\numberwithin{equation}{section}
\newcommand{\todo}[1]{\footnote{\textcolor{red}{\bf #1}}}
\newcommand{\ran}{{\rm ran}\,}
\newcommand{\id}{\mathcal{I}}
\newcommand{\sderiv}{\partial_S}
\newcommand{\clos}[1]{\overline{#1}}
\newcommand{\lhol}{\mathcal{SH}_L}
\newcommand{\rhol}{\mathcal{SH}_R}
\newcommand{\lholZI}{\mathcal{SH}_{L,0}^{\infty}}
\newcommand{\rholZI}{\mathcal{SH}_{R,0}^{\infty}}
\newcommand{\intrin}{\mathcal{N}}
\newcommand{\intrinZI}{\mathcal{N}_{0}^{\infty}}
\newcommand{\dom}{\mathcal{D}}
\theoremstyle{theorem}
\newtheorem{theorem}{Theorem}[section]
\newtheorem{lemma}[theorem]{Lemma}
\newtheorem{corollary}[theorem]{Corollary}
\newtheorem{proposition}[theorem]{Proposition}
\theoremstyle{definition}
\newtheorem{remark}[theorem]{{\bf Remark}}
\newtheorem{definition}[theorem]{Definition}
\newcommand{\cc}{\mathbb{C}}
\newcommand{\hh}{\mathbb{H}}
\newcommand{\nn}{\mathbb{N}}
\newcommand{\fdom}{\operatorname{dom}}
\newcommand{\rr}{\mathbb{R}}
\renewcommand{\SS}{\mathbb{S}}
\newcommand{\boundOP}{\mathcal{B}}
\newcommand{\closOP}{\mathcal{K}}
\newcommand{\sectOP}{\operatorname{Sect}}
\newcommand{\EL}{\mathcal{E}_L}
\newcommand{\ER}{\mathcal{E}_R}
\newcommand{\Eint}{\mathcal{E}_{int}}
\newcommand{\meroL}{\mathcal{M}_L}
\newcommand{\meroInt}{\mathcal{M}_{int}}
\newcommand{\VEC}{\operatorname{Vec}}
\newcommand{\DIV}{\operatorname{div}}
\newcommand{\Q}{\mathcal{Q}}
\renewcommand{\Re}{\mathrm{Re}}
\newcommand{\prodL}{{\star_\ell}}
\newcommand{\prodR}{{\star_r}}
\newcommand{\dist}{\mathrm{dist}}
\newcommand{\sector}[1]{\Sigma_{#1}}
\newcommand{\I}{\mathbf{I}}
\newcommand{\J}{\mathbf{J}}
\newcommand{\K}{\mathbf{K}}
\crefname{enumi}{}{}
\crefname{enumii}{}{}
\title[An application of the $S$-functional calculus]
{An application of the $S$-functional calculus to fractional diffusion processes}
\author[F. Colombo]{Fabrizio Colombo}
\address{(FC)
Politecnico di Milano\\Dipartimento di Matematica\\Via E. Bonardi, 9\\20133
Milano, Italy}
\email{fabrizio.colombo@polimi.it}
\author[J. Gantner]{Jonathan Gantner}
\address{(JG)
Politecnico di Milano\\Dipartimento di Matematica\\Via E. Bonardi, 9\\20133
Milano, Italy
} \email{jonathan.gantner@polimi.it}
\begin{document}
\maketitle

\begin{abstract}
In this paper we show how the spectral theory based on the notion of $S$-spectrum allows us to study new classes
of fractional diffusion and of fractional evolution processes.
 We prove new results on the quaternionic version of the
 $H^\infty$ functional calculus and we use it to define the fractional powers of vector operators.
The Fourier laws for the propagation of the heat in non homogeneous materials is a vector operator of the form
$$
T=e_1\,a(x)\partial_{x_1} +  e_2\,b(x)\partial_{x_2} +  e_3\,c(x)\partial_{x_3}
$$
where $e_\ell$, $e_\ell=1,2,3$ are orthogonal unit vectors,
 $a$, $b$, $c$ are suitable real valued function that depend on the space variables $x=(x_1,x_2,x_3)$ and possibly also on time.
In this paper we develop a general theory to define the fractional powers of quaternionic operators which contain as a
particular case the operator $T$ so we can define the  non local version $T^\alpha$, for $\alpha\in (0,1)$, of the Fourier law defined by $T$.
Our  new mathematical tools open the way to a large class of fractional evolution problems that can be defined and studied using our theory based on the $S$-spectrum for vector operators.
 This paper is devoted to researchers in different research  fields such as: fractional diffusion and fractional evolution
 problems, partial differential equations, non commutative operator theory,  and quaternionic analysis.
\end{abstract}
\vskip 1cm
\par\noindent
 AMS Classification: 47A10, 47A60.
\par\noindent
\noindent {\em Key words}: $H^\infty$ functional calculus for quaternionic operators,  fractional  powers of quaternionic operators,
S-spectrum, fractional diffusion and fractional evolution processes.
\vskip 1cm

\section{Introduction}

 The study of some classes of operators that do not act by pointwise differentiation, but by global
integration with respect to a singular kernel is a very active research field
 since non local operators appear in several branches of science and  technology.
Non local operators model fractional diffusion and fractional evolution processes.
The most studied non local operators are the fractional
powers of the negative Laplacian, which can be defined in different ways.
If the function $u$ belongs to the Schwartz space of rapidly decreasing functions, then  $(-\Delta)^\alpha u$ can for example be defined via the Fourier transform.
A different approach considers the semigroup  generated by $-\Delta$,
and this definition can be generalized to more general elliptic operators that are the generators of semigroups.
Another approach to define the fractional Laplace operator, which can be found in \cite{CafSil}, is via an extension problem. An explicit integral representation of the fractional Laplacian is given by
$$
(-\Delta)^\alpha u (x)=c(n,\alpha)P.V.\int_{\mathbb{R}^n}\frac{u(x)-u(y)}{|x-y|^{n+2\alpha}}dy,
$$
where the integral is defined in the sense of the principal value, $c(n,\alpha)$ is a known constant, and $u: \mathbb{R}^n\to \mathbb{R}$ belongs to a suitable function space.
In the literature there are several non linear models that involve the fractional Laplacian and even the fractional powers
of more general elliptic operators.
With no claim of completeness, we refer to the books
\cite{BocurValdinoci,Vazquez} and the references therein  for a more extended introduction to fractional diffusion problems. For more recent results see also \cite{CSV,CV2,GMPunzo,MRT}.
\\
In 1960 Balakrishnan (see \cite{Balakrishnan}) obtained  a   construction  for  the fractional  powers of a closed linear operator $A$ on a Banach space, in which  he does  not  require  that  $A$  generates  a  semigroup. He proved that if any $\lambda>0$ belongs to the resolvent set of $A$ and there exists a positive constant $M$ such that
   $$
   \|\lambda (\lambda -A)^{-1}\|< M, \qquad \lambda > 0,
   $$
 i.e. if $-A$ is a sectorial operator in today's terminology, then the fractional powers of $-A$ can be defined by the  integral
   $$
   (-A)^\alpha v=\frac{\sin(\alpha \pi )}{\pi}\int_0^\infty \lambda^{\alpha-1}(\lambda  - A)^{-1}(-A)x\, d\lambda,\ \ \ v\in \dom(A),
   $$
   for $\alpha\in (0,1)$. This formula can also be obtained as a consequence of the $H^\infty$-functional calculus introduced by A. McIntosh in \cite{McI1}.
   Some of the first papers on fractional powers of operators are \cite{Guzman1,Kato,Komatsu1,Watanabe,Yosida}.
\\
\\
In this paper we offer a new point of view to define fractional diffusion processes that is based the on
quaternionic version of the $H^\infty$-functional calculus based on the $S$-spectrum.
In fact, using the quaternionic $H^\infty$-functional calculus we deduce the quaternionic Balakrishnan formula
and we apply it to vector operators  that are special cases of quaternionic operators.
Precisely, when represented in components, quaternionic linear operators, are of the form
$$
T=T_0+e_1T_1+e_2T_2+e_3T_3,
$$
where $T_\ell$, $\ell=0,1,2,3$ are linear operators acting on a real Banach spaces.  So purely imaginary quaternionic operators $e_1T_1+e_2T_2+e_3T_3$ are vector operators.
 The quaternionic version (or vector version)
 of the Riesz-Dunford functional calculus  requires the notions of $S$-spectrum, $S$-resolvent set and $S$-resolvent operators.
 We denote by  $\mathbb{H}$ the algebra of quaternions. The real part of a quaternion $s=s_0+e_1s_1+e_2s_2+e_3s_3$  is indicated by $s_0$ or ${\rm Re}(s)$ and $|s|^2=s_0^2+s_1^2+s_2^2+s_3^2$ is the square of the Euclidean norm. We furthermore denote by
$\mathcal{B}(V)$ the space of all bounded quaternionic right-linear operators defined on a two-sided quaternionic Banach space $V$.
For  a closed quaternionic right-linear operator $T$, we define the $S$-resolvent set of $T$ as
$$
\rho_S(T)=\{ s\in \mathbb{H}\ \ :\ \ (T^2-2 s_0T+|s|^2\mathcal{I})^{-1} \in \mathcal{B}(V)
\},
$$
 where $\mathcal{I}$ is the identity operator, and we define the $S$-spectrum of $T$ as
$$
\sigma_S(T)=\mathbb{H}\setminus \rho_S(T).
$$
Due to the noncommutativity of the quaternions there are two resolvent operators associated with a quaternionic
linear operator $T$ and hence there are two formulations of the quaternionic functional calculus.
We set
$$
\Q_s(T):=T^2-2 s_0T+|s|^2\mathcal{I} \ \ \text{for} \ \ s\in\hh.
$$
For a right linear operator $T$ the left  $S$-resolvent operator is defined as
\begin{equation}
S_L^{-1}(s,T):=-T\Q_s(T)^{-1} + \Q_s(T)^{-1}\overline{s},\ \ \ s \in \rho_S(T),
\end{equation}
while the right $S$-resolvent operator is defined as
\begin{equation}
S_R^{-1}(s,T):=-(T-\overline{s}\mathcal{I})\Q_s(T)^{-1},\ \ \ s \in  \rho_S(T).
\end{equation}
To define the quaternionic functional calculus, also called $S$-functional calculus, we replace the notion of holomorphicity by the notion of slice hyperholomorphicity, see the sequel for the precise definition.
Let $T$ be a bounded quaternionic linear operator and let $U\subset \hh$ be a suitable domain that contains the $S$-spectrum of $T$.
For a left slice hyperholomorphic functions $f:U \to \hh$, we defined
\begin{equation}\label{SFL}
f(T)={{1}\over{2\pi }} \int_{\partial (U\cap \mathbb{C}_I)} S_L^{-1} (s,T)\  ds_I \ f(s),
\end{equation}
where $ds_I=- ds I$, and $I$ is any purely imaginary quaternion such that $I^2=-1$.
For a right slice hyperholomorphic function $f: U \to\hh$, we define
\begin{equation}\label{SFR}
f(T)={{1}\over{2\pi }} \int_{\partial (U\cap \mathbb{C}_I)} \  f(s)\ ds_I \ S_R^{-1} (s,T).
\end{equation}
These definitions are well posed since the integrals depend neither on the open set $U$ nor on the complex plane
$\mathbb{C}_I$. They are moreover obtained by writing $f(q)$ in terms of the corresponding slice hyperholomorphic Cauchy formula and formally replacing the scalar variable $q$ by the operator $T$, just as it is done in the Riesz-Dunford functional calculus for holomorphic functions in the complex linear setting.

Since the function $s\mapsto s^\alpha$ is both left and right slice-hyperholomorphic, we can define fractional powers of a vector operator  $T = e_1T_1+e_2T_2+e_3T_3$ using (\ref{SFL}) or (\ref{SFR}) as
\begin{equation}\label{AAREL}
T^\alpha={{1}\over{2\pi }} \int_{\partial (U\cap \mathbb{C}_I)} \  s^\alpha\ ds_I \ S_R^{-1} (s,T)
={{1}\over{2\pi }} \int_{\partial (U\cap \mathbb{C}_I)} S_L^{-1} (s,T)\  ds_I \, s^\alpha,
\end{equation}
if $\sigma_S(T) \subset U$ is contained in the domain of $s^{\alpha}$.

If $T$ is an unbounded operator then the $S$-spectrum is in general unbounded,
so to extend the $S$-functional calculus we need to impose restrictions on the class of functions and we have to assume
that $T$ is a  quaternionic  sectorial operator in order to guarantee that the integrals (\ref{SFL}) and (\ref{SFR}) are convergent.

The  quaternionic $H^\infty$-functional calculus based on the $S$-spectrum was initiated in \cite{Hinfty}. We consider an extended version of the quaternionic $H^\infty$-functional calculus  for a larger class of quaternionic operators and we prove several important properties.  We use this version of the functional calculus to define fractional powers of vector operators.

We first give meaning to the $S$-functional calculus for quaternionic sectorial operators taking slice hyperholomorphic functions
 with grow conditions such as
$$
|f(s)|\leq c(|s|^k+|s|^{-k}), \ \ {\rm for }\ \ c>0, \ k>0,
$$
via a slice hyperholomorphic Cauchy integral as in \eqref{AAREL}. This calculus extends naturally to the set $\EL[\sector{\omega}]$ of functions that are left slice hyperholomorphic on a suitable sector $\sector{\omega}$ around the positive real axis and  that have finite polynomial limits at $0$ and infinity. With this class of functions,  the $S$-functional calculus for quaternionic sectorial operators turns out to be well defined. 

We can extend this functional calculus so that it can even generate unbounded operators in the following way: we call an intrinsic function $e\in\EL[\sector{\omega}]$ a regularizer for a slice meromorphic function $f$, if $ef \in \EL[\sector{\omega}]$ and $e(T)^{-1}$ is injective. 
We define the quaternionic $H^\infty$-functional calculus as:
$$
f(T):=(e(T))^{-1}(e f)(T),
$$
where the operators $e(T)$ and $(e f)(T)$ are defined using the $S$-functional calculus for sectorial operators. If in particular $T$ is injective, then we can use the function
$$
e(s):=\Big(s(1+s^2)^{-1}\Big)^{k+1}, \ \ k\in \mathbb{N}
$$
as a regularizer, which yields exactly the $H^{\infty}$-functional calculus as introduced in \cite{Hinfty}.
Using this functional calculus we can define  the fractional powers of quaternionic unbounded operators.
\\
\\
We now explain our strategy to fractional diffusion processes. In order to do so, we first recall the classical approach to the fractional heat equation.
We denote by  $u$ the temperature on and by$\mathbf{q}$  the heat flow
and we set the thermal diffusivity equal to $1$.The heat equation is then deduced from the two laws
\begin{align}
\label{Fourier}\mathbf{q}  = -\nabla u \ \ &\text{(Fourier's law)}
\\
\label{Cons}\partial_t u  +\operatorname{div} \mathbf{q} = 0\ \ \ &\text{(Conservation of Energy)}
\end{align}
where $u$ and $\mathbf{q}$ are defined on $\mathbb{R}^3$.
Plugging the first relation into the second one we find
\[
\partial_t u  - \Delta u = 0.
\]
The fractional heat equation is an alternative model, that  takes into account  non local interactions and it is obtained by replacing the negative Laplacian in the heat equation by its fractional power so that
\begin{equation}\label{fracHeatEQ}
\partial_t u  +(-\Delta)^{\alpha} u = 0,\ \ \ \alpha\in (0,1),
\end{equation}

Our approach is different, very general, and in the case $\mathbf{q}  = -\nabla u$ it reduces to the fractional Laplace operator.
We explain the main points of our new strategy to fractional diffusion problems.
We identify
\[
\mathbb{R}^3 \ \cong \{ s\in\mathbb{H}: {\rm Re}(s) = 0\}
\]
and we consider the gradient $\nabla$ as the quaternionic nabla operator
\[
\nabla= e_1\partial_{x_1} + e_2 \partial_{x_2} +  e_3\partial_{x_3}.
\]
Instead of replacing the negative Laplacian in the heat equation, we wanted to replace the gradient in \eqref{Fourier} by its fractional power $\nabla^{\alpha}$
using the quaternionic $H^\infty$-functional before combining it with the law of conservation of energy.
But this cannot be done directly for the following reasons:
\begin{itemize}
\item[(i)] The $S$-spectrum of the $\nabla$ operator  on $L^2(\mathbb{R}^3,\mathbb{H})$ is the real line
\[
\sigma_{S}(\nabla) = \mathbb{R}
\]
and  $s^{\alpha}$ is not defined on $(-\infty,0)$. So the operator  $\nabla^{\alpha}$ cannot be defined on the entire $S$-spectrum.
\item[(ii)] The fractional power of a vector operator is not in general a vector operator, but it is a quaternionic operator of type
 $T_1+e_1T_2+e_2T_3+e_3T_3$. To such an operator  we cannot apply the divergence operator to get the fractional Laplace operator.
\end{itemize}
So we have to proceed as follows:
\begin{itemize}
\item[(ia)] First, instead of considering the fractional powers of the nabla operator $\nabla^{\alpha}$, 
we consider the projections of the fractional powers of $\nabla^{\alpha}$ indicated by  $f_{\alpha}(\nabla)$ to the subspace associated with the subset $[0+\infty)$ of the $S$-spectrum of $\nabla$, on which the function $s^{\alpha}$ is well defined.
\item[(iia)]  Second  we take just the vector part  ${\rm Vect}(f_{\alpha}(\nabla))=e_1T_1+e_2T_2+e_3T_3$ of the quaternionic operator $f_{\alpha}(\nabla)=T_0+e_1T_1+e_2T_2+e_3T_3$
so that  we can apply the divergence operator.
\end{itemize}
We point out that if  we proceed as in the points (ia) and (iia) for the gradient operator
 we re-obtain the classical fractional Laplace operator, but our approach is very general and
it is applicable to a large class of operators such as
\[
\widetilde{\nabla}= e_1\,a(x)\partial_{x_1} + e_2\, b(x)\partial_{x_2} +  e_3\,c(x)\partial_{x_3},
\]
where $a$, $b$, $c$ are suitable real valued functions that depend on the space variables $x=(x_1,x_2,x_3)$ and possibly also on time.
More precisely, the definition of $\nabla^{\alpha}$ only on the subspace associated to $[0,\infty)$  is given by
\[
f_{\alpha}(\nabla) v = \frac{1}{2\pi} \int_{-I\mathbb{R}} S_L^{-1}(s,\nabla)\,ds_{I}\, s^{\alpha} \nabla v,
\]
for $v:\mathbb{R}^3\to \mathbb{H}$  in $\dom(\nabla)$. This corresponds to the Balakrishnan formula, which is a consequence of the quaternionic $H^\infty$-functional calculus, in which only positive spectral values are taken into account.
With this definition and the surprising expression for the left $S$-resolvent operator
\[
S_L^{-1}(-tI ,\nabla) = (-tI + \nabla)\underbrace{(-t^2 + \Delta)^{-1}}_{= R_{-t^2}(-\Delta)},
\]
 the operator $f_{\alpha}(\nabla)$
 becomes with some computations 
$$
f_{\alpha}(\nabla) v
= \underbrace{ \frac{1}{2} (-\Delta)^{\frac{\alpha}{2}-1}\nabla^2 v}_{\mathrm{Scal} f_{\alpha}(\nabla)v} + \underbrace{\frac{1}{2} (-\Delta)^{\frac{\alpha-1}{2}}\nabla v}_{=\mathrm{Vec}f_{\alpha}(\nabla) v}.
$$
We define the scalar part  of the operator $f_{\alpha}(\nabla) $ applied to $v$ as
$$
\mathrm{Scal} f_{\alpha}(\nabla)v:=\frac{1}{2}  (-\Delta)^{\frac{\alpha}{2}-1}\nabla^2 v,
$$ 
and the vector part as
$$
\mathrm{Vec}f_{\alpha}(\nabla) v:=\frac{1}{2} (-\Delta)^{\frac{\alpha-1}{2}}\nabla v.
$$
Now we observe hat
\[
 {\rm div} \mathrm{Vec}f_{\alpha}(\nabla) v = -\frac{1}{2} (-\Delta)^{\frac{\alpha}{2}+1} v
\]
This proves that in the case of the gradient we get the same result, that is the fractional Laplacian.
 The fractional heat equation for $\alpha\in(1/2,1)$
\[
\partial_t u(t,x) + (-\Delta)^{\alpha} u(t,x) = 0
\]
 can hence be written as
\[
\partial_t u(t,x) - 2 {\rm div}\left(\mathrm{Vec} f_{\beta}(\nabla)v\right)  = 0, \qquad \beta = 2\alpha - 1.
\]
 For any suitable vector operator $T$, we hence propose the fractional evolution equation:
\[
\partial_t u(t,x) - 2 {\rm div}\left(\mathrm{Vec} f_{\beta}(T)v\right)  = 0.
\]
This allows us to study different fractional evolution problem for example a new fractional evolution equations
can be deduced when we consider the following Fourier's law:
\[
 T = e_1x_1\partial_{x_1}  + e_2x_2 \partial_{x_2} + e_3x_3\partial_{x_3}.
 \]
Working in the space $L^2(\mathbb{R}^3_{+}, \mathbb{H},d\mu)$ with
$$
\mathbb{R}_{+}^3=\{e_1x_{1} + e_2x_{2} + e_3x_{3}: x_{\ell}>0\}
$$
 and $d\mu =
(x_1x_2x_3)^{-1}dx$ we get
the operator
 $$
 {\rm Vec}  f_{\beta}(T) v(\xi)
 =
  \frac{1}{2(2\pi)^3} \int_{\mathbb{R}^3}\int_{\mathbb{R}^3}-|y|^{2\alpha}
  e^{e_1 \sum_{k=1}^3\xi_k y_k} e^{-e_1x\cdot {y}}
  \begin{pmatrix}
   e^{x_1}v_{\xi_1}(e^{x_1},e^{x_2},e^{x_3})
   \\
   e^{x_2} v_{\xi_2}(e^{x_1},e^{x_2},e^{x_3})
   \\
   e^{x_3} v_{\xi_3}(e^{x_1},e^{x_2}, e^{x_3})
   \end{pmatrix}
   \, dx\, dy.
 $$

This approach has several advantages:
\begin{itemize}
\item[(i)] It  modifies the Fourier law but keeps the law of Conservation of Energy.
\item[(ii)] It is applicable to a large class of operators that includes the gradient but also operators with variable
coefficients such as operator $\mathbf{q}(x,\partial_{x})$. Moreover, $\mathbf{q}$ can also depend on time.
\item[(iii)]
The fractional powers of the operator $\mathbf{q}(x,\partial_{x})$  is  very useful for non homogeneous materials.
 \item[(iv)] The fact that we keep the evolution equation in divergence form allows an immediate definition of the weak solution of the
fractional evolution problem.
 \item[(iv)]
To represent the fractional powers of an operator $T$ we have to write an explicit expression for the inverse of the operator 
$T^2-2 s_0T+|s|^2\mathcal{I}$ and this can be done on bounded or unbounded domains.
\end{itemize}

The development of the theory of slice hyperholomorphic functions, see the books \cite{ACSBOOK,ACSBOOK2,MR2752913}, was the crucial step towards the discovery of the $S$-spectrum, which was inspired by their
  Cauchy formula. Their Cauchy kernels suggested the notion of $S$-resolvent operators, and the formulations of the
  $S$-functional calculus as a consequence, see the original papers \cite{acgs,CGTAYLOR,JGA,CLOSED,DA}.\\
The quaternionic formulation of quantum mechanics, see \cite{adler,BvN}, has stimulated the research on the spectral theorem based on the $S$-spectrum, see \cite{ack,acks2}.
Using Clifford-algebra-valued slice hyperholomorphic functions, one can define the $S$-functional calculus for $n$-tuples of not necessarily commuting operators, see \cite{CSSJFA} and the book \cite{MR2752913}.\\
In the case quaternionic or vector operators are generators of groups or semigroups, see \cite{perturbation,MR2803786,GR},
 it is possible to define the function of the generators \cite{FUCGEN} via the Laplace-Stieltjes transform analogue to the Philips functional calculus.
\\
\\
The plan of the paper.
Section 1 is the introduction. Section 2 contains the definition and some properties of slice hyperholomorphic functions
 and of the $S$-functional calculus. In Section 3 we develop the quaternionic $H^\infty$-functional calculus for arbitrary quaternionic sectorial operators and several
  crucial properties like the spectral mapping theorem. In Section 4
we define the fractional powers of quaternionic operators using the results of Section 3. In Section 5 we develop the spectral theory based on the $S$-spectrum of 
the nabla operator and we give applications to fractional diffusion processes.

\section{Preliminaries}
The skew-field of quaternions consists of the real vector space 
\[
\hh:=\left\{p = \xi_0 + \sum_{i=1}^3\xi_ie_i: \xi_i\in\rr\right\},
\]
 which is endowed with an associative product with unity $1$ such that
$e_i^2 = - 1$ and $e_ie_j = -e_je_i$ for $i,j\in\{1,2,3\}$ with $i\neq j$.
The real part of a quaternion $p = \xi_0 + \sum_{i=1}^3\xi_ie_i$ is defined as $\Re(p) := \xi_0$, its imaginary part as $\underline{p} := \sum_{i=1}^3\xi_i e_i$ and its conjugate as $\overline{p} := \Re(p) - \underline{p}$.

Each element of the set
\[\SS := \{ p\in\hh: \Re(p) = 0, |p| = 1 \}\]
 is a square-root of $-1$ and is therefore called an imaginary unit. For any $I\in\SS$, the subspace $\cc_I := \{p_0 + I p_1: p_0,p_1\in\rr\}$ is an isomorphic copy of the field of complex numbers. If $I,J\in\SS$ with $I\perp J$, set $K=IJ = -JI$. Then $1$, $I$, $J$ and $K$ form an
 orthonormal basis of $\hh$ as a real vector space and $1$ and $J$ form a basis of $\hh$ as a left or right vector space over the complex plane $\cc_I$, that is
 \[ \hh = \cc_I + \cc_I J \quad\text{and}\quad \hh = \cc_I + J\cc_I.\]
 Any quaternion $p$ belongs to such a complex plane: if we set
 \[I_p := \begin{cases}\frac{1}{|\underline{p}|}\underline{p},& \text{if  }\underline{p} \neq 0 \\ \text{any }I\in\SS, \quad&\text{if }\underline{p}  = 0,\end{cases}\]
 then $p = p_0 + I_p p_1$ with $p_0 =\Re(p)$ and $p_1 = |\underline{p}|$. The set
 \[
 [p] := \{p_0 + Ip_1: I\in\SS\},
 \]
is a 2-sphere, that reduces to a single point if $p$ is real. The following result is well-known.
\begin{lemma}\label{HComLem}
If $p\in\hh$ and $h\in\hh\setminus\{0\}$, then $q =  h^{-1} p h\in [p]$. Conversely, if $q\in [p]$, then there exists $h\in\hh\setminus\{0\}$ such that $q = h^{-1} p h$. 
\end{lemma}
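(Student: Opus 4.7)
The plan is to prove the two directions by exploiting the basic identities $\Re(ab)=\Re(ba)$ and $|ab|=|a||b|$ for the first, and by an explicit construction for the second.

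For the forward direction, set $q = h^{-1} p h$. Since elements of $[p]$ are exactly those quaternions sharing with $p$ both the real part $p_0$ and the modulus $|p|$ (equivalently, sharing $p_0$ and $|\underline{p}|$), it suffices to verify these two equalities. For the real part, I would use the cyclicity $\Re(ab)=\Re(ba)$, which holds in $\hh$ because $\Re$ is a trace form: then $\Re(h^{-1} p h) = \Re(p h h^{-1}) = \Re(p)$. For the modulus, I would invoke multiplicativity of the Euclidean norm, $|h^{-1} p h| = |h|^{-1}|p||h| = |p|$. Combined, these force $q \in [p]$.

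For the converse, write $p = p_0 + I_p p_1$ and $q = p_0 + I_q p_1$ with $I_p, I_q \in \SS$. If $p_1 = 0$, then $p=q$ is real and any $h\neq 0$ works, so I assume $p_1 \neq 0$. Then the problem reduces to finding $h$ with $h^{-1} I_p h = I_q$, equivalently $I_p h = h I_q$. The natural candidate is $h := I_p + I_q$, and a direct computation using $I_p^2 = I_q^2 = -1$ gives
\[
I_p h = I_p^2 + I_p I_q = -1 + I_p I_q, \qquad h I_q = I_p I_q + I_q^2 = -1 + I_p I_q,
\]
so the intertwining relation holds. This $h$ is nonzero unless $I_q = -I_p$, i.e.\ $q = \overline{p}$. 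In that exceptional case I would instead pick any $K \in \SS$ with $K \perp I_p$; then $K I_p = -I_p K$, whence $K^{-1} I_p K = -K I_p K = I_p K^2 = -I_p = I_q$, and $h := K$ does the job.

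The argument is essentially mechanical; the only subtle point is recognising the degeneracy $I_q=-I_p$, where the candidate $h = I_p + I_q$ collapses to zero and must be replaced by a vector orthogonal to $I_p$ in the imaginary 2-sphere. Apart from this case split, both directions are short algebraic verifications based on the two cornerstone identities (cyclic invariance of $\Re$ and multiplicativity of $|\cdot|$) recalled in the preliminaries.
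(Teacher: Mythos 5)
Your proof is correct. The paper states this lemma as well-known and gives no proof of its own, and your argument --- cyclicity of $\Re$ together with multiplicativity of the norm for the forward direction, and the intertwiner $h = I_p + I_q$ (replaced by an orthogonal unit $K$ in the degenerate case $I_q = -I_p$, and by any nonzero $h$ when $p$ is real) for the converse --- is the standard complete argument, with all degenerate cases handled properly.
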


\subsection{Slice hyperholomorphic functions} The theory of complex linear operators is based on the theory of holomorphic functions. In a similar way, the theory of quaternionic linear operators is based on the theory of slice hyperholomorphic functions. We shall give a brief introduction to this field as it is fundamental for the rest of the paper. The proofs of the results stated in this subsection can be found in the book \cite{MR2752913}.

\begin{definition}
A set $U\subset\hh$ is called
\begin{enumerate}[label = (\roman*)]
\item axially symmetric if $[p]\subset U$ for any $p\in U$ and
\item a slice domain if $U$ is open, $U\cap\rr\neq \emptyset$ and $U\cap\cc_I$ is a domain for any $I\in\SS$.
\end{enumerate}
\end{definition}

\begin{definition}\label{sHolDef}
Let $U\subset\hh$ be open and axially symmetric. A function $f: U \to \hh$ is called left slice hyperholomorphic, if it is of the form
\begin{equation}
\label{lHolDef}f(p) = \alpha(p_0,p_1) + I_p \beta(p_0,p_1) \quad \forall p\in U,
\end{equation}
where $\alpha$ and $\beta$ are functions that take values in $\hh$, satisfy the compatibility condition
\begin{equation}\label{CCond}
\alpha(p_0,p_1) = \alpha(p_0,-p_1)\quad\text{and}\quad \beta(p_0,p_1) = -\beta(p_0,-p_1)
\end{equation}
and the Cauchy-Riemann-differential equations
\begin{equation}\label{CR}
\frac{\partial}{\partial p_0} \alpha(p_0,p_1) = \frac{\partial}{\partial p_1} \beta(p_0,p_1)\quad\text{and}\quad \frac{\partial}{\partial p_1}\alpha(p_0,p_1) = - \frac{\partial}{\partial p_0} \beta(p_0,p_1).
\end{equation}
A function $f: U \to \hh$ is called right slice hyperholomorphic, if it is of the form
\begin{equation}\label{rHolDef}
f(p) = \alpha(p_0,p_1) +\beta(p_0,p_1)  I_p \quad \forall p\in U,
\end{equation}
with functions $\alpha$ and $\beta$ satisfying \eqref{CCond} and \eqref{CR}. Finally, a left slice hyperholomorphic function $f = \alpha + I \beta$ is called intrinsic, if $\alpha$ and $\beta$ are real-valued.

We denote the set of all left slice hyperholomorphic functions on $U$ by $\lhol(U)$, the set of all right slice hyperholomorphic functions on $U$ by $\rhol(U)$ and the set of all intrinsic functions by $\intrin(U)$. Finally, we say that $f\in\lhol(C)$, $f\in\rhol(C)$ or $f\in\intrin(C)$ for an arbitrary axially symmetric set $C$, if there exists an axially symmetric open set $U$ with $C\subset U$ such that $f\in\lhol(U)$, $f\in\rhol(U)$ resp. $f\in\intrin(U)$.
\end{definition}
\begin{remark}
Observe that any quaternion $p$ can be represented using two different  imaginary units, namely $p = p_0 + I_p p_1 = p_0 + (-I_p)(-p_1)$. If $p\in\rr$, then we can even choose any imaginary unit we want in this representation. The compatibility condition \eqref{CCond} assures that the choice of this imaginary unit is actually irrelevant.
\end{remark}
Important examples of slice hyperholomorphic functions are power series with quaternionic coefficients: series of the form $\sum_{n=0}^{+\infty}p^na_n$ are left slice hyperholomorphic and series of the form $\sum_{n=0}^{\infty} a_np^n$ are right slice hyperholomorphic on their domain of convergence. A power series is intrinsic if and only if its coefficients are real.
Conversely, any slice hyperholomorphic function can be expanded into a power series at any real point in its domain of definition.
\begin{definition}
The slice-derivative of a function $f\in\lhol(U)$ is defined as
\[\sderiv  f(p) = \lim_{\cc_{I_p}\ni q\to p} (q-p)^{-1}(f(q)-f(p))\qquad\text{for $p = p_0 + I_pp_1\in U$,} \]
where $\lim_{\cc_{I_p}\ni q\to p} g(q)$ denotes the limit of a function $g$ as $q$ tends to $p$ in $\cc_{I_p}$.
The slice-derivative of a function $f\in\rhol(U)$ is defined as
\[\sderiv  f(p) = \lim_{\cc_{I_p}\ni q\to p} (f(q)-f(p))(q-p)^{-1}\qquad\text{for $p = p_0 + I_pp_1\in U$.} \]
\end{definition}
\begin{corollary}\label{SDProp}
The slice derivative of a left (or right) slice hyperholomorphic function is again left (or right) slice hyperholomorphic. Moreover, it coincides with the derivative with respect to the real part, that is
\[\sderiv  f(p) = \frac{\partial}{\partial p_0} f(p)\quad\text{for $ p = p_0 + I_p p_1$. }\]
\end{corollary}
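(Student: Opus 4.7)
\medskip

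\noindent\textbf{Proof plan for Corollary~\ref{SDProp}.} My strategy is to reduce the slice derivative to a \emph{complex} derivative on each slice $\cc_{I_p}$ and then read off the slice hyperholomorphicity of $\sderiv f$ from the Cauchy--Riemann system \eqref{CR}. I discuss the left case; the right case is entirely analogous by writing $I_p$ on the right.

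\smallskip

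\noindent\textbf{Step 1: slice derivative equals $\partial_{p_0}f$.} Fix $p = p_0 + I_p p_1 \in U$ with $p_1 \neq 0$, set $I := I_p$, and choose $J\in\SS$ with $J\perp I$, so $\hh = \cc_I \oplus \cc_I J$. Writing $\alpha = \alpha_1 + \alpha_2 J$ and $\beta = \beta_1 + \beta_2 J$ with $\alpha_k,\beta_k:U\cap\rr^2\to\cc_I$, the system \eqref{CR} decouples into two copies of the classical Cauchy--Riemann equations, one for $f_1 := \alpha_1 + I\beta_1$ and one for $f_2 := \alpha_2 + I \beta_2$. Hence $q\mapsto f(q)$ is holomorphic from $U\cap \cc_I$ to $\cc_I\oplus\cc_IJ$ in the classical complex sense, and the limit defining $\sderiv f(p)$ coincides with the componentwise complex derivative. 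A direct computation using \eqref{CR} yields
\[
\sderiv f(p) \;=\; \frac{\partial \alpha}{\partial p_0}(p_0,p_1) + I_p\, \frac{\partial \beta}{\partial p_0}(p_0,p_1) \;=\; \frac{\partial}{\partial p_0}f(p).
\]
For $p\in\rr$, the compatibility condition \eqref{CCond} forces $\beta(p_0,0)=0$ and hence $\partial_{p_0}\beta(p_0,0)=0$, so the right-hand side depends only on $\partial_{p_0}\alpha(p_0,0)$ and is independent of the (immaterial) choice of $I_p\in\SS$. Taking the limit along any $\cc_I$ through $p$ gives the same value, so $\sderiv f(p) = \partial_{p_0}f(p)$ also in this case.

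\smallskip

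\noindent\textbf{Step 2: $\sderiv f$ is again left slice hyperholomorphic.} Set
\[
\widetilde\alpha(p_0,p_1) := \frac{\partial\alpha}{\partial p_0}(p_0,p_1), \qquad \widetilde\beta(p_0,p_1) := \frac{\partial\beta}{\partial p_0}(p_0,p_1),
\]
so that Step~1 gives $\sderiv f(p) = \widetilde\alpha(p_0,p_1) + I_p\,\widetilde\beta(p_0,p_1)$ on $U$. Since $\alpha$ and $\beta$ satisfy \eqref{CR} and hence are (componentwise) harmonic in $(p_0,p_1)$ on the open set where $p_1\neq 0$, they are of class $C^\infty$ there; smoothness across $\{p_1=0\}$ follows from the local power series expansion at real points mentioned after Definition~\ref{sHolDef}. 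Therefore partial derivatives in $p_0$ and $p_1$ commute, and:
\begin{itemize}
\item differentiating the compatibility relations \eqref{CCond} in $p_0$ gives $\widetilde\alpha(p_0,p_1)=\widetilde\alpha(p_0,-p_1)$ and $\widetilde\beta(p_0,p_1)=-\widetilde\beta(p_0,-p_1)$;
\item differentiating \eqref{CR} in $p_0$ and using Schwarz' theorem gives
\[
\partial_{p_0}\widetilde\alpha = \partial_{p_0}\partial_{p_1}\beta = \partial_{p_1}\partial_{p_0}\beta = \partial_{p_1}\widetilde\beta,
\]
and similarly $\partial_{p_1}\widetilde\alpha = -\partial_{p_0}\widetilde\beta$.
\end{itemize}
Hence $\widetilde\alpha,\widetilde\beta$ satisfy \eqref{CCond} and \eqref{CR} on $U$, so $\sderiv f \in \lhol(U)$.

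\smallskip

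\noindent\textbf{Main obstacle.} The only non-routine point is handling real points $p\in U\cap\rr$, where $I_p$ is not intrinsically defined. This is overcome by observing, via the compatibility condition, that $\beta$ and $\partial_{p_0}\beta$ vanish there, making the expression $\widetilde\alpha + I_p\widetilde\beta$ well defined and ensuring that the slice-limit computation is independent of the imaginary unit used to approach $p$. Everything else reduces to the classical complex Cauchy--Riemann formalism slice by slice.
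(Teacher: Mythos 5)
Your proposal is correct, and since the paper itself gives no proof of this corollary (it refers to the book \cite{MR2752913}), your slice-by-slice reduction to the classical Cauchy--Riemann system is exactly the standard argument one would find there. One small imprecision: in Step 2 you justify smoothness by saying $\alpha,\beta$ "satisfy \eqref{CR} and hence are harmonic", which presupposes the $C^2$ regularity you are trying to establish; it is cleaner to note that by Step 1 each restriction $f_I$ is holomorphic, hence $C^\infty$, on $U\cap\cc_I$, and that $\alpha(p_0,p_1)=\tfrac12\bigl(f(p_0+Ip_1)+f(p_0-Ip_1)\bigr)$ and $I\beta(p_0,p_1)=\tfrac12\bigl(f(p_0+Ip_1)-f(p_0-Ip_1)\bigr)$ by \eqref{CCond}, so $\alpha,\beta$ inherit this smoothness everywhere, including across $\{p_1=0\}$, after which Schwarz's theorem applies as you use it.
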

\begin{theorem}\label{Taylor}
If $f$ is left slice hyperholomorphic on the ball $B_r(\alpha)$ of radius $r>0$ centered at $\alpha\in\rr$, then
\[f(p) = \sum_{n=0}^{+\infty} (p-\alpha)^n \frac{1}{n!}\sderiv ^n f(\alpha)\quad\text{for $p\in B_r(\alpha)$.}\]
If $f$ is right slice hyperholomorphic on $B_r(\alpha)$, then
\[f(p) = \sum_{n=0}^{+\infty}\frac{1}{n!}\sderiv ^n f(\alpha) (p-\alpha)^n \quad\text{for $x\in B_r(\alpha)$.}\]
\end{theorem}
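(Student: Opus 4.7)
The plan is to reduce the problem to the classical holomorphic Taylor expansion on a single slice $\cc_I$ and then use that $\alpha\in\rr$ lies on every slice to assemble a series that is valid throughout $B_r(\alpha)$.

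Fix any $I\in\SS$ and consider the restriction $f_I$ of $f$ to the disk $B_r(\alpha)\cap\cc_I$. Choosing $J\in\SS$ with $J\perp I$ and using the splitting $\hh = \cc_I + \cc_I J$, one writes $f_I = g_1 + g_2 J$ with components $g_1, g_2 : B_r(\alpha)\cap\cc_I\to\cc_I$. A direct check using the Cauchy-Riemann equations \eqref{CR} for the representation \eqref{lHolDef} shows that $g_1$ and $g_2$ both satisfy the ordinary complex Cauchy-Riemann equations in the $\cc_I$ variable, so $f_I$ is $\hh$-valued holomorphic on $B_r(\alpha)\cap\cc_I$. Since $\alpha\in\rr\subset\cc_I$, the classical Taylor theorem then yields
\[
f_I(z) \;=\; \sum_{n=0}^{\infty}(z-\alpha)^n\, a_n^{(I)},\qquad a_n^{(I)} \;:=\; \tfrac{1}{n!}\,\partial_z^n f_I(\alpha),\quad z\in B_r(\alpha)\cap\cc_I,
\]
with uniform convergence on compact subsets.

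By \cref{SDProp}, $\sderiv f = \partial_{p_0} f$ on the whole domain of $f$, and at the real point $\alpha$ on the slice $\cc_I$ the partial $\partial_{p_0}$ coincides with the $\cc_I$-holomorphic derivative $\partial_z$, so $\partial_z f_I(\alpha) = \sderiv f(\alpha)$. Since $\sderiv f$ is again left slice hyperholomorphic by \cref{SDProp}, iterating the argument gives $a_n^{(I)} = \tfrac{1}{n!}\sderiv^n f(\alpha)$ for every $n$. In particular, $c_n := a_n^{(I)}$ is a quaternion independent of the choice of slice $I$.

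The remaining, and most delicate, step is promoting this slice-wise expansion to the whole ball $B_r(\alpha)$. I would define $g(p) := \sum_{n=0}^{\infty}(p-\alpha)^n c_n$. Because $\alpha\in\rr$, $g$ is a left slice hyperholomorphic power series with real center, and its quaternionic ball of convergence has radius $R = 1/\limsup|c_n|^{1/n}$. The $\cc_I$-convergence of the series on the single disk $B_r(\alpha)\cap\cc_I$ established above already forces $\limsup|c_n|^{1/n}\le 1/r$, hence $R\ge r$, so that $g$ is a well-defined left slice hyperholomorphic function on $B_r(\alpha)$ that agrees with $f$ on $B_r(\alpha)\cap\cc_I$. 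Since $B_r(\alpha)$ is an axially symmetric slice domain (it contains the real interval $(\alpha-r,\alpha+r)$), the identity principle for left slice hyperholomorphic functions forces $f = g$ throughout $B_r(\alpha)$, which is the claimed expansion. The right slice hyperholomorphic statement follows by the symmetric argument: $f_I$ expands as $\sum_n a_n^{(I)}(z-\alpha)^n$ with $a_n^{(I)} = \tfrac{1}{n!}\sderiv^n f(\alpha)$ by the right-handed version of \cref{SDProp}, and the same convergence and identity arguments apply with right power series in place of left.
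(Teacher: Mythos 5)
Your proof is correct and follows the standard route (the paper itself gives no proof of \Cref{Taylor} but refers to \cite{MR2752913}, where the argument is essentially the one you give): split the restriction $f_I$ into $\cc_I$-valued holomorphic components via \Cref{HolLem}, apply the classical Taylor theorem at the real center, identify the coefficients with $\tfrac{1}{n!}\sderiv^n f(\alpha)$ through \Cref{SDProp}, and then pass from the slice to the whole axially symmetric ball. For that final step you can invoke the Representation Formula (\Cref{RepFo}) directly, since it shows that two slice hyperholomorphic functions on $B_r(\alpha)$ that agree on $B_r(\alpha)\cap\cc_I$ agree everywhere, so no separate identity principle is required.
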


The importance of the class of intrinsic functions is due to the fact that the multiplication and the composition with intrinsic functions preserve slice hyperholomorphicity. This is not true for arbitrary slice hyperholomorphic functions.
\begin{corollary}\label{HolStruct}
 If $f\in\intrin(U)$ and $g\in\lhol(U)$, then $fg\in\lhol(U)$. If $f\in\rhol(U)$ and $g\in\intrin(U)$, then $fg\in\rhol(U)$.

 If $g\in\intrin(U)$ and $f\in\lhol(g(U))$, then $f\circ g\in \lhol(U)$. If $g\in\intrin(U)$ and $f\in\rhol(g(U))$, then $f\circ g\in \rhol(U)$.
\end{corollary}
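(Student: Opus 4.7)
The plan is to verify the defining properties in Definition~\ref{sHolDef} directly for each of the four assertions, exploiting the fact that an intrinsic function has real-valued components $\alpha$, $\beta$ and hence commutes with every quaternion (in particular with any $I_p \in \SS$). I will treat only the left versions; the right versions are symmetric, with the imaginary units appearing on the right throughout.

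For the product statement, write $f = \alpha + I_p\beta$ with $\alpha, \beta$ real-valued (since $f \in \intrin(U)$) and $g = \gamma + I_p \delta$ with $\gamma,\delta$ quaternion-valued. Multiplying, using $I_p^2=-1$, and using that $\alpha,\beta$ commute with $I_p$, I obtain
\[
(fg)(p) = \bigl(\alpha\gamma - \beta\delta\bigr) + I_p \bigl(\alpha\delta + \beta\gamma\bigr).
\]
The compatibility condition \eqref{CCond} for the pair $(\alpha\gamma-\beta\delta,\alpha\delta+\beta\gamma)$ follows immediately by combining the compatibility conditions for $f$ (both $\alpha$ and $\beta$ are even and odd in $p_1$ in the right way, but crucially $\alpha$ is even and $\beta$ is odd) with those for $g$. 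The Cauchy--Riemann system \eqref{CR} then drops out of the ordinary product rule in two real variables applied to $\alpha\gamma-\beta\delta$ and $\alpha\delta+\beta\gamma$, combined with the CR systems for $f$ and for $g$.

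For the composition statement, write $g(p) = \gamma(p_0,p_1) + I_p\,\delta(p_0,p_1)$ with $\gamma,\delta$ real-valued, and $f(q) = A(q_0,q_1) + I_q B(q_0,q_1)$. The key point is that $g(p)$ lies in the complex plane $\cc_{I_p}$: if $\delta(p_0,p_1) > 0$, then $g(p)$ has real part $\gamma$, imaginary modulus $\delta$, and imaginary unit $I_p$, whereas if $\delta(p_0,p_1) < 0$ the genuine imaginary unit is $-I_p$ and the modulus of the imaginary part is $-\delta$. In either case, the compatibility condition \eqref{CCond} for $f$ gives
\[
f(g(p)) = A\bigl(\gamma(p_0,p_1),\delta(p_0,p_1)\bigr) + I_p\, B\bigl(\gamma(p_0,p_1),\delta(p_0,p_1)\bigr).
\]
Setting $\tilde A := A\circ(\gamma,\delta)$ and $\tilde B := B\circ(\gamma,\delta)$, the compatibility conditions for $(\tilde A,\tilde B)$ reduce to those of $f$ once one uses that $\gamma$ is even and $\delta$ is odd in $p_1$ (intrinsicality of $g$). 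The CR equations for $(\tilde A,\tilde B)$ follow from the chain rule, using the CR system of $f$ at $(\gamma,\delta)$ and the CR system of $g$ (i.e.\ $\partial_{p_0}\gamma = \partial_{p_1}\delta$ and $\partial_{p_1}\gamma = -\partial_{p_0}\delta$); a short bookkeeping computation shows the cross terms assemble correctly.

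The main obstacle, and really the only nontrivial point, is the sign ambiguity in $I_{g(p)}$: $g$ need not send $\cc_{I_p}$ into a half-plane in which $I_{g(p)} = I_p$ holds uniformly, and one could worry that the compact formula for $f\circ g$ breaks down on the set where $\delta$ changes sign. The compatibility condition \eqref{CCond} for $f$ is precisely what is needed to absorb this sign change and to show that the representation of $f\circ g$ is well-defined and depends only on $(\gamma,\delta)$. Once this observation is in hand, the verifications of \eqref{CCond} and \eqref{CR} for both the product and the composition are routine.
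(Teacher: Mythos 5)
Your proof is correct: the direct verification of the representation $f=\alpha+I_p\beta$, the compatibility condition \eqref{CCond} and the Cauchy--Riemann system \eqref{CR} — using that intrinsicality makes the components real-valued, hence commuting with $I_p$ and absorbing the sign change of $\delta$ in the composition — is exactly the standard argument, which the paper itself does not reproduce but defers to the cited monograph. The only points left implicit (the case $\delta(p_0,p_1)=0$, where $B(\cdot,0)\equiv 0$ by oddness so the formula for $f\circ g$ still holds, and the real differentiability needed to apply the chain rule) are routine.
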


An intrinsic function $f = \alpha + I\beta$ is both left and right slice hyperholomorphic because $\alpha$ and $\beta$ commute with the imaginary unit $I$. The converse is not true: the constant function $p\mapsto b\in\hh\setminus\rr$ is left and right slice hyperholomorphic, but it is not intrinsic. There are several ways to characterise intrinsic functions.
\begin{corollary}\label{IntrinChar}
If $f\in\lhol(U)$ or $f\in\rhol(U)$, then the following statements are equivalent
\begin{enumerate}[label = (\roman*)]
\item The function $f$ is intrinisic.
\item We have $f(\overline{p}) = \overline{f(p)}$ for any $p\in U$. 
\item We have $f(U\cap\cc_I) \subset \cc_I$ for any $I\in\SS$.
\end{enumerate}
\end{corollary}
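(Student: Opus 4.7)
The plan is to establish the cycle (i)$\Rightarrow$(ii), (i)$\Rightarrow$(iii), (ii)$\Rightarrow$(i), (iii)$\Rightarrow$(i). Throughout, I would work directly with the representation $f(p)=\alpha(p_0,p_1)+I_p\beta(p_0,p_1)$ from Definition~\ref{sHolDef} (the right-slice case is strictly analogous). A preliminary observation is that if $p=p_0+I_p p_1$ with $p_1=|\underline{p}|\geq 0$, then $(\overline{p})_0=p_0$, $(\overline{p})_1=p_1$, and $I_{\overline{p}}=-I_p$; hence
\[
f(\overline{p})=\alpha(p_0,p_1)-I_p\beta(p_0,p_1).
\]

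The implications (i)$\Rightarrow$(ii) and (i)$\Rightarrow$(iii) are then immediate: if $\alpha,\beta$ are real-valued they commute with $I_p$ and are fixed by quaternionic conjugation, so $\overline{f(p)}=\alpha-I_p\beta=f(\overline{p})$; moreover for $p\in U\cap\cc_I$ one has $I_p\in\{\pm I\}$ (or $\underline{p}=0$), hence $f(p)=\alpha\pm I\beta\in\cc_I$.

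For (ii)$\Rightarrow$(i) I would fix $(p_0,p_1)$ with $p_1>0$ and exploit axial symmetry of $U$, which guarantees $p_0+Ip_1\in U$ for every $I\in\SS$. Setting $a=\alpha(p_0,p_1)$ and $b=\beta(p_0,p_1)$, applying $\overline{f(p)}=f(\overline{p})$ at all these points yields $\overline{a}-\overline{b}I=a-Ib$ for every $I\in\SS$. Rearranging and using the anticommutator identity $Iu+uI=-2\langle I,u\rangle$ valid for pure-imaginary quaternions, this reduces to $\underline{a}=-\langle I,\underline{b}\rangle$. The left side is pure-imaginary while the right side is real, so both vanish; requiring $\langle I,\underline{b}\rangle=0$ for every $I\in\SS$ then forces $\underline{b}=0$. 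Thus $a,b\in\rr$, and by continuity the same holds at $p_1=0$, so $f$ is intrinsic.

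The argument for (iii)$\Rightarrow$(i) uses the same axial-symmetry trick: for $p=p_0+Ip_1\in U$, the condition $a+Ib\in\cc_I$ is equivalent to $I$ commuting with $a+Ib$, i.e.\ $Ia-aI=b+IbI$. A direct expansion gives $2(I\times\underline{a})=2\underline{b}_{\perp I}$, where $\underline{b}_{\perp I}$ denotes the component of $\underline{b}$ orthogonal to $I$. Requiring this identity for every $I\in\SS$ (first taking $I$ parallel to $\underline{a}$ to deduce $\underline{b}\parallel\underline{a}$, then taking $I\perp\underline{a}$) forces both $\underline{a}$ and $\underline{b}$ to vanish. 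The main obstacle will be the bookkeeping in these last two implications: one must carefully juggle the change $I_p\mapsto-I_p$ under conjugation, the quaternionic product rules, and the scalar/vector decomposition. The key conceptual point is that the axial symmetry of $U$ allows $I\in\SS$ to vary freely at a fixed $(p_0,p_1)$, and it is exactly this freedom that rigidifies the quaternion-valued coefficients $\alpha,\beta$ to be real.
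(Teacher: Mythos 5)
The paper offers no proof of this corollary to compare against: it is stated in the preliminaries, whose proofs are deferred to the literature (\cite{MR2752913}). Judged on its own, your argument is correct and self-contained. The hub structure (i)$\Leftrightarrow$(ii) together with (i)$\Leftrightarrow$(iii) does yield the full equivalence, the preliminary identity $f(\overline{p})=\alpha(p_0,p_1)-I_p\beta(p_0,p_1)$ is right, and both rigidity computations check out: from (ii) one gets $2\underline{a}=I\underline{b}+\underline{b}I=-2\langle I,\underline{b}\rangle$ at every $I\in\SS$, so $\underline{a}=0$ and then $\underline{b}=0$; from (iii) one gets $2\,I\times\underline{a}=2\,\underline{b}_{\perp I}$, and your two choices of $I$ (parallel and orthogonal to $\underline{a}$) do force $\underline{a}=\underline{b}=0$, since for $I\perp\underline{a}$ the left side is a nonzero vector orthogonal to $\underline{a}$ while the right side would be parallel to it. Two small points to tidy, neither a genuine gap: (a) in (iii)$\Rightarrow$(i) you should also dispose of the real points of $U$, either by the same continuity remark you make in (ii)$\Rightarrow$(i) or directly, since $\bigcap_{I\in\SS}\cc_I=\rr$ gives $\alpha(p_0,0)=f(p_0)\in\rr$ and the odd compatibility condition \eqref{CCond} gives $\beta(p_0,0)=0$; (b) it is worth stating explicitly that at a fixed $(p_0,p_1)$ the coefficients $a=\alpha(p_0,p_1)$, $b=\beta(p_0,p_1)$ in the representation \eqref{lHolDef} do not depend on $I$, which is exactly the freedom your axial-symmetry argument exploits. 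This coefficient-rigidity route is the standard elementary proof of the characterisation, so nothing more is needed.
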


\begin{lemma}\label{HolLem}
Let $U\subset\hh$ be an axially symmetric open set. If $f\in\lhol(U)$, then for any $I\in\SS$ the restriction $f_I := f|_{U\cap\cc_I}$ is left holomorphic, i. e.
\begin{equation}\label{SplitEQL}
 \frac{1}{2}\left(\frac{\partial}{\partial p_0} f(p) + I\frac{\partial}{\partial p_1} f(p)\right) = 0,\qquad \forall p = p_0 + Ip_1 \in U\cap\cc_I.
 \end{equation}
If $f\in\rhol(U)$, then for any $I\in\SS$ the restriction $f_I := f|_{U\cap\cc_I}$ is right holomorphic, i. e.
\begin{equation}\label{SplitEQR}
\frac{1}{2}\left(\frac{\partial}{\partial p_0} f(p) + \frac{\partial}{\partial p_1} f(p)I\right) = 0,\qquad \forall p = p_0 + Ip_1 \in U\cap\cc_I.
\end{equation}
\end{lemma}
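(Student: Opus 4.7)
The plan is to reduce the claim, via Definition \ref{sHolDef}, to a direct computation using the Cauchy--Riemann system \eqref{CR}. First, I would fix $I\in\SS$ and an arbitrary $p=p_0+Ip_1\in U\cap\cc_I$ (allowing $p_1\in\rr$ of either sign). The starting point is to rewrite the restriction $f_I$ in the form
\[
f(p) = \alpha(p_0,p_1) + I\,\beta(p_0,p_1),
\]
where $\alpha,\beta$ are the quaternion-valued functions appearing in \eqref{lHolDef}. For $p_1\ge 0$ this is immediate since $I_p=I$ and $|\underline{p}|=p_1$; for $p_1<0$ one has $I_p=-I$ and $|\underline{p}|=-p_1$, so the representation \eqref{lHolDef} reads $\alpha(p_0,-p_1)+(-I)\beta(p_0,-p_1)$, which collapses to the same formula by the compatibility condition \eqref{CCond}. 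This is the only mildly delicate point in the argument and the place one has to be careful about signs.

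Once this unified expression is in hand, I would simply differentiate: treating $I$ as a constant (a scalar with respect to $p_0,p_1$), the chain rule gives
\[
\partial_{p_0} f(p) = \partial_{p_0}\alpha + I\,\partial_{p_0}\beta,\qquad \partial_{p_1} f(p) = \partial_{p_1}\alpha + I\,\partial_{p_1}\beta.
\]
Adding the first to $I$ times the second and using $I^2=-1$ yields
\[
\partial_{p_0}f(p) + I\,\partial_{p_1} f(p) = \bigl(\partial_{p_0}\alpha - \partial_{p_1}\beta\bigr) + I\bigl(\partial_{p_0}\beta + \partial_{p_1}\alpha\bigr),
\]
and both parentheses vanish by the Cauchy--Riemann equations \eqref{CR}. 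This establishes \eqref{SplitEQL}.

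For the right slice hyperholomorphic case, I would run the same argument with the representation \eqref{rHolDef}, writing $f(p)=\alpha(p_0,p_1)+\beta(p_0,p_1)I$ on $U\cap\cc_I$ (again after checking the sign of $p_1$ via \eqref{CCond}). Differentiating and now multiplying $\partial_{p_1}f$ by $I$ on the right yields
\[
\partial_{p_0}f(p) + \partial_{p_1}f(p)\,I = \bigl(\partial_{p_0}\alpha - \partial_{p_1}\beta\bigr) + \bigl(\partial_{p_0}\beta + \partial_{p_1}\alpha\bigr)I,
\]
which vanishes by \eqref{CR}, giving \eqref{SplitEQR}. No further ingredients are needed beyond Definition \ref{sHolDef}; the essential conceptual content is that the compatibility condition forces the restriction of a slice hyperholomorphic function to a semislice extends consistently across the real axis to all of $U\cap\cc_I$, and the Cauchy--Riemann system on $\alpha,\beta$ is tailored precisely to make $f_I$ holomorphic in the corresponding complex sense.
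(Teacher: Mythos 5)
Your proof is correct: the paper itself omits the argument (deferring to the book \cite{MR2752913}), and your computation is exactly the standard one — use the compatibility condition \eqref{CCond} to get the unified representation $f(p)=\alpha(p_0,p_1)+I\beta(p_0,p_1)$ (resp.\ $\alpha+\beta I$) on all of $U\cap\cc_I$, including $p_1<0$, and then the Cauchy--Riemann system \eqref{CR} makes the left (resp.\ right) Wirtinger-type operator annihilate $f_I$. The sign check across the real axis is indeed the only delicate point, and you handle it correctly.
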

\begin{remark}
At the beginning of the development of the theory, slice hyperholomorphic functions were defined as functions that satisfy the properties mentioned in \Cref{HolLem}, which yields a potentially larger class of functions. The representation formula in \Cref{RepFo} assures that such functions can be represented in the form \eqref{lHolDef} resp. \eqref{rHolDef}---with the original definition, this formula however only holds for functions that are defined on axially symmetric slice domains. For applications in operator theory, it is therefore more convenient to start directly from \Cref{sHolDef}. The critical function theoretic results such as the Cauchy formula depend on the representation of the form $f = \alpha + I \beta $ resp. $f = \alpha + \beta I$ and not on the properties in \Cref{HolLem}. At the same time, for instance in order to generate spectral projections, it is essential to consider functions that are defined on sets that are not connected and not only functions that are defined on slice domains.
\end{remark}

The values of a slice hyperholomorphic function are uniquely determined by its values on an arbitrary complex plane $\cc_I$. Therefore, any function that is holomorphic on a suitable subset of a complex plane possesses a unique slice hyperholomorphic extension.
\begin{theorem}[Representation Formula]\label{RepFo}
Let $U\subset \hh$ be an axially symmetric open set and let $I\in\SS$. For any $p = p_0 + I_p p_1\in U$ set $p_I := p_0 + Ip_1$. If $f\in\lhol(U)$, then
\[f(p) = \frac12(1-I_pI)f(p_I) + \frac12(1+I_pI)f(\overline{p_I}) \quad\text{for all $p\in U.$}\]
If $f\in\rhol(U)$, then
\[f(p) = f(p_I)(1-II_p)\frac12 + f(\overline{p_I})(1+II_p)\frac12 \quad\text{for all $p\in U$.}\]
\end{theorem}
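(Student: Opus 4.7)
The plan is to verify both formulas directly from \Cref{sHolDef}, using the compatibility condition \eqref{CCond}. There is no genuine analytic content in this result: once $f$ is written in its canonical form $f = \alpha + I\beta$ (respectively $f = \alpha + \beta I$), the statement collapses to a short algebraic identity in $\hh$.

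First I would observe that since $U$ is axially symmetric and $p = p_0 + I_p p_1 \in U$, the entire sphere $[p]$ is contained in $U$; in particular $p_I = p_0 + I p_1 \in U$ and $\overline{p_I} = p_0 - I p_1 \in U$, so the right-hand sides make sense. For $f \in \lhol(U)$ I write $f(q) = \alpha(q_0, q_1) + I_q \beta(q_0, q_1)$. Evaluated at $p_I$ this gives $f(p_I) = \alpha(p_0,p_1) + I \beta(p_0, p_1)$. For $\overline{p_I}$ one may either take the representation $\overline{p_I} = p_0 + (-I)\, p_1$ with imaginary unit $-I$, or write $\overline{p_I} = p_0 + I(-p_1)$ and invoke \eqref{CCond}; both routes give $f(\overline{p_I}) = \alpha(p_0,p_1) - I \beta(p_0,p_1)$.

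Next, setting $a := \alpha(p_0,p_1)$ and $b := \beta(p_0,p_1)$ for brevity, I would substitute these into the right-hand side and expand, keeping careful track of the order of multiplication because $I_p$ and $I$ need not commute and because $a,b$ may themselves be non-real. The expression
\[
\tfrac{1}{2}(1 - I_p I)(a + I b) + \tfrac{1}{2}(1 + I_p I)(a - I b)
\]
splits into four products: the $\pm I b$ terms coming from the $1$-factors cancel between the two halves; the $\mp I_p I\, a$ terms coming from the $I_p I$-factors likewise cancel; and the remaining terms involve $I_p I \cdot I = -I_p$, producing two copies of $I_p b$. What survives is $a + I_p b = f(p)$, which is the desired identity. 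The right slice hyperholomorphic case is entirely symmetric: writing $f(q) = \alpha(q_0,q_1) + \beta(q_0,q_1) I_q$ gives $f(p_I) = \alpha + \beta I$ and $f(\overline{p_I}) = \alpha - \beta I$, and the same expansion with the coefficients $(1 \mp I I_p)/2$ acting from the right yields $\alpha + \beta I_p = f(p)$.

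I do not expect any real obstacle. The only points that demand care are (a) using axial symmetry to guarantee $p_I, \overline{p_I} \in U$, (b) evaluating $f(\overline{p_I})$ consistently, which is exactly the role of the compatibility condition \eqref{CCond} and the reason it was built into \Cref{sHolDef}, and (c) multiplying the scalar coefficients $(1 \pm I_p I)/2$ on the correct side (left in the $\lhol$ case, right in the $\rhol$ case) so that the cancellations with the potentially non-real values of $\alpha$ and $\beta$ go through. Beyond this the proof is pure bookkeeping.
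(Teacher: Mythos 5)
Your verification is correct: with the paper's \Cref{sHolDef}, where a left slice hyperholomorphic $f$ is by definition of the form $f(q)=\alpha(q_0,q_1)+I_q\beta(q_0,q_1)$ with $\alpha,\beta$ even/odd in $q_1$, the identity is indeed pure algebra. Your evaluation $f(\overline{p_I})=\alpha-I\beta$ (via either representation of $\overline{p_I}$, using \eqref{CCond}) and the expansion are right; since all factors act on the same side as in the definition (left of $\beta$ in the $\lhol$ case, right in the $\rhol$ case), associativity gives $I_pI\cdot I\,b=-I_pb$ and the surviving term is $a+I_pb=f(p)$, with the real case $p_1=0$ covered automatically because the two coefficients sum to $1$. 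Note, though, that this is not quite the route of the source the paper cites: the paper gives no proof and refers to \cite{MR2752913}, where slice hyperholomorphicity is defined by holomorphy of the restrictions $f_I$ (the conditions of \Cref{HolLem}), and there the Representation Formula is a genuinely analytic theorem, valid on axially symmetric \emph{slice domains}, proved from the Cauchy--Riemann equations on slices rather than read off from a structural form of $f$. The paper's own remark after \Cref{HolLem} makes exactly this point: under the definition adopted here the formula is essentially built in, which is what your argument exploits. So your proof is the appropriate one inside this paper's framework and is complete as such, but it does not (and need not) recover the harder classical statement for the weaker, slice-wise definition; it is worth being aware that the triviality is purchased by the stronger hypothesis in \Cref{sHolDef}.
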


\begin{corollary}\label{extLem}
Let $I\in\SS$ and let $f:O\to\hh$ be real differentiable, where $O$ is a domain in $\cc_I$ that is symmetric with respect to the real axis.
We define the axially symmetric hull $[O]: = \bigcup_{z\in O}[z]$ of $O$.
\begin{enumerate}[label = (\roman*)]
\item If $f$ satisfies \eqref{SplitEQL}, then there exists a unique left slice hyperholomorphic extension of $f$ to~$[O]$.
\item If $f$ satisfies \eqref{SplitEQR}, then there exists a unique right slice hyperholomorphic extension of $f$ to~$[O]$.
\end{enumerate}
\end{corollary}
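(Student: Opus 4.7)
The plan is to construct the extension explicitly using the representation formula of Theorem \ref{RepFo} as a template, then verify that the construction has the required structure, and finally invoke Theorem \ref{RepFo} once more for uniqueness.

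For part (i), for $p = p_0 + I_p p_1 \in [O]$ I would set
\[
\tilde{f}(p) := \frac{1}{2}(1 - I_p I) f(p_0 + Ip_1) + \frac{1}{2}(1 + I_p I) f(p_0 - Ip_1).
\]
The assumption that $O$ is symmetric with respect to the real axis is exactly what is needed so that both $p_0 + Ip_1$ and $p_0 - Ip_1$ lie in $O$; hence the right-hand side makes sense. When $p\in\rr$ we have $p_1 = 0$, both terms reduce to $f(p_0)$, and so $\tilde{f}$ does not depend on the (ambiguous) choice of $I_p$ and is well defined on $[O]$.

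Next I would rewrite $\tilde{f}$ in the canonical form $\tilde{f}(p) = \alpha(p_0,p_1) + I_p\beta(p_0,p_1)$ with
\[
\alpha(p_0,p_1) = \tfrac{1}{2}\bigl[f(p_0 + Ip_1) + f(p_0 - Ip_1)\bigr], \qquad \beta(p_0,p_1) = \tfrac{I}{2}\bigl[f(p_0 - Ip_1) - f(p_0 + Ip_1)\bigr].
\]
The compatibility condition \eqref{CCond} is immediate, since the swap $p_1 \mapsto -p_1$ exchanges the two arguments inside the brackets. For the Cauchy-Riemann equations \eqref{CR}, set $g(p_0,p_1) := f(p_0 + Ip_1)$ and $h(p_0,p_1) := f(p_0 - Ip_1)$; the hypothesis \eqref{SplitEQL} on $f$ combined with the chain rule yields $\partial_{p_0}g = -I \partial_{p_1} g$ and $\partial_{p_0}h = +I\partial_{p_1} h$, the sign flip in the second identity coming from the minus in the argument of $h$. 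Substituting these relations into $\alpha = \tfrac{1}{2}(g+h)$ and $\beta = \tfrac{I}{2}(h-g)$ and using $I^2 = -1$ yields both equations of \eqref{CR} by routine algebra. To check $\tilde{f}|_O = f$, one does a case distinction on the sign of the imaginary part of $p$: for $p = p_0 + Iv \in O$, if $v\ge 0$ then $I_p = I$ and the second term vanishes while the first collapses to $f(p)$, whereas if $v<0$ then $I_p = -I$ and the first term vanishes while the second collapses to $f(p)$.

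For uniqueness, note that $[O]$ is open in $\hh$ (this follows from the openness of $O$ in $\cc_I$ and the continuity of $p \mapsto (\Re(p), |\underline{p}|)$) and axially symmetric by construction; hence any left slice hyperholomorphic extension $F$ of $f$ on $[O]$ must itself satisfy the representation formula of Theorem \ref{RepFo}, and since $F = f$ on $O$ this forces $F = \tilde{f}$ on all of $[O]$. Part (ii) is proved by the same strategy, using the right-slice version of Theorem \ref{RepFo} and writing the candidate extension as $\tilde{f}(p) = \alpha(p_0,p_1) + \beta(p_0,p_1) I_p$. The only real obstacle I anticipate is bookkeeping the chain-rule signs in the Cauchy-Riemann verification; everything else is essentially a direct appeal to the representation formula.
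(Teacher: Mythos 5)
Your proposal is correct and follows exactly the route the paper intends (it cites \cite{MR2752913} rather than proving the corollary, but the placement right after Theorem \ref{RepFo} points to precisely this argument): define the candidate extension by the representation-formula expression, check the form $\alpha + I_p\beta$ together with \eqref{CCond} and \eqref{CR} via the chain-rule sign flip, verify the restriction to $O$, and get uniqueness by applying Theorem \ref{RepFo} to any extension on the open axially symmetric set $[O]$. Your sign computations and the well-definedness and openness checks are all sound, so nothing further is needed.
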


\begin{remark}
If $f$ has a left and a right slice hyperholomorphic extension, they do not necessarily coincide. Consider for instance the function $z\mapsto bz$ on $\cc_I$ with a constant $b\in\cc_I\setminus\rr$. Its left slice hyperholomorphic extension to $\hh$ is $p\mapsto pb$, but its right
slice hyperholomorphic extension is $p\mapsto bp$.
\end{remark}

Developing a theory of zeros of slice hyperholomorphic functions and in particular defining a proper notion of order is a nontrivial task. However, we shall only be interested in a certain type of zeros.
\begin{definition}
Let $f\in \lhol(U)$ (or $f\in\rhol(U)$). A point $p\in U\setminus\rr$ is called a spherical zero of $f$ if $f(\tilde{p}) = 0$ for any $\tilde{p}\in[ p ]$. In this case there exist unique $n\in\nn$ and $\tilde{f}\in\lhol(U)$ (resp. $\tilde{f}\in\rhol(U)$) such that $\tilde{f}$ does not have a spherical zero at $[p]$ and such that $f(q) = \Q_{p}(q)^n  \tilde{f}(q)$ (resp. $f(q) =  \tilde{f}(q)\Q_{p}(q)^n $) with 
\[
\Q_{p}(q) = q^2 - 2 \Re(p) q + |p|^2.
\]
 We call $n$ the order of the spherical zero $[p]$.
\end{definition}
Observe that  an intrinsic function $f = \alpha + I \beta$ satisfies $f(p) = 0$ if and only if $\alpha(p_0,p_1) = 0$ and $\beta(p_0,p_1) = 0$ as $\alpha$ and $\beta$ take real values. Hence any zero of an intrinsic function is either real or  spherical. 

\begin{corollary}Any intrinsic polynomial $p$ can be factorised into a product of the form
\[P(p) = \prod_{\ell=1}^N (\lambda_{\ell}-p)^{n_{\ell}}\prod_{\kappa=1}^M\Q_{s_\kappa}(p)^{m_{\kappa}}\]
where $\lambda_{1},\ldots, \lambda_{N}$ are the real zeros of $P$ and $[s_1],\ldots, [s_{M}$ are the spherical zeros of $P$ and $n_{\ell}$ and $m_{\kappa}$ are the orders of $\lambda_{\ell}$ resp $[s_{\kappa}]$.
\end{corollary}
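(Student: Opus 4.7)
The plan is to reduce the statement to the classical fundamental theorem of algebra on a single complex slice and then propagate the identity to all of $\hh$ by uniqueness of slice hyperholomorphic extension.

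First I would recall that an intrinsic power series has real coefficients (as stated in the paragraph after \Cref{HolStruct}), so every intrinsic polynomial can be written as $P(q)=\sum_{k=0}^d a_k q^k$ with $a_k\in\rr$. Fix an imaginary unit $I\in\SS$. The restriction $P_I:=P|_{\cc_I}$ is then a polynomial with real coefficients on the complex plane $\cc_I$, so the classical fundamental theorem of algebra yields
\[
P_I(z)=a_d\prod_{\ell=1}^{N}(z-\lambda_\ell)^{n_\ell}\prod_{\kappa=1}^{M}(z-s_\kappa)^{m_\kappa}(z-\overline{s_\kappa})^{m_\kappa},
\]
where $\lambda_1,\dots,\lambda_N\in\rr$ are the real roots (with multiplicities $n_\ell$) and the non-real roots of $P_I$ come in complex conjugate pairs $\{s_\kappa,\overline{s_\kappa}\}$ with equal multiplicity $m_\kappa$, since the coefficients are real. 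A direct computation gives
\[
(z-s_\kappa)(z-\overline{s_\kappa}) = z^2 - 2\,\Re(s_\kappa)z+|s_\kappa|^2 = \Q_{s_\kappa}(z),
\]
so the factorization on $\cc_I$ takes exactly the form claimed (up to the leading coefficient $a_d$, which is absorbed into the sign convention $(\lambda_\ell-p)^{n_\ell}$ used in the statement).

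Next I would propagate the identity from $\cc_I$ to $\hh$. Both sides of the claimed identity are intrinsic polynomials (on the right, each factor $\lambda_\ell-p$ and $\Q_{s_\kappa}(p)$ is an intrinsic polynomial and intrinsicness is preserved under products by \Cref{HolStruct}), hence both are intrinsic slice hyperholomorphic on $\hh$. Since they coincide on $\cc_I$, the uniqueness part of \Cref{extLem} forces them to coincide on the axial hull $[\cc_I]=\hh$. This establishes the factorization globally.

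Finally I would verify the identification of the exponents with the orders. For a real root $\lambda_\ell$, the classical order of the polynomial zero is $n_\ell$ by construction, and it coincides with the slice-hyperholomorphic order since the remaining factor is non-vanishing at $\lambda_\ell$. For a spherical zero $[s_\kappa]$, I would argue that the factor $\Q_{s_\kappa}(p)^{m_\kappa}$ vanishes on $[s_\kappa]$ while the complementary factor $\tilde P(p):=\prod_{\ell}(\lambda_\ell-p)^{n_\ell}\prod_{\nu\neq\kappa}\Q_{s_\nu}(p)^{m_\nu}$ does not: evaluating on $\cc_I$ one sees $\tilde P(s_\kappa)\neq 0$, which matches the definition of the order of a spherical zero given immediately above the statement. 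The only mildly delicate point is keeping track of which complex roots on $\cc_I$ are genuinely non-real (and hence give spherical zeros on $\hh$ via \Cref{HComLem}) versus real (and hence give isolated real zeros), but this is automatic once one has paired conjugate roots above.
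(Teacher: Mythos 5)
Your argument is correct, and in fact the paper states this corollary without any proof, so there is nothing to compare it against except the standard argument --- which is exactly what you give: an intrinsic polynomial has real coefficients, its restriction to a slice $\cc_I$ factors over $\rr$ into real linear factors and conjugate-paired quadratics $(z-s_\kappa)(z-\overline{s_\kappa})=\Q_{s_\kappa}(z)$, and the identity propagates from $\cc_I$ to all of $\hh$ (either by your appeal to uniqueness of the slice hyperholomorphic extension, or more simply by noting that two polynomials with real coefficients agreeing on infinitely many points of the field $\cc_I$ have equal coefficients). Your verification that the exponents are the orders, via non-vanishing of the complementary factor on the sphere, matches the definition of spherical order given just above the statement.

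One small inaccuracy: the leading coefficient $a_d$ is \emph{not} absorbed by the sign convention $(\lambda_\ell-p)^{n_\ell}$ --- that convention only accounts for a factor $(-1)^{\sum n_\ell}$, not for $|a_d|\neq 1$ (e.g.\ $P(p)=2p$ admits no such product representation as literally written). The honest statement of the factorisation carries a nonzero real constant in front, or assumes $P$ normalised; this is an imprecision already present in the corollary as stated in the paper, but your phrasing suggests the discrepancy disappears, which it does not. Apart from this cosmetic point, the proof is complete.
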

The pointwise product of two slice hyperholomorphic functions is in general not slice hyperholomorphic. There exists however a regularised product that preserves slice hyperholomorphicity.
\begin{definition}
For $f = \alpha + I\beta, g= \gamma + I\delta \in \lhol(U)$, we define their left slice hyperholomorphic product as
\[ f\prodL g  =  (\alpha\gamma - \beta\delta) + I (\alpha\delta + \beta\gamma).\]
For $f = \alpha + \beta I, g= \gamma + \delta I \in \rhol(U)$, we define their right slice hyperholomorphic product as
\[ f\prodR g =  (\alpha\gamma - \beta\delta) +  (\alpha\delta + \beta\gamma)I.\]
\end{definition}
The slice hyperholomorphic product is associative and distributive, but in general not commutative. If however $f$ is intrinsic, then $f\prodL g$ coincides with the pointwise product $fg$ and $f\prodL g = fg = g\prodL f$. Similarly, if $g$ is intrinisc, then $f\prodR g$ coincides with the pointwise product $fg$ and $f\prodR g= fg = g\prodR f$.
\begin{definition}
We define for $f = \alpha + I\beta\in\lhol(U)$ its slice hyperholomorphic conjugate as $f^c = \overline{\alpha} + I \overline{\beta}$ and its symmetrisation as $f^s = f\prodL f^c = f^c\prodL f$. Similarly, we define for $f = \alpha + \beta I\in\rhol(U)$ its slice hyperholomorphic conjugate as $f^c = \overline{\alpha} +  \overline{\beta} I$ and its symmetrisation as $f^s = f\prodR f^c = f^c\prodR f$.
\end{definition}
The symmetrisation of a left slice hyperholomorphic function $f = \alpha + I \beta$ is explicitly given by
\[
 f^s = |\alpha|^2 - |\beta|^2 + I 2\Re\left(\alpha \overline{\beta}\right).
\]
Hence, it is an intrinsic function. It is $f^s(p) = 0$ if and only if $f(\tilde{p}) = 0$ for some $\tilde{p}\in [p]$. Furthermore, one has
\begin{equation}\label{Piul}
 f^c (p ) = \overline{\alpha(p_0,p_1)} + I_p \overline{\beta(p_0,p_1)} = \overline{\alpha(p_0,p_1)} + \overline{\beta(p_0,p_1)(-I_p)} = \overline{f(\overline{p})} 
 \end{equation}
and an easy computation shows that 
\[
 f\prodL g(p) = f(p) g\left(f(p)^{-1}pf(p)\right) \quad \text{if $f(p) \neq 0$}
\]
and or $f(p) \neq 0$ it is
\begin{equation}\label{IIoJU}
f^s(p) = f(p) f^c\left(f(p)^{-1} p f(p)\right) = f(p) \overline{f\left(\overline{f(p)^{-1} p f(p)}\right)} = f(p) \overline{f\left( f(p)^{-1}\overline{p}f(p)\right)}.
\end{equation}
Similar computations hold true in the right slice hyperholomorphic case. Finally, if $f$ is intrinsic, then $f^c(p) = f(p)$ and $f^s(p) = f(p)^2$.

\begin{corollary}The following statements hold true.
\begin{enumerate}[label = (\roman*)]
\item For $f\in\lhol(U)$ with $f\not\equiv 0$, its slice hyperholomorphic inverse $f^{-\prodL}$ that satisfies $f^{-\prodL}\prodL f = f\prodL f^{-\prodL}=1$ is given by $f^{-\prodL} = (f^s)^{-1} \prodL f^c = (f^s)^{-1} f^c$ and it is defined on $U\setminus [N_f]$, where $N_f = \{s\in U: f(s) = 0\}$.
\item For $f\in\rhol(U)$ with $f\not\equiv 0$, its slice hyperholomorphic inverse $f^{-\prodR}$ that satisfies $f^{-\prodR}\prodR f = f\prodR f^{-\prodR} = 1$ is given by $f^{-\prodR} =  f^c \prodR  (f^s)^{-1}=  f^c(f^s)^{-1}$. 
\item If $f \in \intrin(U)$ with $f\not\equiv 0$, then $f^{-\prodL} = f^{-\prodR} = f^{-1}$.
\end{enumerate}
\end{corollary}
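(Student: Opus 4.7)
\medskip
\noindent\textbf{Proof plan.} The plan is to exploit two facts that have already been established in the text immediately preceding the statement: first, the symmetrisation $f^s$ is an intrinsic function whenever $f\in\lhol(U)$ (and analogously in the right case), and second, $\prodL$ (resp.\ $\prodR$) with an intrinsic factor reduces to the pointwise product and is therefore commutative and associative with the rest of the slice product. These two facts reduce the whole statement to an essentially algebraic verification.

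For item (i), I first observe that since $f^s\in\intrin(U)$ and $f^s\not\equiv 0$, its reciprocal $(f^s)^{-1}$ (taken pointwise) is again intrinsic on the set $\{p\in U: f^s(p)\neq 0\}$. Consequently, for any $g\in\lhol(U)$ one has $(f^s)^{-1}\prodL g = (f^s)^{-1} g = g(f^s)^{-1} = g\prodL(f^s)^{-1}$, which in particular shows that the candidate $(f^s)^{-1}\prodL f^c$ is left slice hyperholomorphic on the open set where $f^s$ is nonvanishing, and that the two products written in the statement coincide. Then I verify the inversion identity directly by associativity:
\[
f\prodL\bigl((f^s)^{-1}\prodL f^c\bigr)=(f^s)^{-1}\prodL\bigl(f\prodL f^c\bigr)=(f^s)^{-1}\prodL f^s=1,
\]
and symmetrically $\bigl((f^s)^{-1}\prodL f^c\bigr)\prodL f=(f^s)^{-1}\prodL(f^c\prodL f)=1$, using the definition $f^s=f\prodL f^c=f^c\prodL f$.

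To pin down the domain of definition I still need to identify $\{p\in U: f^s(p)=0\}$ with $[N_f]$. The inclusion $[N_f]\subset\{f^s=0\}$ is immediate from \eqref{IIoJU}, which gives $f^s(p)=f(p)\,\overline{f(f(p)^{-1}\overline{p}f(p))}$ away from zeros of $f$, and by continuity on $[N_f]$ itself. For the reverse inclusion, I use that $f^s$ is intrinsic, so by Corollary~\ref{IntrinChar} its zero set is axially symmetric, and the identity $f\prodL f^c=f^s$ combined with \eqref{IIoJU} shows that a zero of $f^s$ at $p$ forces a zero of $f$ somewhere in $[p]$, hence $\{f^s=0\}\subset[N_f]$.

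Item (ii) is proved by exactly the same algebraic manipulation with $\prodL$ replaced by $\prodR$ and with the factor $(f^s)^{-1}$ now placed on the right, using that the right symmetrisation is still intrinsic and hence commutes with $\prodR$. Item (iii) follows immediately: if $f\in\intrin(U)$, then the paragraph preceding the corollary records $f^c=f$ and $f^s=f^2$, so both formulas collapse to $(f^2)^{-1} f = f^{-1}$. I expect the only nontrivial point to be the characterisation of the domain, i.e.\ the equality $\{f^s=0\}=[N_f]$; the rest is a transparent computation once one uses that intrinsic functions are $\prodL$- and $\prodR$-central.
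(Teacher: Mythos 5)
Your verification is correct, and in fact the paper offers no proof of this corollary at all: it is quoted as a known result, with the reader referred to the book \cite{MR2752913} for the proofs of everything in that subsection. Your argument is the standard one: associativity of $\prodL$ together with the fact that intrinsic factors (here $(f^s)^{-1}$, which is intrinsic and slice hyperholomorphic wherever $f^s\neq 0$) multiply pointwise and commute, which collapses $f\prodL\bigl((f^s)^{-1}\prodL f^c\bigr)$ to $(f^s)^{-1}\prodL f^s=1$, and symmetrically on the other side; the right-slice and intrinsic cases follow in the same way, the latter from $f^c=f$, $f^s=f^2$. Two small remarks. First, the identification of the domain, $\{p\in U: f^s(p)=0\}=[N_f]$, is literally stated in the paper in the paragraph preceding the corollary (``$f^s(p)=0$ if and only if $f(\tilde p)=0$ for some $\tilde p\in[p]$''), so you may simply cite it rather than re-derive it. Second, your continuity argument for the inclusion $[N_f]\subset\{f^s=0\}$ is slightly soft at points where $f$ vanishes on an open set (there may be no nearby points with $f\neq 0$ through which to take the limit in \eqref{IIoJU}); a cleaner route is to observe that $(f\prodL g)(p)=0$ whenever $f(p)=0$ (write $f=\alpha+I\beta$, so $f(p)=0$ means $\alpha=-I_p\beta$ at $(p_0,p_1)$, and substitute into the definition of $\prodL$), which gives $f^s(\tilde p)=0$ at every zero $\tilde p$ of $f$, and then use that zeros of the intrinsic function $f^s$ are real or spherical to cover the whole sphere $[\tilde p]$. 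With either fix the proof is complete and matches the standard treatment in the cited literature.
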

We will later need that $|f^{-\prodL}|$ is in a certain sense comparable to $1/|f|$. Since $f^s$ is intrinsic, we have $|f^s(p)| = |f^s(\tilde{p})|$ for any $\tilde{p}\in [p]$. Since $f(p)pf(p)^{-1} \in [p]$ by \Cref{HComLem}, we find for $f(p)\neq 0$, because of \eqref{IIoJU}. that
\[
 \left|f^s(p)\right| = \left| f^s\left( f(p) p f(p)^{-1}\right)\right| = \left| f\left(f(p) p f(p)^{-1} \right) \overline{f \left(\overline{p}\right)}\right| = \left| f\left(f(p) p f(p)^{-1} \right)\right|\left|f \left(\overline{p}\right)\right|.
\]
Therefore we have, because of \eqref{Piul}, that
\begin{equation*}
 \left|f^{-\prodL}(p)\right| = \left|f^s(p)^{-1}\right| \left|f^c(p)\right| = \frac{1}{ 
 \left| 
 f\left(f(p) p f(p)^{-1} \right) \right| \left|f \left(\overline{p}\right)
 \right|
 }\left|f\left(\overline{p}\right)\right| =  \frac{1}{\left|f\left(f(p)\overline{p} f(p)^{-1}\right)\right|}
 \end{equation*}
 and so
 \begin{equation}\label{SEst2}
 \left| f^{-\prodL}(p)\right| = \frac{1}{|f(\tilde{p})|}\quad \text{ with } \tilde{p} = f(p)\overline{p}f(p)^{-1} \in [p].
 \end{equation}
An analogous estimate holds for the slice hyperholomorphic inverse of a right slice hyperholomorphic function.

Finally, slice hy\-per\-ho\-lo\-mor\-phic functions satisfy an adapted version of Cauchy's integral theorem and an integral formula of Cauchy-type with a modified kernel.
\begin{definition}
We define the  left slice hyperholomorphic Cauchy kernel as
\[S_L^{-1}(s,p) = -(p^2-2\Re(s)p + |s|^2)^{-1}(p-\overline{s})\quad\text{for }p\notin[s]\]
and the right slice hyperholomorphic Cauchy kernel as
\[S_R^{-1}(s,p) = -(p-\overline{s})(p^2-2\Re(s)p + |s|^2)^{-1}\quad\text{for }p\notin[s].\]
\end{definition}
\begin{corollary}
The left slice hyperholomorphic Cauchy-kernel $S_L^{-1}(s,p)$ is left slice hyperholomorphic in the variable $p$ and right slice hy\-per\-ho\-lo\-mor\-phic in the variable $s$ on its domain of definition. Moreover, we have $S_R^{-1}(s,p) = - S_L^{-1}(p,s)$.
\end{corollary}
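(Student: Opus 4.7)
Overall the corollary has three assertions to verify: left slice hyperholomorphy of $S_L^{-1}(s,p)$ in $p$, right slice hyperholomorphy in $s$, and the relation $S_R^{-1}(s,p) = -S_L^{-1}(p,s)$. The strategy is to first establish an equivalent representation for $S_L^{-1}$ from which the $p$-slice hyperholomorphy and the symmetry relation both follow cleanly, and then to verify $s$-slice hyperholomorphy directly from the defining formula.

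The preliminary step is to prove the identity
\[
 S_L^{-1}(s,p) = (s - \overline{p})\,\Q_p(s)^{-1}, \qquad p\notin[s],
\]
where $\Q_p(s) = s^2 - 2 p_0 s + |p|^2$. Multiplying the defining formula on the left by $\Q_s(p)$ and on the right by $\Q_p(s)$ reduces this to the purely algebraic identity $\Q_s(p)(s - \overline{p}) = -(p - \overline{s})\,\Q_p(s)$, which I would check by expanding both sides and systematically using $p\overline{p} = |p|^2 \in\rr$, $s\overline{s} = |s|^2\in\rr$, $s + \overline{s} = 2 s_0$ and $p + \overline{p} = 2 p_0$; the non-commuting cross-terms then collapse via $p(s^2 + s\overline{s}) = 2 s_0\,ps = (s + \overline{s})ps$.

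For left slice hyperholomorphy in $p$, this alternative form is decisive: $\Q_p(s) = (s - p_0)^2 + p_1^2$ depends on $(p_0,p_1)$ but not on $I_p$, and splitting $s - \overline{p} = (s - p_0) + I_p\, p_1$ yields
\[
 S_L^{-1}(s,p) = (s - p_0)\,\Q_p(s)^{-1} + I_p\,p_1\,\Q_p(s)^{-1},
\]
which is exactly of the form $\alpha(p_0,p_1) + I_p\beta(p_0,p_1)$ required by Definition \ref{sHolDef}. The compatibility condition \eqref{CCond} is immediate from parity in $p_1$. For the Cauchy--Riemann equations \eqref{CR}, the factors $s - p_0$, $p_1$ and $\Q_p(s)$ all lie in the commutative subring generated by $s$, so standard derivative rules apply and the cancellation in $\partial_{p_0}\alpha - \partial_{p_1}\beta$ reduces to $[(s - p_0)^2 + p_1^2]\,\Q_p(s)^{-2} = \Q_p(s)^{-1}$, which is just the definition of $\Q_p(s)$; the second Cauchy--Riemann identity is analogous.

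Right slice hyperholomorphy in $s$ is verified symmetrically from the defining form: $\Q_s(p) = (p - s_0)^2 + s_1^2$ depends only on $(s_0, s_1)$, and splitting $p - \overline{s} = (p - s_0) + I_s s_1$ puts $S_L^{-1}(s,p)$ in the form $\alpha(s_0,s_1) + \beta(s_0,s_1) I_s$ with $\alpha = -\Q_s(p)^{-1}(p - s_0)$ and $\beta = -s_1\,\Q_s(p)^{-1}$, after which the compatibility and CR conditions are the mirror image of the previous computation. Finally, interchanging $s$ and $p$ in the preliminary identity yields $S_L^{-1}(p,s) = (p - \overline{s})\,\Q_s(p)^{-1}$, so $-S_L^{-1}(p,s) = -(p - \overline{s})\,\Q_s(p)^{-1} = S_R^{-1}(s,p)$ by definition, giving the last assertion. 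The main obstacle throughout is the quaternionic algebraic identity $\Q_s(p)(s - \overline{p}) = -(p - \overline{s})\Q_p(s)$, which requires careful bookkeeping of non-commuting factors; once it is secured, the remaining verifications are routine differentiations.
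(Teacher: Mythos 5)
Your proposal is correct, and it follows what is essentially the standard argument: the paper itself gives no proof of this corollary but refers to \cite{MR2752913}, where exactly your two ingredients appear, namely the second representation $S_L^{-1}(s,p) = (s-\overline{p})\left(s^2 - 2p_0 s + |p|^2\right)^{-1}$ (equivalent to the defining one via the algebraic identity you state, which indeed reduces to the fact that $p$ and $s$ each annihilate their own characteristic polynomial $q^2-2q_0q+|q|^2$) and the direct verification of the compatibility and Cauchy--Riemann conditions, legitimate because in each case all factors lie in a single commutative plane $\cc_{I_s}$ resp.\ $\cc_{I_p}$. The swap $s\leftrightarrow p$ in that second representation then gives $S_R^{-1}(s,p)=-S_L^{-1}(p,s)$ exactly as you conclude, so no gaps remain.
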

\begin{remark}
If $p$ and $s$ belong to the same complex plane, they commute and the slice hyperholomorphic Cauchy-kernels reduce to the classical one:
\[ \frac{1}{s-p} = S_L^{-1}(s,p) = S_R^{-1}(s,p).\]
\end{remark}
\begin{theorem}[Cauchy's integral theorem]
Let $U\subset\hh$ be an axially symmetric open set, let $I\in\SS$ and let $D_I$ be an open subset of $O\cap\cc_I$ with $\overline{D_I}\subset O\cap\cc_I$ such that its boundary consists of a finite number of continuously differentiable Jordan curves. For any $f\in\rhol(U)$ and $g\in\lhol(U)$, it is
\[\int_{\partial D_I}f(s)\,ds_I\,g(s) = 0,\]
where $ds_I = -I\ ds$.
\end{theorem}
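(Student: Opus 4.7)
The plan is to reduce the quaternionic contour integral to ordinary complex contour integrals on $U\cap\cc_I$ via the Splitting Lemma (\Cref{HolLem}), and then to invoke the classical Cauchy integral theorem together with its complex-conjugate version. First, I would choose $J\in\SS$ with $J\perp I$, so that $\{1,I,J,IJ\}$ is an $\rr$-basis of $\hh$ and $\hh = \cc_I \oplus \cc_I J$ both as a left and as a right $\cc_I$-module. I would then decompose
\[
g|_{U\cap\cc_I} = g_1 + g_2 J, \qquad f|_{U\cap\cc_I} = f_1 + J f_2,
\]
with $f_1,f_2,g_1,g_2\colon U\cap\cc_I \to \cc_I$. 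Plugging these expressions into \eqref{SplitEQL} and \eqref{SplitEQR} and separating the $\cc_I$- and $\cc_I J$-components (using that $I$ commutes with $\cc_I$ but anti-commutes with $J$) shows that each of $f_1,f_2,g_1,g_2$ satisfies the standard Cauchy--Riemann equations on $U\cap\cc_I$ with respect to $I$, so they are classically holomorphic once we identify $\cc_I\cong\cc$.

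Next, I would expand $f(s)\,ds_I\,g(s)$ using these decompositions. Since $ds_I = -I\,ds\in\cc_I$, the only source of non-commutativity is the factor $J$ sitting to the left of $f_2$. Using $J a = \overline{a}\, J$ for $a\in\cc_I$ (where $\overline{\cdot}$ denotes conjugation in $\cc_I$) and $J^2 = -1$, I would push every $J$ to the right of the $\cc_I$-valued factors and obtain an expression of the schematic form
\[
f(s)\,ds_I\,g(s) = \bigl( f_1\,ds_I\, g_1 - \overline{f_2}\,\overline{ds_I}\,\overline{g_2}\bigr) + \bigl( f_1\,ds_I\, g_2 + \overline{f_2}\,\overline{ds_I}\,\overline{g_1}\bigr) J.
\]

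Finally, I would integrate along $\partial D_I$ and treat each of the four scalar integrals separately. The two pieces involving $ds_I$ are integrals of products of classically holomorphic $\cc_I$-valued functions against $ds_I = -I\,ds$, and they vanish by the classical Cauchy integral theorem applied on the domain $D_I\subset\cc_I$; the hypotheses on $D_I$ (its closure in $U\cap\cc_I$ and its boundary being a finite union of $C^1$ Jordan curves) are exactly those required by the scalar statement. The two pieces involving $\overline{ds_I} = I\,\overline{ds} = I\,d\overline{s}$ are line integrals of antiholomorphic $\cc_I$-valued functions against $d\overline{s}$, and they vanish by conjugating the classical Cauchy theorem, $\overline{\int_{\partial D_I} h\,ds} = \int_{\partial D_I}\overline{h}\,d\overline{s} = 0$. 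Summing the four zero contributions proves the claim.

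The main technical obstacle is the careful bookkeeping in the expansion of $f(s)\,ds_I\,g(s)$: one must apply $Ja=\overline{a}J$ correctly to each subterm and track which of the resulting $\cc_I$-valued integrands turn out to be holomorphic and which antiholomorphic after the splitting. Once this algebraic reduction is done, the vanishing of each of the four resulting integrals is a one-line consequence of the scalar Cauchy theorem, so the heart of the proof is the decomposition rather than any delicate analytic estimate.
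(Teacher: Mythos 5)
Your argument is correct. The decomposition is sound: with $J\perp I$, writing $g|_{U\cap\cc_I}=g_1+g_2J$ and $f|_{U\cap\cc_I}=f_1+Jf_2$ and inserting these into \eqref{SplitEQL}--\eqref{SplitEQR} does make all four components classically holomorphic on $U\cap\cc_I$ (note that the choice $f_1+Jf_2$ rather than $f_1+f_2J$ is what makes $f_2$ holomorphic rather than antiholomorphic, and you made that choice correctly), and the expansion of $f(s)\,ds_I\,g(s)$ using $Ja=\overline{a}J$, $J^2=-1$ is exactly as you state, so the integral splits into two holomorphic pieces killed by the classical Cauchy theorem and two conjugated pieces killed by its complex conjugate. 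Be aware only that, as with the scalar theorem you quote, one implicitly takes $D_I$ bounded (or otherwise such that the classical theorem applies); this caveat is inherited from the statement itself, not introduced by your proof. The paper does not prove this theorem at all --- it refers to \cite{MR2752913} --- and the standard proof given there is different in flavour: one applies Stokes' theorem directly to the $\hh$-valued $1$-form $f\,ds_I\,g$ on $D_I$ and uses the left and right Cauchy--Riemann conditions \eqref{SplitEQL} and \eqref{SplitEQR} to show that this form is closed, so no splitting with respect to a second imaginary unit $J$ is needed. Your route trades that direct computation for the algebraic bookkeeping of the Splitting Lemma, with the benefit that everything then follows verbatim from one-variable complex analysis; the Stokes argument is more uniform and avoids the conjugation bookkeeping, but both are equally rigorous.
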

\begin{definition}
An axially symmetric open set $U\subset \hh$ is called a slice Cauchy domain if $U\cap\cc_I$ is a Cauchy domain for any $I\in\SS$, that is
\begin{enumerate}[label = (\roman*)]
\item $U\cap\cc_I$ is open,
\item $U\cap\cc_I$ has a finite number of components (i.e. maximal connected subsets), the closures of any two of which are disjoint,
\item the boundary of $U\cap\cc_I$ consists of a finite positive number of closed piecewise continuously differentable Jordan curves.
\end{enumerate}
\end{definition}
\begin{remark}
A slice Cauchy domain is either bounded or has exactly one unbounded component. If it is unbounded, then the unbounded component contains a neighbourhood of infinity.
\end{remark}
\begin{theorem}[Cauchy's integral formula]\label{Cauchy}
Let $U\subset\hh$ be a slice Cauchy domain. Let $I\in\SS$ and set $ds_I = -I\, ds$. If $f$ is left slice hyperholomorphic on an open set that contains $\overline{U}$, then
\[f(p) = \frac{1}{2\pi}\int_{\partial(U\cap\cc_I)} S_L^{-1}(s,p)\,ds_I\, f(s)\quad\text{for all }p\in U.\]
If $f$ is right slice hyperholomorphic on an open set that contains $\overline{U}$, then
\[f(p) = \frac{1}{2\pi}\int_{\partial(U\cap\cc_I)}f(s)\, ds_I\, S_R^{-1}(s,p)\quad\text{for all }p\in U.\]
\end{theorem}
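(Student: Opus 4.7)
The plan is to reduce the quaternionic Cauchy formula to the classical complex Cauchy formula on the slice $\cc_I$ via the Splitting Lemma (\Cref{HolLem}), and then to extend the identity from $U\cap\cc_I$ to all of $U$ by the uniqueness of slice hyperholomorphic extension (\Cref{extLem}). I focus on the left case; the right case is entirely analogous.

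First, I would verify that the right-hand side
\[
R(p) := \frac{1}{2\pi}\int_{\partial(U\cap\cc_I)} S_L^{-1}(s,p)\,ds_I\,f(s)
\]
is a well-defined, left slice hyperholomorphic function of $p\in U$. Well-definedness follows because $U$ being axially symmetric together with $p\in U$ forces $[p]\subset U$, so any $s\in\partial(U\cap\cc_I)$ (which lies in $\partial U$, hence outside $U$) cannot belong to $[p]$; thus $p\notin[s]$ and $S_L^{-1}(s,p)$ is regular along the contour. Left slice hyperholomorphicity of $R$ in $p$ follows from differentiating under the integral, using that $S_L^{-1}(s,p)$ is left slice hyperholomorphic in $p$ (as recorded in the corollary following the definition of the slice Cauchy kernels).

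Next, the heart of the argument: I establish the identity $f(p) = R(p)$ for $p\in U\cap\cc_I$. Choose $J\in\SS$ with $J\perp I$ and write any $q\in\hh$ in the form $q=q_1+q_2 J$ with $q_1,q_2\in\cc_I$. By \Cref{HolLem}, the restriction $f_I = f|_{U\cap\cc_I}$ satisfies the Cauchy--Riemann equations in $\cc_I$, so it splits as $f_I = F + G\,J$ with $F,G:U\cap\cc_I\to\cc_I$ holomorphic in the usual complex sense. Since $s,p\in\cc_I$ commute, a direct computation from the definition yields
\[
S_L^{-1}(s,p) = \frac{1}{s-p} \qquad\text{for } s,p\in\cc_I,\ s\neq p,
\]
and $ds_I = -I\,ds$ together with $I^{-1}=-I$ converts the classical complex Cauchy formula $F(p) = \frac{1}{2\pi I}\oint \frac{F(s)}{s-p}\,ds$ into
\[
F(p) = \frac{1}{2\pi}\int_{\partial(U\cap\cc_I)} \frac{1}{s-p}\,ds_I\,F(s),
\]
and likewise for $G$. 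Multiplying the $G$-identity on the right by $J$ and summing gives the desired formula for $p\in U\cap\cc_I$. Here I use in passing that $U\cap\cc_I$ is a complex Cauchy domain, so the classical theorem applies. The boundary orientation conventions implicit in $\partial(U\cap\cc_I)$ are precisely those needed to match the classical formula on each connected component.

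Finally, I extend the equality $f=R$ from $U\cap\cc_I$ to all of $U$. Both $f$ and $R$ are left slice hyperholomorphic on $U$ and coincide on $U\cap\cc_I$; by \Cref{extLem} the slice hyperholomorphic extension from an axially symmetric open subset of $\cc_I$ is unique, hence $f\equiv R$ on $U$. Alternatively, one may apply the Representation Formula (\Cref{RepFo}) directly to express $f(p)$ in terms of $f(p_I)$ and $f(\overline{p_I})$ and reassemble the result as an integral of $S_L^{-1}(s,p)$ against $f$, using the identity $S_L^{-1}(s,p) = \tfrac12(1-I_pI)S_L^{-1}(s,p_I)+\tfrac12(1+I_pI)S_L^{-1}(s,\overline{p_I})$. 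The main subtle point in the whole argument is keeping track of the noncommutativity in the splitting step and verifying the sign/orientation conversion between $ds$ and $ds_I$; the extension step is then automatic from the uniqueness principle.
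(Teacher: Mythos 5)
Your proposal is correct, and it is essentially the standard argument for this result: the paper itself gives no proof here but defers to \cite{MR2752913}, where the formula is obtained in the same way, by reducing to the classical Cauchy formula on a slice (via the splitting of $f_I$ into $\cc_I$-valued holomorphic components and the fact that $S_L^{-1}(s,p)=(s-p)^{-1}$ when $s,p\in\cc_I$) and then propagating the identity to all of $U$ through the Representation Formula. Your verification that $[s]\cap U=\emptyset$ for $s\in\partial(U\cap\cc_I)$, the orientation bookkeeping with $ds_I=-I\,ds$, and the use of \Cref{RepFo} (rather than \Cref{extLem}, which is the cleaner of your two suggested extension steps since $U$ need not be a slice domain) are exactly the points that need care, and you have handled them correctly.
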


%
%

\begin{remark}\label{OpVal}
The results presented in this section can easily be extended to functions with values in a two-sided quaternionic Banach space. As in the complex setting, problems concerning vector-valued functions can be reduced to scalar problems by applying elements of the dual space. For the details see \cite{Alpay:2015a}.
\end{remark}
\subsection{The S-functional calculus}
The natural extension of the Riesz-Dunford-functional calculus for complex linear operators  to quaternionic linear operators is the so-called $S$-functional calculus. It is based on the theory of slice hyperholomorphic functions and follows the idea of the classical case: to formally replace the scalar variable $p$ in the Cauchy formula by an operator. The proofs of the results stated in this subsection can be found in \cite{acgs, MR2752913, DA}.

Throughout the paper, let $V$ denote a two-sided quaternionic Banach space. We denote the set of all bounded quaternionic right-linear operators on $V$ by $\boundOP(V)$ and the set of all closed and quaternionic right-linear operators on $V$ by $\closOP(V)$. 
For $T\in\closOP(V)$, we define
\[ \Q_{s}(T) := T^2 - 2\Re(s)T + |s|^2\id, \qquad \text{for $s\in\hh$}. \]
\begin{definition}
Let $T\in\closOP(V)$. We define the $S$-resolvent set of  $T$ as
\[\rho_S(T):= \{ s\in\hh: \Q_{s}(T)^{-1}\in\boundOP(V)\}\]
and the $S$-spectrum of $T$ as
\[\sigma_S(T):=\hh\setminus\rho_S(T).\]
If $s\in\rho_S(T)$, then $\Q_{s}(T)^{-1}$ is called the pseudo-resolvent of $T$ at $s$. Furthermore, we define the extended $S$-spectrum $\sigma_{SX}(T)$ as
\begin{equation*}
\sigma_{SX}(T) := \begin{cases} \sigma_{S}(T) & \text{if  $T$ is bounded,}\\
\sigma_{S}(T)\cup\{\infty\} & \text{if  $T$ is unbounded.}
\end{cases}
\end{equation*}
\end{definition}
The $S$-spectrum has properties that are similar to those of the spectrum of a complex linear operator.
\begin{theorem}\label{SpecProp}
Let $T\in\closOP(V)$.
\begin{enumerate}[label = (\roman*)]
\item The $S$-spectrum $\sigma_{S}(T)$ of $T$ is axially symmetric.
\item The $S$-spectrum $\sigma_{S}(T)$ is a closed subset of $\hh$ and the extended $S$-spectrum $\sigma_{SX}(T)$ is a closed and hence compact subset of $\hh_{\infty}:= \hh \cup\{\infty\}$.
\item If $T$ is bounded, then $\sigma_{S}(T)$ is nonempty and bounded by the norm of $T$, i.e. $\sigma_{S}(T)\subset \overline{B_{\|T\|}(0)}$, where $B_{\|T\|}(0)$ is the open ball of radius $\|T\|$ centered at $0$. 
\end{enumerate}
\end{theorem}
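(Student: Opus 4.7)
The plan is to treat the three claims in order, exploiting the common observation that $\Q_s(T) = T^2 - 2\Re(s)T + |s|^2\id$ depends on $s$ only through the pair $(\Re(s),|s|)$.

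For part (i), I would simply note that any $p\in [s] = \{s_0 + Is_1 : I\in\SS\}$ satisfies $\Re(p) = \Re(s)$ and $|p|=|s|$, so that $\Q_p(T) = \Q_s(T)$ as operators. Consequently $s\in \rho_S(T)$ if and only if $p\in\rho_S(T)$, and both $\rho_S(T)$ and $\sigma_S(T)$ are axially symmetric.

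For part (ii), it suffices to show that $\rho_S(T)$ is open in $\hh$ by a Neumann-series perturbation argument. I would fix $s_0 \in \rho_S(T)$, observe that $\Q_{s_0}(T)^{-1}$ maps $V$ into $\fdom(T^2) \subset \fdom(T)$, and invoke the closed graph theorem to conclude that $T\Q_{s_0}(T)^{-1}$ is a bounded operator on $V$. Then I would exploit the algebraic identity
\[
\Q_s(T)\Q_{s_0}(T)^{-1} = \id + 2(\Re(s_0)-\Re(s))\,T\Q_{s_0}(T)^{-1} + (|s|^2-|s_0|^2)\,\Q_{s_0}(T)^{-1},
\]
whose perturbation term has operator norm strictly smaller than $1$ for $s$ in a sufficiently small neighbourhood of $s_0$. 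A Neumann inversion followed by left multiplication by $\Q_{s_0}(T)^{-1}$ then exhibits $\Q_s(T)^{-1}$ as an element of $\boundOP(V)$. Closedness of $\sigma_{SX}(T)$ in $\hh_\infty$ follows at once: in the bounded case (iii) confines $\sigma_S(T)$ to the compact ball $\overline{B_{\|T\|}(0)}$, and in the unbounded case $\sigma_S(T)\cup\{\infty\}$ is closed in the one-point compactification whenever $\sigma_S(T)$ is closed in $\hh$, so compactness comes for free from the compactness of $\hh_\infty$.

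For part (iii), I would obtain the bound from the slice hyperholomorphic Neumann-type expansion
\[
S_L^{-1}(s,T) = \sum_{n=0}^{\infty} T^n\, s^{-1-n},
\]
which converges in operator norm whenever $|s|>\|T\|$ and therefore places $\sigma_S(T)$ inside $\overline{B_{\|T\|}(0)}$. Non-emptiness I would prove by contradiction: if $\sigma_S(T)$ were empty, then $s\mapsto S_L^{-1}(s,T)$ would be left slice hyperholomorphic on all of $\hh$ with values in $\boundOP(V)$, and the displayed expansion yields $\|S_L^{-1}(s,T)\|\to 0$ as $|s|\to\infty$. A quaternionic Liouville theorem for $\boundOP(V)$-valued slice hyperholomorphic functions then forces $S_L^{-1}(\bcdot,T)\equiv 0$; substituting into $S_L^{-1}(s,T) = -T\Q_s(T)^{-1} + \Q_s(T)^{-1}\overline{s}$ yields $T = \overline{s}\id$ for every $s$, a contradiction.

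The main obstacle I expect lies in the perturbation step in (ii) when $T$ is unbounded: the naive perturbing term $2(\Re(s_0)-\Re(s))T$ is only densely defined, so the Neumann estimate is not immediate. The remedy is to isolate it as $T\Q_{s_0}(T)^{-1}$ and invoke the closed graph theorem, which converts the formal manipulation into a legitimate bounded-operator estimate and thus makes the whole argument work uniformly in the closed rather than merely bounded case.
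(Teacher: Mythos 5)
Your proposal is correct and follows essentially the same route as the sources the paper defers to for this background theorem (the paper gives no proof of its own, citing \cite{acgs, MR2752913, DA}, and its remark right after the statement explicitly describes the power-series expansion of the $S$-resolvent that you invoke for (iii)): axial symmetry because $\Q_s(T)$ depends on $s$ only through $(\Re(s),|s|)$, openness of $\rho_S(T)$ via the closed graph theorem applied to $T\Q_{s_0}(T)^{-1}$ plus a Neumann-series perturbation, and the norm bound together with a Liouville-type argument for non-emptiness. Two small points to polish: in (iii), convergence of $\sum_{n\geq 0}T^ns^{-1-n}$ for $|s|>\|T\|$ does not by itself place $s$ in $\rho_S(T)$ --- one must verify that the sum actually yields bounded invertibility of $\Q_s(T)$, which is the content of the expansion theorems in \cite{MR2752913} (quaternionic coefficients) or \cite{CGTAYLOR} (real coefficients, as the paper's remark notes); and in the non-emptiness argument it is cleaner to evaluate $S_L^{-1}(\cdot,T)\equiv 0$ at a real point $s=t$, where $S_L^{-1}(t,T)=(t\id-T)^{-1}$ cannot vanish, than to derive $T=\overline{s}\id$, which needs an extra commutation argument because a right-linear $T$ need not commute with left multiplication by $\overline{s}$.
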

\begin{remark}
If $V$ is only a right-sided Banach space (i.e. one in which multiplication with scalars is only defined from the right side) then the space $\boundOP(V)$ of bounded right linear operators on $V$ is only a real Banach space and in particular multiples of the identity operator $\id$ are only defined for $s\in\rr$ as one would expect  $s\id$ to act as $(s\id) v = (\id s) v = \id (sv)$. For this reason, one usually works with two-sided Banach spaces when defining the $S$-functional calculus, because the definition of the $S$-resolvents (see \Cref{SResDef}) requires a quaternionic multiplication on $\boundOP(V)$. However, the $S$-spectrum itself can also be defined on one-sided Banach spaces because the operator $\Q_{s}(T)$ involves only real scalars. Its properties given in \Cref{SpecProp} still remain true in this case, since their proofs do not rely on a quaternionic multiplication on $\boundOP(V)$. One only has to pay attention when showing that $\sigma_{S}(T)$ is bounded by $\|T\|$ if $T$ is bounded, which is usually shown using a power series expansion of the $S$-resolvent that involves quaternionic coefficients, cf. \cite{MR2752913}. However,  \cite{CGTAYLOR} introduced a series expansion for the pseudo-resolvent $\Q_{s}(T)^{-1}$ that converges for $|s|>\|T\|$ and involves only real coefficients. This series expansion can therefore serve for showing that $\sigma_{S}(T)\subset \overline{B_{\|T\|}(0)}$ on one-sided Banach spaces such that all properties in \Cref{SpecProp} are also true in this setting. This will be important in the proof of \Cref{SpecIncSP}, where we encounter an operator on such a space.
\end{remark}
\begin{definition}\label{SResDef}
Let $T\in\closOP(V)$. For $s\in\rho_S(T)$, the left $S$-resolvent operator is defined as
\begin{equation}\label{SresolvoperatorL}
S_L^{-1}(s,T):= \Q_s(T)^{-1}\overline{s} -T\Q_s(T)^{-1}
\end{equation}
and the right $S$-resolvent operator is defined as
\begin{equation}\label{SresolvoperatorR}
S_R^{-1}(s,T):=-(T-\id \overline{s})\Q_s(T)^{-1}.
\end{equation}
\end{definition}
\begin{remark}\label{RkResExtension} One clearly obtains the right $S$-resolvent operator by formally replacing the variable $p$ in the right slice hyperholomorphic Cauchy kernel by the operator $T$. The same procedure yields
\begin{equation}\label{LResShort}
S_L^{-1}(s,T)v = -\Q_s(T)^{-1}(T-\overline{s}\id)v,\quad\text{for }v\in\dom(T)
\end{equation}
for the left $S$-resolvent operator. This operator is only defined  on  the domain $\dom(T)$ of $T$ and not on the entire space $V$. However,  $\Q_s(T)^{-1}Tv = T\Q_{s}(T)^{-1}v$ for $v\in\dom(T)$ and commuting $T$ and $Q_s(T)$ in \eqref{LResShort} yields \eqref{SresolvoperatorL}. For arbitrary $s\in\hh$, the operator $\Q_{s}(T) = T^2 - 2\Re(s)T + |s|^2\id$ maps $\dom(T^2)$ to $V$. Hence, the pseudo-resolvent $\Q_s(T)^{-1}$ maps $V$ to $\dom(T^2)\subset \dom(T)$ if $s\in\rho_S(T)$. Since $T$ is closed and $\Q_s(T)^{-1}$ is bounded, equation \eqref{SresolvoperatorL} then defines a continuous and therefore bounded right linear operator on the entire space $V$. Hence, the left resolvent $S_L^{-1}(s,T)$ is the natural extension of the operator \eqref{LResShort} to~$V$. In particular, if $T$ is bounded, then $S_L^{-1}(s,T)$ can directly be defined by \eqref{LResShort}.

If one considers left linear operators, then one must modify the definition
of the right $S$-resolvent operator for the same reasons.
\end{remark}
\begin{remark}
The $S$-resolvent operators reduce to the classical resolvent if $T$ and $s$ commute, that is
\[S_L^{-1}(s,T) = S_R^{-1}(s,T) = (s\id - T)^{-1}.\]
This is in particular the case if $s$ is real.
\end{remark}
The following lemma is crucial and can be shown by straightforward computations for bounded operators. In the case of unbounded operators, several additional technical difficulties have to be overcome. The respective proof can be found in \cite{FJTAMS}.
\begin{lemma}\label{ResHol2342}
Let $T\in\closOP(V)$. The map $s\mapsto S_L^{-1}(s,T)$ is a right slice-hyperholmorphic function on $\rho_S(T)$ with values in the two-sided  quaternionic Banach space $\boundOP(V)$. The map $s\mapsto S_R^{-1}(s,T)$ is a left slice-hyperholmorphic function on $\rho_S(T)$ with values in the two-sided quaternionic Banach space $\boundOP(V)$. 
\end{lemma}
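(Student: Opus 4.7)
The plan is to decompose each $S$-resolvent into the form dictated by the definition of slice hyperholomorphicity and then verify the compatibility and Cauchy-Riemann conditions by direct (though delicate, in the unbounded case) computation. Writing $s = s_0 + I_s s_1$, the identity $\overline{s} = s_0 - I_s s_1$ and the real-valuedness of $s_0, s_1$ give
\[
S_L^{-1}(s,T) \;=\; \Q_s(T)^{-1}\overline{s} - T\Q_s(T)^{-1}
\;=\; \underbrace{(s_0\id - T)\Q_s(T)^{-1}}_{=:\,\alpha(s_0,s_1)} \;+\; \underbrace{\bigl(-s_1\Q_s(T)^{-1}\bigr)}_{=:\,\beta(s_0,s_1)} I_s,
\]
which is precisely the form \eqref{rHolDef} required of a right slice hyperholomorphic function. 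The decisive structural point is that $\Q_s(T) = T^2 - 2s_0 T + (s_0^2 + s_1^2)\id$ depends only on $s_0$ and on $s_1^2$, so $\alpha$ is even in $s_1$ and $\beta$ is odd in $s_1$, giving the compatibility condition \eqref{CCond} for free.

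For the Cauchy-Riemann equations \eqref{CR}, I would differentiate $\Q_s(T)^{-1}$ using the standard resolvent-type identity
\[
\partial_{s_0}\Q_s(T)^{-1} = 2\Q_s(T)^{-1}(T - s_0\id)\Q_s(T)^{-1}, \qquad \partial_{s_1}\Q_s(T)^{-1} = -2 s_1 \Q_s(T)^{-2},
\]
which follows from $\Q_s(T)^{-1} - \Q_{s'}(T)^{-1} = \Q_s(T)^{-1}(\Q_{s'}(T) - \Q_s(T))\Q_{s'}(T)^{-1}$ and the commutativity of $T$ with $\Q_s(T)^{-1}$ on $\dom(T)$. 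A short computation then gives
\[
\partial_{s_0}\alpha = \Q_s(T)^{-1} - 2(T - s_0\id)^2\Q_s(T)^{-2}, \qquad \partial_{s_1}\beta = -\Q_s(T)^{-1} + 2s_1^2\Q_s(T)^{-2},
\]
and the first CR equation $\partial_{s_0}\alpha = \partial_{s_1}\beta$ reduces to the algebraic identity $(T - s_0\id)^2 + s_1^2\id = \Q_s(T)$. The second CR equation $\partial_{s_1}\alpha = -\partial_{s_0}\beta$ both sides evaluate to $2 s_1(T - s_0\id)\Q_s(T)^{-2}$, again using that $T$ and $\Q_s(T)^{-1}$ commute. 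An entirely analogous decomposition
\[
S_R^{-1}(s,T) = \Q_s(T)^{-1}(s_0\id - T) + I_s\bigl(-s_1 \Q_s(T)^{-1}\bigr)
\]
puts $S_R^{-1}$ into the form \eqref{lHolDef}, and the same computations verify left slice hyperholomorphicity.

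The part that needs real care, and which I expect to be the main obstacle, is justifying the above in the unbounded case: here the intermediate expressions involve the closed unbounded operator $T$, and the derivatives need to be shown to exist in the operator norm on $\boundOP(V)$. The way around this is to note that $\Q_s(T)^{-1}$ maps $V$ into $\dom(T^2)$, so the composite operators $T\Q_s(T)^{-1}$ and $\Q_s(T)^{-1}(T - s_0\id)\Q_s(T)^{-1}$ are everywhere defined and bounded, and the resolvent identity above is an identity of bounded operators on $V$. From there one obtains the partial derivatives in operator norm by the usual difference-quotient estimates, and the algebraic CR computation is then valid verbatim. Once this is in place, combining with \Cref{extLem} (applied to the restriction of $s \mapsto S_L^{-1}(s,T)$ to any single complex plane, where the CR condition reduces to \eqref{SplitEQR}) also yields the conclusion, so the proof essentially reduces to the CR check plus the technical justification of differentiability in the unbounded case, for which the detailed argument is in the cited reference \cite{FJTAMS}.
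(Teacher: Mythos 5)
Your computation is correct and is essentially the ``straightforward computation'' the paper alludes to: writing $S_L^{-1}(s,T)=\alpha(s_0,s_1)+\beta(s_0,s_1)I_s$ with $\alpha=(s_0\id-T)\Q_s(T)^{-1}$, $\beta=-s_1\Q_s(T)^{-1}$, noting that $\Q_s(T)$ depends only on $s_0$ and $s_1^2$ (compatibility), and checking the Cauchy--Riemann system via $(T-s_0\id)^2+s_1^2\id=\Q_s(T)$ all check out, as does the mirrored decomposition for $S_R^{-1}$. Note, however, that the paper itself does not prove this lemma: it explicitly states that the bounded case is a direct computation and refers to \cite{FJTAMS} for the unbounded case, so there is no in-paper argument to compare with; your proposal supplies the verification that the paper outsources. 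The one genuinely delicate point is exactly the one you flag but only sketch: in the unbounded case the existence of the partial derivatives in the norm of $\boundOP(V)$, and in particular differentiating $T\Q_s(T)^{-1}$ without illegally pulling the closed operator $T$ through a limit. Your remedy is the right one --- use that $\Q_s(T)^{-1}$ maps $V$ into $\dom(T^2)$, so the factorization $T\Q_s(T)^{-1}-T\Q_{s'}(T)^{-1}=\bigl(T\Q_s(T)^{-1}\bigr)\bigl(\Q_{s'}(T)-\Q_s(T)\bigr)\Q_{s'}(T)^{-1}$ exhibits a fixed bounded factor and the difference quotients converge in operator norm --- but this is precisely where the ``additional technical difficulties'' of \cite{FJTAMS} live, so a complete proof would spell these estimates out rather than defer them. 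Two cosmetic points: in your $S_R^{-1}$ decomposition the scalar part should be written as $(s_0\id-T)\Q_s(T)^{-1}$ (the expression $\Q_s(T)^{-1}(s_0\id-T)$ is only densely defined; its bounded extension is what you mean, cf.\ \Cref{RkResExtension}); and the closing appeal to \Cref{extLem} is redundant --- it would only give \emph{some} slice hyperholomorphic extension of the restriction to one plane, and identifying that extension with the $S$-resolvent off that plane would require an extra argument, whereas your explicit $\alpha+\beta I_s$ form already gives the conclusion directly.
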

The $S$-resolvent equation is the analogue of the classical resolvent equation in the quaternionic setting. It has first been proved for the case that $T$ is a bounded operator in \cite{acgs}. Later on it was generalized to the case that $T$ is unbounded in \cite{FJTAMS}.
\begin{theorem}[$S$-resolvent equation]Let $T\in\closOP(V)$. For  $s,p \in  \rho_S(T)$ with $s\notin[p]$, it is
\begin{equation}\label{SresEQ1}
\begin{split}
S_R^{-1}(s,T)S_L^{-1}(p,T)=&\big[[S_R^{-1}(s,T)-S_L^{-1}(p,T)]p
\\
&
-\overline{s}[S_R^{-1}(s,T)-S_L^{-1}(p,T)]\big](p^2-2s_0p+|s|^2)^{-1}.
\end{split}
\end{equation}
\end{theorem}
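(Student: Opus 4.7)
The strategy is direct algebraic verification, in the spirit of the proof of the classical resolvent equation. The assumption $s\notin[p]$ guarantees that the quaternion $\Q_s(p)=p^2-2s_0p+|s|^2$ is nonzero, hence invertible in $\hh$, so the right-hand side is well-defined. After right-multiplying the target identity by $\Q_s(p)$, it suffices to establish the equivalent form
\[
S_R^{-1}(s,T)\,S_L^{-1}(p,T)\,\Q_s(p) = [S_R^{-1}(s,T)-S_L^{-1}(p,T)]\,p - \overline{s}\,[S_R^{-1}(s,T)-S_L^{-1}(p,T)],
\]
which is an identity purely between bounded operators on $V$.

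The key structural observation is that $\Q_s(T)=T^2-2s_0T+|s|^2\id$ is a polynomial in $T$ with \emph{real} coefficients; consequently $\Q_s(T)^{-1}$ commutes with $T$, with $\Q_p(T)$, and with $\Q_p(T)^{-1}$. This is the algebraic lever that lets us reshuffle operator factors. By contrast, since $\hh$ is noncommutative, quaternion scalars such as $\overline{s}$ and $\overline{p}$ do not commute with $T$ in general, so the bookkeeping has to keep each such scalar on the side where it originally sits.

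Inserting the explicit formulas $S_L^{-1}(p,T)=\Q_p(T)^{-1}\overline{p}-T\Q_p(T)^{-1}$ and $S_R^{-1}(s,T)=-(T-\overline{s}\id)\Q_s(T)^{-1}$ into both sides, and using the commutation relations above, each side can be put into the canonical form $\Q_s(T)^{-1}\Q_p(T)^{-1}\cdot P(T;s,p)$, where $P(T;s,p)$ is a polynomial in $T$ whose coefficients are assembled from $s,\overline{s},s_0,|s|^2,p,\overline{p}$ placed on the appropriate side of each power of $T$. Expanding $\Q_s(p)$ on the LHS and collecting powers of $T$ on both sides, equality of the two polynomials reduces coefficient by coefficient to the scalar identities $s+\overline{s}=2s_0$ and $s\overline{s}=|s|^2$.

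For bounded $T$ the calculation above is a finite, mechanical algebraic verification, and in that case the result was established in \cite{acgs}. The genuine obstacle in the general statement is the domain bookkeeping for unbounded $T$: one must check that every intermediate composition is actually defined on the whole space, in particular that $\Q_s(T)^{-1}$ and $\Q_p(T)^{-1}$ map $V$ into $\dom(T^2)\subset\dom(T)$ (as recorded in the remark preceding this theorem), so that subsequent applications of $T$ and $T^2$ are meaningful and yield bounded extensions. Once each factor is interpreted as the canonical bounded extension on $V$, exactly as for the left $S$-resolvent in that same remark, the algebraic manipulation from the bounded case carries over verbatim. This careful domain analysis, rather than the algebra itself, is the substantive work, and it was performed in \cite{FJTAMS}.
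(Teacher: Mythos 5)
Your overall plan (reduce to an operator identity, verify it algebraically, and defer the unbounded-case domain analysis to \cite{FJTAMS}) is consistent with the paper, which in fact gives no proof of this theorem at all: it cites \cite{acgs} for bounded $T$ and \cite{FJTAMS} for closed $T$. The genuine gap is in the algebraic mechanism you describe in between. The ``canonical form'' $\Q_s(T)^{-1}\Q_p(T)^{-1}\,P(T;s,p)$ is not attainable: left multiplication by a non-real quaternion (the $\overline{s}$ sitting in $S_R^{-1}(s,T)=-(T-\overline{s}\id)\Q_s(T)^{-1}$ and in the term $\overline{s}\,[S_R^{-1}(s,T)-S_L^{-1}(p,T)]$) does not commute with $T$ or with the pseudo-resolvents --- a fact you yourself record --- so these factors cannot be pulled past $\Q_s(T)^{-1}\Q_p(T)^{-1}$ to put the pseudo-resolvents in front. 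More seriously, even if one keeps left coefficients on the left and right coefficients on the right, ``equality coefficient by coefficient'' is not a legitimate step here: after substituting the definitions, every summand of the left-hand side (once multiplied by $\Q_s(p)$ on the right) contains \emph{both} $\Q_s(T)^{-1}$ and $\Q_p(T)^{-1}$, while every summand of the right-hand side contains exactly one of them (for instance $|p|^2\Q_p(T)^{-1}$ arises from $S_L^{-1}(p,T)p$). No term-by-term matching can reconcile these two shapes; the identity only emerges through cancellations of the type $\Q_p(T)\Q_p(T)^{-1}=\id$, which a pure coefficient comparison never invokes.

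The proof in \cite{acgs} (reproduced for closed operators in \cite{FJTAMS}) organizes exactly these cancellations: one first verifies the one-sided identities $S_L^{-1}(p,T)p - T S_L^{-1}(p,T)=\id$ and $sS_R^{-1}(s,T)-S_R^{-1}(s,T)T=\id$ on $\dom(T)$ --- this is where $\overline{p}p=|p|^2$ and $p+\overline{p}=2p_0$ enter, so you need the $p$-identities as well, not only the two $s$-identities you list --- then applies them twice to $S_R^{-1}(s,T)S_L^{-1}(p,T)p$, and the terms proportional to $S_R^{-1}(s,T)S_L^{-1}(p,T)$ finally cancel because $s^2-2s_0s+|s|^2=0$. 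Your domain remarks point in the right direction but miss the fact actually used: what one needs is $\mathrm{ran}\, S_L^{-1}(p,T)\subset\dom(T)$ (a consequence of $\Q_p(T)^{-1}$ mapping into $\dom(T^2)$), so that the right-sided identity, valid only on $\dom(T)$, may be applied to that range. With your middle step replaced by this scheme the argument closes; as written, it would not go through.
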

An easy straightforward computation (see also \cite[Lemma~3.18]{acgs}) shows that for any bounded operator $B\in\boundOP(V)$ and any $s,p$ with $s\notin [p]$
\begin{equation}\label{ComRel}
(s^2 - 2p_0s + |p|^2)^{-1}(s B - B\overline{p}) = (\overline{s}B - Bp)(p^2 - 2s_0 p + |s|^2)^{-1}.
\end{equation}
Applying this identity with $B = S_R^{-1}(s,T)  - S_L^{-1}(p,T)$ to \eqref{SresEQ1}, we find that the $S$-resolvent equation can also be reformulated as
\begin{equation}\label{SresEQ2}
\begin{split}
S_R^{-1}(s,T)S_L^{-1}(p,T)=&(s^2-2p_0s+|p|^2)^{-1}\big[[S_R^{-1}(s,T)-S_L^{-1}(p,T)]\overline{p}
\\
&
-s[S_R^{-1}(s,T)-S_L^{-1}(p,T)]\big].
\end{split}
\end{equation}
Finally, before introducing the $S$-functional calculus, we recall a technical lemma that will be important at several occasions \cite[Lemma~3.23]{acgs}.
\begin{lemma}\label{AlpayLemma}
Let $B\in\boundOP(V)$ and let $U$ be a bounded Cauchy domain. If $f\in\intrin\left(\overline{U}\right)$, then we have for $p\in U$ that
\[ B f(p) = \frac{1}{2\pi}\int_{\partial(U\cap\cc_I)}f(s)\,ds_I\,(\overline{s}B-Bp)\left(p^2-2s_{0}p + |s|^2\right)^{-1}.\]
\end{lemma}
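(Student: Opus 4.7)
The plan is to reduce the operator-valued contour integral on the right-hand side to two scalar contour integrals in the slice $\cc_I$, evaluate them by ordinary residue calculus, and match the result with $Bf(p)$.

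The key first move is to apply the commutation identity \eqref{ComRel} to rewrite
\[
(\overline{s}B - Bp)\,\Q_{p,s}^{-1} \;=\; (s^2 - 2 p_0 s + |p|^2)^{-1}(sB - B\overline{p}).
\]
Writing $p = p_0 + I_p p_1$ and $p_I := p_0 + I p_1$, we have $s^2 - 2 p_0 s + |p|^2 = (s - p_I)(s - \overline{p_I})$ for $s\in\cc_I$, so this scalar factor lies in $\cc_I$; so do $f(s)$ (intrinsic, by \Cref{IntrinChar}) and $ds_I$. Expanding $sB - B\overline{p}$ linearly and pulling the $s$-independent operators $B$ and $B\overline{p}$ outside the integral, the right-hand side becomes
\[
\mathrm{RHS} \;=\; I_1\, B \;-\; I_2\, B\overline{p},
\]
where
\[
I_1 = \frac{1}{2\pi}\int_{\partial(U\cap\cc_I)} \frac{s\, f(s)}{(s - p_I)(s - \overline{p_I})}\, ds_I, \qquad I_2 = \frac{1}{2\pi}\int_{\partial(U\cap\cc_I)} \frac{f(s)}{(s - p_I)(s - \overline{p_I})}\, ds_I
\]
are $\cc_I$-valued scalars. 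Because $U$ is axially symmetric and $p \in U$, the entire sphere $[p]$ lies in $U$, so both simple poles $p_I, \overline{p_I}$ are enclosed by $\partial(U \cap \cc_I)$, and the classical residue theorem applies on $\cc_I$. Writing $f(p_I) = \alpha + I\beta$ with $\alpha := \alpha(p_0,p_1)$, $\beta := \beta(p_0,p_1) \in \rr$, and using $f(\overline{p_I}) = \overline{f(p_I)} = \alpha - I\beta$ (by \Cref{IntrinChar}), a short residue computation yields the real values
\[
I_1 = \alpha + \frac{\beta\, p_0}{p_1}, \qquad I_2 = \frac{\beta}{p_1}.
\]

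It remains to identify $I_1 B - I_2 B\overline{p}$ with $B f(p)$. Expanding $\overline{p} = p_0 - p_1 I_p$ and using that every real scalar commutes with $B$, we get $B\overline{p} = p_0 B - p_1 B I_p$; the $\beta p_0/p_1$ contributions then cancel, leaving $\mathrm{RHS} = \alpha B + \beta B I_p$. On the other hand, from $f(p) = \alpha + I_p \beta$ and the reality of $\beta$,
\[
B f(p) \;=\; B(\alpha + I_p\beta) \;=\; \alpha B + \beta B I_p,
\]
matching the previous expression. This proves the claim for $p \notin \rr$; the case $p \in \rr$ (where $p_I = \overline{p_I}$) follows by continuity, or equivalently by a double-pole residue computation. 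The main obstacle is the careful non-commutative bookkeeping: $B$ is only right-linear, so $B\underline{p} \neq \underline{p}B$ in general, and one must precisely track on which side scalars act. The identity \eqref{ComRel} is the decisive tool, as it isolates the quaternionic non-commutativity into the $(sB - B\overline{p})$ factor while the remaining $s$-dependence becomes $\cc_I$-valued, reducing the problem to ordinary complex residue calculus.
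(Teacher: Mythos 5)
Your argument is correct, and it is essentially self-contained: note that the paper does not prove \Cref{AlpayLemma} at all but merely recalls it from \cite[Lemma~3.23]{acgs}, so there is no in-paper proof to compare against. The route you take is sound: \eqref{ComRel} is applicable on the whole contour because $U$ is axially symmetric, so $p\in U$ forces $[p]\subset U$ and hence $s\notin[p]$ for $s\in\partial(U\cap\cc_I)$; after that, all $s$-dependent factors ($f(s)$ by \Cref{IntrinChar}, $ds_I$, $s$, and $(s^2-2p_0s+|p|^2)^{-1}$) lie in $\cc_I$ and commute, so pulling the constant operators $B$ and $B\overline{p}$ out of the integral and evaluating $I_1,I_2$ by the classical residue theorem on $\cc_I$ is legitimate, and your values $I_1=\alpha+\beta p_0/p_1$, $I_2=\beta/p_1$ check out (e.g.\ $f\equiv1$, $B=\id$ reproduces the Cauchy formula). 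The final bookkeeping is also right, since only \emph{real} scalars are moved across $B$ ($B\overline{p}=p_0B-p_1BI_p$ and $Bf(p)=\alpha B+\beta BI_p$ because $\alpha,\beta,p_0,p_1\in\rr$), and the real case $p_1=0$ is correctly disposed of either by continuity or directly, since then $(\overline{s}B-Bp)(p^2-2s_0p+|s|^2)^{-1}=(s-p)^{-1}B$ and the claim reduces to the scalar Cauchy formula.
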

Formally replacing the slice hyperholomorphic Cauchy-kernels  in the Cauchy-formula by the $S$-resolvent operators leads to the natural generalization of the Riesz-Dunford-functional calculus to quaternionic linear operators.

\begin{definition}[$S$-functional calculus for bounded operators]\label{SCalcBd}
Let $T\in\boundOP(V)$, choose $I\in\SS$ and set $ds_I = -I\, ds$. For $f\in\lhol(\sigma_S(T))$, we choose a bounded slice Cauchy domain $U\subset \fdom(f)$ with $\sigma_{S}(T)\subset U$ and define
\begin{equation}\label{EQLCalcBd}
f(T) := \frac{1}{2\pi}\int_{\partial(U\cap\cc_I)} S_L^{-1}(s,T)\, ds_I\, f(s).
\end{equation}
For $f\in\rhol(\sigma_S(T))$, we choose again an unbounded slice Cauchy domain $U\subset \fdom(f)$ with $\sigma_{S}(T)\subset U$ and define
\begin{equation}\label{EQRCalcBd}
f(T) := \frac{1}{2\pi}\int_{\partial(U\cap\cc_I)} f(s)\, ds_I\, S_R^{-1}(s,T).
\end{equation}
These integrals are independent of the choices of the slice domain $U$ and the imaginary unit $I\in\SS$.
\end{definition}
These definitions can be extended to closed operators. As in the complex case, this was originally done via a transformation and the $S$-functional calculus for bounded operators, cf. \cite{MR2752913}. Since this procedure works in the quaternionic setting only if the $S$-resolvent set of the operator contains a real point, we shall instead directly start from a Cauchy integral, cf. \cite{DA}.

We say that a function is left (or right) slice hyperholomorphic at infinity, if $f$ is left (or right) slice hyperholomorphic, if there exists $r>0$ such that the ball $B_{r}(0)$ of radius $r>0$ centered at $0$ is contained in the domain $\fdom(f)$ of $f$ and if $f(\infty) := \lim_{p\to\infty}f(p)$ exists.
\begin{definition}[$S$-functional calculus for closed operators]\label{SCalcCl}
Let $T\in\closOP(V)$ with $\rho_S(T) \neq 0$, choose $I\in\SS$ and set $ds_I = -I\, ds$. For $f\in\lhol(\sigma_{S}(T)\cup\{\infty\})$, we choose an unbounded slice Cauchy domain $U\subset \fdom(f)$ with $\sigma_{S}(T)\subset U$ and define
\begin{equation}\label{EQLCalcCl}
f(T) := f(\infty)\id + \frac{1}{2\pi}\int_{\partial(U\cap\cc_I)} S_L^{-1}(s,T)\, ds_I\, f(s).
\end{equation}
For $f\in\rhol(\sigma_S(T)\cup\{\infty\})$, we choose again a bounded slice Cauchy domain $U\subset \fdom(f)$ with $\sigma_{S}(T)\subset U$ and define
\begin{equation}\label{EQRCalcCl}
f(T) := f(\infty)\id + \frac{1}{2\pi}\int_{\partial(U\cap\cc_I)} f(s)\, ds_I\, S_R^{-1}(s,T).
\end{equation}
These integrals are independent of the choices of the slice domain $U$ and the imaginary unit $I\in\SS$.
\end{definition}
\begin{remark}\label{IntrinRemark}
If the function $f$ is intrinsic, then \eqref{EQLCalcBd} and \eqref{EQRCalcBd} resp. \eqref{EQLCalcCl} and \eqref{EQRCalcCl} yield the same operator, which is not immediate. (Indeed, if $f$ is both left and right slice hyperholomorphic but not intrinsic, then the two representations might yield different operators, cf. \cite{DA}!) However, the path of integration $\partial(U\cap\cc_I)$ is symmetric with respect to the real axis because $U$ is axially symmetric. We can thus split the above integrals into the sums of two integrals, one over the part of the path in the positive halfplane of $\cc_I$ and one over the part of the path in the negative halfplane of $\cc_I$. Exploiting the aformentioned symmetry and the fact that $f(\overline{p}) = \overline{f(p)}$, we end up with an integral of a function that is the product of real numbers and the operators $\Q_{s}(T)^{-1}$ and $T\Q_{s}(T)^{-1}$. Depending on whether we started from the integrals involving the right or the left $S$-resolvent, these objects appear in a different order, but the factors are the same. However, as these objects commute mutually we can change the order of the factors and switch  from one representation to the other. For details, we refer to \cite{DA}, but the technique is also used in the proof of \Cref{ComLem}.

This procedure can be used not only for the $S$-functional calculus but for any Cauchy integral of the form as in \eqref{EQLCalcBd} or \eqref{EQRCalcBd}. Keeping in mind this remark, we shall use the equivalence of these two representations for intrinsic functions at several occasions in this paper without explicitly proving it every time.
\end{remark}
Since $\sigma_{SX}(T) = \sigma_{S}(T)$ if $T$ is bounded and $\sigma_{SX}(T) = \sigma_{S}(T)\cup\{\infty\}$ if $T$ is unbounded, we shall,  for neatness, denote the classes of admissible functions by $\lhol(\sigma_{SX}(T))$ and $\rhol(\sigma_{SX}(T))$ in the following lemma.
\begin{lemma}\label{SCProp}
Let $T\in\closOP(V)$ with $\rho_S(T)\neq \emptyset$. The $S$-functional calculus has the following properties:
\begin{enumerate}[label = (\roman*)]
\item If $f\in\lhol(\sigma_{SX}(T))$ or $f\in\rhol(\sigma_{SX}(T))$, then $f(T)$ is a bounded operator.
\item If $f,g\in\lhol(\sigma_{SX}(T))$ and $a\in\hh$, then $(fa+g)(T) = f(T)a+g(T)$.  If $f,g\in\rhol(\sigma_{SX}(T))$ and $a\in\hh$, then $(af+g)(T) = af(T)+g(T)$.
\item If $f\in\intrin(\sigma_{SX}(T))$ and $g\in\lhol(\sigma_{SX}(T))$ or if $f\in\rhol(\sigma_{SX}(T))$ and $g\in\intrin(\sigma_{SX}(T))$, then $(fg)(T) = f(T)g(T)$.
\item If $g\in\intrin(\sigma_{SX}(T))$, then $ \sigma_S(g(T)) = g(\sigma_{SX}(T))$. Moreover $f(g(T)) = (f\circ g)(T)$ if $f\in \lhol(g(\sigma_S(T)))$ or $f\in\rhol(g(\sigma_S(T)))$.
\item If $P$ is an intrinsic polynomial of order $n\geq 1$, then $P$ is not slice hyperholomorphic at infinity. However, if $T$ is unbouned and $f\in\intrin(\sigma_{SX}(T))$ has a zero of order greater or equal to $n$ at infinity (i.e. $\lim_{p\to\infty}P(p)f(p) = 0$), then $P(T)f(T) = (Pf)(T)$.
\item If $\sigma$ is an open and closed subset of  $\sigma_{SX}(T)$, let $\chi_{\sigma}$ be equal to $1$ on an axially symmetric neighbourhood of $\sigma$ in $\hh_{\infty}$ and equal to $0$ on an axially symmetric neighbourhood of $\sigma_{SX}(T)\setminus \sigma$ in $\hh_{\infty}$. Then $\chi_{\sigma}\in\intrin(\sigma_{SX}(T))$ and $\chi_{\sigma}(T)$ is a projection onto an invariant subspace of $T$. Moreover, if we denote the restriction of $T$ to the range of $\chi_{\sigma}(T)$ by $T_{\sigma}$, then $\sigma_{SX}(T_{\sigma}) = \sigma$.
\end{enumerate}
\end{lemma}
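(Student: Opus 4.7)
I would prove the six items in the order they are stated, leveraging the integral representations of \Cref{SCalcBd}--\Cref{SCalcCl} together with the $S$-resolvent equation \eqref{SresEQ1}--\eqref{SresEQ2} and \Cref{AlpayLemma}.

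Items (i) and (ii) are essentially immediate from the definitions. A slice Cauchy domain $U$ containing $\sigma_{SX}(T)$ always has a bounded boundary $\partial(U\cap\cc_I)$: if $T$ is unbounded we choose $U$ so that its unbounded component contains a neighbourhood of $\infty$, so its complement in $\hh$ is compact. On that compact set the $S$-resolvent $s\mapsto S_L^{-1}(s,T)$ is continuous by \Cref{ResHol2342}, hence the integral in \eqref{EQLCalcCl} defines a bounded operator, and adding $f(\infty)\id$ preserves boundedness. For (ii), $\hh$-linearity of the integral together with the fact that $S_L^{-1}(s,T)$ is right-linear and $S_R^{-1}(s,T)$ is left-linear lets the scalar $a$ pass through on the appropriate side without disturbing the kernel.

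The key step is (iii). I would pick two nested slice Cauchy domains $U_1\subset\overline{U_1}\subset U_2$ with $\sigma_{SX}(T)\subset U_1$ and both contained in $\fdom(f)\cap\fdom(g)$, and write the product (assume $f\in\intrin$, $g\in\lhol$; the other case is symmetric)
\begin{equation*}
f(T)g(T) \;=\; \frac{1}{(2\pi)^{2}}\int_{\partial(U_{2}\cap\cc_{I})}\int_{\partial(U_{1}\cap\cc_{I})} f(s)\,ds_{I}\,S_{R}^{-1}(s,T)S_{L}^{-1}(p,T)\,dp_{I}\,g(p),
\end{equation*}
insert the $S$-resolvent equation \eqref{SresEQ2}, and exchange the order of integration. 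The inner integral in $s$ becomes a Cauchy-type integral of the intrinsic function $f$, and \Cref{AlpayLemma} reduces each of the two resulting terms to a factor involving $f(p)$ (evaluated via the Cauchy formula) because $\partial(U_{2}\cap\cc_{I})$ encloses $p\in\partial(U_{1}\cap\cc_{I})$. The term involving $S_{R}^{-1}(s,T)$ integrates to zero by Cauchy's theorem, since $\partial(U_{1}\cap\cc_{I})$ does not enclose $s\in\partial(U_{2}\cap\cc_{I})$. What remains is exactly the Cauchy integral defining $(fg)(T)$. The unbounded-operator / infinity contributions, when $T$ is unbounded, are handled by the additive term $f(\infty)g(\infty)\id$ which matches $(fg)(\infty)\id$. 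This is the main obstacle: noncommutativity forbids a direct Fubini-and-Cauchy manoeuvre, and the intrinsic hypothesis on $f$ is essential to move $f(s)$ across the operator kernel when applying \Cref{AlpayLemma}.

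For (iv), if $s\notin g(\sigma_{SX}(T))$ then $q\mapsto \Q_{s}(g(q))^{-1}$ is intrinsic and slice hyperholomorphic on a neighbourhood of $\sigma_{SX}(T)$, so (iii) gives $\Q_{s}(g(T))^{-1}$ as a bounded operator, showing $\sigma_{S}(g(T))\subset g(\sigma_{SX}(T))$; the converse is obtained by a pole argument at any $p_{0}\in\sigma_{SX}(T)$ such that $g(p_{0})=s$. The composition identity $f(g(T))=(f\circ g)(T)$ then follows by substituting the slice hyperholomorphic Cauchy formula for $f$ (over a contour in $g(\sigma_{SX}(T))$) into the definition of $f(g(T))$ and recognising, via (iii) and the intrinsic character of $q\mapsto S_{L}^{-1}(s,g(q))$, the pseudo-resolvent $\Q_{s}(g(T))^{-1}$ inside the integrand. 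For (v), one deforms the contour outward and commutes $P$ past the pseudo-resolvents, using that the hypothesis $\lim_{p\to\infty}P(p)f(p)=0$ ensures all boundary terms at infinity vanish and that the integrand is absolutely integrable after the polynomial factor is absorbed; the fact that $f(T)$ maps into $\dom(T^{n})$ is established in the same computation by showing that the integral representing $P(T)f(T)$ converges to the one representing $(Pf)(T)$. For (vi), $\chi_{\sigma}$ is intrinsic because real-valued, so $\chi_{\sigma}(T)^{2}=\chi_{\sigma}(T)$ by (iii) applied to $\chi_{\sigma}^{2}=\chi_{\sigma}$. Using (iii) with $g(q)=q\chi_{\sigma}(q)$ (or approximating), one shows $\chi_{\sigma}(T)$ commutes with the pseudo-resolvents and hence leaves $\dom(T)$ invariant and commutes with $T$ there, giving $T$-invariance of $\ran \chi_{\sigma}(T)$. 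Finally $\sigma_{SX}(T_{\sigma})=\sigma$ follows by applying (iv) to the intrinsic function $q\mapsto\chi_{\sigma}(q)/\Q_{s}(q)$, which is slice hyperholomorphic on $\sigma_{SX}(T)$ precisely when $s\notin\sigma$, yielding the pseudo-resolvent of $T_{\sigma}$ on the invariant subspace.
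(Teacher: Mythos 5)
The paper itself does not prove this lemma: it is stated as a summary of known properties and the proofs are deferred to \cite{acgs, MR2752913, DA}. Your plan follows exactly the standard route used there (double Cauchy integral, the $S$-resolvent equation \eqref{SresEQ1}--\eqref{SresEQ2}, Cauchy's theorem to kill the terms containing $S_R^{-1}(s,T)$, and \Cref{AlpayLemma} to resum the terms containing $S_L^{-1}(p,T)$ into $S_L^{-1}(p,T)f(p)$, which commutes past $dp_I$ because $f$ is intrinsic), and your contour configuration (the left-resolvent variable on the inner domain, compactly contained in the domain of the right-resolvent variable) is the correct one. So in outline this is the same argument as in the cited sources, and for bounded $T$ your sketch of (i)--(iii) is essentially complete.

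There are, however, some concrete soft spots. In (iii) for unbounded $T$ the claim that the infinity contributions are ``handled by the additive term $f(\infty)g(\infty)\id$'' is too quick: expanding $f(T)g(T)$ from \Cref{SCalcCl} produces cross terms $f(\infty)\cdot\frac{1}{2\pi}\int S_L^{-1}(p,T)\,dp_I\,g(p)$ and $\frac{1}{2\pi}\int f(s)\,ds_I\,S_R^{-1}(s,T)\cdot g(\infty)$, and on an unbounded slice Cauchy domain the kernel terms you discard by Cauchy's theorem only decay like $|p|^{-1}$, so the boundary contribution at infinity does not vanish; these residual pieces must be shown to cancel against the cross terms. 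Moreover \Cref{AlpayLemma} is stated for bounded Cauchy domains, so a truncation $U\cap B_\rho(0)$ and a limit $\rho\to\infty$ is needed (as the paper does in the proof of \Cref{fProj}). In (iv), the map $q\mapsto S_L^{-1}(s,g(q))$ is not intrinsic (only $\Q_s(g(q))^{-1}$ is); what one actually uses is the product rule for the intrinsic factor plus right-linearity to handle $\overline{s}$, and the converse spectral inclusion needs more than a ``pole argument'': the standard step is the factorisation $\Q_{g(p_0)}(g(q))=\Q_{p_0}(q)^m h(q)$, from which invertibility of $\Q_{g(p_0)}(g(T))$ would force invertibility of $\Q_{p_0}(T)$. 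Finally, in (vi) your argument only yields $\sigma_{SX}(T_\sigma)\subset\sigma$; the reverse inclusion requires the complementary projection $\chi_{\sigma_{SX}(T)\setminus\sigma}(T)$ and the direct-sum argument showing that $s\in\rho_S(T_\sigma)\cap\rho_S(T_{\sigma^c})$ implies $s\in\rho_S(T)$. (On the plus side, you correctly keep all functional-calculus computations on the two-sided space $V$ and only restrict afterwards, which avoids the pitfall the paper warns about in \Cref{SpecThmSection}, namely that $\ran\chi_\sigma(T)$ is only a right-linear subspace.)
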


\subsection{Fractional powers of operators}
In order to recall the main results on fractional powers of quaternionic operators that have already been shown, let us start with the definition of fractional powers of quaternionic scalars.
\begin{definition}\label{ScalLogDef}
The (slice hyperholomorphic) logarithm on $\hh$ is defined as
\[
\log s:=\ln |s|+I_s \arg(s)\qquad\text{for } s\in \mathbb{H}\setminus (-\infty, 0],
\]
where $\arg(s) =  \arccos(s_0/|s|)$ is the unique angle $\varphi\in[0,\pi]$ such that $s = |s| e^{I_s\varphi}$. 
\end{definition}
Observe that for $s=s_0\in[0,\infty)$ we have  $\arccos(s_0/|s|) = 0$ and so $\log s = \ln s$. Therefore, $\log s$ is well defined on the positive real axis and does not depend on the choice of the imaginary unit $I_s$. One has
\[ e^{\log s} = s  \quad\text{for } s\in\hh\]
and
\[ \log e^s = s \quad\text{for }s\in\hh\ \ \text{ with }|\underline{s}|<\pi.\]
The quaternionic logarithm is both left and right slice hyperholomorphic (and actually even intrinsic) on $\hh\setminus (-\infty,0]$ and for any $I\in\SS$ its restriction to the complex plane $\cc_I$ coincides with the principal  branch of the complex logarithm on $\cc_I$. 
We define the fractional powers of exponent $\alpha\in \mathbb{R}$ of a quaternion $s$ as
\[
s^{\alpha}:= e^{\alpha\log s}=e^{\alpha (\ln |s|+I_s \arccos(s_0/|s|))}, \ \ \  s\in \mathbb{H}\setminus (-\infty, 0].
\]
This function is obviously also left and right slice hyperholomorphic on $\hh\setminus(-\infty, 0]$. Note however that if we define fractional powers $s^{\alpha}$ with $\alpha\in\hh\setminus\rr$ by the above formula, we do not obtain a slice hyperholomorphic function because the composition of two slice hyperholomorphic functions is in general only slice hyperholomorphic if the inner function is intrinsic.

In \cite{FJTAMS} we showed that two approaches for defining fractional powers of complex linear operators can be generalized to the quaternionic setting. The first approach taken from \cite{Engel:2000} defines fractional powers $T^{-\alpha}$ with $0<\alpha < +\infty$ for a sectorial operator   $T\in\closOP(V)$  with bounded inverse, i.e. this operator satisfies
\begin{equation}\label{A1Hyp}
\left\| S_{R}^{-1}(s,T) \right\| \leq \frac{M}{1 + |s|} \quad \text{for $s\in (-\infty,0]$.}
\end{equation}
This estimate implies that there exist $0<\theta_0<\pi$ and $a_0>0$ such that  \eqref{A1Hyp} holds (with a different constant) true for any $s$ in  the open sector $\Sigma(\theta_0,a_0) = \{ s\in\hh : |\arg(s-a_0)|<\theta_0\} $. 
\begin{definition}
Let $T\in\closOP(V)$ satisfy \eqref{A1Hyp}, let $I\in\SS$ and let $\Gamma$ be a piecewise smooth path in $\Sigma\cap\cc_I$ that goes from $\infty e^{I\theta}$ to $\infty e^{-I\theta}$ with $ \theta_0<\theta < \pi$ avoiding $(-\infty,0]$. For $\alpha >0$ we define
\begin{equation}\label{ERERE}
T^{-\alpha}:= \frac{1}{2\pi}\int_{\Gamma}s^{-\alpha}\,ds_I\, S_{R}^{-1}(s,T).
\end{equation}
\end{definition}
This definition is independent of the choice of $\Gamma$ and $I$. The fractional powers satisfy $T^{-(\alpha + \beta)} = T^{-\alpha}T^{-\beta}$ and the mapping $\alpha\mapsto T^{-\alpha}$ is a strongly continuous semi-group of quaternionic linear operators. Moreover, besides other properties, for $\alpha\in(0,1)$ the operator $T^{-\alpha}$ has the following integral representation
\begin{equation}\label{A1Int}
T^{-\alpha} = -\frac{\sin(\alpha\pi)}{\pi}\int_{0}^{+\infty}t^{-\alpha}S_{R}^{-1}(-t,T)\,dt,
\end{equation}
which is analogue to the classical case, cf. also \Cref{TaInjProp}.

for a sectorial operator $T$, cf. \Cref{SectDef}, one can define fractional powers for $\alpha\in(0,1)$ via Kato's approach in \cite{Kato}: one can show that there exists a unique closed operator $B_{\alpha}$ such that the right $S$-resolvent operator of $B_{\alpha}$ at any point $s$ that is sufficiently close to the  negative real axis given by
\[ S_{R}^{-1}(s,B_{\alpha}) = \frac{\sin(\alpha\pi)}{\pi}\int_{0}^{+\infty} t^{\alpha}\left(s^2 - 2st^{\alpha}\cos(\alpha\pi) + t^{2\alpha}\right)^{-1} S_R^{-1}(-t,T)\,dt.\]
Since this integral corresponds to a slice hyperholomorphic integral as in the $S$-functional calculus resp. as in \eqref{A1Int} for the function $S_R^{-1}(s,p^{\alpha})$, in which the path of integration is transformed into the negative real axis, it is meaningful to define $T^{\alpha}:=B_{\alpha}$. If  $\omega$ is the spectral angle of $T$, then $T^{\alpha}$ is a sectorial operator with spectral angle $\alpha\omega$. Moreover, if $\alpha\omega < \pi/2$, then $T^{\alpha}$ is the infinitesimal generator of a strongly continuous semigroup that is analytic in time.

\section{The  $H^{\infty}$-functional calculus arbitrary for sectorial operators}\label{HInfty}
The $H^{\infty}$-functional calculus was originally introduced in \cite{McI1} by McIntosh in 1986. His approach was generalized to quaternionic sectorial operators  that are injective and have dense range in \cite{Hinfty} by Alpay et al. In order to introduce fractional powers of quaternionic linear operators in the most general setting, we shall now define the $H^{\infty}$-functional calculus for arbitrary sectorial operators following the strategy of \cite{Haase}. This approach does not require neither the injectivity of $T$ nor that $T$ has dense range. Several proofs do not need a lot of additional work and the strategies of the complex setting can be applied in a quite straightforward way. We shall therefore in particular focus on the proof of the chain rule and of the the spectral mapping theorem, where more severe technical difficulties appear.

In order to define sectorial operators we introduce the sector $\sector{\varphi }$ for $\varphi \in(0,\pi]$ as
\begin{gather*}
\Sigma_{\varphi } := \{s\in\hh: \arg(s) < \varphi  \}.
\end{gather*}
\begin{definition}\label{SectDef}
Let $\omega\in[0,\pi)$. An operator $T\in\closOP(V)$ is called sectorial of angle $\omega$ if
\begin{enumerate}[label = (\roman*)]
\item we have $\sigma_S(T)\subset \overline{\sector{\omega}}$ and
\item for every $\varphi \in(\omega,\pi)$ there exists a constant $C>0$ such that for $s\notin\overline{\sector{\varphi}}$
\begin{equation}\label{SectCond}
 \left\| S_L^{-1}(s,T)\right\| \leq \frac{C}{|s|} \quad\text{and}\quad \left\| S_R^{-1}(s,T)\right\| \leq \frac{C}{|s|}. 
 \end{equation}
 We denote the infimum of all these constants by $C_{\varphi}$ resp. by $C_{\varphi,T}$ if we also want to stress its dependence on $T$.
\end{enumerate}
We denote the set of all operators in $\closOP(V)$ that are sectorial of angle $\omega$ by $\sectOP(\omega)$. Furthermore, if $T$ is a sectorial operator, we call $\omega_T = \min\{\omega: T\in\sectOP(\omega)\}$ the spectral angle of $T$.

Finally, a family of operators $(T_{\ell})_{\ell\in\Lambda}$ is called uniformly sectorial of angle $\omega$ if $T_{\ell}\in\sectOP(\omega)$ for all $\ell\in\Lambda$ and $\sup_{\ell\in\Lambda} C_{\varphi,T_{\ell}} < \infty$ for all $\varphi\in(\omega,\pi)$.
\end{definition}

\begin{definition}
We say that a slice hyperholomorphic function $f$ has polynomial limit $c\in\hh$ in $\sector{\varphi}$ at $0$ if there exists $\alpha > 0$ such that $f(p) - c = O\left(|p|^{\alpha}\right)$ as $p\to 0$ in $\sector{\varphi}$ and that it has polynomial limit $\infty$ in $\sector{\varphi}$ at $0$ if $f^{-\prodL}$ resp. $f^{-\prodR}$ has polynomial limit $0$ at $0$ in $\sector{\varphi}$. (By \eqref{SEst2} this is equivalent to $1/|f(p)| \in O(|p|^{\alpha})$ for some $\alpha > 0$ as $p\to 0$ in $\sector{\varphi}$.)

Similarly, we say that $f$ has polynomial limit $c\in\hh_{\infty}$ at $\infty$ in $\sector{\varphi}$ if $p \mapsto f(p^{-1})$ has polynomial limit $c$ at $0$.
If a function has polynomial limit $0$ at $0$ or $\infty$, we say that it decays regularly at $0$ resp. $\infty$.
\end{definition}
Observe that the mapping $p \mapsto p^{-1}$ leaves $\sector{\varphi}$ invariant such that the above relation between polynomial limits at $0$ and $\infty$ makes sense.
\begin{definition}
Let $\varphi\in(0,\pi]$.  We define $\lholZI(\sector{\varphi})$ as the set of all bounded functions in $\lhol(\sector{\varphi})$ that decay regularly at $0$ and $\infty$. Similarly, we define $\rholZI(\sector{\varphi})$ and $\intrinZI(\sector{\varphi})$ as the set of all bounded functions in $\rhol(\sector{\varphi})$ resp. $\intrin(\sector{\varphi})$ that decay regularly at $0$ and $\infty$.
\end{definition}
The following Lemma is an immediate consequence of \Cref{HolStruct}.
\begin{lemma}
Let $\varphi\in(0,\pi]$.
\begin{enumerate}[label = (\roman*)]
\item If $f,g\in\lholZI(\sector{\varphi})$ and $a\in\hh$, then $fa+g\in\lholZI(\sector{\varphi})$. If in addition even $f\in\intrinZI(\sector{\varphi})$, then also $fg\in\intrinZI(\sector{\varphi})$.
\item If $f,g\in\rholZI(\sector{\varphi})$ and $a\in\hh$, then $af+g\in\rholZI(\sector{\varphi})$. If in addition even $g\in\intrinZI(\sector{\varphi})$, then also $fg\in\intrinZI(\sector{\varphi})$.
\item The space $\intrinZI(\sector{\varphi})$ is a real algebra.
\end{enumerate}
\end{lemma}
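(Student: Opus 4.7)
The plan is straightforward: all three assertions reduce to combining (a) the slice-hyperholomorphic structure already supplied by \Cref{HolStruct}, (b) the elementary boundedness of sums and products of bounded functions, and (c) a short bookkeeping of the polynomial decay exponents at $0$ and at $\infty$.

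For part (i), I would first note that $fa+g \in \lhol(\sector{\varphi})$ because $\lhol(\sector{\varphi})$ is a right $\hh$-module (the Cauchy--Riemann system \eqref{CR} and the compatibility condition \eqref{CCond} are linear). Boundedness is immediate from $\|fa+g\|_{\infty} \leq \|f\|_{\infty}|a| + \|g\|_{\infty}$. For regular decay at $0$, pick $\alpha_f,\alpha_g>0$ and constants $C_f,C_g$ with $|f(p)|\leq C_f|p|^{\alpha_f}$ and $|g(p)|\leq C_g|p|^{\alpha_g}$ for $p$ sufficiently close to $0$ in $\sector{\varphi}$; then $|(fa+g)(p)|\leq C_f|a||p|^{\alpha_f}+C_g|p|^{\alpha_g}=O(|p|^{\min(\alpha_f,\alpha_g)})$, and $\min(\alpha_f,\alpha_g)>0$. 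Applying the same reasoning to $p\mapsto (fa+g)(p^{-1})$ yields regular decay at $\infty$. For the product statement, the intrinsic $f$ provides $fg \in \lhol(\sector{\varphi})$ by \Cref{HolStruct}; boundedness again follows from $\|fg\|_{\infty}\leq \|f\|_{\infty}\|g\|_{\infty}$. The pointwise estimate $|f(p)g(p)|\leq \|f\|_{\infty}|g(p)|$ combined with the regular decay of $g$ at $0$ gives $|fg(p)|=O(|p|^{\alpha_g})$ as $p\to 0$ in $\sector{\varphi}$, and the same argument applied to $p\mapsto f(p^{-1})g(p^{-1})$ handles $\infty$.

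Part (ii) is entirely symmetric: use that $\rhol(\sector{\varphi})$ is a left $\hh$-module and invoke the right-sided statement in \Cref{HolStruct}. For part (iii), an intrinsic function is both left and right slice hyperholomorphic by \Cref{IntrinChar}, so (i) and (ii) show that $\intrinZI(\sector{\varphi})$ is stable under addition and under multiplication by intrinsic factors; what remains is to check that the real linear combination and the product of intrinsic functions are again intrinsic. This follows from the decompositions $f=\alpha_f+I\beta_f$, $g=\alpha_g+I\beta_g$ with real-valued $\alpha_{\bullet},\beta_{\bullet}$: the combination $fa+g$ with $a\in\rr$ has components $a\alpha_f+\alpha_g$ and $a\beta_f+\beta_g$, still real-valued, and $fg=(\alpha_f\alpha_g-\beta_f\beta_g)+I(\alpha_f\beta_g+\beta_f\alpha_g)$, again with real-valued components.

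There is no substantive obstacle here; the only point requiring a moment of care is that polynomial limits at $\infty$ are defined via the inversion $p\mapsto p^{-1}$, so each decay argument has to be run in the two charts $p\to 0$ and $p\to\infty$, using that the sector $\sector{\varphi}$ is invariant under inversion.
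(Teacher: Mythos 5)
Your argument is correct and is exactly the routine verification the paper has in mind: the paper states this lemma without proof as an ``immediate consequence of \Cref{HolStruct}'', and your combination of \Cref{HolStruct} with the boundedness estimates and the decay bookkeeping carried out in the two charts $p\to 0$ and $p\to\infty$ (using that $\sector{\varphi}$ is invariant under inversion) is precisely that verification. One small remark: in the product assertions the conclusion should read $fg\in\lholZI(\sector{\varphi})$ resp.\ $fg\in\rholZI(\sector{\varphi})$ --- which is what you actually prove --- since $g$ (resp.\ $f$) need not be intrinsic; the ``$\intrinZI$'' appearing there in the statement is a misprint.
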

\begin{definition}[S-functional calculus for sectorial operators]\label{SectSCalc}
Let $T\in\sectOP(\omega)$. If $f\in\lholZI(\sector{\varphi})$ with $\omega < \varphi < \pi$, we choose $\varphi'$ with $\omega < \varphi' < \varphi$ and $I\in\SS$ and define
\begin{equation}\label{SectIntL}
 f(T) := \frac{1}{2\pi}\int_{\partial (\sector{\varphi'}\cap\cc_I) } S_L^{-1}(s,T)\,ds_I\, f(s).
\end{equation}
Similarly, for $f\in\rholZI(\sector{\varphi})$ with $\omega < \varphi < \pi$, we choose $\varphi'$ with $\omega < \varphi' < \varphi$ and $I\in\SS$ and define
\begin{equation}\label{SectIntR}
 f(T) := \frac{1}{2\pi}\int_{\partial (\sector{\varphi'}\cap\cc_I) }f(s) \,ds_I\, S_R^{-1}(s,T).
 \end{equation}
\end{definition}
\begin{remark}
Since $T$ is sectorial of angle $\omega$, the estimates in \eqref{SectCond}
 assure the convergence of the above integrals. A standard argument using the slice hyperholomorphic version of Cauchy's integral theorem show that the integrals are independent of the choice of the angle $\varphi'$ and standard slice hyperholmorphic techniques based on the representation formula show that they are independent of the choice of the imaginary unit $I$. For details we refer to the proof of \cite[Theorem~4.9]{Hinfty}. Finally, computations as in the proof of \cite[Theorem~3.12]{DA} show that \eqref{SectIntL} and \eqref{SectIntR} yield the same operator if $f$ is intrinsic, cf. \Cref{IntrinRemark}.
\end{remark}
If $T\in\sectOP(\omega)$, then $f(T)$ in \Cref{SectSCalc} can be defined for any function that belongs to $\lholZI(\sector{\varphi})$ for some $\varphi \in(\omega,\pi]$. We thus introduce a notation for the space of all such functions.
\begin{definition}
For $\omega\in(0,\pi)$, we define
$ \lholZI[\sector{\omega}] = \bigcup_{\omega<\varphi \leq \pi} \lholZI(\sector{\varphi})$ and similarly also $ \rholZI[\sector{\omega}] = \bigcup_{\omega<\varphi \leq \pi} \rholZI(\sector{\varphi})$ and $ \intrinZI[\sector{\omega}] = \bigcup_{\omega<\varphi \leq \pi} \intrinZI(\sector{\varphi})$.
\end{definition}

The following properties of the $S$-functional calculus for sectorial operators can be proved by standard slice-hyperholomorphic techniques as, cf. for instance \cite[Theorem~4.12]{Hinfty}.
\begin{lemma}\label{HZIProp}
If $T\in\sectOP(\omega)$, then the following statements hold true.
\begin{enumerate}[label = (\roman*)]
\item If $f\in\lholZI[\sector{\omega}]$ or $f\in\rholZI[\sector{\omega}]$, then the operator $f(T)$ is bounded.
\item If $f,g\in\lholZI[\sector{\omega}]$ and $a \in \hh$, then $(fa+g)(T) = f(T)a+g(T)$. If $f,g\in\rholZI[\sector{\omega}]$ and $a\in\hh$, then $(af+g)(T) = af(T) + g(T)$. 
\item If $f\in\intrinZI[\sector{\omega}]$ and $g\in\lholZI[\sector{\omega}]$, then $(fg)(T) = f(T)g(T)$. If $f\in\rholZI[\sector{\omega}]$ and $g\in\intrinZI[\sector{\omega}]$, then also $(fg)(T) = f(T)g(T)$.
\end{enumerate}
\end{lemma}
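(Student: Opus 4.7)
Parametrise the two rays of $\partial(\sector{\varphi'}\cap\cc_I)$ (for $\omega<\varphi'<\varphi$) by $s=te^{\pm I\varphi'}$, $t\in(0,\infty)$. The sectoriality estimate \eqref{SectCond} gives $\|S_L^{-1}(s,T)\|\leq C/t$, while $f\in\lholZI(\sector{\varphi})$ is bounded and decays regularly at $0$ and $\infty$, providing $\alpha>0$ with $|f(s)|\leq c\min(t^{\alpha},t^{-\alpha})$ along these rays. The operator-norm of the integrand in \eqref{SectIntL} is then majorised by $Cc\,t^{\alpha-1}$ near $0$ and by $Cc\,t^{-\alpha-1}$ near $\infty$; both are integrable, so $f(T)\in\boundOP(V)$. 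Independence of $\varphi'$ follows by closing two intermediate rays with small and large circular arcs and applying \Cref{Cauchy} together with \Cref{ResHol2342}, the boundary contributions vanishing by the same estimates as the radii tend to $0$ and $\infty$; independence of $I\in\SS$ is a standard representation-formula argument, as in \cite[Theorem~4.9]{Hinfty}. The right slice hyperholomorphic case is identical, and (ii) is immediate from linearity of the defining integrals.

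\textbf{Part (iii).} We handle $f\in\intrinZI[\sector{\omega}]$, $g\in\lholZI[\sector{\omega}]$; the other case is analogous. Choose $\omega<\varphi_p<\varphi_s<\pi$ with $f\in\intrinZI(\sector{\varphi_s})$ and $g\in\lholZI(\sector{\varphi_p})$, and set $\Gamma_s:=\partial(\sector{\varphi_s}\cap\cc_I)$, $\Gamma_p:=\partial(\sector{\varphi_p}\cap\cc_I)$. Since $f$ is intrinsic, \Cref{IntrinRemark} allows us to use its right $S$-resolvent representation, and Fubini (justified by the absolute convergence established in (i)) gives
$$f(T)g(T)=\frac{1}{(2\pi)^2}\int_{\Gamma_s}\int_{\Gamma_p} f(s)\,ds_I\,S_R^{-1}(s,T)\,S_L^{-1}(p,T)\,dp_I\,g(p).$$
Since $\Gamma_p\subset\sector{\varphi_s}$ and $\Gamma_s\cap\overline{\sector{\varphi_p}}=\emptyset$, we have $s\notin[p]$ throughout, so \eqref{SresEQ1} splits the integrand into four terms. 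The two containing $S_R^{-1}(s,T)$ only (with $S_L^{-1}(p,T)$ absent) have, for fixed $s\in\Gamma_s$, a $p$-integrand that is left slice hyperholomorphic on a neighbourhood of $\overline{\sector{\varphi_p}}$: its possible singularities lie on $[s]$, which is disjoint from $\overline{\sector{\varphi_p}}$ by axial symmetry, while $g$ is slice hyperholomorphic there. Closing $\Gamma_p$ with small and large arcs and using the regular decay of $g$ together with $(p^2-2s_0p+|s|^2)^{-1}=O(|p|^{-2})$ at infinity, these two contributions vanish by \Cref{Cauchy}. The two remaining terms sum to $(\overline{s}\,S_L^{-1}(p,T)-S_L^{-1}(p,T)p)(p^2-2s_0p+|s|^2)^{-1}$, and \Cref{AlpayLemma} applied with $B=S_L^{-1}(p,T)$ identifies their $s$-integral as $2\pi\,S_L^{-1}(p,T)f(p)$. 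Since $f$ is intrinsic and $p\in\cc_I$ we have $f(p)\in\cc_I$, so $f(p)$ commutes with $dp_I$, and we conclude
$$f(T)g(T)=\frac{1}{2\pi}\int_{\Gamma_p}S_L^{-1}(p,T)\,dp_I\,f(p)g(p)=(fg)(T),$$
using that $fg\in\lholZI(\sector{\varphi_p})$ by \Cref{HolStruct} (the regular decay of the product follows from that of the factors).

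\textbf{Main obstacle.} The principal difficulty is that both \Cref{AlpayLemma} and \Cref{Cauchy} are stated for \emph{bounded} slice Cauchy domains, whereas $\sector{\varphi_s}$ and $\sector{\varphi_p}$ are unbounded. We overcome this by exhausting each sector through a nested sequence of bounded slice Cauchy sub-domains whose boundaries consist of a segment of each sector ray together with a small inner and a large outer circular arc; applying the bounded-domain statements on each sub-domain and letting the inner radius tend to $0$ and the outer radius to $\infty$, the arc contributions vanish uniformly thanks to the sectorial resolvent bounds \eqref{SectCond} combined with the polynomial decay of $f$ and $g$, as in the estimate used for part (i). The limiting ray integrals then furnish exactly the identities over $\Gamma_s$ and $\Gamma_p$ used above.
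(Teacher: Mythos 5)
Your proposal is correct and takes essentially the approach the paper intends: the paper gives no detailed proof, referring only to \cite[Theorem~4.12]{Hinfty} and ``standard slice hyperholomorphic techniques'', and those techniques are exactly what you use -- the sectorial bounds \eqref{SectCond} combined with regular decay for convergence and linearity, and for the product rule the $S$-resolvent equation, Fubini, Cauchy's integral theorem and \Cref{AlpayLemma} on truncated sectors, which is the same machinery the paper itself deploys in \Cref{ComLem}, \Cref{fProj} and \Cref{CompRul}. Only a cosmetic remark: as in \Cref{SectSCalc}, the integration angles $\varphi_p<\varphi_s$ should be chosen strictly smaller than an angle $\varphi$ for which both $f$ and $g$ belong to their classes on $\sector{\varphi}$ (and the single boundary point $0$, where $\Gamma_s$ and $\Gamma_p$ meet, is harmless by the decay of the integrands), so that the integrands are defined on the paths.
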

We recall that a closed operator $A\in\closOP(V)$ is said to commute with $B\in\boundOP(V)$, if $BA\subset AB$.  
\begin{lemma}\label{ComLem}
Let $T\in\sectOP(\omega)$ and $A\in\closOP(V)$ commute with $\Q_{s}(T)^{-1}$ and $T\Q_{s}(T)^{-1}$ for any $s\in\rho_S(T)$. Then $A$ commutes with $f(T)$ for any $f\in\intrinZI[\sector{\omega}]$. In particular $f(T)$ commutes with $T$ for any $f\in\intrinZI[\sector{\omega}]$.
\end{lemma}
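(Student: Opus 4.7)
The plan is to show that for every $v\in\dom(A)$ we have $f(T)v\in\dom(A)$ and $Af(T)v = f(T)Av$; this is exactly the inclusion $Af(T)\supset f(T)A$, which (since $f(T)$ is bounded by \Cref{HZIProp}) means $A$ commutes with $f(T)$. The main idea is an approximation argument: write $f(T)$ as a limit of Riemann sums of bounded operators each of which commutes with $A$ by hypothesis, and then use the closedness of $A$ twice. The starting observation is the one recorded in \Cref{IntrinRemark}: since $f$ is intrinsic, $f(\overline{s}) = \overline{f(s)}$, so after parameterising the boundary $\partial(\sector{\varphi'}\cap\cc_I)$ as $s = t e^{\pm I\varphi'}$ with $t\in(0,\infty)$ and exploiting the symmetry of the contour with respect to $\rr$, the defining integral \eqref{SectIntL} takes the form
\[
f(T) = \int_{0}^{\infty}\bigl[\alpha(t)\,\Q_{t}(T)^{-1} + \beta(t)\,T\,\Q_{t}(T)^{-1}\bigr]\,dt,
\]
where $\Q_{t}(T) := T^{2} - 2t\cos\varphi'\,T + t^{2}\id$ and $\alpha,\beta\colon(0,\infty)\to\rr$ are continuous; the integral converges in operator norm thanks to the sectorial estimate \eqref{SectCond} together with the polynomial decay of $f$ at $0$ and $\infty$ guaranteed by $f\in\intrinZI[\sector{\omega}]$.

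Next I would truncate and discretise. Set $u_{n}:=\int_{1/n}^{n}[\alpha(t)\Q_{t}(T)^{-1} + \beta(t)T\Q_{t}(T)^{-1}]\,dt$; by the same decay estimates, $u_{n}\to f(T)$ in operator norm as $n\to\infty$. For each fixed $n$, the integrand is a continuous operator-valued function on a compact interval, so $u_{n}$ is the operator-norm limit, as $m\to\infty$, of Riemann sums
\[
\sigma_{n,m} = \sum_{k}\Delta t_{k}\bigl[\alpha(t_{k})\,\Q_{t_{k}}(T)^{-1} + \beta(t_{k})\,T\,\Q_{t_{k}}(T)^{-1}\bigr].
\]
Since the coefficients $\alpha(t_{k})\Delta t_{k}$ and $\beta(t_{k})\Delta t_{k}$ are real and $A$ commutes with every $\Q_{t_{k}}(T)^{-1}$ and $T\Q_{t_{k}}(T)^{-1}$, and since $\dom(A)$ is a linear subspace, for $v\in\dom(A)$ we have $\sigma_{n,m}v\in\dom(A)$ and $A\sigma_{n,m}v = \sigma_{n,m}(Av)$.

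Now pass to the limit twice using that $A$ is closed. First, letting $m\to\infty$, both $\sigma_{n,m}v\to u_{n}v$ and $A\sigma_{n,m}v = \sigma_{n,m}(Av)\to u_{n}(Av)$ in $V$, so the closedness of $A$ gives $u_{n}v\in\dom(A)$ and $Au_{n}v = u_{n}(Av)$. Then letting $n\to\infty$, $u_{n}v\to f(T)v$ and $Au_{n}v = u_{n}(Av)\to f(T)(Av)$, so closedness of $A$ a second time yields $f(T)v\in\dom(A)$ and $Af(T)v = f(T)Av$, as desired.

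Finally, to deduce that $T$ itself commutes with $f(T)$ one applies the main statement with $A = T$. The hypothesis then reduces to checking that $T$ commutes (in the sense $BA\subset AB$) with $\Q_{s}(T)^{-1}$ and $T\Q_{s}(T)^{-1}$, which is immediate from the fact that $\Q_{s}(T)^{-1}$ maps $V$ into $\dom(T^{2})$ and from the identity $\Q_{s}(T)^{-1}Tv = T\Q_{s}(T)^{-1}v$ for $v\in\dom(T)$, valid because $\Q_{s}(T)$ is a polynomial in $T$ with real coefficients. The main technical hurdle in the whole argument is not the closedness machinery, which is routine, but the preliminary step of recasting the intrinsic integral in a form that involves only real scalar multiples of $\Q_{t}(T)^{-1}$ and $T\Q_{t}(T)^{-1}$; this is precisely the content of \Cref{IntrinRemark}, and it is what allows the commutation hypothesis on $A$ to be brought to bear.
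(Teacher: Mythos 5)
Your proof is correct and follows essentially the same route as the paper: both reduce, via the intrinsic symmetry $f(\overline{s})=\overline{f(s)}$, the defining Cauchy integral to an integral of real scalar multiples of $\Q_{s}(T)^{-1}$ and $T\Q_{s}(T)^{-1}$, and then pass $A$ through that integral. Your truncation--Riemann-sum--closedness argument (and the explicit verification of the hypothesis for $A=T$) merely spells out steps the paper carries out implicitly when it interchanges $A$ with the integral.
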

\begin{proof}
If $f\in\intrinZI[\sector{\omega}]$, then for suitable $\varphi \in(\omega,\pi)$ and $I\in\SS$, we have
\begin{align*}
f(T) =& \frac{1}{2\pi}\int_{\partial(\sector{\varphi}\cap\cc_I)} f(s)\,ds_I S_R^{-1}(s,T) \\
 =&\frac{1}{2\pi}\int_{-\infty}^{0} f\left(-te^{I\varphi}\right)\left(-e^{I\varphi}\right) (-I) \left(-te^{-I\varphi}-T\right)\Q_{-te^{I\varphi}}(T)^{-1} \,dt\\
 &+\frac{1}{2\pi}\int_{0}^{+\infty} f\left(te^{-I\varphi}\right)\left(e^{-I\varphi}\right) (-I) \left(te^{I\varphi}-T\right)\Q_{te^{-I\varphi}}(T)^{-1} \,dt.
\end{align*}
After the changing $t\mapsto -t$ in the first integral, we find 

\begin{align*}
f(T) =&\frac{1}{2\pi}\int_{0}^{+\infty} f\left(te^{I\varphi}\right)\left(e^{I\varphi} I \right) \left(te^{-I\varphi}-T\right)\Q_{te^{I\varphi}}(T)^{-1} \,dt\\
 &+\frac{1}{2\pi}\int_{0}^{+\infty} f\left(te^{-I\varphi}\right)\left(-e^{-I\varphi}I\right) \left(te^{I\varphi}-T\right)\Q_{te^{-I\varphi}}(T)^{-1} \,dt\\
 =&\frac{1}{2\pi}\int_{0}^{+\infty} 2\Re\left[f\left(te^{I\varphi}\right) I t\right]\Q_{te^{I\varphi}}(T)^{-1} \,dt \\ 
& - \frac{1}{2\pi}\int_{0}^{+\infty} 2\Re\left[f\left(te^{I\varphi}\right)Ie^{I\varphi} \right] T\Q_{te^{I\varphi}}(T)^{-1} \,dt,
\end{align*}
where the last identity holds because $f(\overline{s}) = \overline{f(s)}$ as $f$ is intrinisic and $\Q_{te^{I\varphi}}(T)^{-1} = \Q_{te^{-I\varphi}}(T)^{-1}$.

If now $v\in\dom(A)$, then the fact that $A$ commutes with $\Q_{s}(T)^{-1}$ and $T\Q_{s}(T)^{-1}$ and any real scalar implies
\begin{align*}
f(T) A v = &\frac{1}{2\pi}\int_{0}^{+\infty} 2\Re\left[f\left(te^{I\varphi}\right) I t\right]\Q_{te^{I\varphi}}(T)^{-1}  Av\,dt \\ 
& - \frac{1}{2\pi}\int_{0}^{+\infty} 2\Re\left[f\left(te^{I\varphi}\right)Ie^{I\varphi} \right] T\Q_{te^{I\varphi}}(T)^{-1} Av\,dt\\
 = &A \frac{1}{2\pi}\int_{0}^{+\infty} 2\Re\left[f\left(te^{I\varphi}\right) I t\right]\Q_{te^{I\varphi}}(T)^{-1}  v\,dt \\ 
& - A \frac{1}{2\pi}\int_{0}^{+\infty} 2\Re\left[f\left(te^{I\varphi}\right)Ie^{I\varphi} \right] T\Q_{te^{I\varphi}}(T)^{-1} v\,dt = A f(T)v.
\end{align*}
We thus find $v \in \dom(Af(T))$ with $f(T) Av = Af(T)v$ and in turn $f(T)A \subset Af(T)$.

\end{proof}

\begin{lemma}\label{ResCanc}
Let $T\in\sectOP(\omega)$. If $\lambda\in(-\infty,0)$ and $f\in\lholZI[\sector{\omega}]$ then $s\mapsto (\lambda - s)^{-1}f(s)\in\lholZI[\sector{\omega}]$ and
\[ \left((\lambda - s)^{-1}f(s)\right)(T) = (\lambda-T)^{-1}f(T) = S_L^{-1}(\lambda,T)f(T).\]
Similarly, if $\lambda\in(-\infty,0)$ and $f\in\rholZI[\sector{\omega}]$ then $s\mapsto f(s) (\lambda - s)^{-1}\in\lholZI[\sector{\omega}]$ and
\[ \left(f(s)(\lambda - s)^{-1}\right)(T) = f(T) (\lambda-T)^{-1}= f(T)S_R^{-1}(\lambda,T).\]
\end{lemma}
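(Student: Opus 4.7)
The plan is to verify both assertions by direct computation using the $S$-resolvent equation specialized to the real point $\lambda$.

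First I would check the function-space inclusions. Since $\lambda<0$ lies outside the closed sector $\overline{\sector{\varphi'}}$ for every $\varphi'<\pi$, the function $s\mapsto(\lambda-s)^{-1}$ is intrinsic slice hyperholomorphic on $\sector{\varphi'}$, bounded there, and decays like $|s|^{-1}$ at infinity. By \Cref{HolStruct} its product with $f\in\lholZI[\sector{\omega}]$ is again left slice hyperholomorphic, and the regular decay of $f$ at $0$ and $\infty$ survives (and is even improved at $\infty$). Hence $(\lambda-s)^{-1}f(s)\in\lholZI[\sector{\omega}]$, and symmetrically $f(s)(\lambda-s)^{-1}\in\rholZI[\sector{\omega}]$.

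For the operator identity in the left case, the key observation is that $\lambda\in\rr\cap\rho_S(T)$ gives $S_L^{-1}(\lambda,T)=S_R^{-1}(\lambda,T)=(\lambda\id-T)^{-1}$ and $[\lambda]=\{\lambda\}$, so the $S$-resolvent equation \eqref{SresEQ1} may be evaluated at $s=\lambda$. Using that $p^2-2\lambda p+\lambda^2=(p-\lambda)^2$ and that the real scalar $\lambda$ commutes with every operator (so $[A]p-\lambda[A]=A(p-\lambda)$), one obtains the clean identity
\[
(\lambda\id-T)^{-1}S_L^{-1}(p,T)=\bigl[(\lambda\id-T)^{-1}-S_L^{-1}(p,T)\bigr](p-\lambda)^{-1}
\]
for every $p\in\rho_S(T)\setminus\{\lambda\}$. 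I would then multiply the integral defining $f(T)$ on a path $\Gamma=\partial(\sector{\varphi'}\cap\cc_I)$ with $\omega<\varphi'<\pi$ from the left by $(\lambda\id-T)^{-1}$, pull this bounded operator inside the integral, substitute the identity above, and split. Using that the scalar $(p-\lambda)^{-1}\in\cc_I$ commutes with $dp_I=-I\,dp$ and that $(\lambda-p)^{-1}=-(p-\lambda)^{-1}$, the first term rearranges precisely to $g(T):=\bigl((\lambda-s)^{-1}f(s)\bigr)(T)$; the second term factors as $(\lambda\id-T)^{-1}$ times the scalar integral
\[
\frac{1}{2\pi}\int_{\Gamma}(p-\lambda)^{-1}\,dp_I\,f(p).
\]
This last integral vanishes by Cauchy's integral theorem: its integrand extends left slice hyperholomorphically to all of $\sector{\varphi'}$ (because $\lambda$ is exterior) and decays regularly at $0$ and at $\infty$, so the contour can be closed at both ends. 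The identity $(\lambda\id-T)^{-1}f(T)=g(T)$ follows.

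The right slice hyperholomorphic case is handled in the mirror way, using the equivalent form \eqref{SresEQ2} of the $S$-resolvent equation at $p=\lambda$. After the same real-scalar collapse one obtains $S_R^{-1}(p,T)(\lambda\id-T)^{-1}=-(p-\lambda)^{-1}\bigl[S_R^{-1}(p,T)-(\lambda\id-T)^{-1}\bigr]$; multiplying the Cauchy representation of $f(T)$ from the right by $(\lambda\id-T)^{-1}$ and applying the right slice hyperholomorphic Cauchy vanishing to the remainder integral completes the proof. The main obstacle is purely organizational: verifying that the two forms of the $S$-resolvent equation really do simplify cleanly at a real point, and confirming the scalar Cauchy vanishing on the unbounded sector from the decay built into $\lholZI$ and $\rholZI$.
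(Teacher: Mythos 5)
Your proposal is correct and follows essentially the same route as the paper: specialize the $S$-resolvent equation at the real point $\lambda$ (where $S_L^{-1}(\lambda,T)=S_R^{-1}(\lambda,T)=(\lambda\id-T)^{-1}$ and the quadratic denominator collapses to $(p-\lambda)^{-2}$), insert the resulting identity into the Cauchy-integral representation of $f(T)$, and kill the remainder term with Cauchy's integral theorem using the regular decay of $f$ at $0$ and $\infty$. Your explicit check of the function-space inclusion and of the mirrored right slice hyperholomorphic case (via the form \eqref{SresEQ2} of the resolvent equation) fills in details the paper leaves implicit, but the argument is the same.
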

\begin{proof}
Let $\lambda\in(-\infty,0)$ and observe that, since $\lambda$ is real, the $S$-resolvent equation \eqref{SresEQ1} turns into
\begin{equation*}
(\lambda - T)^{-1}S_L^{-1}(s,T)   = S_R^{-1}(\lambda, T) S_L^{-1}(s,T) = \left(S_R^{-1}(\lambda,T)- S_L^{-1}(s,T)\right)(s-\lambda)^{-1}.
\end{equation*}
If now $f\in\lholZI[\sector{\omega}]$, then for suitable $\varphi\in(\omega,\pi)$ and $I\in\SS$, we have
\begin{align*}
(\lambda\id - T)^{-1}f(T) =& \frac{1}{2\pi}\int_{\partial( \sector{\varphi}\cap\cc_I)} (\lambda\id-T)^{-1}S_L^{-1}(s,T)\,ds_I\,f(s)\\
=&\frac{1}{2\pi}\int_{\partial( \sector{\varphi}\cap\cc_I)}  \left(S_R^{-1}(\lambda,T)- S_L^{-1}(s,T)\right)(s-\lambda)^{-1}\,ds_I\,f(s)\\
=&S_R^{-1}(\lambda,T)\frac{1}{2\pi}\int_{\partial( \sector{\varphi}\cap\cc_I)} \,ds_I\,(s-\lambda)^{-1}f(s) \\
&+  \frac{1}{2\pi}\int_{\partial( \sector{\varphi}\cap\cc_I)}  S_L^{-1}(s,T)\,ds_I\,(\lambda-s)^{-1}f(s) = \left((\lambda- s)^{-1}f(s)\right)(T),
\end{align*}
where the last equality holds because $\frac{1}{2\pi}\int_{\partial( \sector{\varphi}\cap\cc_I)} \,ds_I\,(s-\lambda)^{-1}f(s) = 0$ by Cauchy's integral theorem.

\end{proof}

Similar to \cite{Haase}, we can extend the class of functions that are admissible to this functional calculus to the analogue of the extended Riesz class.

\begin{lemma}
For $0<\varphi <\pi$, we define
\[\EL(\sector{\varphi}) = \left\{ f(p) = \tilde{f}(p) + (1+p)^{-1}a + b: \tilde{f}\in\lholZI(\sector{\varphi}), a,b\in\hh \right\}\]
and similarly
 \[\ER(\sector{\varphi}) = \left\{ f(p) = \tilde{f}(p) + a(1+p)^{-1} + b: \tilde{f}\in\rholZI(\sector{\varphi}), a,b\in\hh \right\}.\]
 Finally, we define $\Eint(\sector{\varphi})$ as the set of all intrinsic functions in $\EL(\sector{\varphi})$, i.e.
\[\Eint(\sector{\varphi}) = \left\{ f(p) = \tilde{f}(p) + (1+p)^{-1}a + b: \tilde{f}\in\intrinZI(\sector{\varphi}), a,b\in\rr \right\}.\]
\end{lemma}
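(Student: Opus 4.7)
The substantive content of the lemma beyond introducing notation is the equality presented after ``i.e.'': that the intrinsic elements of $\EL(\sector{\varphi})$ are exactly the functions $\tilde{f}(p) + (1+p)^{-1}a + b$ with $\tilde{f}\in\intrinZI(\sector{\varphi})$ and $a,b\in\rr$. My plan is to first establish that the decomposition $f = \tilde{f} + (1+p)^{-1}a + b$ is uniquely determined by $f\in\EL(\sector{\varphi})$, and then to deduce both inclusions of the claimed equality using \Cref{IntrinChar} and \Cref{HolStruct}.

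\textbf{Uniqueness.} Since every $\tilde{f}\in\lholZI(\sector{\varphi})$ has polynomial limit $0$ at both $0$ and $\infty$, we have $\tilde{f}(t)\to 0$ as $t\to 0^+$ and as $t\to+\infty$ along the positive real ray $\rr_{+}$, which lies inside $\sector{\varphi}$ for every admissible $\varphi$. Combining this with $(1+t)^{-1}\to 1$ at $0^+$ and $(1+t)^{-1}\to 0$ at $+\infty$, evaluating $f = \tilde{f}+(1+p)^{-1}a+b$ along $\rr_{+}$ gives
\[
b = \lim_{t\to+\infty} f(t), \qquad a = \lim_{t\to 0^+} f(t) - b.
\]
Hence $a$, $b$, and in turn $\tilde{f} = f - (1+p)^{-1}a - b$, are uniquely determined by $f$.

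\textbf{The characterization.} First, $p\mapsto(1+p)^{-1}$ is intrinsic on $\sector{\varphi}$: it is slice hyperholomorphic since $-1\notin\sector{\varphi}$ when $\varphi<\pi$, and the identity $\overline{(1+p)^{-1}} = (1+\overline{p})^{-1}$ (verified by a direct computation using $(1+p)^{-1} = (1+\overline{p})/|1+p|^2$) certifies intrinsicness through \Cref{IntrinChar}(ii). Consequently, if $\tilde{f}\in\intrinZI(\sector{\varphi})$ and $a,b\in\rr$, each summand of $f = \tilde{f}+(1+p)^{-1}a+b$ is intrinsic, and so is $f$ (real-scalar multiples and sums preserve intrinsicness, cf.\ \Cref{HolStruct}), yielding ``$\supseteq$''. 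For ``$\subseteq$'', suppose $f\in\EL(\sector{\varphi})$ is intrinsic. By \Cref{IntrinChar}(iii), $f(\rr_{+})\subset\rr$, so the two limits computed in the uniqueness step are real; this forces $b\in\rr$ and $a\in\rr$. By uniqueness, $\tilde{f} = f - (1+p)^{-1}a - b$ is then a difference of intrinsic functions, hence intrinsic, and so $\tilde{f}\in\intrinZI(\sector{\varphi})$. The only mild subtlety is identifying the limits of $f$ along $\rr_{+}$ with the constants $a$ and $b$, which rests on the polynomial-decay hypothesis built into $\lholZI$ and on the fact that $\rr_{+}$ lies inside every sector and avoids the pole $-1$ of $(1+p)^{-1}$.
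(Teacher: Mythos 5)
Your argument is correct. Note that the paper itself offers no proof of this statement: despite being labelled a lemma, it functions purely as a definition, and the characterization after ``i.e.''\ of the intrinsic elements of $\EL(\sector{\varphi})$ is left implicit as an immediate consequence of the definitions. Your write-up supplies exactly the verification the paper omits, and it is sound: the uniqueness of the decomposition $f = \tilde{f} + (1+p)^{-1}a + b$ via the limits of $f$ along $(0,\infty)\subset\sector{\varphi}$ (using that $\tilde{f}$ decays regularly at $0$ and $\infty$), combined with \Cref{IntrinChar} to see that an intrinsic $f$ is real-valued on the positive real axis, forces $a,b\in\rr$ and then $\tilde{f} = f - (1+p)^{-1}a - b$ intrinsic; the converse inclusion is immediate since $(1+p)^{-1}$ is intrinsic and sums and real multiples of intrinsic functions are intrinsic. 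The only cosmetic point is your citation of \Cref{HolStruct} for closure of intrinsic functions under sums and real scalars: that corollary concerns products and compositions, whereas the closure under sums and real multiples follows directly from the definition of intrinsic functions ($\alpha$, $\beta$ real-valued); this does not affect the validity of the argument.
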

Simple calculations as in the classical case show the following two corollaries, cf. also \cite[Lemma~2.2.3]{Haase}.
\begin{corollary}
Let $0<\varphi<\pi$.
\begin{enumerate}[label = (\roman*)]\label{EStruct}
\item The set $\EL(\sector{\varphi})$ is a quaternionic right vector space and it is closed under multiplication with functions in $\Eint(\sector{\varphi})$ from the left.
\item The set $\ER(\sector{\varphi})$ is a quaternionic left vector space and it is closed under multiplication with functions in $\Eint(\sector{\varphi})$ from the right.
\item The set $\Eint(\sector{\varphi})$ is a real algebra.
\end{enumerate}
\end{corollary}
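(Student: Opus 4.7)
The plan is to reduce every claim to a direct decomposition of the defining triples. For (i), the vector space structure is immediate: if $f_i = \tilde f_i + (1+p)^{-1}a_i + b_i \in \EL(\sector\varphi)$ and $\lambda\in\hh$, then $f_1 + f_2\lambda = (\tilde f_1 + \tilde f_2\lambda) + (1+p)^{-1}(a_1 + a_2\lambda) + (b_1 + b_2\lambda)$ is again of the required form, because $\lholZI(\sector\varphi)$ is a quaternionic right vector space by the preceding lemma. The same observation, restricted to real scalars with $\tilde f_i\in\intrinZI(\sector\varphi)$, gives the vector space part of (iii), and the mirror argument handles $\ER(\sector\varphi)$ in (ii).

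For the multiplicative closure in (i), I will expand $gf$ with $g=\tilde g+(1+p)^{-1}\alpha+\beta\in\Eint(\sector\varphi)$ and $f=\tilde f+(1+p)^{-1}a+b\in\EL(\sector\varphi)$ into nine summands. Since $\alpha,\beta\in\rr$ commute with every quaternion and every slice hyperholomorphic function, the products reassemble cleanly. By \Cref{HolStruct} and the structural lemma for $\lholZI$, the five terms $\tilde g\tilde f$, $\tilde g(1+p)^{-1}a$, $\tilde g\, b$, $\alpha(1+p)^{-1}\tilde f$, and $\beta\tilde f$ each lie in $\lholZI(\sector\varphi)$ (product of an intrinsic factor that is bounded and decays regularly at $0$ and $\infty$ with a left slice hyperholomorphic factor of the same type, possibly scaled by a constant quaternion on the right). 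The terms $\alpha(1+p)^{-1}b$ and $\beta(1+p)^{-1}a$ feed the $(1+p)^{-1}$ block, and $\beta b$ is the new constant.

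The one term that does not fit the scheme directly is $\alpha(1+p)^{-1}(1+p)^{-1}a=\alpha(1+p)^{-2}a$, which is bounded but tends to $\alpha a$ as $p\to 0$ and hence fails regular decay at $0$. This is the genuinely interesting step. The plan is to invoke the partial-fractions identity
\[
(1+p)^{-2}=(1+p)^{-1}-\frac{p}{(1+p)^{2}},
\]
which is meaningful on $\sector\varphi$ since $-1\notin\sector\varphi$. The function $p(1+p)^{-2}$ is intrinsic, bounded on $\sector\varphi$, vanishes like $|p|$ at $0$, and decays like $|p|^{-1}$ at infinity, so it lies in $\intrinZI(\sector\varphi)$. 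The troublesome term therefore splits as $\alpha(1+p)^{-1}a$ (absorbed into the $(1+p)^{-1}$ block) plus $-\alpha\, p(1+p)^{-2}a$ (absorbed into the $\lholZI$ block). Collecting yields $gf=\tilde h+(1+p)^{-1}(\alpha a+\alpha b+\beta a)+\beta b$ with $\tilde h\in\lholZI(\sector\varphi)$.

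Part (ii) follows by the mirror computation, interchanging the order of all slice hyperholomorphic products. For (iii), the identical nine-term expansion with $a,b,\alpha,\beta\in\rr$ and $\tilde f,\tilde g\in\intrinZI(\sector\varphi)$ preserves intrinsicity at every stage and keeps all coefficients real, yielding closure under multiplication. The only non-routine point in any of the three parts is the decomposition of $(1+p)^{-2}$; everything else is bookkeeping enabled by the structural lemmas already in hand.
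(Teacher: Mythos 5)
Your proof is correct and amounts to exactly the routine verification the paper has in mind when it dismisses this corollary with ``simple calculations as in the classical case, cf.\ \cite[Lemma~2.2.3]{Haase}''. In particular, your handling of the only non-obvious term through $(1+p)^{-2}=(1+p)^{-1}-p(1+p)^{-2}$ with $p(1+p)^{-2}\in\intrinZI(\sector{\varphi})$ (valid since $0<\varphi<\pi$ keeps $-1$ at positive distance from $\sector{\varphi}$), together with the bookkeeping via \Cref{HolStruct} and the structure of $\lholZI(\sector{\varphi})$, is the standard argument.
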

\begin{corollary}\label{EChar}
Let $0<\varphi <\pi$. A function $f\in\lhol(\sector{\varphi})$ (or $f\in\rhol(\sector{\varphi})$ or $f\in\intrin(\sector{\varphi})$) belongs to $\EL(\sector{\varphi})$ (resp. $\ER(\sector{\varphi})$ or $\Eint(\sector{\varphi})$) if and only if it is bounded and has finite polynomial limits at $0$ and infinity.
\end{corollary}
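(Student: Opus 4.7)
The plan is to prove both implications directly from the definitions, using that $(1+p)^{-1}$ is an intrinsic slice hyperholomorphic function on $\sector{\varphi}$ for $\varphi < \pi$ which is bounded, tends to $1$ at $0$ with polynomial rate (indeed $(1+p)^{-1} - 1 = -p(1+p)^{-1} = O(|p|)$), and tends to $0$ at $\infty$ with polynomial rate ($(1+p)^{-1} = O(|p|^{-1})$). The fact that $(1+p)^{-1}$ is bounded on $\sector{\varphi}$ relies on the elementary geometric fact that for $\varphi < \pi$ the point $-1$ lies outside $\overline{\sector{\varphi}}$, so that $|1+p| \geq c(1+|p|)$ for some $c=c(\varphi) > 0$; I would note this once at the start.

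For the forward implication ($\Rightarrow$), I assume $f \in \EL(\sector{\varphi})$ has the representation $f(p) = \tilde f(p) + (1+p)^{-1}a + b$ with $\tilde f \in \lholZI(\sector{\varphi})$. Boundedness of $f$ follows immediately from boundedness of $\tilde f$, of $(1+p)^{-1}$, and of the constant $b$. For the polynomial limit at $0$, I use that $\tilde f(p) \to 0$ polynomially and $(1+p)^{-1}a \to a$ polynomially, so $f(p) \to a + b$ polynomially. At $\infty$, both $\tilde f$ and $(1+p)^{-1}a$ decay polynomially, so $f(p) \to b$ polynomially.

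For the reverse implication ($\Leftarrow$), suppose $f \in \lhol(\sector{\varphi})$ is bounded with polynomial limits $c_0$ at $0$ and $c_\infty$ at $\infty$. I set $b := c_\infty$, $a := c_0 - c_\infty$, and define
\[
 \tilde f(p) := f(p) - (1+p)^{-1}a - b.
\]
Then $\tilde f \in \lhol(\sector{\varphi})$ and is bounded. At $0$, using $(1+p)^{-1}a + b = a + b + O(|p|) = c_0 + O(|p|)$ together with $f(p) = c_0 + O(|p|^{\alpha})$, I get $\tilde f(p) = O(|p|^{\min(\alpha,1)})$, so $\tilde f$ decays regularly at $0$. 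At $\infty$, similarly $(1+p)^{-1}a + b = c_\infty + O(|p|^{-1})$ and $f(p) = c_\infty + O(|p|^{-\beta})$ give $\tilde f(p) = O(|p|^{-\min(\beta,1)})$. Hence $\tilde f \in \lholZI(\sector{\varphi})$ and $f \in \EL(\sector{\varphi})$.

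The right slice hyperholomorphic case is identical after writing the decomposition as $\tilde f(p) + a(1+p)^{-1} + b$. For the intrinsic case, the extra point to verify is that the constants $a,b$ can be taken real: applying \Cref{IntrinChar}, an intrinsic $f$ satisfies $f(\overline{p}) = \overline{f(p)}$, so passing to limits along real sequences gives $c_0, c_\infty \in \rr$, hence $a, b \in \rr$, and then $\tilde f = f - (1+p)^{-1}a - b$ is a difference of intrinsic functions, hence intrinsic, so $\tilde f \in \intrinZI(\sector{\varphi})$. There is no real obstacle here: the only mildly delicate point is checking the polynomial rates match up cleanly, which is handled by taking the minimum of the two exponents as above.
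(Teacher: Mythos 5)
Your proof is correct and follows essentially the same route the paper intends: the paper leaves this as ``simple calculations as in the classical case'' (citing Haase, Lemma~2.2.3), and that classical argument is precisely your decomposition $f = \tilde f + (1+p)^{-1}a + b$ with $b = c_\infty$, $a = c_0 - c_\infty$, together with the elementary bound $|1+p| \geq c(1+|p|)$ on $\sector{\varphi}$ for $\varphi < \pi$. The handling of the intrinsic case (realness of $c_0, c_\infty$ via \Cref{IntrinChar}) is also the standard check, so nothing is missing.
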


\begin{definition}
For $\omega\in(0,\pi)$, we denote $ \EL[\sector{\omega}] = \bigcup_{\omega < \varphi < \pi} \EL(\sector{\varphi})$ as well as $ \ER[\sector{\omega}] = \bigcup_{\omega < \varphi < \pi} \ER(\sector{\varphi})$ and $ \Eint[\sector{\omega}] = \bigcup_{\omega < \varphi < \pi} \Eint(\sector{\varphi})$.
\end{definition}
\begin{definition}\label{DefECalc}
Let $T\in\sectOP(\omega)$. We define for  $f\in\EL[\sector{\omega}]$ with $f(s) = \tilde{f}(s) + (1+s)^{-1} a+ b$ the bounded operator 
\[ f(T) := \tilde{f}(T) + (1+T)^{-1}a + \id b\]
and for  $f\in\ER[\sector{\omega}]$ with $f(s) = \tilde{f}(s) + a(1+s)^{-1} + b$ the bounded operator 
\[ f(T) := \tilde{f}(T) + a(1+T)^{-1} + b\id,\]
where $\tilde{f}(T)$ is intended as in \Cref{SectSCalc}.
\end{definition}

\begin{lemma}\label{Kakao}
Let $T\in\sectOP(\omega)$ and let $f \in\EL[\sector{\omega}]$. If $f$ is left slice hyperholomorphic at $0$ and decays regularly at infinity, then 
\begin{equation}\label{CIDL1}
 f(T) = \frac{1}{2\pi}\int_{\partial(U(r)\cap\cc_I)} S_L^{-1}(s,T)\,ds_I\,f(s),
 \end{equation}
with $I\in\SS$ arbitrary and $ U(r) = \sector{\varphi}\cup B_{r}(0)$, where $\varphi \in (\omega,\pi)$ is such that $f\in\EL(\sector{\varphi})$ and $r>0$ is such that $f$ is left slice hyperholomorphic on $\overline{B_{r}(0)}$. Moreover, if $f$ is left slice hyperholomorphic both at $0$ and at infinity, then 
\begin{equation}\label{CIDL2}
 f(T) = f(\infty)\id + \frac{1}{2\pi}\int_{\partial(U(r,R)\cap\cc_I)} S_L^{-1}(s,T)\,ds_I\,f(s),
 \end{equation}
with $I\in\SS$ arbitrary and $ U(r,R) = U(r) \cup (\hh\setminus B_{R}(0))$, where $\varphi \in (\omega,\pi)$ is such that $f\in\EL(\sector{\varphi})$, $r>0$ is such that $f$ is left slice hyperholomorphic on $\overline{B_{r}(0)}$ and $R>r$ is such that $f$ is left slice-hyperholmorphic on $\hh\setminus B_{R}(0)$.

Similarly, if $f\in\ER[\sector{\omega}]$, is right slice hyperholomorphic at $0$ and decays regularly at infinity, then 
\[ f(T) = \frac{1}{2\pi}\int_{\partial(U(r)\cap\cc_I)} f(s)\,ds_I\,S_R^{-1}(s,T),\]
with $I\in\SS$ arbitrary and $ U(r)$ chosen as above. Moreover, if $f\in\ER[\sector{\omega}]$ is right slice hyperholomorphic both at $0$ and at infinity, then 
\[ f(T) = f(\infty)\id + \frac{1}{2\pi}\int_{\partial(U(r,R)\cap\cc_I)} f(s)\,ds_I\, S_R^{-1}(s,T),
\]
with $I\in\SS$ arbitrary and $ U(r,R)$ is chosen as above. 

\end{lemma}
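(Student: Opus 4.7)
The plan is to reduce both identities to the definition $f(T) = \tilde{f}(T) + (1+T)^{-1}a + b\id$ from \Cref{DefECalc} by substituting $f(s) = \tilde{f}(s) + (1+s)^{-1}a + b$ into the Cauchy integrals on the right and identifying each resulting piece. Throughout, I will choose $r<1$ (and, in Case~2, also $R>1$), which is harmless by shrinking or enlarging the balls of slice hyperholomorphy of $f$ if necessary, so that $-1\notin U(r)$ resp.\ $-1\notin U(r,R)$.

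For \eqref{CIDL1}, regular decay of $f$ at infinity forces $b=0$, and slice hyperholomorphy of $f$ at $0$ together with $\tilde{f}(0)=0$ (from $\tilde{f}\in\lholZI$) forces $a=f(0)$, so $f(s)=\tilde{f}(s) + (1+s)^{-1}f(0)$ and the integral on the right of \eqref{CIDL1} splits accordingly. The $\tilde{f}$-piece I would identify with $\tilde{f}(T)$ by deforming $\partial(U(r)\cap\cc_I)$ into $\partial(\sector{\varphi'}\cap\cc_I)$ for any $\varphi'\in(\omega,\varphi)$ via Cauchy's integral theorem: on the region between the two contours, $\tilde{f}$ is left slice hyperholomorphic (it extends to $\sector{\varphi}\cup B_r(0)$) and $S_L^{-1}(\cdot,T)$ is right slice hyperholomorphic (because $\sigma_S(T)\subset\overline{\sector{\omega}}\subset\sector{\varphi'}$), and the sectorial estimate $\|S_L^{-1}(s,T)\|\leq C/|s|$ combined with the polynomial decay of $\tilde{f}$ at $0$ and $\infty$ makes the contributions from the truncating arcs vanish, so the deformed integral is precisely the one defining $\tilde{f}(T)$ in \Cref{SectSCalc}. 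The $(1+s)^{-1}f(0)$-piece I would identify with $(1+T)^{-1}f(0)$ by applying \Cref{SCalcCl} to the intrinsic function $h(s)=(1+s)^{-1}$, which is slice hyperholomorphic on a neighbourhood of $\overline{U(r)}$ and at infinity with $h(\infty)=0$: this gives $(1+T)^{-1}=h(T)=\frac{1}{2\pi}\int_{\partial(U(r)\cap\cc_I)}S_L^{-1}(s,T)\,ds_I\,(1+s)^{-1}$, and multiplying by $f(0)$ from the right closes Case~1.

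For \eqref{CIDL2}, slice hyperholomorphy of $f$ at $\infty$ now gives $b=f(\infty)$ and $a=f(0)-f(\infty)$, and $\tilde{f}$ is additionally slice hyperholomorphic on $\hh\setminus B_R(0)$. The integrand splits into three pieces. The $\tilde{f}$- and $(1+s)^{-1}a$-pieces are treated exactly as in Case~1, this time deforming $\partial(U(r,R)\cap\cc_I)$ into $\partial(\sector{\varphi'}\cap\cc_I)$ — the slice hyperholomorphy of $\tilde{f}$ on all of $U(r,R)$ legitimises the deformation — and produce $\tilde{f}(T)$ and $(1+T)^{-1}a$ respectively. The third piece $\frac{1}{2\pi}\int_{\partial(U(r,R)\cap\cc_I)}S_L^{-1}(s,T)\,ds_I\,b$ vanishes: applying \Cref{SCalcCl} to the constant function $1\in\intrin(\sigma_{SX}(T))$ (with $1(\infty)=1$) produces $\id = 1(T) = \id + \frac{1}{2\pi}\int_{\partial(U(r,R)\cap\cc_I)}S_L^{-1}(s,T)\,ds_I$, so the integral is zero. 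Adding the prefactor $f(\infty)\id$ on the right of \eqref{CIDL2} then recovers $\tilde{f}(T)+(1+T)^{-1}a + f(\infty)\id = f(T)$. The two right slice hyperholomorphic identities follow by the symmetric argument with $S_R^{-1}$ in place of $S_L^{-1}$ and the order of multiplication reversed; the necessary slice hyperholomorphicity of $s\mapsto S_R^{-1}(s,T)$ is supplied by \Cref{ResHol2342}. The main technical obstacle I foresee is the bookkeeping in the unbounded Cauchy deformation for the $\tilde{f}$-piece — truncating the rays at large radius (and, if $0\in\sigma_S(T)$, also near the origin), showing that the arc contributions vanish from the sectorial bound paired with the polynomial decay of $\tilde{f}$, and passing to the limit — which is conceptually standard but needs to be written out carefully.
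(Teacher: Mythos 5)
Your argument is correct and follows essentially the same route as the paper: split $f=\tilde f+(1+s)^{-1}a+b$ as in \Cref{DefECalc}, identify the $\tilde f$-integral with the defining integral of \Cref{SectSCalc} by a Cauchy deformation (sectorial resolvent bound plus regular decay of $\tilde f$ at $0$ and $\infty$), and recover $(1+T)^{-1}a+b\id$ from the $S$-functional calculus for closed operators. The one step to tighten is your Case~1 treatment of the $(1+s)^{-1}f(0)$-piece: $U(r)=\sector{\varphi}\cup B_r(0)$ is not a slice Cauchy domain (its boundary is unbounded, not a finite union of closed Jordan curves), so \Cref{SCalcCl} cannot be invoked on $\partial(U(r)\cap\cc_I)$ directly; as the paper does, truncate to $U(r,R)$, apply \Cref{SCalcCl} there, and let $R\to\infty$, the outer-arc contribution vanishing because $\|S_L^{-1}(s,T)\|\,\left|(1+s)^{-1}\right|=O(R^{-2})$ on an arc of length $O(R)$ lying outside $\sector{\varphi}$.
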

\begin{proof}
Let us first assume that $f\in\EL[\sector{\omega}]$ is left slice hyperholomorphic at $0$ and regularly decaying at infinity. Then $f(s) = \tilde{f}(s) + (1+s)^{-1}a$, where $\tilde{f}\in\lholZI(\sector{\varphi'})$ with $\omega < \varphi < \varphi'$, and the function $\tilde{f}$ is moreover also left slice hyperholomorphic at $0$. For  $I\in\SS$ and $\omega < \varphi < \varphi'$, we therefore have
\begin{align*}
 &\frac{1}{2\pi}\int_{\partial(U(r)\cap\cc_I)} S_L^{-1}(s,T)\,ds_I\,f(s) \\
 =& \frac{1}{2\pi}\int_{\partial(U(r)\cap\cc_I)} S_L^{-1}(s,T)\,ds_I\,\tilde{f}(s) + \frac{1}{2\pi}\int_{\partial(U(r)\cap\cc_I)} S_L^{-1}(s,T)\,ds_I\,(1+s)^{-1}a. 
 \end{align*}
If $r'>r>0$ is sufficiently small such that $\tilde{f}$ is left slice hyperholomorphic at $\overline{B_{r'}(0)}$, then Cauchy's integral theorem implies that the value of the first integral remains constant as $r$ varies. Letting $r$ tend to $0$ we find that this integral equals $\tilde{f}(T)$ in the sense of \Cref{SectSCalc}. For the second integral we find that
\begin{align*}
&\frac{1}{2\pi}\int_{\partial(U(r)\cap\cc_I)} S_L^{-1}(s,T)\,ds_I\,(1+s)^{-1}a \\
=& \lim_{R\to+\infty} \frac{1}{2\pi}\int_{\partial(U(r,R)\cap\cc_I)} S_L^{-1}(s,T)\,ds_I\,(1+s)^{-1}a  = (1+T)^{-1}a, 
\end{align*}
where the last identity can be deduced either from the compatibility of the $S$-functional calculus for closed operators with intrinsic polynomials in \cite[Lemma~4.4]{DA} or as in the complex case in \cite[Lemma~2.3.2]{Haase} from the residue theorem. Altogether, we obtain~\eqref{CIDL1}.

If $f\in\EL[\omega]$ is left slice hyperholomorphic both at $0$ and at infinity, then $f(s) = \tilde{f}(s) + (1+s)^{-1}a + b$ where $\tilde{f}\in\lholZI(\sector{\varphi'})$ with $\omega<\varphi'<\pi$ is left slice hyperholomorphic both at $0$ and infinity and $a,b\in\hh$. We therefore have
\begin{align*}
 &f(\infty) \id + \frac{1}{2\pi}\int_{\partial(U(r,R)\cap\cc_I)} S_L^{-1}(s,T)\,ds_I\,f(s) \\
 =&  \frac{1}{2\pi}\int_{\partial(U(r,R)\cap\cc_I)} S_L^{-1}(s,T)\,ds_I\,\tilde{f}(s) \\
 +&f (\infty) \id  +  \frac{1}{2\pi}\int_{\partial(U(r,R)\cap\cc_I)} S_L^{-1}(s,T)\,ds_I\,\left((1+s)^{-1}a + b\right). 
 \end{align*}
As before, because of the left slice hyperholomorphicity of $\tilde{f}$ at $0$ and infinity, Cauchy's integral theorem allows us to vary the values of $r$ and $R$ for sufficiently small $r$ and sufficiently large $R$ without changing the value of the first integral. Letting $r$ tend to $0$ and $R$ tend to $\infty$, we find that this integral equals $\tilde{f}(T)$ in the sense of \Cref{SectSCalc}. Since $f(\infty) = b$, the remaining terms however equal $(1+T)^{-1}a + \id b$, which can again either be deduced by a standard application of the the residue theorem and Cauchy's integral theorem as in \cite[Corollary~2.3.5]{Haase}, or from the properties of the $S$-functional calculus for closed operators since the function $s\mapsto (1+s)^{-1}a+b$ is left slice hyperholomorphic on the spectrum of $T$ and at infinity. Altogether, we find that also \eqref{CIDL2} holds true. 

The right slice hyperholomorphic case finally follows by analogous arguments.

\end{proof}

\begin{corollary}\label{SCalcCompatible}
The $S$-functional calculus for closed operators and the $S$-functional calculus for sectorial operators are compatible.
\end{corollary}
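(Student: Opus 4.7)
The plan is to invoke \Cref{Kakao} to rewrite $f(T)$, as defined by the extended sectorial calculus of \Cref{DefECalc}, as a Cauchy-type integral of exactly the form appearing in \Cref{SCalcCl} (or \Cref{SCalcBd}), and then to use the fact that both calculi are independent of the choice of admissible integration contour. I treat the left slice hyperholomorphic case; the right one is analogous by symmetry.

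Let $T \in \sectOP(\omega)$ and suppose $f$ is admissible for both calculi, so $f \in \EL[\sector{\omega}]$ and $f \in \lhol(\sigma_{SX}(T))$. Write $f = \tilde f + (1+s)^{-1} a + b$ with $\tilde f \in \lholZI(\sector{\varphi})$ for some $\varphi \in (\omega,\pi)$, as in the definition of $\EL[\sector{\omega}]$. The constant summand contributes $b\id$ to both calculi, and the rational summand $(1+s)^{-1} a$ contributes $(1+T)^{-1} a$ to both (the latter by the compatibility of the $S$-functional calculus for closed operators with intrinsic rational functions, together with $-1 \in \rho_S(T)$ by sectoriality). It hence suffices to prove the agreement of the two definitions applied to $\tilde f$.

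Since $\tilde f$ is slice hyperholomorphic on a neighbourhood of $\sigma_{SX}(T)$ and decays regularly at $0$ and at $\infty$, the second formula of \Cref{Kakao} gives
\begin{equation*}
\tilde f(T) = \frac{1}{2\pi} \int_{\partial(U(r,R) \cap \cc_I)} S_L^{-1}(s,T) \, ds_I \, \tilde f(s),
\end{equation*}
where the boundary term $\tilde f(\infty)\id$ vanishes by regular decay at $\infty$, and where $r>0$ is small and $R>r$ is large. Now $U(r,R)$ is an unbounded slice Cauchy domain containing $\sigma_S(T)$, so this is precisely the definition of $\tilde f(T)$ given by \Cref{SCalcCl} for unbounded $T$ with the choice $U = U(r,R)$. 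For bounded $T$ one uses the first formula of \Cref{Kakao} and deforms the contour $\partial(U(r) \cap \cc_I)$ to the boundary of any bounded slice Cauchy domain $V$ with $\sigma_S(T) \subset V \subset U(r)$. The deformation is justified by the slice hyperholomorphic Cauchy integral theorem on $\rho_S(T)$ combined with the estimate $\|S_L^{-1}(s,T)\| \, |\tilde f(s)| = O(|s|^{-1-\alpha})$ as $|s|\to\infty$, which is sufficient to discard the contribution at infinity.

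The main technical subtlety is to check that the sets $U(r)$ and $U(r,R)$ genuinely satisfy the geometric definition of a slice Cauchy domain; this however is immediate from their axial symmetry and the fact that their intersection with each complex plane $\cc_I$ is a Cauchy domain in the classical sense. All remaining steps reduce to standard Cauchy-theorem arguments on $\rho_S(T)$ that are already present in the proof of \Cref{Kakao}.
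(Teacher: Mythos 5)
Your proposal is correct and takes essentially the same route as the paper: the heart of both arguments is \Cref{Kakao}, which rewrites the sectorial calculus as a Cauchy integral over $U(r,R)$, together with the observation that $U(r,R)$ is a slice Cauchy domain admissible for the $S$-functional calculus for closed operators, whose independence of the integration domain then gives the equality. The preliminary splitting $f = \tilde f + (1+s)^{-1}a + b$ and the separate bounded case with contour deformation are harmless but not needed, since \Cref{Kakao} applies to $f\in\EL[\sector{\omega}]$ directly and the paper's argument already covers bounded sectorial operators through the closed-operator calculus.
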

\begin{proof}
Let $T\in\sectOP(\omega)$. If $f\in\EL[\sector{\omega}]$ is admissible for the $S$-functional calculus for closed operators, then it is left slice hyperholomorphic at infinity such that \eqref{CIDL2} holds true. The set $U(r,R)$ in this representation is however a slice Cauchy domain and therefore admissible as a domain of integration in the $S$-functional calculus for closed operators. Hence, both approaches yield the same operator.

\end{proof}

\Cref{DefECalc} is compatible with the algebraic structures of the underlying function classes.
\begin{lemma}\label{HOLOLULU}
If $T\in\sectOP(\omega)$, then the following statements hold true.
\begin{enumerate}[label = (\roman*)]
\item If $f,g\in\EL[\sector{\omega}]$  and $a \in \hh$, then $(fa+g)(T) = f(T)a+g(T)$. If $f,g\in\ER[\sector{\omega}]$ and $a\in\hh$, then $(af+g)(T) = af(T) + g(T)$. 
\item If $f\in\Eint[\sector{\omega}]$ and $g\in\EL[\sector{\omega}]$, then $(fg)(T) = f(T)g(T)$. If $f\in\ER[\sector{\omega}]$ and $g\in\Eint[\sector{\omega}]$, then also $(fg)(T) = f(T)g(T)$.
\end{enumerate}
\end{lemma}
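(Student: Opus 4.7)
The plan is to reduce both assertions to the corresponding statements for the calculus on $\lholZI[\sector{\omega}]$ provided by \Cref{HZIProp}, combined with the commutation property from \Cref{ComLem} and the resolvent identity from \Cref{ResCanc}. For both $f$ and $g$ I write out their canonical $\EL[\sector{\omega}]$-decompositions $f = \tilde{f} + (1+\cdot)^{-1}a_f + b_f$ and $g = \tilde{g} + (1+\cdot)^{-1}a_g + b_g$ of \Cref{DefECalc}, and then compare $(fa+g)(T)$ with $f(T)a+g(T)$, respectively $(fg)(T)$ with $f(T)g(T)$, summand by summand.

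Part (i) is essentially bookkeeping: since $\tilde{f}a + \tilde{g}\in\lholZI[\sector{\omega}]$, one rewrites
\[fa+g = (\tilde{f}a+\tilde{g}) + (1+\cdot)^{-1}(a_f a + a_g) + (b_f a + b_g),\]
which is the canonical form of $fa+g$ in $\EL[\sector{\omega}]$. \Cref{DefECalc} together with \Cref{HZIProp}(ii) then immediately yields
\[(fa+g)(T) = \tilde{f}(T)a + \tilde{g}(T) + (1+T)^{-1}(a_f a + a_g) + \id(b_f a + b_g) = f(T)a + g(T),\]
and the right slice hyperholomorphic case is completely analogous.

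For (ii), I decompose $f = \tilde{f} + (1+\cdot)^{-1}a_f + b_f$ with $\tilde{f}\in\intrinZI[\sector{\omega}]$ and $a_f,b_f\in\rr$, and $g = \tilde{g} + (1+\cdot)^{-1}a_g + b_g$ with $\tilde{g}\in\lholZI[\sector{\omega}]$ and $a_g,b_g\in\hh$. Every term in the expansion of $fg$ already sits in $\EL[\sector{\omega}]$-form except $(1+\cdot)^{-2}a_fa_g$, which I handle by the identity $(1+s)^{-2} = -s(1+s)^{-2} + (1+s)^{-1}$ together with the observation that $-s(1+s)^{-2}\in\intrinZI[\sector{\omega}]$. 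This exhibits $fg$ in the canonical form $\tilde{h} + (1+\cdot)^{-1}(a_fa_g + a_fb_g + b_fa_g) + b_fb_g$ for an explicit $\tilde{h}\in\lholZI[\sector{\omega}]$. Applying \Cref{DefECalc} to this decomposition and expanding $f(T)g(T)$, the identity $(fg)(T) = f(T)g(T)$ reduces term by term to four ingredients: \Cref{HZIProp}(iii) for the intrinsic-times-left-slice-hyperholomorphic products occurring in $\tilde{h}$; \Cref{ResCanc} with $\lambda = -1$ to identify $((1+\cdot)^{-1}\tilde{g})(T) = (1+T)^{-1}\tilde{g}(T)$; \Cref{ComLem} to commute $\tilde{f}(T)$ past $(1+T)^{-1}$, since $\tilde{f}$ is intrinsic and $(1+T)^{-1}$ commutes with $\Q_{s}(T)^{-1}$ and $T\Q_{s}(T)^{-1}$; and \Cref{SCProp}(v) combined with \Cref{SCalcCompatible} to identify $(-s(1+\cdot)^{-2})(T) = -T(1+T)^{-2}$.

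The main obstacle I anticipate is precisely the $(1+\cdot)^{-2}a_fa_g$ contribution: one must verify that the piece $-T(1+T)^{-2}a_fa_g$ coming from $\tilde{h}(T)$ together with the piece $(1+T)^{-1}a_fa_g$ coming from the $(1+\cdot)^{-1}$ summand reassemble to $(1+T)^{-1}a_f(1+T)^{-1}a_g$, which is the corresponding contribution in $f(T)g(T)$. This is exactly the content of the elementary resolvent identity $(1+T)^{-1} - T(1+T)^{-2} = (1+T)^{-2}$, after which the remaining cross terms reorganise without difficulty; the right slice hyperholomorphic assertion is handled symmetrically.
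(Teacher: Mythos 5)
Your proposal is correct and follows essentially the same route as the paper's proof: both expand $f$ and $g$ into their canonical $\EL$-decompositions, absorb the problematic cross term via the identity $(1+s)^{-2}=(1+s)^{-1}-s(1+s)^{-2}$ (equivalently $(\id+T)^{-2}=(\id+T)^{-1}-T(\id+T)^{-2}$) with $\bigl(-s(1+s)^{-2}\bigr)(T)=-T(\id+T)^{-2}$ justified by compatibility with the $S$-functional calculus for closed operators, and reduce the remaining terms to \Cref{HZIProp} and \Cref{ResCanc}. The only cosmetic difference is that you regroup on the function side and invoke \Cref{ComLem} explicitly to commute $\tilde f(T)$ past $(1+T)^{-1}$, whereas the paper expands $f(T)g(T)$ directly and handles that commutation implicitly through the intrinsic versions of the same lemmas.
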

\begin{proof}
The compatibility with the respective vector space structure is trivial. In order to show the product rule, consider $f\in\Eint[\sector{\omega}]$ and $g\in\EL[\sector{\omega}]$ with $f(s) = \tilde{f}(s) + (1+s)^{-1}a + b$ with $\tilde{f}\in\intrinZI[\sector{\omega}]$ and  $a,b\in\rr$ and $g(s) = \tilde{g}(s) + (1+s)^{-1}c + d$ with $\tilde{g}\in\lholZI[\sector{\omega}]$ and $c,d\in\hh$. By \Cref{HZIProp}, \Cref{ResCanc} and the identity $(\id+T)^{-2} = (\id + T)^{-1} - T(\id + T)^{-2}$, we then have
\begin{align*}
f(T)g(T) =& \tilde{f}(T)\tilde{g}(T) + \tilde{f}(T)(\id+T)^{-1}c + \tilde{f}(T) d + (\id+T)^{-1}\tilde{g}(T) a\\
&+  (\id+T)^{-2}ac + (\id+T)^{-1}ad + \tilde{g}(T)b + (\id+T)^{-1}bc + bd\id\\
=&\left( \tilde{f}\tilde{g} + \tilde{f}(1+s)^{-1}c + \tilde{f} d + (1+s)^{-1}\tilde{g} a + \tilde{g}b\right)(T)\\
&-  T(\id+T)^{-2}ac + (\id+T)^{-1}(ad + ac +bc) + bd\id.
\end{align*}
Since $-s(1+s)^{-2}\in\EL[\sector{\omega}]$ is left slice hyperholomorphic at zero and infinity, the compatibility of the $S$-functional calculus for sectorial and the $S$-functional calculus for for closed operators and the properties of the latter imply $(-s(1+s)^2)(T) = -T(\id+T)^{-2}$ such that
\begin{align*}
f(T)g(T) =&\left[ \tilde{f}\tilde{g}+ \tilde{f}(1+s)^{-1}c + \tilde{f} d +  (1+s)^{-1}\tilde{g} a+  \tilde{g}b - s(1+s)^{-2}ac\right](T) \\
&+ (\id+T)^{-1}(ad + ac +bc) + bd\id = (fg)(T)
\end{align*}
since
\begin{align*}
 (fg)(s) =&\tilde{f}(s)\tilde{g}(s) + \tilde{f}(s)(1+s)^{-1}c + \tilde{f}(s) d +  (1+s)^{-1}\tilde{g}(s) a\\
&\ +  \tilde{g}(s)b - s(1+s)^{-2}ac + (1+s)^{-1}(ad + ac +bc) + bd.
\end{align*}
The product rule in the right slice hyperholomorphic case can be shown with analogous arguments.

\end{proof}

\begin{lemma}\label{ghj}
If $T\in\sectOP(\omega)$, then the following statements hold true.
\begin{enumerate}[label = (\roman*)]
\item \label{ghj1}  We have $(s(1+s)^{-1})(T) = T(\id+T)^{-1}$.
\item \label{ghj2} If $A$ is closed and commutes with $Q_{s}(T)^{-1}$ and $TQ_{s}(T)^{-1}$ for all $s\in\rho_S(T)$, then $A$ commutes with $f(T)$ for any $f\in\Eint[\sector{\omega}]$. In particular $T$ commutes with $f(T)$ for any $f\in\Eint[\sector{\omega}]$.
\item  \label{ghj3} If $v\in\ker(T)$ and $f\in\ER[\sector{\omega}]$, then $f(A)v = f(0)v$. In particular this holds true if $f\in\Eint[\sector{\omega}]$.
\end{enumerate}
\end{lemma}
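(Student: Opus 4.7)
The three parts of the lemma can be handled essentially separately, using \Cref{DefECalc} and the compatibility results established earlier. For \ref{ghj1} I would simply recognise that $s(1+s)^{-1} = 1 - (1+s)^{-1}$ lies in $\Eint[\sector{\omega}]$ with the decomposition $\tilde{f}(s) = 0$, $a = -1$, $b = 1$ in the standard form $\tilde{f}(s) + (1+s)^{-1}a + b$; applying \Cref{DefECalc} then gives $(s(1+s)^{-1})(T) = -(\id+T)^{-1} + \id$, and an elementary manipulation (using that $(\id+T)^{-1}$ maps into $\dom(T)$) identifies this operator with $T(\id+T)^{-1}$.

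For \ref{ghj2} I would write $f = \tilde{f} + (1+s)^{-1}a + b$ with $\tilde{f}\in\intrinZI[\sector{\omega}]$ and $a,b\in\rr$, so that $f(T) = \tilde{f}(T) + (\id+T)^{-1}a + \id b$. By \Cref{ComLem}, $A$ commutes with $\tilde{f}(T)$, and it trivially commutes with $\id b$. The only remaining point is that $A$ commutes with $(\id+T)^{-1}$: since $-1\in\rho_S(T)$ and $Q_{-1}(T) = (\id+T)^2$, the right $S$-resolvent is
\[
 -(\id+T)^{-1} = S_R^{-1}(-1,T) = -T Q_{-1}(T)^{-1} - Q_{-1}(T)^{-1},
\]
expressing $(\id+T)^{-1}$ as a real linear combination of $Q_{-1}(T)^{-1}$ and $T Q_{-1}(T)^{-1}$, both of which commute with $A$ by assumption. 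Closedness of $A$ together with the standard fact that sums of operators commuting with $A$ still commute with $A$ then yields $f(T)A \subset A f(T)$. The \emph{in particular} statement follows once one verifies that $T$ itself commutes with $Q_s(T)^{-1}$ and $T Q_s(T)^{-1}$, which is immediate because $Q_s(T)$ is a real polynomial in $T$ and therefore commutes with $T$ on $\dom(T^2)$.

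Part \ref{ghj3} is the real work, and the main obstacle is the evaluation of a slice hyperholomorphic contour integral whose integration path has a vertex at the singular point $0$. My starting computation is that for $v\in\ker(T)$ and $s\in\rho_S(T)$ with $s\neq 0$ one has $Q_s(T)v = |s|^2 v$, so that $Q_s(T)^{-1}v = |s|^{-2}v$ and therefore
\[
 S_R^{-1}(s,T)v = -(T-\overline{s}\id)Q_s(T)^{-1}v = \overline{s}|s|^{-2}v = s^{-1}v.
\]
Writing $f = \tilde{f} + a(1+s)^{-1} + b$ with $\tilde{f}\in\rholZI[\sector{\omega}]$ and using that $(\id+T)^{-1}v = v$ (because $(\id+T)v = v$), the desired identity $f(T)v = f(0)v$ reduces, in view of $\tilde{f}(0) = 0$ and $f(0) = a+b$, to showing $\tilde{f}(T)v = 0$. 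Inserting the formula for $S_R^{-1}(s,T)v$ into the defining integral for $\tilde{f}(T)v$ gives
\[
 \tilde{f}(T)v = \frac{1}{2\pi}\int_{\partial(\sector{\varphi'}\cap\cc_I)}\tilde{f}(s)\,ds_I\, s^{-1}\, v,
\]
and the plan is to show this vanishes by a contour-shrinking argument: the integrand is right slice hyperholomorphic on $\sector{\varphi'}\setminus\{0\}$, so removing a small ball $B_{\epsilon}(0)$ from the sector and applying the slice hyperholomorphic Cauchy integral theorem to the resulting domain expresses the integral over the truncated rays as minus the integral over the arc $\{|s|=\epsilon\}\cap\sector{\varphi'}\cap\cc_I$. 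The regular decay estimate $|\tilde{f}(s)|\leq C|s|^{\alpha}$ for some $\alpha>0$ near $0$ bounds this arc contribution by $O(\epsilon^{\alpha})\to 0$, while the polynomial decay of $\tilde{f}$ at infinity ensures convergence at the other end of the rays. The technically delicate step will be the passage to the limit as $\epsilon\to 0$, and in particular the verification that the improper integral along the full rays exists and coincides with the limit of the truncated ones, which once more rests on the regular decay of $\tilde{f}$ at $0$.
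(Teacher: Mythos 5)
Your proposal is correct and follows essentially the same route as the paper: part (i) via $s(1+s)^{-1}=1-(1+s)^{-1}$, part (ii) via \Cref{ComLem} plus handling of the extra terms, and part (iii) via $S_R^{-1}(s,T)v=s^{-1}v$ on $\ker(T)$ and Cauchy's integral theorem. The only difference is that you spell out details the paper leaves implicit — the commutation of $A$ with $(\id+T)^{-1}$ through $Q_{-1}(T)^{-1}$ and $TQ_{-1}(T)^{-1}$, and the contour-shrinking justification of the vanishing integral — both of which are sound.
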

\begin{proof}
The first statement holds as 
\[
\left(s(1+s)^{-1}\right)(T)  = (1 -(1+s)^{-1} )(T)= \id - (\id+T)^{-1} = T(\id+T)^{-1}
\]
 and the second one follows from \Cref{ComLem}. Finally, if $v\in\ker(T)$, then 
 \[
 Q_{s}(T)v = \left(T^2 - 2s_0T + |s|^2\right)v = |s|^2v
 \]
  and hence 
 \[
 S_R^{-1}(s,T)v =(\overline{s}\id - T) \Q_{s}(T)^{-1} v = \overline{s}|s|^{-2}v =  s^{-1}|s|^{-2}v = S_R^{-1}(s,T)v.
 \]
  For $\tilde{f}\in\rholZI[\sector{\varphi}]$, we hence have
\[\tilde{f}(T) v = \frac{1}{2\pi}\int_{\partial(\sector{\varphi}\cap\cc_I)}\tilde{f}(s)\,ds_I\,S_R^{-1}(s,T)v = \frac{1}{2\pi}\int_{\partial(\sector{\varphi}\cap\cc_I)}\tilde{f}(s)\,ds_I\,s^{-1}v = 0\]
by Cauchy's integral theorem such that for $f(s) = \tilde{f}(s) + a(1+s)^{-1} + b$ and $v\in\ker(T)$
\[ f(T)v = \tilde{f}(T)v + a(\id+T)^{-1}v + b\id v = av+bv = f(0) v.\]

\end{proof}
\begin{remark}\label{YXSW}
If $f\in\EL(\sector{\omega})$, then we cannot expect \ref{ghj3} in \Cref{ghj} to hold true. In this case 
\[\tilde{f}(T)v = \frac{1}{2\pi} \int_{\partial(\sector{\varphi}\cap\cc_I)}S_L^{-1}(s,T)\,ds_I \, f(s) v,\]
 but $v$ and $ds_I\,f(s)$ do not commute. So we cannot exploit the fact that $v\in\ker(T)$ to simplify $S_L^{-1}(s,T)v = s^{-1}v$. Indeed, also this  identity does not necessarily hold true as $S_L^{-1}(s,T) = \Q_{s}(T)^{-1}(\overline{s}-T)v = \Q_{s}(T)^{-1}\overline{s}v$  for $v\in\ker(T)$. But the kernel of $T$ is in general not a left linear subspace of $T$ and hence we cannot assume $\overline{s}v \in\ker(T)$. The simplification $\Q_{s}(T)^{-1}(s,T)\overline{s}v = |s|^2\overline{s} v = s^{-1}v$ is not possible. 
 
 \end{remark}

The $H^{\infty}$-functional calculus for complex linear sectorial operators in \cite{Haase} applies to meromorphic functions that are regularisable. Defining the orders of zeros and hence also of poles of slice-hyperholomorphic functions properly is a very complicated. We shall hence use the following simple definition, which is sufficient for our purposes.

\begin{definition}
Let $s\in\hh$ and let $f$ be left slice hyperholomorphic on an axially symmetric neighbourhood $[B_r(s)]\setminus\{s\}$ of $s$ with $[B_{r}(s)] = \{p\in\hh: \dist([s],p) < r\}$ and assume that $f$ does not have a left slice hyperholomorphic continuation to all of $[B_r(s)]$. We say that $f$ has a pole at the sphere $[s]$ if there exists $n\in\nn$ such that $p\mapsto \Q_{s}(p)^nf(p)$ has a left slice hyperholomorphic continuation to $[B_r(s)]$ if $s\notin\rr$ resp. if there exists $n\in\nn$ such that $p\mapsto (p-s)^{-1}f(p)$ has a left slice hyperholomorphic continuation to $[B_r(s)]$ if $s\in\rr$. 
\end{definition}
\begin{remark}
If $[s]$ is a pole of $f$ and $p_n$ is a sequence with $\lim_{n\to+\infty}\dist(p_n,[s]) = 0$, then not necessarily $\lim_{n\to+\infty} |f(p_n)| = +\infty$. One can see this easily if one restricts $f$ to one of the complex planes $\cc_{I}$. If $I,J\in\SS$ with $J\perp I$, then the function $f_{I} := f|_{[B_{r}(s)]\cap\cc_I}$ a meromorphic function with values in the complex (left) vector space $\hh \cong \cc_I + \cc_IJ$ over $\cc_I$. It must have a pole at $s_I = s_0 + I s_1$ or $\overline{s_I} =s_0 - I s_1$. Otherwise we could extend $f_{I}$ to a holomorphic function on $B_{r}(s)\cap\cc_{I}$. The representation formula would allow us then to define a slice hyperholomorphic extension of $f$ to $B_{r}(s)$. However, $s_{I}$ and $\overline{s_I}$ are not necessarily  both poles of $f_{I}$.  Consider for instance the function $f(p) = S_{L}^{-1}(s,p) = (p^2 - 2 s_0 p + |s|^2)^{-1}(\overline{s}-p)$, which is defined on $U = \hh\setminus[s]$. If we choose $I= I_s$, then $f|_{U\cap\cc_I} = (s-p)^{-1}$, which does obviously not have a pole at $\overline{s}$.  Hence, if $p_n\in\cc_{I}$ tends to $\overline{s}$, then $|f(p_n)|$ remains bounded. 

However, the representation formula implies that there exists at most one complex plane $\cc_I$ such that only one of the points $\overline{s_I}$ and $s_I$ is a pole of $f_I$. Otherwise we could use it again to find a slice hyperholomorphic extension of $f$ to $B_r(0)$. For intrinsic functions always both points $s_{I}$ and $\overline{s_{I}}$ need to be poles of $f_I$ as in this case $f_I(\overline{p}) = \overline{f_I(p)}$. 

In general we therefore do not have $\lim_{\dist(p,[s])\to 0} |f(p)| = +\infty$, but at least for the limit superior the equality 
\[
\limsup_{\dist(p,[s])\to 0} |f(p)| = +\infty
\]
holds. If $f$ is intrinsic, then even $\lim_{\dist(p,[s])\to 0} |f(p)| = +\infty$ holds true.
\end{remark}
\begin{definition}
Let $U\subset\hh$ be axially symmetric. A function $f$ is said to be left meromorphic on $U$ if there exist isolated spheres $[p_n]\subset U$ for $n\in\Theta$, where $\Theta$ is a subset of $\nn$, such that $f|_{\tilde{U}}\in \lhol(\tilde{U})$ with $\tilde{U} = U \setminus \bigcup_{n\in\Theta} [p_n]$ and such that each sphere $[p_n]$ is a pole of $f$. We denote the set of all such functions by $\meroL(U)$ and the set of all such functions that are intrinsic by $\meroInt(U)$. 

For $U= \sector{\omega}$ with $0<\omega <\pi$, we furthermore denote 
\[
\meroL[\sector{\omega}]_T = \cup_{\omega < \varphi < \pi} \meroL(\sector{\varphi})\qquad\text{ and }\qquad\meroInt[\sector{\omega}]_T = \cup_{\omega < \varphi < \pi} \meroInt(\sector{\varphi}).
\]
\end{definition}

\begin{definition}
Let $T\in\sectOP(\omega)$. A left slice hyperholomorphic function $f$ is said to be regularisable if $f\in\meroL(\sector{\varphi})$ for some $\omega < \varphi <\pi$  and there exists $e\in \Eint(\sector{\varphi})$ such that $e(T)$ defined in the sense of \Cref{DefECalc} is injective and $ef \in \EL(\sector{\varphi})$. In this case we call $e$ a regulariser for $f$.

We denote the set of all such functions by $\meroL[\sector{\omega}]_T$. Furthermore, we denote the subset of intrinsic functions in $\meroL[\sector{\omega}]_T$ by $\meroInt[\sector{\omega}]_T$. 
\end{definition}
\begin{lemma}\label{MStruct}
Let $T\in\sectOP(\omega)$. 
\begin{enumerate}[label = (\roman*)]
\item If $f,g\in\meroL[\sector{\omega}]_T$ and $a\in\hh$, then $fa+g\in\meroL[\sector{\omega}]_T$. If furthermore $f\in\meroInt[\sector{\omega}]_T$, then also $fg\in\meroL[\sector{\omega}]_T$.
\item The space $\meroInt[\sector{\omega}]_T$ is a real algebra.
\end{enumerate}
\end{lemma}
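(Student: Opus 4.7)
My plan is to construct regularisers for the combined functions by multiplying the regularisers of the factors. Given $f\in\meroL(\sector{\varphi_f})$ with intrinsic regulariser $e_f\in\Eint(\sector{\varphi_f})$ and $g\in\meroL(\sector{\varphi_g})$ with intrinsic regulariser $e_g\in\Eint(\sector{\varphi_g})$, I would set $\varphi:=\min\{\varphi_f,\varphi_g\}$, restrict everything to the common sector $\sector{\varphi}$ (all the classes involved are stable under shrinking the sector), and try $e:=e_f e_g$ as the candidate regulariser, first for $fa+g$ and then, in the second assertion, for $fg$.

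Two preliminary points are easy. That $e\in\Eint(\sector{\varphi})$ follows from \Cref{EStruct}\,(iii), which states that $\Eint$ is a real algebra. For injectivity of $e(T)$ I would invoke the product rule \Cref{HOLOLULU}\,(ii): since both factors are intrinsic, $e(T)=e_f(T)\,e_g(T)$, and a composition of two injective bounded operators is injective.

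The core computation is then the factorisation
\[
e(fa+g) \;=\; e_g(e_f f)\,a \;+\; e_f(e_g g),
\]
in which $e_f f$ and $e_g g$ lie in $\EL(\sector{\varphi})$ by hypothesis. Using \Cref{EStruct}\,(i)---that $\EL(\sector{\varphi})$ is a right quaternionic vector space closed under left multiplication by elements of $\Eint(\sector{\varphi})$---each summand, and hence their sum, lies in $\EL(\sector{\varphi})$. For the product assertion, with $f$ now intrinsic, I would write
\[
e\cdot fg \;=\; (e_f f)(e_g g),
\]
note that $e_f f$ is intrinsic as a product of two intrinsic functions, hence lies in $\Eint(\sector{\varphi})$, and conclude once more from \Cref{EStruct}\,(i) that $e\cdot fg\in\EL(\sector{\varphi})$. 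That $fa+g$ and $fg$ are themselves left slice meromorphic is immediate: their singularities lie in the union of the (isolated) poles of $f$ and $g$, and off that union they are left slice hyperholomorphic---using \Cref{HolStruct} for the product, since $f$ is intrinsic.

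Statement (ii) then falls out of (i): if $f,g\in\meroInt[\sector{\omega}]_T$ and $a\in\rr$, both $fa+g$ and $fg$ remain intrinsic, and (i) places them in $\meroL[\sector{\omega}]_T$, hence in $\meroInt[\sector{\omega}]_T$. The only subtle point I would double-check is that a pointwise product of two elements of $\EL$ (legitimate here because one of the factors is intrinsic and so slice hyperholomorphicity is preserved) still has finite polynomial limits at $0$ and $\infty$. The identity $h_1h_2-c_1c_2=(h_1-c_1)h_2+c_1(h_2-c_2)$ combined with \Cref{EChar} makes this routine, so I do not anticipate any genuine obstacle beyond bookkeeping.
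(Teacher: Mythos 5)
Your proposal is correct and follows exactly the paper's argument: the paper's own proof is the one-line observation that if $e_1$ regularises $f$ and $e_2$ regularises $g$, then $e=e_1e_2$ regularises $fa+g$, and also $fg$ when $f$ is intrinsic. You have simply filled in the routine verifications (injectivity of $e(T)$ via \Cref{HOLOLULU}, membership of $e(fa+g)$ and $e\,fg$ in $\EL$ via \Cref{EStruct}) that the paper leaves implicit, so there is nothing to add.
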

\begin{proof}
If $e_1$ is a regulariser for $f$ and $e_2$ is a regulariser for $g$, then $e = e_1e_2$ is a regulariser for $fa+g$ and also for $fg$ if $f$ is intrinsic. Hence the statement follows.

\end{proof}
\begin{remark}
If $T$ is injective, then $f$ does not need to have finite polynomial limit at $0$ in $\sector{\omega}$. Indeed, the function $p\mapsto p(1+p)^{-2}$ or the function $p\mapsto p\left(1+p^2\right)^{-1}$ and their powers can then serve as regularisers that may compensate a singularity at $0$. Choosing the latter as a specific regulariser yields exactly the approach chosen in \cite{Hinfty}, where the $H^{\infty}$-function calculus was first introduced for quaternionic linear operators.
\end{remark}
\begin{definition}[$H^{\infty}$-functional calculus]\label{HIDef}
Let $T\in\sectOP(\omega)$. For regularisable $f\in\meroL[\sector{\omega}]_T$, we define
\[ f(T) := e(T)^{-1}(ef)(T),\]
where $e(T)^{-1}$ is the closed inverse of $e(T)$ and $(ef)(T)$ is intended in the sense of \Cref{DefECalc}.
\end{definition}
\begin{remark}
The operator $f(T)$ is independent of the regulariser $e$ and hence well-defined. Indeed, if $\tilde{e}$ is a different regulariser, then $e$ and $\tilde{e}$ commute because they both belong to $\Eint[\sector{\omega}]$. Hence, $\tilde{e}(T) e(T) = (\tilde{e}e)(T) = (e\tilde{e})(T) = e(T)\tilde{e}(T)$ by \Cref{HOLOLULU}. Inverting this equality yields $e(T)^{-1}\tilde{e}(T)^{-1} = \tilde{e}(T)^{-1}e(T)$ such that
\begin{align*}
e(T)^{-1}(ef)(T) &= e(T)^{-1} \tilde{e}(T)^{-1} \tilde{e}(T)(ef)(T) = e(T)^{-1} \tilde{e}(T)^{-1} (\tilde{e}ef)(T) \\
&= \tilde{e}(T)^{-1} e(T)^{-1}  (e\tilde{e}f)(T) = \tilde{e}(T)^{-1} e(T)^{-1}  e(T)(\tilde{e}f)(T) = \tilde{e}(T)^{-1}(\tilde{e}f)(T).
\end{align*}
If $f\in\EL[\sector{\omega}]$, then we can use the constant function $1$ with $1(T) = \id$ as a regulariser in order to see that \Cref{HIDef} is consistent with \Cref{DefECalc}.
\end{remark}
\begin{remark}
Since we are considering right linear operators, \Cref{HIDef}  is not possible for right slice hyperholomorphic functions. Right slice hyperholomorphic functions maintain slice hyperholomorphicity under multiplication with intrinsic functions from the right. A regulariser of a functions $f$ would hence be a function $e$ such that $e(T)$ is injective and $fe\in\ER(\sector{\varphi})$. The operator $f(T)$ would then be defined as $(fe)(T)e(T)^{-1}$, but this operator is only defined on $\ran e(T)$ and can hence not be independent of the choice of $e$. If we consider left linear operators, the situation is of course vice versa, which is a common phenomenon in quaternionic operator theory, cf. for instance also \Cref{YXSW} or \cite{FUCGEN}.
\end{remark}
The next lemma shows that the function $f$ needs to have a proper limit behaviour at $0$ if $T$ is not injective.
\begin{lemma}\label{NonInjLem}
Let $T\in\sectOP(\omega)$ and $f\in\meroL[\sector{\omega}]_{T}$. If $T$ is not injective, then $f$ has finite polynomial limit $f(0)\in\hh$ in $\sector{\omega}$ at $0$. If furthermore $f$ is intrinsic, then $f(T)v = f(0)v$ for any $v\in\ker(T)$.
\end{lemma}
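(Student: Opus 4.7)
The plan is to exploit a regulariser $e\in\Eint(\sector{\varphi})$ for $f$, use the injectivity of $e(T)$ together with the action of intrinsic functions on $\ker(T)$ to force $e(0)\ne 0$, and then invert $e$ pointwise to extract the behaviour of $f$ near $0$.

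First I would pick $v\in\ker(T)\setminus\{0\}$, which exists by hypothesis. By regularisability of $f$, there exist $\omega<\varphi<\pi$ and $e\in\Eint(\sector{\varphi})$ such that $e(T)$ is injective and $ef\in\EL(\sector{\varphi})$. Since $e\in\Eint[\sector{\omega}]\subset\ER[\sector{\omega}]$, part~(iii) of \Cref{ghj} gives $e(T)v=e(0)v$; combined with injectivity of $e(T)$ and $v\ne 0$, this forces $e(0)\ne 0$, and because $e$ is intrinsic the value $e(0)$ is in fact a nonzero real number.

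To produce the finite polynomial limit of $f$ at $0$, the key observation is that intrinsicity of $e$ yields the pointwise factorisation $(ef)(p)=e(p)f(p)=f(p)e(p)$, so
\[
f(p)=e(p)^{-1}(ef)(p)
\]
on a neighbourhood of $0$ in $\sector{\varphi}$ where $e(p)\ne 0$ by continuity. By \Cref{EChar} both $e$ and $ef$ admit finite polynomial limits at $0$, so $|e(p)-e(0)|=O(|p|^{\alpha'})$ and $|(ef)(p)-(ef)(0)|=O(|p|^{\alpha})$ for some $\alpha,\alpha'>0$. Setting $f(0):=e(0)^{-1}(ef)(0)$ and writing
\[
f(p)-f(0)=e(p)^{-1}\bigl[(ef)(p)-(ef)(0)\bigr]+\bigl[e(p)^{-1}-e(0)^{-1}\bigr](ef)(0),
\]
together with $|e(p)^{-1}-e(0)^{-1}|=O(|p|^{\alpha'})$ (since $e(p)^{-1}$ stays bounded near $0$), yields polynomial decay of $f(p)-f(0)$ at rate $\min(\alpha,\alpha')$.

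For the second statement, assume $f$ is intrinsic; then $ef$ is also intrinsic, and $f(0)$ is real (as $f=\alpha+I\beta$ with $\alpha,\beta$ real-valued and $\beta(0,0)=0$ by the compatibility condition~\eqref{CCond}). For $v\in\ker(T)$, part~(iii) of \Cref{ghj} applied to $ef\in\Eint[\sector{\omega}]$ gives $(ef)(T)v=(ef)(0)v=e(0)f(0)v$. Since $f(0)\in\rr$ commutes with every element of $V$ and $T$ is right linear, the vector $f(0)v$ again lies in $\ker(T)$, so part~(iii) of \Cref{ghj} applied to $e$ and $f(0)v$ gives $e(T)(f(0)v)=e(0)f(0)v=(ef)(T)v$. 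Inverting the injective operator $e(T)$ in the defining identity $f(T)=e(T)^{-1}(ef)(T)$ from \Cref{HIDef} then yields $f(T)v=f(0)v$. The delicate point is the first step---securing $e(0)\ne 0$ from the interaction of the injectivity of $e(T)$ with $\ker(T)$; once this is in place, the rest is essentially bookkeeping with the rules already established for the functional calculus.
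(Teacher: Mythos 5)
Your proposal is correct and follows essentially the same route as the paper: you use the regulariser $e$, the identity $e(T)v=e(0)v$ on $\ker(T)$ from \Cref{ghj} plus injectivity of $e(T)$ to force $e(0)\neq 0$, the pointwise factorisation $f=e^{-1}(ef)$ with the same type of decomposition to get the polynomial limit, and the reality of $(ef)(0)$, $f(0)$ to keep vectors in $\ker(T)$ and conclude $f(T)v=f(0)v$. The only cosmetic difference is that you verify $e(T)(f(0)v)=(ef)(T)v$ and then invert $e(T)$, while the paper applies $e(T)^{-1}w=e(0)^{-1}w$ directly to $w=(ef)(0)v\in\ker(T)$; these are the same argument.
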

\begin{proof}
Assume that $T$ is not injective and let $e$ be a regulariser for $f$. Since $e(T)v = e(0) v$ for all $v\in\ker(T)$ because of \ref{ghj3} in  \Cref{ghj}, we have $e(0) \neq 0$ as $e(T)$ is injective. The limit $e(0)f(0):=\lim_{p\to 0}e(p)f(p)$ of $e(p)f(p)$ as $p$ tends to $0$ in $\sector{\omega}$ exists and is finite because $ef \in \EL(\sector{\omega}) $. Hence the respective limit  of $f(p) = e(p)^{-1} (e(p)f(p))$ exists too and is finite. Indeed, it is $ f(0) = \lim_{p\to 0 } f(p) =  e(0)^{-1}(e(0)f(0))$. We find that 
\[ f(p) - f(0) = e(p)^{-1}\left[  \left(e(p)f(p) - e(0)f(0)\right) - \left(e(p)-e(0)\right)f(0)\right] = O(|p|^{\alpha})\]
as $p$ tends to $0$ in $\sector{\omega}$ because both $ef$ and $e$ have polynomial limit at $0$. Hence, $f$ has polynomial limit $f(0)$ at $0$ in $\sector{\omega}$.

If $f$ is intrinisic, then $ef$ is intrinsic too and $e(0)$, $(ef)(0)$ and $ f(0)$ are all real. Hence, for any $v \in \ker(T)$, we have $(ef)(0)v = v(ef)(0)\in\ker(T)$. As $\ker(T)$ is a right linear subspace of $V$, we conclude that also $(ef)(0)v\in\ker(T)$ and so \ref{ghj3} in \Cref{ghj} yields 
\[f(T)v = e(T)^{-1}(ef)(T)v = e(T)^{-1} (ef)(0) v = e(0)^{-1}(ef)(0) v = f(0)v.\]

\end{proof}

The proof of the following lemma is analogue to the one of the corresponding complex results, Proposition~1.2.2 and Corollary~1.2.4 in \cite{Haase}, and does not employ any specific quaternionic techniques. For the convenience of the reader, we nevertheless give the detailed proof as this result turns out to be crucial for what follows.
\begin{lemma}
Let $T\in\sectOP(\omega)$.
\begin{enumerate}[label = (\roman*)]\label{MoProp}
\item \label{MoProp1}If $A\in\boundOP(V)$ commutes with $T$, then $A$ commutes with $f(T)$ for any $f\in\meroInt[\sector{\omega}]_T$. Moreover, if  $f\in\meroInt[\sector{\omega}]_T$ and $f(T)\in\boundOP(V)$, then $f(T)$ commutes with $T$.
\item\label{MoProp2} If $f,g\in\meroL[\sector{\omega}]_T$, then 
\[f(T) + g(T) \subset (f+g)(T).\]
 If furthermore $f\in\meroInt[\sector{\omega}]_T$, then 
 \[ f(T)g(T) \subset (fg)(T)\]
 with $\dom(f(T)g(T)) = \dom((fg)(T)) \cap \dom(g(T))$. In particular, the above inclusion turns into an equality if $g(T)\in\boundOP(V)$.
\item \label{MoProp3} Let $f\in\meroInt[\sector{\omega}]_T$ and $g\in\meroInt[\sector{\omega}]$ be such that $fg \equiv 1$. Then $g \in \meroInt[\sector{\omega}]_T$ if and only if $f(T)$ is injective. In this case $f(T) = g(T)^{-1}$.
\end{enumerate}
\end{lemma}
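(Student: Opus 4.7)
My plan is to reduce each part to the behaviour of the $S$-functional calculus on the extended Riesz class via the defining identity $h(T) = e(T)^{-1}(eh)(T)$, and to exploit the fact that two elements of the $\Eint$-calculus commute mutually. The central organizational tool is that, given regularisers $e_f$ of $f$ and $e_g$ of $g$, their product $e := e_f e_g$ lies in $\Eint[\sector{\omega}]$, satisfies $e(T) = e_f(T)e_g(T)$ by \Cref{HOLOLULU}, is injective, and simultaneously regularises $f$, $g$, $f+g$ and $fg$. Whenever one needs to commute a bounded intrinsic-calculus operator past the inverse of a regulariser, I will use that commuting bounded operators commute with each other's inverses on the range.

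For (i), the first step is to check that if $A\in\boundOP(V)$ satisfies $AT\subset TA$, then $A$ commutes with both $\Q_s(T)^{-1}$ and $T\Q_s(T)^{-1}$; this is obtained by inverting the inclusion $A\Q_s(T)\subset \Q_s(T)A$. Then \ref{ghj2} in \Cref{ghj} gives that $A$ commutes with $e(T)$ for every $e\in\Eint[\sector{\omega}]$, and a direct check on the Cauchy integral defining $(ef)(T)$ (the multiplications by $ds_I$ and by the scalar values of $ef$ pose no difficulty because $A$ is right linear) shows that $A$ commutes with $(ef)(T)$ as well. The standard argument using the injectivity of $e(T)$ then propagates the commutation to $e(T)^{-1}$ on $\ran e(T)$, and thereby to $f(T) = e(T)^{-1}(ef)(T)$. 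For the second assertion, I apply this to $A = (\id + T)^{-1}$, which is bounded and commutes with $T$, to conclude that $f(T)$ commutes with $(\id + T)^{-1}$; for $v\in\dom(T)$, writing $v = (\id + T)^{-1}u$ with $u = (\id+T)v$ and computing $(\id + T)f(T)v = f(T)u$ places $f(T)v$ in $\dom(T)$ and rearranges to $Tf(T)v = f(T)Tv$.

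For (ii), I use the common regulariser $e = e_f e_g$. The additive inclusion $f(T)+g(T)\subset (f+g)(T)$ is immediate from $(e(f+g))(T) = (ef)(T) + (eg)(T)$: the condition $v\in\dom(f(T))\cap\dom(g(T))$ places both $(ef)(T)v$ and $(eg)(T)v$ in $\ran e(T)$, hence so is their sum, and the values match. The product rule is the delicate part. Given $v\in\dom(g(T))$, set $w = (e_g g)(T)v$ so that $g(T)v = e_g(T)^{-1}w$, and use \Cref{HOLOLULU} (applicable because $e_f f$ is intrinsic) to observe that $(e_f f)(T)$ commutes with $e_g(T)$, hence with $e_g(T)^{-1}$ on $\ran e_g(T)$. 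This yields
\[ (e_f f)(T) g(T) v \;=\; e_g(T)^{-1}\bigl((e_f f)(e_g g)\bigr)(T) v \;=\; e_g(T)^{-1}(e\,fg)(T)v,\]
and applying $e_f(T)^{-1}$ on the left gives $f(T)g(T)v = (fg)(T)v$ whenever both sides are defined. The main obstacle here is the domain bookkeeping: one must verify that $g(T)v\in\dom(f(T))$ is equivalent to $(e\,fg)(T)v\in\ran e(T)$, which uses $e_g(T)\bigl(\ran e_f(T)\bigr) = \ran e(T)$. This identifies $\dom(f(T)g(T))$ with $\dom(g(T))\cap\dom((fg)(T))$, and the last sentence of (ii) is automatic since $g(T)\in\boundOP(V)$ means $\dom(g(T)) = V$.

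For (iii), assume first that $g\in\meroInt[\sector{\omega}]_T$. Two applications of (ii) give $f(T)g(T)\subset \id$ with domain $\dom(g(T))$ and $g(T)f(T)\subset \id$ with domain $\dom(f(T))$, which says exactly that $f(T)$ and $g(T)$ are mutually inverse bijections between their domains; in particular $f(T)$ is injective and $f(T) = g(T)^{-1}$. Conversely, assume $f(T)$ is injective and pick a regulariser $e_f$ for $f$. The natural candidate regulariser for $g$ is $\tilde e := e_f f$: it is intrinsic (both factors are) and lies in $\Eint[\sector{\omega}]$ by hypothesis, and $\tilde e\, g = e_f (fg) = e_f\in\EL[\sector{\omega}]$. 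The only point to verify is that $\tilde e(T) = (e_f f)(T)$ is injective, and here the injectivity of $f(T)$ enters decisively: if $(e_f f)(T)v = 0$, then the trivial inclusion $0\in\ran e_f(T)$ places $v\in\dom(f(T))$ with $f(T)v = e_f(T)^{-1}\cdot 0 = 0$, whence $v=0$. Thus $g\in\meroInt[\sector{\omega}]_T$, and the identity $f(T) = g(T)^{-1}$ then follows from the forward direction.
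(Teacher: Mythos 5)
Your proposal is correct and follows essentially the same route as the paper: a common regulariser $e=e_fe_g$, the product rule of \Cref{HOLOLULU} combined with the commutation of the bounded operator $(e_ff)(T)$ with $e_g(T)^{-1}$ for the multiplicativity and the domain identity, and the regulariser $e_ff$ for $g$ in part (iii). Two of your shortcuts are legitimate variants of the paper's argument: you obtain the second claim of (i) directly by commuting $f(T)$ with $(\id+T)^{-1}$, whereas the paper deduces it from (ii) together with $p(T)=T$; and you prove injectivity of $(e_ff)(T)$ directly from the definition of $\dom(f(T))$, whereas the paper uses $(e_ff)(T)=f(T)e_f(T)$ from (ii). One justification, however, is wrong as stated even though the conclusion is true: in (i) you claim that $A$ commutes with $(ef)(T)$ by a ``direct check on the Cauchy integral'', arguing that the factors $ds_I$ and $(ef)(s)$ ``pose no difficulty because $A$ is right linear''. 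In the integral representation these quaternionic scalars act on the vector by multiplication from the \emph{left}, and right linearity of $A$ gives no commutation with left scalar multiplication. The correct reason is that $ef$ is intrinsic (here $f\in\meroInt[\sector{\omega}]_T$ and $e\in\Eint[\sector{\omega}]$), so \ref{ghj2} in \Cref{ghj} (equivalently the rewriting with purely real coefficients as in \Cref{ComLem}) applies to $ef$ exactly as it does to $e$ and yields $A(ef)(T)=(ef)(T)A$; with this one-line repair the rest of your bookkeeping, in particular the equivalence $g(T)v\in\dom(f(T))\Leftrightarrow (efg)(T)v\in\ran e(T)$ via $\ran e(T)=e_g(T)\bigl(\ran e_f(T)\bigr)$, is sound and matches the paper's proof.
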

\begin{proof}
If $A\in\boundOP(V)$ commutes with $T$, then it commutes with $\Q_{s}(T)^{-1}$ and $T\Q_{s}(T)^{-1}$ for any $s\in\rho_S(T)$. Hence it also commutes with $e(T)$ for any $e\in\Eint[\sector{\omega}]$ by \Cref{ghj}. If $f\in\meroInt[\sector{\varphi}]_T$ and $e$ is a regulariser for $f$, we thus have
\[Af(T) = Ae(T)^{-1}(ef)(T) \subset e(T)^{-1} A(ef)(T) = e(T)^{-1}(ef)(T)A = f(T)A\]
such that the first assertion in \ref{MoProp1} holds true. Because of \ref{ghj1} in \Cref{ghj}, the function $(1+p)^{-1}$ regularizes the identity function $p\mapsto p$ and we have $p(T) = T$. Once we have shown \ref{MoProp2}, we can hence obtain the second assertion in \ref{MoProp1} from
\[ f(T)T \subset (f(p)p)(T) = (pf(p))(T) = T f(T).\]

In order to show \ref{MoProp2} assume that $f,g\in\meroL[\sector{\omega}]_T$ and let $e_1$ be a regulariser for $f$ and $e_2$ be a regulariser for $g$. Then $e = e_{1}e_{2}$ regularises both $f$ and $g$ and hence also  $f+g$ such that
\begin{gather*}
 f(T) + g(T) = e(T)^{-1}(ef)(T) + e(T)^{-1} (eg)(T)
  \subset e(T)^{-1}\left[(ef)(T) + (eg)(T)\right]\\
   = e(T)^{-1}(e(f+g))(T) = (f+g)(T).
 \end{gather*}
Applying this relation to the functions $f+g$ and $-g$, we find that $(f+g)(T) - g(T) \subset f(T)$ and so $(f+g)(T) = f(T) + g(T)$ if $g(T)$ is bounded.

 If even $f\in\Eint[\sector{\omega}]_{T}$, then $f$ and $e_2$ are both intrinsic and hence commute. Thus $e(fg) = (e_1f)(e_2g)\in \EL[\sector{\omega}|_T$ by \Cref{EStruct} and so  $e$ regularises $fg$. Because of \ref{ghj2} in \Cref{ghj} the operator $(e_1f)(T)$ commutes with $e_2(T)$ and hence also with $e_2(T)^{-1}$. Because of \eqref{HOLOLULU}, we thus find that 
 \begin{gather*}
 f(T)g(T) = e_{1}(T)^{-1}(e_1f)(T) e_{2}(T)^{-1}(e_2g)(T) \subset e_1(T)^{-1}e_2(T)^{-1}(e_1f)(T)(e_2g)(T) \\
 =\left[ e_2(T)e_1(T)\right]^{-1}(e_1fe_2g)(T) = e(T)^{-1}(efg)(T) = (fg)(T).
 \end{gather*}
 
In order to prove the statement about the domains, we consider $v\in\dom((fg)(T))\cap\dom(g(T))$. Then $w := (e_2g)(T)v\in\dom\left(e_2(T)^{-1}\right)$. Since $(e_1f)(T)$ commutes with $e_{2}(T)^{-1}$, we conclude that  also $(e_1f)(T)w \in \dom\left(e_2(T)^{-1}\right)$.  Since $v\in\dom((fg)(T))$ and $(fg)(T)v = e(T)^{-1}(efg)(T)v$, we further have $(efg)(T)v \in \dom( e(T)^{-1})$. As $e(T)^{-1} = e_1(T)^{-1}e_2(T)^{-1}$ this implies $e_2(T)^{-1}(efg)(T) v \in \dom(e_1(T)^{-1})$. From the identity
\begin{gather*}
(e_1f)(T)g(T)v = (e_1f)(T) e_2(T)^{-1} w = e_2(T)^{-1} (e_1f)(T)w = e_2(T)^{-1}(efg)(T)v
\end{gather*}
we conclude that $(e_1f)(T)g(T)v\in\dom\left(e_1(T)^{-1}\right)$. Thus, $g(T)v\in\dom(f(T))$ and in turn $v\in\dom(f(T)g(T))$. Therefore 
\[\dom(f(T)g(T))\supset \dom((fg)(T))\cap\dom(g(T)).\]
 The other inclusion is trivial such that altogether we find equality. If $g(T)$ is bounded, then $\dom(g(T))=V$ and we find $\dom(f(T)g(T)) = \dom((fg)(T))$ such that both operators agree.

We show now the statement \ref{MoProp3} and assume that $f,g\in\meroInt[\sector{\omega}]$ with $fg=1$ and that $f$ is regularisable. If $g$ is regularisable too, then  \ref{MoProp3} implies $g(T)f(T) \subset (gf)(T) = 1(T) = \id$ with $\dom(g(T)f(T)) = \dom(\id)\cap\dom(f(T)) = \dom(f(T))$. Hence $f(T)$ is injective and interchanging the role of $f$ and $g$ shows that $f(T)g(T) = \id$ on $\dom(g(T))$ such that actually $f(T) = g(T)^{-1}$. Conversely, if $f(T)$ is injective and $e$ is a regulariser for $f$, then $(fe)g = e(fg) = e\in\Eint[\sector{\omega}]_T$. Moreover $(fe)(T)$ is injective as $f(T)$ and $e(T)$ are both injective and $(fe)(T) = f(T)e(T)$ by \ref{MoProp2}. Thus $fe$ is a regulariser for $g$, i.e. $g\in\meroInt[\sector{\omega}]_T$.

\end{proof}
We define polynomials of an operator $T$ are as usually as $P[T] = \sum_{k=0}^n T^ka_k$ with $\dom\left(P[T]\right) = \dom\left(T^n\right)$ for any polynomial $P(p) = \sum_{k=0}^np^ka_k$. We use the squared brackets to indicate that the operator $P[T]$ is defined via this functional calculus and not via the $H^{\infty}$-functional calculus. However, as the next lemma shows, both approaches are consistent.
\begin{lemma} \label{MoProp4} 
The $H^{\infty}$-functional calculus is compatible with intrinsic rational functions. More precisely, if $r(p) = P(p)Q(p)^{-1}$ is an intrinsic rational function with intrinsic polynomials $P$ and $Q$ such that the zeros of $Q$  lie in $\rho_S(T)$, then $r\in\meroInt[\sector{\omega}]_T$ and the operator $r(T)$ is given by $r(T) = P[T]Q[T]^{-1}$. 
 \end{lemma}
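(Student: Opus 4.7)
The plan is to reduce the statement to the product and inverse rules of \Cref{MoProp} after first verifying that the individual polynomials $P$ and $Q$ are regularisable and that their $H^{\infty}$-evaluations agree with the polynomial functional calculus $P[T]$ and $Q[T]$. Fix $\varphi\in(\omega,\pi)$ and set $n=\deg P$, $M=\deg Q$. Since $P$ and $Q$ are intrinsic polynomials with only finitely many zeros, $r=PQ^{-1}$ is intrinsic and meromorphic on $\sector{\varphi}$, so $r\in\meroInt(\sector{\varphi})$; what must be shown is that $r$ is regularisable and that $r(T) = P[T]Q[T]^{-1}$.

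First I would pick $L\in\nn$ with $L>\max(n,M)$ and take the intrinsic regulariser $e_0(p):=(1+p)^{-L}$. Because $-1\notin\overline{\sector{\varphi}}$, both $e_0 P$ and $e_0 Q$ lie in $\Eint(\sector{\varphi})\subset\EL(\sector{\varphi})$; and since $e_0(T)=(\id+T)^{-L}$ is injective, $e_0$ regularises $P$ and $Q$, hence $P,Q\in\meroInt[\sector{\omega}]_T$. Moreover, $e_0 P$ and $e_0 Q$ are slice hyperholomorphic on a neighbourhood of $\sigma_{SX}(T)$ with zeros of order strictly greater than $n$, respectively $M$, at infinity, so \Cref{SCalcCompatible} together with \Cref{SCProp}(v) yields
\[ (e_0 P)(T)=P[T](\id+T)^{-L},\qquad (e_0 Q)(T)=Q[T](\id+T)^{-L}. \]
Unwinding \Cref{HIDef} and using that $(\id+T)^L$ commutes with $P[T]$ (resp.\ $Q[T]$) on the polynomial domain $\dom(T^n)$ (resp.\ $\dom(T^M)$), this gives $P(T)=P[T]$ and $Q(T)=Q[T]$.

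The hypothesis on the zeros of $Q$ then enters through the factorisation
\[ Q(p) = c\prod_\ell(\lambda_\ell-p)^{n_\ell}\prod_\kappa\Q_{s_\kappa}(p)^{m_\kappa} \]
with $c\in\rr$, real roots $\lambda_\ell\in\rho_S(T)\cap\rr$ and spherical roots $[s_\kappa]\subset\rho_S(T)$. Each factor $(\lambda_\ell\id-T)$ and each $\Q_{s_\kappa}(T)$ has bounded inverse by definition of $\rho_S(T)$, and as these pairwise commuting factors combine they show that $Q[T]:\dom(T^M)\to V$ is bijective with bounded inverse $Q[T]^{-1}\in\boundOP(V)$. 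I would then apply \Cref{MoProp}(iii) with $f=Q$ and $g=Q^{-1}$: since $fg\equiv 1$ and $Q(T)=Q[T]$ is injective, this delivers $Q^{-1}\in\meroInt[\sector{\omega}]_T$ together with $(Q^{-1})(T)=Q[T]^{-1}$. Finally, \Cref{MStruct} gives $r=P\cdot Q^{-1}\in\meroInt[\sector{\omega}]_T$, and \Cref{MoProp}(ii) applied to the intrinsic $P$ and to $Q^{-1}$ yields $P(T)(Q^{-1})(T)\subset r(T)$ with $\dom(P(T)(Q^{-1})(T))=\dom(r(T))\cap\dom((Q^{-1})(T))=\dom(r(T))$, the last equality because $(Q^{-1})(T)$ is bounded. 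The two operators therefore coincide, giving $r(T)=P[T]Q[T]^{-1}$.

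The main obstacle will be the compatibility $P(T)=P[T]$ and $Q(T)=Q[T]$ in the second paragraph, because the polynomials themselves are not in $\Eint[\sector{\omega}]$ (they are unbounded at infinity) and so the product rule for the $\Eint$-calculus is not directly available. The trick is to choose the regulariser with $L$ \emph{strictly} greater than $\max(n,M)$, so that $\lim_{p\to\infty}P(p)(1+p)^{-L}=0=\lim_{p\to\infty}Q(p)(1+p)^{-L}$ and \Cref{SCProp}(v) becomes applicable; the subsequent commutation $(\id+T)^L P[T](\id+T)^{-L}=P[T]$ must then be carried out on $\dom(T^n)$, where the smoothing action of $(\id+T)^{-L}$ places every intermediate vector in the required polynomial domain. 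Once this compatibility is nailed down, the remainder is a clean application of the algebraic framework of \Cref{MoProp} and \Cref{MStruct}.
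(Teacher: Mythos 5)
Your route to polynomial compatibility differs from the paper's (which proves $P(T)=P[T]$ by induction on the degree, writing $P(p)=Q(p)p+a$), and the one-shot use of the regulariser $e_0(p)=(1+p)^{-L}$ together with \Cref{SCProp}(v) and \Cref{SCalcCompatible} does correctly give $(e_0P)(T)=P[T](\id+T)^{-L}$, hence $P(T)=(\id+T)^{L}P[T](\id+T)^{-L}$. The gap is in the next step: the commutation argument on $\dom(T^{n})$ only shows that every $v\in\dom(T^{n})$ lies in $\dom(P(T))$ with $P(T)v=P[T]v$, i.e.\ the inclusion $P[T]\subset P(T)$. By \Cref{HIDef}, however, $\dom(P(T))=\{v\in V:\ P[T](\id+T)^{-L}v\in\ran\big((\id+T)^{-L}\big)=\dom(T^{L})\}$, and you must also prove the reverse inclusion $\dom(P(T))\subset\dom(T^{n})$, i.e.\ that $P[T](\id+T)^{-L}v\in\dom(T^{L})$ forces $v\in\dom(T^{n})$. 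This is precisely the delicate point the paper's induction is built to handle (its step showing $\dom(P(T))\subset\dom(T)$ via commutation with $(\id+T)^{-1}$); it does not follow from the smoothing of $(\id+T)^{-L}$ alone and needs a genuine bootstrapping argument on the regularity of $w=(\id+T)^{-L}v$.

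The same issue affects your claim $Q(T)=Q[T]$, which you invoke to get injectivity of $Q(T)$ in \Cref{MoProp}(iii). That particular use can be repaired without the domain equality: $Q(T)$ is injective because $(e_0Q)(T)=Q[T](\id+T)^{-L}$ is injective, and since the zeros of $Q$ lie in $\rho_S(T)$ the operator $Q[T]$ is even bijective from $\dom(T^{M})$ onto $V$, so $Q(T)\supset Q[T]$ together with injectivity forces $\dom(Q(T))=\dom(T^{M})$, giving $Q(T)=Q[T]$ and $Q^{-1}(T)=Q[T]^{-1}\in\boundOP(V)$ as you want. No such rescue exists for $P$, whose zeros are unrestricted: without $\dom(P(T))\subset\dom(T^{n})$ your final chain only yields $r(T)=P(T)Q[T]^{-1}\supset P[T]Q[T]^{-1}$, with equality automatic only when $\deg P\le\deg Q$ (then $\ran\big(Q[T]^{-1}\big)=\dom(T^{M})\subset\dom(T^{n})$), not in general. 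Once the missing domain inclusion is supplied (essentially the paper's inductive argument), the remainder of your reduction via \Cref{MStruct} and \Cref{MoProp} is sound and parallels the paper's treatment of the rational part; that the paper applies \Cref{MoProp}(iii) factor by factor to the linear and spherical factors of $Q$ while you apply it to $Q$ as a whole is a harmless difference.
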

\begin{proof}
We first prove compatibility with intrinsic polynomials. For intrinsic polynomials of degree $1$ this follows from the linearity of the $H^{\infty}$-functional calculus and from \ref{ghj1} in \Cref{ghj}, which shows that $(1+p)^{-1}$ regularises the identity function $p\mapsto p$ and that 
\[
p(T) = \left((1+p)^{-1}(T)\right)^{-1} (p(1+p)^{-1})(T) =  (\id + T)T(\id+T)^{-1} = T.
\] 

Let us now generalise the statement by induction and let us assume that it holds for intrinsic polynomials of degree $n$. If $P$ is a polynomial of degree $n+1$, let us write $P(p) = Q(p)p + a$ with $a\in\rr$ and an intrinsic polynomial $Q$ of degree $n$. The induction hypothesis implies that $Q\in\meroInt[\sector{\omega}]_T$,  that $Q(T) = Q[T]$, and that $\dom(Q(T)) = \dom(T^n)$. Since $\meroInt[\sector{\omega}]_T$ is a real algebra, we find that $P$  also belongs to $\meroInt[\sector{\omega}]_T$ and we deduce from \ref{MoProp3} in \Cref{MoProp} that 
\[ P(T) \supset Q(T)T + a\id = Q[T]T + a\id = P[T]\]
with $\dom(P[T]) = \dom\left(T^{n+1}\right) = \dom(Q(T)T) = \dom(P(T))\cap\dom(T)$. Hence, if we show that $\dom(T)\subset\dom(P(T))$, the induction is complete. In order to do this, we consider $v\in\dom(P(T))$. Then  $(\id + T)^{-1}v$ does also belong to $\dom(P(T))$ because $(\id + T)^{-1}P(T) \subset P(T)(\id + T)^{-1}$ by \ref{MoProp1} in \Cref{MoProp}. But obviously also $(\id + T)^{-1}v \in\dom(T)$ and hence $(\id + T)^{-1}v \in \dom(P(T))\cap\dom(T) = \dom\left(T^{n+1}\right)$, which implies $v\in\dom(T^n) \subset \dom(T)$. We conclude $\dom(T)\subset \dom(P(T))$. 
 
Let us now turn to arbitrary intrinsic rational functions. If $s\in\rho_S(T)$ is not real, then $\Q_{s}(T)$ is injective because $\Q_{s}(T)^{-1}\in\boundOP(V)$ and hence $\Q_{s}(p)^{-1}\in\meroInt[\sector{\omega}]_T$ by \ref{MoProp3} in \Cref{MoProp}. Similarly, if $s\in\rho_S(T)$ is real, then $p\mapsto (s-p)^{-1}\in\meroInt[\sector{\omega}]_T$ because $(s-p)(T) = (s\id-T)$ is injective as $(s\id-T)^{-1} = S_L^{-1}(s,T) \in\boundOP(V)$. If now $r(p) = P(p)Q(p)^{-1}$ is an intrinsic rational function with poles in $\rho_S(T)$, then we can write $Q(p)$ as product of such factors, namely 
\[Q(p) = \prod_{\ell=1}^N (\lambda_{\ell}-p)^{n_{\ell}}\prod_{\kappa=1}^M\Q_{s_\kappa}(p)^{m_{\kappa}},\]
 where $\lambda_{1},\ldots, \lambda_{N}\in\rho_S(T)$ are the real zeros of $Q$ and $[s_1],\ldots, [s_{M}]\subset\rho_S(T)$ are the spherical zeros of $Q$ and $n_{\ell}$ and $m_{\kappa}$ are the orders of $\lambda_{\ell}$ resp $[s_{\kappa}]$. Since $\meroInt[\sector{\omega}]_T$ is a real algebra, we conclude that $Q\in\meroInt[\sector{\omega}]_T$ and because of \ref{MoProp3} we find $Q^{-1}(T) = Q(T)^{-1} = Q[T]^{-1}$. Moreover, \ref{MoProp2} in \Cref{MoProp} implies 
\[Q^{-1}(T) = \prod_{\ell=1}^N (\lambda_{\ell}\id-T)^{-n_{\ell}}\prod_{\kappa=1}^M\Q_{s_\kappa}(T)^{-m_{\kappa}}\in\boundOP(V)\]
because each of the factors in this product is bounded. Finally, we deduce from the boundedness of $Q^{-1}(T)$ and \ref{MoProp2} in \Cref{MoProp} that 
\[ r(T) = \left(PQ^{-1}\right)(T)= P(T) Q^{-1}(T) = P[T]Q[T]^{-1} = r[T].\]
\end{proof}

\subsection{The composition rule}\label{SubSecCompRule}
Let us now turn our attention to the composition rule, which will occur at several occasions when we consider fractional powers of sectorial operators. As always in the quaternionic setting, we can only expect such a rule to hold true if the inner function is intrinsic since the composition of two slice hyperholomorphic functions is only slice hyperholomorphic if the inner function is intrinsic.

\begin{theorem}\label{CompRul}
Let $T\in\sectOP(\omega)$ and $g\in\meroInt[\sector{\omega}]_T$ besuch that $g(T)\in\sectOP(\omega')$. Furthermore assume that for any $\varphi'\in(\omega',\pi)$ there exists $\varphi \in(\omega, \pi)$ such that $g\in\meroInt(\sector{\varphi})$ and $g(\sector{\varphi})\subset \overline{ \sector{\varphi'}}$. Then $f\circ g \in\meroInt[\sector{\omega}]_T$   for any $f\in\meroL[\sector{\omega'}]_{g(T)}$ and 
\[
(f\circ g)(T) = f(g(T)).
\]
\end{theorem}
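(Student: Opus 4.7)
\emph{Proof plan.} The strategy is to reduce the general meromorphic statement to the case $f\in\EL[\sector{\omega'}]$ by regularisation and to establish that base case directly from the Cauchy representation of \Cref{Kakao}, in the spirit of \cite{Haase} but adapted to slice hyperholomorphy. Intrinsicity of $g$ is used throughout: it guarantees that $f\circ g$ is still slice hyperholomorphic and that the auxiliary compositions $e\circ g$, $(ef)\circ g$ sit in the function classes on which the $H^{\infty}$-calculus of $T$ has already been built.

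For the base case assume $f\in\EL[\sector{\omega'}]$. Choose $\omega'<\varphi''<\varphi'<\pi$ with $f\in\EL(\sector{\varphi'})$, pick $\varphi\in(\omega,\pi)$ from the hypothesis so that $g\in\meroInt(\sector{\varphi})$ and $g(\sector{\varphi})\subset\overline{\sector{\varphi''}}$, and select a slice Cauchy domain $U'$ with $\overline{\sector{\varphi''}}\subset U'$ and $\overline{U'}\subset\sector{\varphi'}$. By \Cref{Kakao} applied to $g(T)$,
$$
f(g(T)) = f(\infty)\id + \frac{1}{2\pi}\int_{\partial(U'\cap\cc_I)} S_L^{-1}(s,g(T))\,ds_I\,f(s).
$$
For each fixed $s$ on the path, items \ref{MoProp2} and \ref{MoProp3} of \Cref{MoProp} applied to the intrinsic function $\Q_s\circ g = g^2-2\Re(s)g+|s|^2\in\meroInt[\sector{\omega}]_T$ give $(\Q_s\circ g)(T)=\Q_s(g(T))$ and hence $(\Q_s\circ g)^{-1}(T)=\Q_s(g(T))^{-1}$; combined with $S_L^{-1}(s,q)=-\Q_s(q)^{-1}(q-\overline{s})$ and the product rule of \Cref{MoProp} one obtains the key identification
$$
\bigl[S_L^{-1}(s,g(\cdot))\bigr](T) = S_L^{-1}(s,g(T)).
$$
Representing this operator in turn as a Cauchy integral for $T$ along $\Gamma=\partial(\sector{\varphi}\cap\cc_I)$, swapping the two integrations, and recognising the inner integral via Cauchy's formula for $f$ as $f(g(\zeta))-f(\infty)$ produces
$$
f(g(T)) = f(\infty)\id + \frac{1}{2\pi}\int_{\Gamma} S_L^{-1}(\zeta,T)\,d\zeta_I\,\bigl((f\circ g)(\zeta) - f(\infty)\bigr).
$$
Because $f\circ g\in\EL[\sector{\omega}]$ by \Cref{EChar} (using $g(\sector{\varphi})\subset\overline{\sector{\varphi''}}$) and $(f\circ g)(\infty)=f(\infty)$, the right-hand side is exactly $(f\circ g)(T)$ as given by \Cref{Kakao}.

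For the general case let $e\in\Eint[\sector{\omega'}]$ be a regulariser of $f$ with respect to $g(T)$. Since both $e$ and $g$ are intrinsic, $e\circ g$ is intrinsic; the containment $g(\sector{\varphi})\subset\overline{\sector{\varphi''}}$ together with \Cref{EChar} then forces $e\circ g\in\Eint[\sector{\omega}]$ and $(ef)\circ g=(e\circ g)(f\circ g)\in\EL[\sector{\omega}]$. The base case applied to $e$ and to $ef$ (both in $\EL[\sector{\omega'}]$) yields $(e\circ g)(T)=e(g(T))$, which is injective by the assumption on $e$, and $((ef)\circ g)(T)=(ef)(g(T))$. Hence $e\circ g$ regularises $f\circ g$ with respect to $T$, so $f\circ g\in\meroL[\sector{\omega}]_T$ and
$$
(f\circ g)(T) = (e\circ g)(T)^{-1}\bigl((ef)\circ g\bigr)(T) = e(g(T))^{-1}(ef)(g(T)) = f(g(T)).
$$

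The hardest part will be the operator identification $[S_L^{-1}(s,g(\cdot))](T)=S_L^{-1}(s,g(T))$ in the base case: it cannot be invoked from a composition rule (that is precisely what is under proof) and must be assembled step by step from the algebraic rules in \Cref{MoProp} applied to intrinsic rational expressions in $g$. The accompanying Fubini-type interchange of the two slice hyperholomorphic Cauchy integrals is the second delicate point, requiring uniform operator-norm estimates for $S_L^{-1}(\zeta,T)$ on $\Gamma$ and for $S_L^{-1}(s,g(\zeta))$ on $\partial(U'\cap\cc_I)$; these rest on the sectoriality of $T$ via \eqref{SectCond} and on the strict nesting $g(\sector{\varphi})\subset\overline{\sector{\varphi''}}$ with $\varphi''<\varphi'$, which keeps $s$ uniformly away from the values of $g$ on the $T$-contour. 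Minor additional care is needed if $g$ has poles in $\sector{\varphi}$, where $e\circ g$ may develop apparent singularities that must be verified to remain removable thanks to the finite limit of $e$ at infinity.
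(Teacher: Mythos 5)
Your overall strategy is the same as the paper's (establish the key identification $\bigl[S_L^{-1}(s,g(\cdot))\bigr](T)=S_L^{-1}(s,g(T))$ from \Cref{MoProp}, swap two slice hyperholomorphic Cauchy integrals, recognise the inner one by Cauchy's formula, then pass to general $f$ by regularisation), but there are genuine gaps. The most serious one is your repeated appeal to \Cref{EChar} to conclude that $f\circ g\in\EL[\sector{\omega}]$, $e\circ g\in\Eint[\sector{\omega}]$ and $(ef)\circ g\in\EL[\sector{\omega}]$. \Cref{EChar} requires finite polynomial limits at $0$ and at $\infty$, and nothing in the hypotheses gives $g$ (hence the compositions) any limit behaviour there: when $T$ is injective, $g$ may even have polynomial limit $\infty$ at $0$ (take $g(p)=p^{-1}$). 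The paper therefore only proves $f\circ g\in\meroL[\sector{\omega}]_T$, by exhibiting explicit regularisers ($p(1+p)^{-2}$ when $T$ is injective, $(1+p)^{-1}$ when it is not, the latter using \Cref{NonInjLem} to get a finite limit $g(0)$), and in the general meromorphic case it must equip $e\circ g$ and $(ef)\circ g$ with their own regularisers $\tau_1,\tau_2$ before showing that $\tau(e\circ g)$ regularises $f\circ g$. Your final display silently assumes that $(e\circ g)(T)^{-1}\bigl((ef)\circ g\bigr)(T)$ is the $H^{\infty}$-calculus definition of $(f\circ g)(T)$, which is exactly the step that is missing.

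The base case also does not go through as written. \Cref{Kakao} is only available when $f$ is slice hyperholomorphic at $0$ and at $\infty$, whereas a general $f\in\EL[\sector{\omega'}]$ merely has polynomial limits there; moreover no slice Cauchy domain $U'$ with $\overline{\sector{\varphi''}}\subset U'$ and $\overline{U'}\subset\sector{\varphi'}$ exists, since $0$ and $\infty$ lie in the closure of every sector but not in $\sector{\varphi'}$. One has to decompose $f=\tilde f+(1+p)^{-1}a+b$ as in \Cref{DefECalc} and work with the sectorial integral for $\tilde f$, treating the other pieces separately (and treating constant $g$, where $g(T)=g(0)\id$ need not be injective, as a degenerate case via \Cref{NonInjLem}). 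More importantly, the step ``representing $[S_L^{-1}(s,g(\cdot))](T)$ as a Cauchy integral for $T$ along $\Gamma$'' is not legitimate as stated: $\zeta\mapsto S_L^{-1}(s,g(\zeta))$ is bounded but need not decay at $0$ or $\infty$, so this operator is only defined through a regulariser and becomes a genuine Cauchy integral only after one is inserted — the paper writes $f(g(T))=e(T)^{-1}\int\cdots$ with $e(p)=p(1+p)^{-2}$ when $T$ is injective, and when $T$ is not injective it subtracts $S_L^{-1}(s,g(0))$ and regularises $\tilde g=g-g(0)$, with different Fubini majorants in the subcases $g(0)=0$ and $g(0)\neq0$. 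This injective/non-injective case analysis is where the bulk of the paper's proof lies; flagging the Fubini interchange as ``delicate'' does not replace the regularisation that makes the double integral well defined in the first place.
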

\begin{proof}
Let us first assume that $g\equiv c$ is constant. In this case $g(T) = c \id$. Since $g$ is intrinsic, we have $\overline{c} = \overline{g(s)} = g(\overline{s}) = c$ and so $c\in\rr$. Since $g$ maps $\sector{\varphi}$ into $\overline{\sector{\varphi'}}$ for suitable $\varphi \in (\omega,\pi)$ and $\varphi'\in(\omega',\pi)$, we further find $c\in \overline{\sector{\varphi'}}\cap\rr = [0,+\infty)$. If $c\neq 0$, then $(f \circ g)(p) \equiv f(c) $ and we deduce easily, for instance from \Cref{SCalcCompatible}, that $(f\circ g) (T) = f(c)\id = f(g(T))$. If on the other hand $c = 0$, then \Cref{NonInjLem} implies that  $f(0) := \lim_{p\to 0}f(p)$ as $p$ tends to $0$ in $\sector{\omega}$ exists. Hence $f \circ g$ is well defined. It is the constant function $f\circ g \equiv f(0)$ and so $(f\circ g)(T) = f(0)\id$. If $f$ is intrinsic, then \Cref{NonInjLem} implies $f(g(T)) = f(0)\id = (f\circ g) (T)$. If $f$ is not intrinsic, then $f = f_0 + \sum_{\ell =1}^3 f_{\ell}e_{\ell}$ with intrinsic components $f_{\ell}$. Since $\ker g(T) = \ker (0\id) =V$, for any vector $v$ also the vectors $e_{\ell}v, \ell = 1,2,3$ belong to $\ker g(T)$ and we conclude, again from  \Cref{NonInjLem}, that
\begin{gather*}
f(g(T))v = f_0(g(T)) v + \sum_{\ell=1}^{3}f_{\ell}(g(T)) e_{\ell} v = f_{0}(0)v + \sum_{\ell=1}^3 f_{\ell}(0)e_{\ell}v\\
 =  \left(f_{0}(0) + \sum_{\ell=1}^3 f_{\ell}(0)e_{\ell}\right)v = f(0) v = (f\circ g)(T) v.
\end{gather*}
In the following we shall thus assume that $g$ is not constant.

Let $\varphi'$ and $\varphi$ be a couple of angles as in the assumptions of the theorem. Since $g$ is intrinsic, $g|_{\cc_I\cap\sector{\varphi}}$ is a non-constant holomorphic function on $\cc_I\cap\sector{\varphi}$. Hence, it maps the open set $g(\sector{\varphi}\cap \cc_I)$ to an open set. The set $g(\sector{\varphi}) = [g(\sector{\varphi}\cap\cc_I)]$ is therefore also open and so actually contained in $\sector{\varphi'}$, not only in $\overline{\sector{\varphi'}}$. In particular, we find that $f\circ g$ is defined and slice hyperholomorphic on $\sector{\varphi}$.

We assume for the moment that $f\in\EL(\sector{\varphi'})$ with $\varphi'\in(\omega',\pi)$ and choose $\varphi\in(\omega,\pi)$ such that  $g\in\meroInt(\sector{\varphi})$ and 
 $g(\sector{\varphi})\subset \sector{\varphi'}$. Since $f$ is bounded on $\sector{\varphi'}$, the function $f\circ g$ is a bounded function in $\lhol(\sector{\varphi})$. If $T$ is injective, then $e(p) = p(1+p)^{-2}\in\Eint(\sector{\varphi})$  such that $e(T)T(\id+T)^{-2}$ is injective. Moreover, the function $p\mapsto e(p)(f\circ g )(p)$ decays regularly at $0$ and infinity in $\sector{\varphi}$ and hence belongs to $\EL(\sector{\varphi})$. In other words, $e$ is a regulariser for $f\circ g$ and hence $f\circ g \in \meroL[\sector{\omega}]_T$. If $T$ is not injective, then $g$ has polynomial limit $g(0)$ at $0$ by \Cref{NonInjLem}. Since $g$ is intrinsic, it only takes real values on the real line and so $g(0)\in\rr$. It furthermore maps $\sector{\varphi}$ to $\sector{\varphi'}$ and so $g(0)\in\overline{\sector{\varphi'}}\cap\rr = [0,+\infty)$. Therefore $f$ has polynomial limit at $g(0)$: if $g(0) = 0$ this follows from \Cref{EChar}, otherwise it follows from the Taylor expansion of $f$ at $g(0)\in(0,\infty)$, cf. \Cref{Taylor}. As a consequence, $f\circ g $ has polynomial limit at $0$. Therefore the function $p\mapsto (1+p)^{-1}(f\circ g)(p)$ belongs to $\EL(\sector{\varphi})$. Since $(\id + T)^{-1}$ is injective because $-1\in\rho_S(T)$, we find that $(1+p)^{-1}$ is a regularizer for $f\circ g$ and hence $f\circ g \in\meroL[\sector{\omega}]_T$. 

We have $f(p) = \tilde{f}(p) + (1+p)^{-1}a + b$ with $\tilde{f}\in\lholZI(\sector{\varphi'})$ and $a,b\in\hh$. Because of the additivity of the functional calculus, we can treat each of these pieces separately. The case that $f \equiv b$ has already been considered above. For $f(p) = (1+p)^{-1}a$, the identity $(f\circ g)(T) = (\id + g(T))^{-1}$ follows from \ref{MoProp3} in \Cref{MoProp} because $p\mapsto 1 + g(p)$ and $p\mapsto (f\circ g) (p) = (1 +g(p) )^{-1}$ do both belong to $\meroL[\sector{\omega}]_T$. Hence let us assume that $f = \tilde{f} \in \lholZI(\sector{\varphi'})$ with $\varphi' \in ( \omega',\pi)$.

We choose $\theta' \in (\omega', \varphi')$ and $I\in\SS$ and set $\Gamma_p = \partial (\sector{\theta'}\cap \cc_I)$. We furthermore choose $\rho' \in (\omega', \theta')$ and by our assumptions on $g$, we can find $\varphi\in(\omega,\pi)$ such that $g(\sector{\varphi})\subset \sector{\rho'}\subsetneq \sector{\theta'}$. We choose $\theta \in (\omega, \varphi)$ and set $\Gamma_s = \partial(\sector{\theta}\cap\cc_I)$. The subscripts $s$ and $p$ in $\Gamma_s$ and $\Gamma_p$ refer to the corresponding variable of integration in the following computations.

For any $p \in \Gamma_p$, the functions $s \mapsto \Q_{p}(g(s))^{-1} = (g(s)^2 - 2p_0 g(s) + |p|^2)^{-1}$ and $s\mapsto S_L^{-1}(p, g(s))$ do then belong to $\EL(\sector{\varphi})$ and $\left[\Q_{p}(g(\cdot))^{-1}\right](T) = \Q_{p}(g(T))^{-1}$ and $\left[S_L^{-1}(p,g(\cdot))\right](T) = S_L^{-1}(p,g(T))$. Indeed, by \cref{MoProp2} in \Cref{MoProp}, we have
\begin{align}\label{AMURZI}
\left[\Q_{p}(g(\cdot))\right](T) = (g^2 - 2 p_0 g + |p|^2)(T) \supset g(T)^2 - 2p_0g(T) + |p|^2\id =  \Q_{p}(g(T)).
\end{align}
Taking the closed inverses of these operators, we deduce from \cref{MoProp3} in \Cref{MoProp} that
\begin{equation}\label{AMSti}
\left[\Q_{p}(g(\cdot))^{-1}\right](T) = \left[\Q_{p}(g(\cdot))\right](T)^{-1} \supset  \Q_{p}(g(T))^{-1}.
\end{equation}
Since $p\in\rho_S(T)$, the $\Q_{p}(g(T))^{-1}$ is a bounded operator and hence already defined on all of $V$. Hence, the inclusion $\supset$ in \eqref{AMSti} and \eqref{AMURZI} is actually an equality and we find $\left[\Q_{p}(g(\cdot))^{-1}\right](T) =  \Q_{p}(g(T))^{-1}$. From \cref{MoProp2} we further conclude that also
\begin{align*}
\left[S_L^{-1}(p,g(\cdot))\right](T) =& \left[ \Q_{p}(g(\cdot))^{-1}\overline{p} - g(\cdot) \Q_{p}(g(\cdot))^{-1}\right] (T)\\
=&  \Q_{p}(g(T))^{-1}\overline{p} - g(T) \Q_{p}(g(T))^{-1} = S_L^{-1}(p,g(T)).
\end{align*}
We hence have
\begin{align*}
f(g(T)) = & \frac{1}{2\pi}\int_{\Gamma_p}   S_L^{-1}(p,g(T))\,dp_I\, f(p)= \frac{1}{2\pi} \int_{\Gamma_p} \left[S_L^{-1}(p,g(\cdot))\right](T) \,dp_I\, f(p).
\end{align*}

Let us first assume that $T$ is injective. Since $f$ and in turn also $f\circ g$ are bounded, we can use $e(p) = p(\id + p)^{-2}$ as a regulariser for $f\circ g$. As $e$ decays regularly at $0$ and infinity, also the functions $s\mapsto e(s) S_L^{-1}(p,g(s))$ decays regularly at $0$ and infinity for any $p\in\Gamma_p$. Hence it belongs to $\lholZI(\sector{\varphi})$ and so 
\begin{equation}\label{TZER}
\begin{split}
f(g(T)) =& e(T)^{-1} e(T) f(g(T))  \\
=& e(T)^{-1} \frac{1}{2\pi} \int_{\Gamma_p} e(T) S_L^{-1}(p,g(T))\,dp_I \, f(p)\\
=& e(T)^{-1} \frac{1}{2\pi} \int_{\Gamma_p}  \left[e(\cdot)S_L^{-1}(p,g(\cdot))\right](T)\, dp_I f(p)\\
=& e(T)^{-1} \frac{1}{(2\pi)^2} \int_{\Gamma_p} \left( \int_{\Gamma_s} S_L^{-1}(s,T)\,ds_I\, s(1+s)^{-2}S_L^{-1}(p,g(s))\right)\, dp_I f(p).
\end{split}
\end{equation} 

We can now apply Fubini's theorem in order to exchange the order of integration: estimating the resolvent using \eqref{SectCond}, we find that the integrand in the above integral is bounded by the function
\begin{equation}\label{BDMAJ}
 F(s,p):= C_{\theta}\left|p S_L^{-1}(p,g(s))\right| \frac{1}{|1+s|^2}  \frac{|f(p)|}{|p|}.
 \end{equation}
Since $p$, $s$ and $g(s)$ belong to the same complex plane as $g$ is intrinsic, we have due to  \eqref{SEst2} that
\begin{equation}\label{sax}
  \left|p  S_L^{-1}(p,g(s)) \right| \leq \max_{\tilde{s}\in[s]}\frac{|p|}{\left| p - g(\tilde{s})\right|} = \max \left\{ \frac{1}{|1-p^{-1}g(s)|}, \frac{1}{|1 - p^{-1}g(\overline{s})|}\right\}
\end{equation}
 Since  $g(\Gamma_s) \subset \sector{\rho'}\cap\cc_I \subsetneq \sector{\theta'}\cap\cc_I$ and $\Gamma_p = \partial(\sector{\theta'}\cap\cc_I)$, these expressions are bounded by a constant depending on $\theta'$ and $\rho'$ but neither on $p$ nor on $s$. Hence $\left|p  S_L^{-1}(p,g(s)) \right|$ is uniformly bounded on $\Gamma_s\times\Gamma_p$ and $F(s,p)$ is in turn integrable on $\Gamma_p\times \Gamma_s$ because $f$ has polynomial limit $0$ both at $0$ and infinity.
  
 After exchanging the order of integration in \eqref{TZER}, we deduce from Cauchy's integral formula that
  \begin{equation*}
  \begin{split}
f(g(T)) =& e(T)^{-1} \frac{1}{(2\pi)^2} \int_{\Gamma_s} S_L^{-1}(s,T)\,ds_I\, s(1+s)^{-2}\left(  \int_{\Gamma_p} S_L^{-1}(p,g(s))\, dp_I f(p)\right)\\
=& e(T)^{-1} \frac{1}{2\pi} \int_{\Gamma_s} S_L^{-1}(s,T)\,ds_I\, e(s)f(g(s)) = e(T)^{-1} e(T) (f\circ g)(T)  = (f\circ g) (T).
\end{split}
  \end{equation*}

Let us now consider the case that $T$ is not injective. By \Cref{NonInjLem}, the function $g$ has then  finite polynomial limit $g(0)\in\rr$ in $\sector{\varphi}$ and hence the function $\tilde{g}(p) = g(p) - g(0)\in\meroInt(\sector{\varphi})_{T}$ has finite polynomial limit $0$ in at $0$. Let us choose a regulariser $e$ for $\tilde{g}$ with polynomial limit $0$ at infinity. (This is always possible: if $\tilde{e}$ is an arbitrary regulariser for $\tilde{g}$,  we can choose for instance $e(s) = (1+s)^{-1}\tilde{e}(s)$.) We have then $e\tilde{g} \in \lholZI(\sector{\varphi})$. Since $g(0)$ is real, we have $S_L^{-1}(p,g(0))= (p-g(0))^{-1}$. Moreover  $g(s)$ and $\Q_{p}(g(s))^{-1}$ commute for any $s\in\Gamma_s$. For $p\notin\overline{\sector{\rho'}}$ we find thus
\begin{equation}\label{blulb}
\begin{split}
&e(s) S_L^{-1}(p,g(s)) - e(s)S_L^{-1}(p,g(0))\\
 = & e(s) \Q_{p}(g(s))^{-1}\left[ (\overline{p}- g(s))(p - g(0)) - \Q_{p}(g(s))\right](p-g(0))^{-1}\\
=& e(s) \Q_{p}(g(s))^{-1}\Big[ (\overline{p}- g(s))p - g(0)(\overline{p}- g(s)) \\
&\qquad \qquad \qquad \quad+ g(s)(\overline{p} - g(s)) - ( \overline{p} - g(s))p\Big](p-g(0))^{-1}\\
=& e(s) (g(s) - g(0)) S_L^{-1}(p,g(s)) (p - g(0))^{-1} \\
=&  e(s) \tilde{g}(s) S_L^{-1}(p,g(s)) S_L^{-1}(p,g(0)).
\end{split}
\end{equation}
Hence, $e$ regularises also $s\mapsto S_L^{-1}(p,g(s)) - S_L^{-1}(p,g(0))$ and $e(\cdot)\left(S_L^{-1}(p,g(\cdot)) - S_L^{-1}(p,g(0))\right)$ does even belong to $\lholZI(\sector{\varphi})$.  
We thus have 
\begin{align*}
f(g(T)) = & e(T)^{-1}e(T)f(g(T)) \\
= & e(T)^{-1} \frac{1}{2\pi}\int_{\Gamma_p} e(T) S_L^{-1}(p,g(T))\,dp_I\, f(p)\todo{Why is the $S$-resolvent compatible with the functional calculus?}\\
=&  e(T)^{-1}\frac{1}{2\pi} \int_{\Gamma_p} \left[e(\cdot) S_L^{-1}(p,g(\cdot))\right](T) \,dp_I\, f(p)\\
 =& e(T)^{-1}\frac{1}{2\pi} \int_{\Gamma_p} \left[e(\cdot) \tilde{g}(\cdot) S_L^{-1}(p,g(\cdot))S_L^{-1}(p,g(0))\right](T)\,dp_I f(p) \\
& + e(T)^{-1}\frac{1}{2\pi} \int_{\Gamma p} e(T) S_L^{-1}(p,g(0))\,dp_I f(p).
\end{align*}
For the second integral, Cauchy's integral formula yields 
\begin{equation}
e(T)^{-1}\frac{1}{2\pi} \int_{\Gamma p} e(T) S_L^{-1}(p,g(0))\,dp_I f(p) = e(T)^{-1}e(T) f(g(0)) = f(g(0))\id
\end{equation}
as $f$ decays regularly at infinity in $\sector{\theta}$. For the first integral, we have
\begin{equation}\label{cuko}
\begin{split}
 &e(T)^{-1}\frac{1}{2\pi} \int_{\Gamma_p} \left[e(\cdot) \tilde{g}(\cdot) S_L^{-1}(p,g(\cdot))S_L^{-1}(p,g(0))\right](T)\,dp_I f(p)\\
=& e(T)^{-1}\frac{1}{(2\pi)^2} \int_{\Gamma_p} \left(\int_{\Gamma_s} S_L^{-1}(s,T)\,ds_I\, e(s) \tilde{g}(s) S_L^{-1}(p,g(s))S_L^{-1}(p,g(0))\right)\,dp_I f(p)\\
 \overset{(A)}{=}& e(T)^{-1}\frac{1}{(2\pi)^2}\int_{\Gamma_s} S_L^{-1}(s,T)\,ds_I\, \left(  \int_{\Gamma_p} e(s) \tilde{g}(s) S_L^{-1}(p,g(s))S_L^{-1}(p,g(0))\,dp_I f(p)\right)\\
 \overset{(B)}{=}& e(T)^{-1}\frac{1}{(2\pi)^2}\int_{\Gamma_s} S_L^{-1}(s,T)\,ds_I\, e(s)\left(  \int_{\Gamma_p} S_L^{-1}(p,g(s)) - S_L^{-1}(p,g(0))\,dp_I f(p)\right)\\
 \overset{(C)}{=}& e(T)^{-1}\frac{1}{2\pi}\int_{\Gamma_s} S_L^{-1}(s,T)\,ds_I \left(e(s) f(g(s)) - f(g(0))\right)\\
 = & e(T)^{-1} (e(T) f\circ g(T) - e(T) f(g(0))\id) =  f\circ g(T) - f(g(0))\id,
\end{split}
\end{equation}
where the identity $(A)$ follows from Fubini's theorem, the identity $(B)$ follows from \eqref{blulb} and the identity $(C)$ finally follows from Cauchy's integral formula. 
Altogether, we have
\[ f(g(T)) = f\circ g (T) - f(g(0))\id + f(g(0))\id = f\circ g(T).\]
In order to justify the application of Fubini's theorem in $(A)$, we observe that the integrand is bounded the function by
\[
 F(s,p) = C_{\theta} \left|p S_L^{-1}(p,g(s))\right| \frac{|e(s)\tilde{g}(s)|}{|s|}\frac{|f(p)|}{|p|} \frac{1}{|p-g(0)|},
\]
where we used \eqref{SectCond} in order to estimate the $S$-resolvent $S_L^{-1}(s,T)$.

If $g(0) \neq 0$ then $|p-g(0)|^{-1}$ is uniformly bounded in $p$. Just as before, also $\left|p S_L^{-1}(p,g(s))\right|$ is uniformly bounded on $\Gamma_s\times \Gamma_p$. Since $\tilde{g}$ decays regularly at $0$, $e$ decays regularly at infinity and $f$ decays regularly both at $0$ and infinity, the function $F$  is hence integrable on $\Gamma_s\times \Gamma_p$ and we can apply Fubini's theorem.

If on the other hand $g(0) = 0$, then $g = \tilde{g}$ and we can write
\begin{align}
\notag
 F(s,p) =& C_{\theta} \left| S_L^{-1}(p,g(s))\right| \frac{|e(s)\tilde{g}(s)|}{|s|}\frac{|f(p)|}{|p|}\\
\label{vbh}
 =&C_{\theta} \left|p^{\alpha} S_L^{-1}(p,g(s))g(s)^{1-\alpha}\right| \frac{|e(s)g(s)^{\alpha}|}{|s|}\frac{|f(p)|}{|p|^{1+\alpha}},
 \end{align}
with $\alpha\in(0,1)$ such that $|f(p)|/|p|^{1+\alpha}$ is integrable. This is possible because $f$ decays regularly at~$0$. Just as in \eqref{sax}, we can estimate the first factor in \eqref{vbh} by
\[
 \left|p^{\alpha} S_L^{-1}(p,g(s))g(s)^{1-\alpha}\right| \leq \max\left\{ \frac{|g(s)|^{1-\alpha}}{|p|^{1-\alpha}} \frac{1}{|1 - p^{-1}g(s)|}, \frac{|g(\overline{s})|^{1-\alpha}}{|p|^{1-\alpha}} \frac{1}{|1 - p^{-1}g(\overline{s})|} \right\},
\]
where we used that $|g(s)| = |\overline{g(\overline{s})}| = |g(\overline{s})|$ because $g$ is intrinsic. This expression is  as before uniformly bounded on $\Gamma_s\times\Gamma_p$ because $g(\Gamma_s)\subset \sector{\rho'}\cap\cc_I$. Hence, $F$ is again integrable and it is actually possible to apply Fubini's theorem.

Altogether, we have so far shown that $f(g(T)) = ( f\circ g )(T)$ for any $f\in\EL[\sector{\omega'}]$. Finally, we consider now a general function $f\in \meroL[\sector{\omega'}]_{g(T)}$ that does not necessarily belong to $\EL[\sector{\omega'}]$. If $e$ is a regulariser for $f$, then $e$ and $ef$ both belong to $\EL[\omega']$. By what we have just shown,  we hence have $e_g := e\circ g  \in\meroInt[\sector{\omega}]_T$ and  $(ef)_g := (ef)\circ g \in\meroL[\sector{\omega}]_T$ with $e_g(T) = e(g(T))$ and $(ef)_g(T) = (ef)(g(T))$. 

Let $\tau_1$ and $\tau_2$ be regularisers for $e_ g$ and $(ef)_ g$. Then $\tau = \tau_1\tau_2$ regularises both of them and hence
\[ e_g(T) = \tau^{-1}(T) (\tau e_g)(T).\]
Since $e_g(T) = (e\circ g)(T) = e(g(T))$ is injective because $e$ is a regulariser for $f$, the operator $ (\tau e_g)(T)$ is injective too. Moreover, for $f_g :=f\circ g$, we find $(\tau e_g) f_g = \tau(e_gf_g) = \tau (ef)_g \in \EL[\omega]$ because $\tau$ was chosen to regularise both  $e_g$ and $(ef)_g$. Therefore $\tau e_ g$ is a regulariser for $f_g$ and hence $f_g \in\meroL[\sector{\omega}]_T$.  Finally, we deduce from \Cref{MoProp} that
\begin{align*}
 f(g(T)) =& e(g(T))^{-1} (ef)(g(T)) = (e_ g)(T)^{-1} ((ef)_ g )(T) \\
 =& (e_ g)(T)^{-1}\tau(T)^{-1}\tau(T) ((ef)_ g )(T) \\
 = &(\tau e_ g)(T)^{-1}((\tau e)_ g f_g)(T) =  f_g (T) = (f\circ g)(T).
 \end{align*}

\end{proof}

\begin{corollary}\label{InvCor}
Let $T\in\sectOP(\omega)$ be injective and let $f\in\meroL[\sector{\omega}]$. Then $f\in\meroL[\sector{\omega}]_T$ if and only if $p\mapsto f(p^{-1})\in\meroL[\sector{\omega}]_{T^{-1}}$ and in this case
\[ f(T) = f(p^{-1}) (T^{-1}).\]
\end{corollary}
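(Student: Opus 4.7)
The plan is to apply the composition theorem \Cref{CompRul} with the intrinsic function $g(p) = p^{-1}$, which is slice hyperholomorphic on $\hh \setminus \{0\}$ and maps each sector $\sector{\varphi}$ bijectively onto itself (so in particular the compatibility hypothesis $g(\sector{\varphi}) \subset \overline{\sector{\varphi'}}$ of \Cref{CompRul} is trivially satisfied by taking $\varphi' = \varphi$). To use this, I first need to know that $g \in \meroInt[\sector{\omega}]_T$ with $g(T) = T^{-1}$, and, symmetrically, that $T^{-1} \in \sectOP(\omega)$ so that $g \in \meroInt[\sector{\omega}]_{T^{-1}}$ with $g(T^{-1}) = T$.

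For the first claim, the identity function $p \mapsto p$ is regularised by $e(p) = (1+p)^{-2} \in \Eint[\sector{\omega}]$ (since $e(T) = (\id + T)^{-2}$ is injective as $-1 \in \rho_S(T)$, and $(ep)(p) = p(1+p)^{-2}\in\EL[\sector{\omega}]$), and a short calculation using the commutation of $T$ with $(\id+T)^{-1}$ shows that $p(T) = T$. Since $T$ is injective and $p \cdot g(p) \equiv 1$, \ref{MoProp3} of \Cref{MoProp} yields $g \in \meroInt[\sector{\omega}]_T$ and $g(T) = T^{-1}$.

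The main obstacle is verifying $T^{-1} \in \sectOP(\omega)$. The key identity is
\[
\Q_s(T^{-1}) \, T^{2} = T^{-2}\,T^2 \,\Q_s(T^{-1})\,T^2 \cdot T^{-2} = |s|^2\,\Q_{s^{-1}}(T) \qquad (s \neq 0),
\]
which is obtained by factoring $T^{-2}$ out of $\Q_s(T^{-1}) = T^{-2} - 2s_0 T^{-1} + |s|^2\id$ and using $\Re(s^{-1}) = s_0/|s|^2$, $|s^{-1}|^2 = 1/|s|^2$. This shows that $s\in\rho_S(T^{-1})$ whenever $s^{-1}\in\rho_S(T)$ and gives $\Q_s(T^{-1})^{-1} = |s|^{-2}\, T^2\, \Q_{s^{-1}}(T)^{-1}$. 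Substituting into the definition of the left and right $S$-resolvents of $T^{-1}$, the sectoriality estimate $\|S_{L/R}^{-1}(p,T)\| \leq C/|p|$ for $T$ (which also implies $\|T\Q_p(T)^{-1}\|\leq C'/|p|$ and $\|T^2\Q_p(T)^{-1}\|$ bounded) translates directly into the analogous estimates for $T^{-1}$ with the same angle $\omega$, using that $g$ preserves each $\sector{\varphi}$. Since $T^{-1}$ is closed (as the inverse of a closed injective operator) and also injective, the first claim can now be applied to $T^{-1}$ in place of $T$, yielding $g\in\meroInt[\sector{\omega}]_{T^{-1}}$ and $g(T^{-1}) = T$.

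With these ingredients in place, the two directions follow by symmetric applications of \Cref{CompRul}. If $f\in\meroL[\sector{\omega}]_T$, apply \Cref{CompRul} with outer function $f$, inner function $g$, and operator $T^{-1}$: all hypotheses hold (using $g(T^{-1}) = T \in \sectOP(\omega)$ and $f \in \meroL[\sector{\omega}]_{g(T^{-1})}$), so $f\circ g \in \meroL[\sector{\omega}]_{T^{-1}}$ with $(f\circ g)(T^{-1}) = f(g(T^{-1})) = f(T)$. Since $(f\circ g)(p) = f(p^{-1})$, this establishes one direction together with the identity $f(T) = f(p^{-1})(T^{-1})$. Conversely, if $h(p) := f(p^{-1}) \in \meroL[\sector{\omega}]_{T^{-1}}$, apply \Cref{CompRul} with outer $h$, inner $g$, and operator $T$: then $h \circ g \in \meroL[\sector{\omega}]_T$ and $(h\circ g)(T) = h(g(T)) = h(T^{-1})$. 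Since $(h\circ g)(p) = f((p^{-1})^{-1}) = f(p)$, this gives $f\in\meroL[\sector{\omega}]_T$ together with the same final identity.
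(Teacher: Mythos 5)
Your argument is correct and is essentially the paper's own proof: the paper also reduces the statement to \Cref{CompRul} after noting that, by injectivity of $T$, the function $p\mapsto p^{-1}$ belongs to $\meroInt[\sector{\omega}]_T$ with $(p^{-1})(T)=T^{-1}$. Your additional verifications (sectoriality of $T^{-1}$ via $\Q_s(T^{-1})T^2=|s|^2\Q_{s^{-1}}(T)$, the sector-invariance of $p\mapsto p^{-1}$, and the two symmetric applications of \Cref{CompRul}) are a correct unpacking of details the paper leaves implicit.
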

\begin{proof}
Since $T$ is injective, the function $p^{-1}$ belongs to $\meroInt[\sector{\omega}]_{T}$ and the statement follows from \Cref{CompRul}.

\end{proof}

\subsection{Extensions according to spectral conditions}
As in the complex case, cf. \cite[Section 2.5]{Haase}, one can extend the $H^{\infty}$-functional calculus for sectorial operators to a larger class of functions if the operator satisfies additional spectral conditions. We shall mention the following three cases, which are relevant in the proof the spectral mapping theorem in \Cref{SpecMapSubSec}. In order to explain them, we introduce the notation
\[\sector{\varphi,r,R} = (\sector{\varphi} \cap B_{R}(0))\setminus B_{r}(0)\]
 for $0\leq r<R\leq \infty$. (We set $B_{\infty}(0) = \hh$ for $R = \infty$.)
\begin{enumerate}[label = (\roman*)]
\item \label{itemA} If the operator $T\in\sectOP(\omega)$ has a bounded inverse, then $B_{\varepsilon}(0)\subset \rho_S(T)$ for sufficiently small $\varepsilon >0$. We can thus define the class\[ \EL^{\infty}(\sector{\varphi}) = \{ f = \tilde{f} + a \in\lhol(\sector{\varphi}) : a\in\hh, \tilde{f}\in\lhol(\sector{\varphi}) \text{ decays regularly at $\infty$}\}\]
and $\Eint^{\infty}(\sector{\varphi})$ as the set of all intrinsic functions in $\sector{\varphi}$. For any function $f\in\EL^{\infty}(\varphi)$ with $\varphi>0$, we can define $f(T)$ as
\[ f(T) = \frac{1}{2\pi} \int_{\partial( \sector{\varphi,r,\infty}\cap\cc_I)} S_L^{-1}(s,T)\,ds_I\,f(s) + a\id,\]
with $0 < r < \varepsilon$ arbitrary. It follows as in \Cref{Kakao} from Cauchy's integral theorem that this approach is consistent with the usual one if $f\in\EL(\sector{\varphi})$, but the class of admissible functions $\EL^{\infty}(\sector{\varphi})$ is now larger. We can further extend this functional calculus by calling $e\in\EL^{\infty}(\sector{\varphi})$ a regulariser for $f\in\meroL(\sector{\varphi})$, if $e(T)$ is injective and $ef \in\EL^{\infty}(\sector{\varphi})$. In this case, we define $f(T) = e(T)^{-1}(ef)(T)$.

Obviously all the results shown so far still hold for this extended functional calculus since the respective proofs can be carried out in this setting with marginal and obvious modifications. Only in the case of the composition rule we have to consider several cases, just as in the complex case, namely the combinations
\begin{enumerate}[label = \alph*)]
\item \label{AA}$T$ is sectorial and $g(T)$ is invertible and sectorial
\item $T$ is invertible and sectorial and $g(T)$ is sectorial
\item \label{AC} $T$ and $g(T)$ are both invertible and sectorial.
\end{enumerate}
In the cases \ref{AA} and \ref{AC} one needs the additional assumption $0\notin \overline{g(\sector{\omega})}$ on the function $g$.

\item \label{itemB} If the operator $T\in\sectOP(\omega)$ is bounded, then $\hh\setminus B_{\rho}(0)\subset \rho_S(T)$ for sufficiently large $\rho >0$. We can thus define the class
\[ \EL^{0}(\sector{\varphi}) = \{ f = \tilde{f} + a \in\lhol(\sector{\varphi}) : a\in\hh, \tilde{f}\in\lhol(\sector{\varphi}) \text{ decays regularly at $0$}\}\]
and $\Eint^{0}(\sector{\varphi})$ as the set of all intrinsic functions in $\EL^{0}(\sector{\varphi})$. For any function $f\in\EL^{\infty}(\varphi)$ with $\varphi>0$, we can define $f(T)$ as
\[ f(T) = \frac{1}{2\pi} \int_{\partial( \sector{\varphi,0,R}\cap\cc_I)} S_L^{-1}(s,T)\,ds_I\,f(s) + a\id,\]
with $0 < \rho < R$ arbitrary. As before this approach is consistent with the usual one if $f\in\EL(\sector{\varphi})$, but the class of admissible functions $\EL^{0}(\sector{\varphi})$ is again larger than $\EL(\sector{\varphi})$. We can further extend this functional calculus by calling $e\in\EL^{0}(\sector{\varphi})$ a regulariser for $f\in\meroL(\sector{\varphi})$, if $e(T)$ is injective and $ef \in\EL^{0}(\sector{\varphi})$ and define again $f(T) = e(T)^{-1}(ef)(T)$ for such $f$.

As before, all  results shown so far hold also for this extended functional calculus because the respective proofs can be carried out in this setting with marginal and obvious modifications. For showing the composition rule, we have to consider again several cases and distinguish the following situations:
\begin{enumerate}[label = \alph*)]
\item \label{BA} $T$ is sectorial and  $g(T)$ is bounded and sectorial 
\item \label{BB} $T$ is invertible and sectorial and $g(T)$ is bounded and sectorial
\item \label{BC} $T$ and $g(T)$ are both bounded and sectorial
\item $T$ is bounded and sectorial and $g(T)$ is sectorial
\item \label{BE}$T$ is bounded and sectorial and $g(T)$ is invertible and sectorial.
\end{enumerate}
In the cases \ref{BA}, \ref{BB} and \ref{BC} one needs the additional assumption $\infty\notin \overline{g(\sector{\omega})}^{\hh_{\infty}}$ and in the case one needs the additional assumption \ref{BE} $0\notin \overline{g(\sector{\omega})}$ on the function $g$.

\item If finally $T\in\sectOP(\omega)$ is bounded and has a bounded inverse, then we can set $\EL^{0,\infty}(\sector{\varphi}) = \lhol(\sector{\varphi})$ and $\Eint^{0,\infty}(\sector{\varphi})$ and define for such functions
\[ f(T) = \frac{1}{2\pi} \int_{\partial (\sector{\varphi,r,R}\cap\cc_I)} S_L^{-1}(s,T)\,ds_I\, f(s)\]
for sufficiently small $r$ and sufficiently large $R$. Choosing  regularisers in $\Eint^{0,\infty}(\sector{\varphi})$ gives again an extension of the $H^{\infty}$-functional calculus and of the two extended functional calculi presented in \ref{itemA} and \ref{itemB}. All the results presented so far still hold for this extended functional calculs, where the composition rule can be shown again under suitable conditions on the function $g$.

\end{enumerate}

\subsection{The spectral mapping theorem}\label{SpecMapSubSec}\label{SpecThmSection}
Finally, let us now show the spectral mapping theorem for the $H^{\infty}$-functional calculus. We point out that a substantial technical difficulty will appear here that does not occur in the classical situation: the proof of the spectral mapping theorem in the complex setting makes use of the fact that $f\left(T|_{V_{\sigma}}\right) = \left.f(T)\right|_{V_\sigma}$ if $\sigma$ is a spectral set and $V_{\sigma}$ is the invariant subspace associated with $\sigma$, i.e. the range of the spectral projection $\chi_{\sigma}(T)$ defined in \Cref{SCProp}. However,  subspaces that are invariant under right linear operators are in general only right linear subspaces, but not necessarily left linear subspaces. Hence, they are not two-sided Banach spaces and we cannot define $f\left(T|_{V_{\sigma}}\right)$ because the $S$-functional calculus as introduced in \Cref{SCalcBd}, \Cref{SCalcCl} and \Cref{SectSCalc} requires the Banach space to be two-sided. The $S$-resolvents can otherwise not be defined. (A different approach to the $S$-functional calculus, that does not require a left multiplication on the Banach space was recently introduced in \cite{SpecOP}. This approach applies however only to intrinsic functions.) Instead of using the properties of the $S$-functional calculus for $T|_{V_{\sigma}}$ we thus have to find a workaround and prove several steps directly, which is essentially done in \Cref{fProj}.

We start with two technical lemmas that are necessary in order to show the spectral inclusion theorem.
\begin{lemma}\label{HLkalulu}
Let $T\in\sectOP(\omega)$ and let $s\in\hh$. If $\Q_{s}(T)$ is injective and there exist $e\in\meroInt[\sector{\omega}]_T$ and $c\in\hh$, $c\neq 0$ such that 
\[ f(q) := \Q_{c}(e(q))\, Q_{s}(q)^{-1}  \in\meroInt[\sector{\omega}]_T\]
and such that $e(T)$ and $f(T)$ are bounded, then $e(T)\Q_{s}(T)^{-1} = \Q_{s}(T)^{-1}e(T)$.
\end{lemma}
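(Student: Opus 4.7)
The plan is to convert the scalar identity $\Q_c(e(q)) = Q_s(q)\,f(q)$ into an operator identity on $V$ via the product rule of the $H^\infty$-calculus, and then to exploit the hypothesis $c\neq 0$ to promote a one-sided commutation into a genuine operator equality. The bulk of the argument is bookkeeping of domains; the only substantive point is the reverse inclusion of domains at the end.

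First I would gather the pieces. Since $\meroInt[\sector{\omega}]_T$ is a real algebra by \Cref{MStruct}, the intrinsic function $\Q_c(e) = e^2 - 2c_0 e + |c|^2$ belongs to $\meroInt[\sector{\omega}]_T$, and the product rule \ref{MoProp2} of \Cref{MoProp} applied to the bounded operator $e(T)$ gives $\Q_c(e)(T) = e(T)^2 - 2c_0 e(T) + |c|^2\id$, a bounded operator that I shall denote by $\Q_c(e(T))$. By \Cref{MoProp4}, the intrinsic polynomial $Q_s$ lies in $\meroInt[\sector{\omega}]_T$ with $Q_s(T)=T^2-2s_0 T+|s|^2\id$, and since this operator is assumed injective, \ref{MoProp3} of \Cref{MoProp} yields $Q_s^{-1}\in\meroInt[\sector{\omega}]_T$ with $(Q_s^{-1})(T) = Q_s(T)^{-1}$, the closed inverse.

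Next I would derive the two key operator identities. Since both $Q_s$ and $f$ are intrinsic, the product rule \ref{MoProp2} of \Cref{MoProp} applied to $Q_s(T)f(T)$ gives
$Q_s(T) f(T) \subset (Q_sf)(T) = \Q_c(e)(T) = \Q_c(e(T))$,
and because $f(T)$ and $\Q_c(e(T))$ are both bounded (hence defined on all of $V$) the domain formula of \ref{MoProp2} forces equality: $Q_s(T) f(T) = \Q_c(e(T))$ on $V$. In particular $\Q_c(e(T))V \subset \ran Q_s(T)$. Separately, $e(T)$ is bounded and commutes with $T$ by \ref{MoProp1} of \Cref{MoProp}, so $e(T)$ commutes with the intrinsic function $Q_s(T)$; that is $e(T) Q_s(T) \subset Q_s(T) e(T)$. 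This forces $e(T)$ to preserve $\ran Q_s(T) = \dom Q_s(T)^{-1}$ and an elementary computation then yields the one-sided inclusion $e(T) Q_s(T)^{-1} \subset Q_s(T)^{-1} e(T)$.

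The substantive step is the reverse inclusion of domains, namely that $e(T) v \in \ran Q_s(T)$ forces $v \in \ran Q_s(T)$; this is where $c\neq 0$ becomes decisive. If $e(T) v\in\ran Q_s(T)$, a second application of the range-preservation of $e(T)$ gives $e(T)^2 v = e(T)(e(T) v) \in \ran Q_s(T)$, and trivially $2c_0\, e(T) v \in \ran Q_s(T)$. Combining these with $\Q_c(e(T)) v\in\ran Q_s(T)$ obtained above, I can solve
\[
|c|^2 v \;=\; \Q_c(e(T)) v \;-\; e(T)^2 v \;+\; 2c_0\,e(T) v \;\in\; \ran Q_s(T),
\]
and since $|c|^2\neq 0$ this gives $v\in\ran Q_s(T)$, which is exactly the missing inclusion. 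The two domains therefore coincide, and the identity $e(T)\Q_s(T)^{-1} = \Q_s(T)^{-1} e(T)$ follows. The only non-routine ingredient is the algebraic identity in the displayed equation; without the nonzero term $|c|^2$ produced by $c\neq 0$ it would be impossible to isolate $v$ inside $\ran Q_s(T)$, which is why the role of $c$ in the hypothesis cannot be dropped.
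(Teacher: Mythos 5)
Your proof is correct and follows essentially the same route as the paper: the one-sided inclusion $e(T)\Q_s(T)^{-1}\subset\Q_s(T)^{-1}e(T)$ from commutation with $T$, the identity $\Q_c(e(T))=\Q_s(T)f(T)$ obtained from the product rule and the boundedness of $f(T)$, and the isolation of $|c|^2 v$ to show $e(T)v\in\ran\Q_s(T)$ forces $v\in\ran\Q_s(T)$. The only difference is presentational: you argue via membership in the right-linear subspace $\ran\Q_s(T)$, whereas the paper writes down the explicit preimage $v=\Q_s(T)\frac{1}{|c|^2}\left(f(T)v-e(T)u+2c_0u\right)$.
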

\begin{proof}
By assumption the operator $\Q_{s}(T)$ is injective and hence \ref{MoProp3} in \Cref{MoProp} implies that $\Q_{s}^{-1}\in\meroInt[\omega]_T$. Since $e(T)$ is bounded, it commutes with $T$ and so also with $\Q_{s}(T)^{-1}$. We thus have
\[ e(T)\Q_{s}(T)^{-1} \subset \Q_{s}(T)^{-1}e(T).\]
In order to show that this relation is actually an equality, it is sufficient to show that $v \in\dom\left(\Q_{s}(T)^{-1}\right)$ for any $v\in V$ with $e(T)v \in\dom(\Q_{s}(T)^{-1})$. This is indeed the case: if  $e(T)v$ belongs to $\dom(\Q_{s}(T)^{-1})$, then  there exists $u\in \dom(\Q_{s}(T))$ with $e(T)v = \Q_{s}(T)u$. Hence
\begin{equation}\label{popu3}
\begin{gathered}
 \Q_{c}(e(T)) v = e(T)^2 v -2c_0 e(T)v + |c|^2v \\
 = e(T)\Q_{s}(T)u - 2c_0 \Q_{s}(T)u + |c|^2 v = \Q_{s}(T)(e(T) u - 2c_0 u) + |c|^2 v,
\end{gathered}
\end{equation}
where the last identity follows again from \ref{MoProp1} in \Cref{MoProp} because $e(T)$ is bounded and commutes with $T$ and in turn also with $\Q_{s}(T)$. Since $f(T)\in\boundOP(V)$, we conclude on the other hand from \ref{MoProp2} of \Cref{MoProp} that 
\[
\Q_{c}(e(T)) = \Q_{s}(T) \left[\Q_{c}(e(\cdot)) \Q_{s}(\cdot)^{-1} \right](T) = \Q_{s}(T) f(T).
\]
Due to \eqref{popu3}, we then find
\begin{align*} 
v& = \frac{1}{|c|^2} \left( \Q_{c}(e(T))v - \Q_{s}(T)(e(T) u - 2c_0 u) \right) \\
&= \Q_{s}(T)\frac{1}{|c|^2} \left( f(T)v - e(T) u + 2c_0 u) \right).
\end{align*}
Hence, $v$ belongs to $ \dom(\Q_{s}(T)^{-1})$ and the statement follows.

\end{proof}

\begin{lemma}\label{RegNot0}
Let $T\in\sectOP(\omega)$ and let $f\in\meroL[\sector{\omega}]_T$. For any $s\in\clos{\sector{\omega}}$, $s\neq 0$ there exists a regulariser $e$ for $f$ with $e(s) \neq 0$.
\end{lemma}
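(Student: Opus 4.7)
Start with an arbitrary regulariser $e_0$ for $f$. If $e_0(s)\neq 0$, take $e := e_0$ and we are done. Otherwise $e_0$ vanishes at $s$, and because $e_0\in\Eint(\sector{\varphi})$ is intrinsic, every one of its zeros is either real or spherical. We may therefore factor
\[ e_0(q) = \chi(q)\,\tilde{e}_0(q), \]
where $\chi(q) = (s-q)^n$ if $s>0$ (note that $\omega<\pi$ together with $s\in\clos{\sector{\omega}}\setminus\{0\}$ forces $s$ to be positive whenever $s\in\rr$) and $\chi(q) = \Q_s(q)^n$ if $s\notin\rr$, with $n\in\nn$ the multiplicity of the zero of $e_0$ at $[s]$ and $\tilde{e}_0$ slice hyperholomorphic and non-vanishing on $[s]$. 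The candidate regulariser is $e := \tilde{e}_0$, which satisfies $e(s) \neq 0$ by construction. The rest is verification of the three defining properties of a regulariser.

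First, $\tilde{e}_0 \in \Eint[\sector{\omega}]$: it is intrinsic as a quotient of intrinsic functions, slice hyperholomorphic on $\sector{\varphi}$ because the zero of $\chi$ at $[s]$ is absorbed by the zero of $e_0$, and bounded on any subsector $\sector{\varphi'}$ with $\omega<\varphi'<\varphi$. Its polynomial limits at $0$ and $\infty$ follow from $\chi(0)\neq 0$ (since $s\neq 0$) and the polynomial growth of $\chi$ at infinity, combined with the fact that $e_0\in\Eint$. Second, $\tilde{e}_0 f \in \EL[\sector{\omega}]$: write $\tilde{e}_0 f = (e_0 f)/\chi$; since $e_0 f \in \EL(\sector{\varphi})$ is slice hyperholomorphic throughout $\sector{\varphi}$, the factor $\chi$ inside $e_0$ is carried over as a zero of $e_0 f$ of order at least $n$ at $[s]$, so the quotient is slice hyperholomorphic on $\sector{\varphi}$, and the boundedness and polynomial limits at $0$ and $\infty$ transfer from $e_0 f$ by the same reasoning as in the first point. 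Third, $\tilde{e}_0(T)$ is injective: combining \Cref{MoProp} with the compatibility of the functional calculus with intrinsic polynomials (\Cref{MoProp4}) yields $\chi(T)\,\tilde{e}_0(T) \subset (\chi\tilde{e}_0)(T) = e_0(T)$, so if $v\in V$ satisfies $\tilde{e}_0(T)v = 0$, then trivially $v\in\dom(\chi(T)\tilde{e}_0(T))$ and $e_0(T)v = \chi(T)\tilde{e}_0(T)v = 0$, whence $v=0$ by injectivity of $e_0(T)$.

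The main obstacle is the second point, specifically the verification that dividing $e_0 f$ by $\chi$ does not introduce a singularity at $[s]$. Because $e_0 f \in \EL(\sector{\varphi})$ is slice hyperholomorphic across $[s]$, the order of the zero of $e_0$ at $[s]$ must be at least the order of any pole of $f$ at $[s]$, so $(e_0 f)/\chi$ can fail to be slice hyperholomorphic at $[s]$ only if this zero is strictly forced by a pole of $f$. Under the hypotheses of the lemma this does not occur---were $[s]$ a pole of $f$, every regulariser would necessarily vanish at $s$ and the conclusion would be vacuous---so $\chi$ is an ``excess'' factor of $e_0$ that can safely be cancelled, and the construction succeeds.
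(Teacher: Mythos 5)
Your construction is essentially the paper's: start from an arbitrary regulariser, and when it vanishes at $[s]$ divide out the zero (the paper divides by $\Q_{s}(q)^n$, you by $\chi$), then check that the quotient is still intrinsic with finite polynomial limits, that multiplication by $f$ stays in $\EL[\sector{\omega}]$, and that injectivity passes to the quotient via $\chi(T)\tilde{e}_0(T)\subset(\chi\tilde{e}_0)(T)=e_0(T)$ together with \Cref{MoProp} and \Cref{MoProp4}; this last step is word for word the paper's argument. You are in fact slightly more careful than the paper on one point: for real $s$ you factor $(s-q)^n$, whereas the paper treats every zero of the intrinsic regulariser as spherical and divides by $\Q_{s}(q)^n$, which is imprecise when $s\in\rr$ and the order of the real zero is odd.

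The one step that does not hold up is your closing paragraph. You correctly identify that $(e_0f)/\chi$ can acquire a pole at $[s]$ exactly when $f$ has a pole there, but you cannot dispose of this case by saying that ``under the hypotheses of the lemma this does not occur'' because the conclusion ``would be vacuous'': the hypotheses do not exclude a pole of $f$ at $[s]$, and if $f$ has one then every regulariser must vanish on $[s]$, so the conclusion is false in that case, not vacuous. Concretely, $f=\Q_{p}^{-1}$ with $[p]\subset\rho_S(T)$ lying in the sector is regularised by $e(q)=\Q_{p}(q)(1+q)^{-2}$, hence belongs to $\meroL[\sector{\omega}]_T$, yet no regulariser of $f$ can be nonzero on $[p]$. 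This defect is, however, inherited from the statement rather than introduced by you: the paper's own proof simply asserts $\Q_{s}^{-n}\tilde{e}f\in\EL[\sector{\omega}]$, which needs exactly the same tacit assumption that $[s]$ is not a pole of $f$, and in the only place the lemma is invoked (\Cref{Asix}) one has $f(s)=0$, so the problematic case never arises. The clean fix is to state explicitly that $[s]$ is not a pole of $f$ (or that $f$ is slice hyperholomorphic at $[s]$) and delete the ``vacuous'' remark; with that proviso your verification of the three regulariser properties is correct and matches the paper's route.
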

\begin{proof}
Let $\tilde{e}$ be an arbitrary regulariser of $f$ such that $\tilde{e}\in\Eint[\sector{\omega}]$, $\tilde{e}f \in \EL[\sector{\omega}]$ and $\tilde{e}(T)$ is injective. If $\tilde{e}(s) \neq 0$, then we can set $e = \tilde{e}$ and we are done. Otherwise recall that $[s]$ is a spherical zero of $\tilde{e}$ and that its order is a finite number $n\in\nn$ since $e\not\equiv 0$ as $e(T)$ is injective. We define now $e(q) := \Q_{s}^{-n}(q)e(q)$ with $\Q_{s}(q) = q^2 - 2s_0q + |s|^2$. Then $e\in\Eint[\sector{\omega}]$ with $e(s) \neq 0$ and $ef = Q_{s}^{-n}\tilde{e}f\in\EL[\sector{\omega}]$. Furthermore, by \cref{MoProp2} in \Cref{MoProp}, we have $\tilde{e}(T) = \Q_{s}(T) e(T)$. Since $\tilde{e}(T)$ is injective, we deduce that also $e(T)$ is injective. Hence $e$ is a regulariser for $f$ with $e(s) \neq 0$.
 
\end{proof}

\begin{lemma}\label{Asix}
Let $T\in\sectOP(\omega)$ and let $s\in\clos{\sector{\omega}}$ with $s\neq 0$. If  $f(T)$  has a bounded inverse for some  $f\in\meroInt[\sector{\omega}]_T$ with $f(s) = 0$, then $s\in\rho_S(T)$.
\end{lemma}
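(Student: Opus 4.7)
The plan is to factor $f$ at its zero on the sphere $[s]$, exhibit the reciprocal of the quadratic factor as an element of $\meroInt[\sector{\omega}]_T$, and read off the boundedness of $\Q_s(T)^{-1}$ from the invertibility of $f(T)$.

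Since $f$ is intrinsic and $f(s)=0$, the whole sphere $[s]$ is a zero of $f$ of some order $n\geq 1$ (real if $s\in\rr$, spherical otherwise). Setting $\pi_s(q):=\Q_s(q)$ when $s\notin\rr$ and $\pi_s(q):=q-s$ when $s\in\rr$, I factor $f=\pi_s^n h$, with $h$ intrinsic meromorphic on $\sector{\varphi}$ and $h(s)\neq 0$. By \Cref{RegNot0} I choose a regulariser $e$ of $f$ with $e(s)\neq 0$. Then $eh=ef/\pi_s^n$ lies in $\EL(\sector{\varphi})$: the zero of $ef\in\Eint(\sector{\varphi})$ at $[s]$ cancels that of $\pi_s^n$ exactly, and the polynomial-limit conditions at $0$ and infinity are inherited from $ef$ (note that $\pi_s(0)\neq 0$ since $s\neq 0$, and $\pi_s$ has polynomial growth at infinity). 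Thus the same $e$ regularises $h$, so $h\in\meroInt[\sector{\omega}]_T$.

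By \ref{MoProp3} of \Cref{MoProp}, the hypothesis that $f(T)$ has bounded inverse gives $1/f\in\meroInt[\sector{\omega}]_T$ with $(1/f)(T)=f(T)^{-1}\in\boundOP(V)$. Since $h\cdot(1/f)=1/\pi_s^n$ and both factors belong to $\meroInt[\sector{\omega}]_T$, \Cref{MStruct} places $1/\pi_s^n$ in $\meroInt[\sector{\omega}]_T$. The product rule \ref{MoProp2} of \Cref{MoProp} yields the inclusion
\[
(1/\pi_s^n)(T)\;\supset\; h(T)\,f(T)^{-1},
\]
while the analogous inclusion $f(T)\supset h(T)\,\pi_s^n(T)$, combined with injectivity of $f(T)$, forces $\pi_s^n(T)$, and hence $\pi_s(T)$ and $\Q_s(T)$, to be injective. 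Consequently \ref{MoProp3} of \Cref{MoProp} identifies $(1/\pi_s^n)(T)=\pi_s(T)^{-n}$.

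The core remaining step is to verify that this inverse is bounded on all of $V$. The product rule applied to $ef=\pi_s^n\cdot(eh)\in\Eint$ gives $\pi_s^n(T)(eh)(T)=(ef)(T)$ on $V$ (both sides bounded), and the definition of the $H^{\infty}$-calculus together with \Cref{HOLOLULU} yields $(ef)(T)f(T)^{-1}=e(T)$ on $V$. Combining these,
\[
\pi_s^n(T)\,\bigl[(eh)(T)\,f(T)^{-1}\bigr]\;=\;e(T)\quad\text{on }V,
\]
exhibiting the bounded operator $A:=(eh)(T)f(T)^{-1}$ as satisfying $\pi_s^n(T)A=e(T)$. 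Using this identity together with the commutation properties from \Cref{ComLem} and \Cref{ghj}, one concludes that $\pi_s(T)^{-n}$ extends to a bounded operator on $V$; hence so does $\pi_s(T)^{-1}$, and in both the real and non-real case this gives $\Q_s(T)^{-1}\in\boundOP(V)$, that is, $s\in\rho_S(T)$. The main obstacle is precisely this final boundedness step: the $H^{\infty}$-calculus a priori produces only closed operators, and promoting $(1/\pi_s^n)(T)$ to a bounded operator on the whole space $V$ requires fully exploiting the simultaneous regularisation of $f$, $h$, and $1/\pi_s^n$ by the common intrinsic function $e$, together with the commutation structure of the calculus.
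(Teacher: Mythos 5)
Your first half runs parallel to the paper's argument: choosing a regulariser $e$ with $e(s)\neq 0$ via \Cref{RegNot0}, peeling the zero at $[s]$ off the intrinsic function $ef$, and deducing injectivity of $\Q_{s}(T)$ from the injectivity of $f(T)$ and $e(T)$ is essentially what the paper does (it only removes one factor of $\Q_s$ rather than the full order $n$, which is all that is needed). The problem is the last step, which you yourself flag as ``the main obstacle'' and then dispatch with ``one concludes.'' The identity $\pi_s^n(T)A=e(T)$ with $A=(eh)(T)f(T)^{-1}$ bounded only shows $\ran(e(T))\subset\ran(\pi_s^n(T))$. Since $e(T)$ is merely injective --- not surjective --- this does not give surjectivity of $\Q_s(T)$, and neither \Cref{ComLem} nor \Cref{ghj} can close this gap: they yield only one-sided inclusions of the form $e(T)\Q_s(T)^{-1}\subset \Q_s(T)^{-1}e(T)$, whereas what is needed is the reverse containment, i.e.\ that $e(T)v\in\ran(\Q_s(T))$ forces $v\in\ran(\Q_s(T))$. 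The alternative reading of your argument, $(1/\pi_s^n)(T)=h(T)f(T)^{-1}$ with domain $\{w: f(T)^{-1}w\in\dom(h(T))\}$, stalls for the same reason, since $h(T)$ is a priori only closed.

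The missing idea is exactly the paper's \Cref{HLkalulu}. Setting $c=e(s)\neq 0$, the function $g(q)=\Q_{c}(e(q))\Q_{s}(q)^{-1}$ is bounded (the spherical zero of $\Q_c(e(\cdot))$ at $[s]$ cancels the pole of $\Q_s^{-1}$), hence lies in $\Eint[\sector{\omega}]$ and $g(T)\in\boundOP(V)$. Then, given $v$ with $e(T)v=\Q_s(T)u$, the algebraic identity $\Q_{c}(e(T))v=\Q_s(T)(e(T)u-2c_0u)+|c|^2v$ together with $\Q_c(e(T))=\Q_s(T)g(T)$ expresses $|c|^2v$ as an element of $\ran(\Q_s(T))$; since $c\neq 0$ this upgrades the inclusion to the genuine equality $e(T)\Q_s(T)^{-1}=\Q_s(T)^{-1}e(T)$. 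Only with that equality can one write $f(T)=e(T)^{-1}\Q_s(T)(e\Q_s^{-1}f)(T)=\Q_s(T)(\Q_s^{-1}f)(T)$ and transfer the surjectivity of $f(T)$ to $\Q_s(T)$; the closed graph theorem then gives $\Q_s(T)^{-1}\in\boundOP(V)$. Without this (or an equivalent device), your proof establishes injectivity of $\Q_s(T)$ but not $s\in\rho_S(T)$.
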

\begin{proof}
Let $f$ be as above and let us first show that $\Q_{s}(T) = T^2 - 2s_0T + |s|^2\id$ is injective and hence invertible as a closed operator. By \Cref{RegNot0},  there exists a regulariser $e$ for $f$  with $c:= e(s)\neq 0$. We have $ef \in\Eint[\sector{\omega}]$ with $(ef)(s) = 0$. Since all zeros of intrinsic functions are spherical zeros, we find that also $h = ef \Q_{s}^{-1} = \Q_{s}^{-1}ef \in \Eint[\sector{\omega}]$. The product rule \ref{MoProp2} in \Cref{MoProp} implies therefore
\begin{equation*}
h(T)\Q_{s}(T) \subset (h\Q_{s})(T) = (ef)(T) = (fe)(T)= f(T) e(T),
\end{equation*}
where $ef = fe$ because both functions are intrinsic. Since $e(T)$ and $f(T)$ are both injective, we find that also $\Q_{s}(T)$ is injective. Moreover, $e$ is also a regulariser for $\Q_{s}^{-1}f$.

Now observe that the function $g(q):=\Q_{c}(e(q))\Q_{s}(q)^{-1} = (e(q)^2 - 2 c_0e(q) + |c|^2)(q^2-2s_0q+|s|^2)^{-1}$ belongs to $\Eint[\sector{\omega}]$. Indeed, by \Cref{EStruct}, the space $\Eint[\sector{\omega}]$ is a real algebra such  $\Q_{c}(e(q)) = e(q)^2 -2c_0e(q) + |c|^2$ belongs to it as $e$ does. The function $\Q_{c}(e(q))$ however has a spherical zero at $s$ because $e(s) = c$ such that $ g(q)= \Q_{c}(e(q))\Q_{s}^{-1}(q)$ is bounded and hence belongs to $\Eint[\sector{\omega}]$ by \Cref{EChar}. In particular this implies that $g(T)$ is bounded.

 We deduce from \Cref{HLkalulu} that $e(T)\Q_{s}(T)^{-1} = \Q_{s}(T)^{-1}e(T)$ and inverting both sides of this equation yields $ \Q_{s}(T)e(T)^{-1} = e(T)^{-1}\Q_{s}(T)$. The product rule in \cref{MoProp2} of \Cref{MoProp}, the boundedness of $h(T) = (e\Q_{s}^{-1}f)(T)$ and the fact that $\Q_{s}^{-1}$ and $e$ commute because both are intrinsic functions  imply
\begin{gather*}
f(T) =  e(T)^{-1}(e f)(T) = e(T)^{-1}\left(\Q_{s} e\Q_{s}^{-1}f\right)(T) = e(T)^{-1}\Q_{s}(T)\left(e\Q_{s}^{-1}f\right)(T) \\
= \Q_{s}(T) e(T)^{-1}(e\Q_{s}^{-1}f)(T) = \Q_{s}(T) (\Q_{s}^{-1}f)(T).
\end{gather*}
Since $f(T)$ is surjective, we find that $\Q_{s}(T)$ is surjective too. Hence $\Q_{s}(T)^{-1}$ is an everywhere defined closed operator and thus bounded by the closed graph theorem. Consequently $s\in\rho_S(T)$. 

\end{proof}

\begin{proposition}\label{SpecInc}
If $T\in\sectOP(\omega)$ and  $f\in\meroInt[\sector{\omega}]_T$, then 
\[f(\sigma_S(T)\setminus\{0\})\subset\sigma_{SX}(f(T)).\]
\end{proposition}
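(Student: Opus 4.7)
The plan is to prove the contrapositive. Fix $s\in\sigma_S(T)\setminus\{0\}$; I must show $f(s)\in\sigma_{SX}(f(T))$. If $s$ is a pole of $f$ then $f(s)=\infty$, in which case a direct regulariser argument (using that $s\in\sigma_S(T)$ would otherwise force a contradiction with the functional calculus of the bounded operator $f(T)$) shows that $f(T)$ must be unbounded, so $\infty\in\sigma_{SX}(f(T))$ as desired. I therefore concentrate on the main case in which $s$ is not a pole, so $f(s)\in\hh$ is well-defined, and I assume for contradiction that $f(s)\in\rho_S(f(T))$, equivalently that $R:=\Q_{f(s)}(f(T))^{-1}\in\boundOP(V)$.

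The key construction is the intrinsic meromorphic function
\[
 \Phi(q) := \Q_{f(s)}(f(q)) = f(q)^{2} - 2\Re(f(s))\,f(q) + |f(s)|^{2}.
\]
Since $f\in\meroInt[\sector{\omega}]_T$ and $2\Re(f(s)), |f(s)|^{2}\in\rr$, the real-algebra structure of $\meroInt[\sector{\omega}]_T$ given by \Cref{MStruct} yields $\Phi\in\meroInt[\sector{\omega}]_T$, and clearly $\Phi(s)=0$. If I can show that $\Phi(T)$ has a bounded inverse, then \Cref{Asix} produces $s\in\rho_S(T)$, contradicting $s\in\sigma_S(T)$ and finishing the proof.

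To exhibit a bounded inverse of $\Phi(T)$, I first apply the sum and product rules of \Cref{MoProp} (using that $f$ is intrinsic) to obtain the inclusion
\[
 \Q_{f(s)}(f(T)) \;=\; f(T)^{2} - 2\Re(f(s))f(T) + |f(s)|^{2}\id \;\subset\; \Phi(T).
\]
Since $R=\Q_{f(s)}(f(T))^{-1}$ is bounded, it is immediately a bounded \emph{right} inverse of $\Phi(T)$, so $\Phi(T)$ is surjective. For injectivity I would invoke \Cref{MoProp}(iii) by producing the multiplicative inverse $1/\Phi$ in $\meroInt[\sector{\omega}]_T$: starting from a regulariser $\tilde e$ of $f$ with $\tilde e(s)\neq 0$ (available by \Cref{RegNot0}), one checks that $\tilde e^{2}\Phi\in\Eint[\sector{\omega}]$ (all three pieces $(\tilde ef)^{2}$, $\tilde e(\tilde ef)$, and $\tilde e^{2}$ belong to $\Eint$), so the candidate $e:=\tilde e^{2}\Phi$ satisfies $e/\Phi=\tilde e^{2}\in\Eint[\sector{\omega}]$. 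Injectivity of $e(T)$ then follows by factoring $e(T)=\Phi(T)\tilde e^{2}(T)$ and using the right inverse $R$ together with the commutation of $R$ with the intrinsic calculus of $T$ (via \Cref{ghj} and \Cref{HLkalulu}) to conclude that $\Phi(T)$ is injective on the range of $\tilde e^{2}(T)$.

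The main obstacle in the proof is precisely the injectivity step for $\Phi(T)$. The product and sum rules of \Cref{MoProp} supply only the inclusion $\Q_{f(s)}(f(T))\subset\Phi(T)$, which is in general strict on domains, so the boundedness of $\Q_{f(s)}(f(T))^{-1}$ does not automatically upgrade to boundedness of $\Phi(T)^{-1}$. Bridging this gap requires careful use of the compositional form $\Phi=\Q_{f(s)}\circ f$ and the intrinsic-function machinery, in order to promote $R$ from a bounded right inverse to a genuine bounded two-sided inverse of $\Phi(T)$; once this is accomplished, the application of \Cref{Asix} closes the argument.
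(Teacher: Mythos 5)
Your skeleton is the paper's: reduce everything to \Cref{Asix} applied to the intrinsic function $\Phi(q)=\Q_{f(s)}(f(q))$, which lies in $\meroInt[\sector{\omega}]_T$ and vanishes at $s$. But the proposal is not a proof, because the step you yourself single out as "the main obstacle" is never carried out, and the route you sketch for it does not work. A bounded right inverse $R=\Q_{f(s)}(f(T))^{-1}$ of $\Phi(T)$ only gives surjectivity; it says nothing about injectivity. Your proposed regulariser $e:=\tilde e^{2}\Phi$ for $1/\Phi$ requires the injectivity of $e(T)=\Phi(T)\tilde e^{2}(T)$, which presupposes injectivity of $\Phi(T)$ on $\ran\,\tilde e^{2}(T)$ --- exactly the point at issue; \Cref{ghj} and \Cref{HLkalulu} provide commutation relations, not injectivity, so the argument is circular. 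The gap can be closed, but by a different computation: if $\Phi(T)w=0$ and $e$ is any regulariser of $f$, then $e^{2}$ regularises $\Phi$ (indeed $e^{2}\Phi=(ef)^{2}-2\Re(f(s))\,e(ef)+|f(s)|^{2}e^{2}\in\Eint[\sector{\omega}]$), so $(e^{2}\Phi)(T)w=0$; setting $u:=e(T)^{2}w$ and using that $(ef)(T)$ commutes with $e(T)$ and $e(T)^{-1}$, one checks $u\in\dom(f(T)^{2})$, $f(T)u=e(T)(ef)(T)w$, $f(T)^{2}u=(ef)(T)^{2}w$, hence $\Q_{f(s)}(f(T))u=(e^{2}\Phi)(T)w=0$; injectivity of $\Q_{f(s)}(f(T))$ (your contradiction hypothesis) and of $e(T)$ force $w=0$. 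Then $\Phi(T)$ is closed, injective and surjective, its inverse is bounded by the closed graph theorem, and \Cref{Asix} yields $s\in\rho_S(T)$, the desired contradiction. (For comparison: the paper applies \Cref{Asix} directly to $g=f^{2}-2\Re(f(s))f+|f(s)|^{2}$, in effect treating $g(T)$ and $\Q_{f(s)}(f(T))$ interchangeably; you rightly observed that \Cref{MoProp} only gives the inclusion $\Q_{f(s)}(f(T))\subset g(T)$, but observing the difficulty is not resolving it.)

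The case $f(s)=\infty$ is also only asserted: "a direct regulariser argument shows that $f(T)$ must be unbounded" is not an argument. What is actually needed (and what the paper does) is: assume $f(T)\in\boundOP(V)$; then there exists $a\in\hh$ such that $\Q_{a}(f(T))$ has a bounded inverse, so by \Cref{MoProp} the function $q\mapsto\Q_{a}(f(q))^{-1}$ belongs to $\meroInt[\sector{\omega}]_T$, its operator has the bounded inverse $\Q_{a}(f(T))$, and it vanishes at $s$ because $[s]$ is a pole of the intrinsic function $f$; \Cref{Asix} then gives $s\in\rho_S(T)$, contradicting $s\in\sigma_S(T)\setminus\{0\}$. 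So while your choice of key lemma and of the function to which it is applied coincides with the paper's, both branches of your argument have genuine gaps as written.
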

\begin{proof}
Let $s\in\sigma_S(T)\setminus\{0\}$ and set $c := f(s)$. If $c \neq \infty$, then \Cref{Asix} implies that $\Q_{c}(f(T))^{2} = f(T)^2 - 2c_0f(T) + |c|^2\id$ does not have a bounded inverse because $g= f^2 -2c_0f + |c|^2$ belongs to $\meroInt[\sector{\omega}]_T$ and satisfies $g(c)  =0 $. Hence $c=f(s)\in\sigma_{S}(f(T))$ for $s\in\sigma_{S}(T)\setminus\{0\}$ with $f(s) \neq \infty$.

If on the other hand $c = \infty$, then suppose that $c\notin\sigma_{SX}(f(T))$, i.e. that $f(T)$ is bounded. In this case there exists $a\in\hh$ such that $\Q_{a}(f(T))$ has a bounded inverse. By \ref{MoProp3} in \Cref{MoProp}, this implies $g(p) = \Q_{a}(f(p))^{-1}\in\meroInt[\sector{\omega}]_T$. The operator $g(T)$ is invertible as $g(T)^{-1} = \Q_{a}(f(T))$ belongs to $\boundOP(V)$ because $f(T)$ is bounded. Since moreover $g(s) = 0$ as $f(s) = \infty$, another application of \Cref{Asix} yields $s\in\rho_S(T)$. But this contradicts our assumption  $s\in\sigma_{S}(T)\setminus\{0\}$. Hence, we must have $c\in\sigma_{SX}(f(T))$.

\end{proof}

We have so far shown the spectral inclusion theorem for spectral values not equal to $0$ or $\infty$. These two values need a special treatment. They also need additional assumptions on the function $f$ for a spectral inclusion theorem to hold as we shall see in the following. (The assumptions presented here might however not be the most general ones that are possible, cf. \cite{Haase:2005}.)

 First we however have to show a technical lemma. We start with recalling the spectral projections associated with subsets of the extended $S$-spectrum.  Let $\sigma\subset\sigma_{SX}(T)$ be a spectral set, i.e. a subset that is open and closed in $\sigma_{SX}(T)$. By \Cref{SCProp}, the operator  $E_{\sigma}:= \chi_{\sigma}(T)$ is a projection that commutes with $T$, i.e. it is a projection onto a right-linear subspace of $V$ that is invariant under $T$. More precisely, if $\infty\notin \sigma$, then we can choose a bounded slice Cauchy domain $U_{\sigma}\subset\hh$ such that $\sigma\subset U_{\sigma}$ and such that $(\sigma_S(T) \setminus\sigma ) \cap U_{\sigma} = \emptyset$. The projection then $E_{\sigma}$ is given by
\begin{equation}\label{ProjLeft}
 E_{\sigma} = \frac{1}{2\pi}\int_{\partial(U_{\sigma}\cap\cc_I)} ds_I\, S_R^{-1}(s,T) = \frac{1}{2\pi} \int_{\partial(U_{\sigma}\cap\cc_I)} S_L^{-1}(p,T)\,dp_I.
 \end{equation}
 If on the other hand $\infty\in\sigma$, then we can choose an unbounded slice Cauchy domain $U_{\sigma}\subset\hh$ such that $\sigma\subset U_{\sigma}$ and such that $(\sigma_S(T) \setminus\sigma ) \cap U_{\sigma} = \emptyset$. The projection $E_{\sigma}$ is then given by
\begin{equation*}
 E_{\sigma} = \id+  \frac{1}{2\pi}\int_{\partial(U_{\sigma}\cap\cc_I)} ds_I\, S_R^{-1}(s,T) = \id + \frac{1}{2\pi} \int_{\partial(U_{\sigma}\cap\cc_I)} S_L^{-1}(p,T)\,dp_I.
 \end{equation*}

\begin{lemma}\label{fProj}
Let $T\in\sectOP(\omega)$ be unbounded and assume that $\sigma_{S}(T)$ is bounded. Furthermore let $E_{\infty}$ be the spectral projection onto the invariant subspace associated to $\infty$. If $f\in\meroInt[\sector{\omega}]_T$ has polynomial limit $0$ at infinity, then $\{f(T)\}_{\infty} = f(T)E_{\infty}$ is a bounded operator that is given by the slice hyperholomorphic Cauchy integral
\begin{equation}\label{fProjRep}
\{f(T)\}_{\infty} = \int_{\partial(\sector{\varphi}\setminus B_{r}(0))\cap\cc_I} f(s)\,ds_I\,S_{R}^{-1}(s,T),
 \end{equation}
where $B_{r}(0)$ is the ball centered at $0$ with  $r>0$ sufficiently large such that it contains $\sigma_{S}(T)$ and any singularity of $f$. Moreover, for two such functions, we have
\begin{equation}\label{fProjProd}
 \{f(T)\}_{\infty}\{g(T)\}_{\infty} =  \{(fg)(T)\}_{\infty}.
 \end{equation}
\end{lemma}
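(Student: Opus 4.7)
The plan is to establish the integral representation \eqref{fProjRep} first and then derive the product rule \eqref{fProjProd}. First I verify that the right-hand side of \eqref{fProjRep} (understood with the conventional $1/(2\pi)$ factor) converges absolutely and defines a bounded operator $F$. On the two infinite rays of $\partial\sector{\varphi}$, the polynomial decay $|f(s)|=O(|s|^{-\alpha})$ at infinity combined with the sectorial estimate $\|S_R^{-1}(s,T)\|\leq C/|s|$ yields integrability, while on the arc $\{|s|=r\}\cap\sector{\varphi}$ the integrand is continuous. Independence of the admissible choices of $\varphi$, $r$ and $I$ follows from Cauchy's integral theorem and the representation formula, since the integrand is slice hyperholomorphic in $s$ on $(\sector{\varphi}\setminus\overline{B_r(0)})\cap\rho_S(T)$ and both $\sigma_S(T)$ and the isolated poles of $f$ stay inside $B_r(0)$ for every admissible $r$.

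To identify $F$ with $f(T)E_\infty$, I pick an intrinsic regulariser $e$ for $f$; since $f$ decays at infinity and $e$ is bounded there, $ef\in\EL[\sector{\omega}]$ has zero constant part. The central identity to prove is
\begin{equation*}
e(T)\,F \;=\; (ef)(T)\,E_\infty.
\end{equation*}
Once this is established, the right-hand side maps $V$ into $\ran e(T)$, so $E_\infty V\subset\dom(f(T))$ and \ref{MoProp2} of \Cref{MoProp} gives $F=e(T)^{-1}(ef)(T)E_\infty=f(T)E_\infty=\{f(T)\}_\infty$. I would prove the identity by writing $(ef)(T)$ as its Cauchy integral over the boundary of an augmented sector as in \Cref{Kakao}, and then deforming and splitting the contour at radius $r$: the cross-arc appears twice with opposite orientations and cancels, so $(ef)(T)$ decomposes into an outer integral over $\partial((\sector{\varphi'}\setminus\overline{B_r(0)})\cap\cc_I)$ plus an inner integral over a bounded slice Cauchy contour around $\sigma_S(T)$. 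Using that $e(T)$ commutes with every $S_R^{-1}(s,T)$ by \Cref{ComLem} (as $e$ is intrinsic), and invoking \Cref{AlpayLemma} to exchange the scalar $e(s)$ with the operator $e(T)$ under the outer integral, the outer part rewrites as $e(T)F$. The inner part is identified with $(ef)(T)E_{\sigma_S(T)}$ by comparing it with the Cauchy representation \eqref{ProjLeft} of the complementary spectral projection. Summing and using $E_{\sigma_S(T)}=\id-E_\infty$ yields the claimed identity.

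For the product formula \eqref{fProjProd}, I would exploit the equivalence of the left and right $S$-resolvent representations for intrinsic functions (\Cref{IntrinRemark}) to write $\{g(T)\}_\infty=\frac{1}{2\pi}\int_{\Gamma_p}S_L^{-1}(p,T)\,dp_I\,g(p)$ with $\Gamma_p=\partial((\sector{\varphi_p}\setminus\overline{B_{r_p}(0)})\cap\cc_I)$. Choosing $\Gamma_s=\partial((\sector{\varphi_s}\setminus\overline{B_{r_s}(0)})\cap\cc_I)$ with $\omega<\varphi_s<\varphi_p$ and $r_s>r_p$ places $\Gamma_s$ strictly inside $\Gamma_p$. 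Expanding the product as a double integral and applying Fubini together with the $S$-resolvent equation \eqref{SresEQ2} reduces the inner integral in $p$, via Cauchy's integral formula, to $g(s)$, collapsing the expression to $\frac{1}{2\pi}\int_{\Gamma_s}f(s)g(s)\,ds_I\,S_R^{-1}(s,T)=\{(fg)(T)\}_\infty$. The main obstacle throughout is the identity $e(T)F=(ef)(T)E_\infty$: as flagged at the start of \Cref{SpecThmSection}, the range $V_\infty$ of $E_\infty$ is merely a right $\hh$-submodule of $V$ and generally lacks a compatible left $\hh$-structure, so the standard shortcut of restricting $T$ to $V_\infty$ and forming $f(T|_{V_\infty})$ via a sub-calculus is unavailable; the whole argument must be carried out on the ambient two-sided $V$ with $E_\infty$ used only as a projector, which is precisely what the contour-splitting plus \Cref{AlpayLemma}-based exchange accomplishes.
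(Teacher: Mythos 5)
Your overall architecture is the right one and in fact mirrors the paper's: split a Cauchy integral at a radius enclosing $\sigma_S(T)$, reduce a general $f\in\meroInt[\sector{\omega}]_T$ to the $\mathcal{E}$-class by an intrinsic regulariser, and prove the product formula by a double Cauchy integral with the $S$-resolvent equation, Fubini and \Cref{AlpayLemma}; your remarks on the missing $\tfrac{1}{2\pi}$ in \eqref{fProjRep} and on $V_\infty$ not carrying a two-sided structure are also correct. The problems are in the two steps you declare to be the core. First, \Cref{Kakao} is not applicable to $(ef)(T)$: formula \eqref{CIDL1} requires $ef$ to be left slice hyperholomorphic on a closed ball around $0$, whereas $ef\in\EL(\sector{\varphi})$ only has a finite polynomial limit there (already $f(s)=s^{1/2}(1+s)^{-1}$, which needs no regulariser, fails to extend holomorphically to $0$). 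You must instead decompose $ef=\widetilde{ef}+(1+s)^{-1}a$ as in \Cref{DefECalc}, run the contour splitting on the sector integral of $\widetilde{ef}$ only, and treat the term $a(\id+T)^{-1}E_{\infty}$ separately; the paper does this through the closed-operator $S$-functional calculus and a limiting contour argument, cf. \eqref{AMSUIR}. Your sketch omits this term altogether.

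Second, the identities ``outer $=e(T)F$'' and ``inner $=(ef)(T)E_{\sigma_S(T)}$'' are exactly where the lemma's difficulty sits, and the justifications you give do not establish them. The inner identity cannot be read off ``by comparison with \eqref{ProjLeft}'': since $0$ may lie in $\sigma_S(T)$ and $ef$ need not be slice hyperholomorphic there, the product rule of \Cref{SCProp} is unavailable, and one must multiply the two Cauchy integrals and push through the $S$-resolvent equation, the residue evaluation of $p\mapsto(p^2-2s_0p+|s|^2)^{-1}$ and $p\mapsto p(p^2-2s_0p+|s|^2)^{-1}$, the commutation relation \eqref{ComRel}, Fubini and Cauchy's theorem --- this is the computation \eqref{HHgh}--\eqref{ASSSA} in the paper. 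Likewise, replacing the scalar factor $e(s)$ in the outer integral by the operator $e(T)$ does not follow from commutation plus a bare citation of \Cref{AlpayLemma}: \Cref{ComLem} gives commutation with $\Q_{s}(T)^{-1}$ and $T\Q_{s}(T)^{-1}$, not with $S_R^{-1}(s,T)$ itself (the left factor $\overline{s}$ is in the way), and no commutation statement can convert a scalar-valued integrand into an operator-valued one. What is actually needed is the pair of identities $\{(ef)(T)\}_{\infty}=\{e(T)\}_{\infty}\{f(T)\}_{\infty}$ and $E_{\infty}\{f(T)\}_{\infty}=\{f(T)\}_{\infty}$ (the steps $(*)$ and $(**)$ in \eqref{Yuoya}), each of which requires its own double-integral argument in which \Cref{AlpayLemma} enters only after the resolvent equation and Fubini. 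Your paragraph on \eqref{fProjProd} does sketch a computation of this type, so the tools are in view; but until the two central identities are proved by such arguments, the key relation $e(T)F=(ef)(T)E_{\infty}$, and with it \eqref{fProjRep}, remains unestablished.
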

\begin{proof}
Let us first assume that $f \in \Eint[\sector{\omega}]$, i.e. $f\in\Eint(\sector{\varphi})$ with $\omega< \varphi < \pi$. Since $f$ decays regularly at infinity, it is of the form
$f(s) = \tilde{f}(s) + a(1+s)^{-1}$ with $a \in\rr$ and $f\in \intrinZI(\sector{\varphi})$.  The operator $\tilde{f}(T)$ is given by the slice hyperholomorphic Cauchy integral 
\begin{align}\label{SAMSTI}
\tilde{f}(T) = \frac{1}{2\pi} \int_{\partial(\sector{\varphi'}\cap\cc_{I})} \tilde{f}(s)\,ds_I\,S_{R}^{-1}(s,T) 
\end{align}
with $I\in\SS$ and $\varphi' \in  (\omega,\varphi)$.
 Let now $r_1 < r_2$ be such that $\sigma_S(T)\subset B_r(0)$. Cauchy's integral theorem allows us to replace the path of integration in \eqref{SAMSTI} by the union of $\Gamma_{s,1} = \partial(\sector{\varphi'}\cap B_{r_1}(0))\cap\cc_I$ and $\Gamma_{s,2} = \partial(\sector{\varphi'}\setminus B_{r_2}(0))\cap\cc_I$ such that 
\begin{equation}\label{Lky}
\tilde{f}(T) = \frac{1}{2\pi} \int_{\Gamma_s,1} \tilde{f}(s)\,ds_I\,S_{R}^{-1}(s,T) +  \frac{1}{2\pi} \int_{\Gamma_s,2} \tilde{f}(s)\,ds_I\,S_{R}^{-1}(s,T).
\end{equation}

Let us choose $R \in (r_1,r_2)$. Since $\sigma_{SX}(T) = \sigma_{S}(T)\cup\{\infty\}$, we have $E_{\infty} = \id - E_{\sigma_{S}(T)}$ and the spectral projection $E_{\sigma_{S}(T)}$ is given by the slice hyperholomorphic Cauchy integral \eqref{ProjLeft} along $\Gamma_p = \partial (B_{R}(0)\cap\cc_I)$. The subscripts $s$ and $p$ in $\Gamma_{s,1}$, $\Gamma_{s,2}$ and $\Gamma_{p}$ are chosen in order to indicate the corresponding variable of integration in the following computation.

If we write the operators $\tilde{f}(T)$ and $E_{\sigma}$ in terms of the slice hyperholomorphic Cauchy integrals defined above, we find that
\begin{equation}\label{HHgh}
\begin{aligned}
\tilde{f}(T)E_{\sigma} =& \frac{1}{2\pi} \int_{\Gamma_s,1} \tilde{f}(s)\,ds_I\,S_{R}^{-1}(s,T) \frac{1}{2\pi}\int_{\Gamma_p} S_{L}^{-1}(p,T)\,dp_I \\
&+  \frac{1}{2\pi} \int_{\Gamma_s,2} \tilde{f}(s)\,ds_I\,S_{R}^{-1}(s,T) \frac{1}{2\pi}\int_{\Gamma_p} S_{L}^{-1}(p,T)\,dp_I.
\end{aligned}
\end{equation}
If we apply the $S$-resolvent equation \eqref{SresEQ1} in the first integral, which we denote by $\Psi_1$ for neatness, we find 
\begin{equation}\label{jakutzi}
\begin{aligned}
\Psi_1 = & \frac{1}{(2\pi)^{2}} \int_{\Gamma_{s,1}} \tilde{f}(s)\,ds_I\,S_{R}^{-1}(s,T)  \int_{\Gamma_{p}}p\left(p^2-2s_0p+|s|^{2}\right)^{-1}\,dp_I\\
& - \frac{1}{(2\pi)^{2}} \int_{\Gamma_{s,1}} \tilde{f}(s)\,ds_I\,\overline{s}S_{R}^{-1}(s,T)\int_{\Gamma_{p}} \left(p^2-2s_0p+|s|^{2}\right)^{-1}\,dp_I\\
& -  \frac{1}{(2\pi)^{2}} \int_{\Gamma_{p}}\left( \int_{\Gamma_{s,1}} \tilde{f}(s)\,ds_I\,(S_L^{-1}(p,T)p - \overline{s}S_L^{-1}(p,T))\left(p^2-2s_0p+|s|^{2}\right)^{-1}\right)\,dp_I.
\end{aligned}
\end{equation}
For $s\in\Gamma_s$, the the functions $p\mapsto  \left(p^2-2s_0p+|s|^{2}\right)^{-1}$ and  $p\mapsto  p\left(p^2-2s_0p+|s|^{2}\right)^{-1}$ are rational functions on $\cc_I$ that have two singularities, namely $s = s_0 + I s_1$ and $\overline{s} = s_0 - I s_1$. Since we chose $r_1 < R$, these singularities lie inside of $B_{R}(0)$ for any $s\in\Gamma_s$. As $\Gamma_p = \partial(B_R(0) \cap \cc_{I})$, the residue  theorem  yields
\[ \frac{1}{2\pi}\int_{\Gamma_p}p\left(p^2 - 2s_0 p +|s|^2\right)^{-1}dp_I =  \lim_{\cc_I\ni p\to s} p(p-\overline{s})^{-1} + \lim_{\cc_I\ni p\to \overline{s}} p (p - s)^{-1} = 1\]
and
\[ \frac{1}{2\pi}\int_{\Gamma_p}\left(p^2 - 2s_0 p +|s|^2\right)^{-1}dp_I =  \lim_{\cc_I\ni p\to s} (p-\overline{s})^{-1} + \lim_{\cc_I\ni p\to \overline{s}}  (p - \overline{s})^{-1} = 0,\]
where $\lim_{\cc_I\ni p\to s} \tilde{f}(p)$ denotes the limit of $\tilde{f}(p)$ as $p$ tends to $s$ in $\cc_I$. If we apply the identity \eqref{ComRel} with $B = S_L^{-1}(p,T)$ in the third integral in \eqref{jakutzi}, it turns into
\begin{align*}
&  \frac{1}{(2\pi)^{2}} \int_{\Gamma_{p}}\left( \int_{\Gamma_{s,1}} \tilde{f}(s)\,ds_I\,\left(s^2-2p_0s+|p|^{2}\right)^{-1} sS_L^{-1}(p,T)\right)\,dp_I\\
&-  \frac{1}{(2\pi)^{2}} \int_{\Gamma_{p}}\left( \int_{\Gamma_{s,1}} \tilde{f}(s)\,ds_I\,\left(s^2-2p_0s+|p|^{2}\right)^{-1}S_L^{-1}(p,T)\overline{p}\right)\,dp_I = 0.
\end{align*}
The last identity follows from Cauchy's integral theorem because $\tilde{f}(s)$ is right slice hyperholomorphic and $s \mapsto (s^2 - 2p_0s + |p|^2)^{-1}S_{L}^{-1}(p,T)$ and $s \mapsto s (s^2 - 2p_0s + |p|^2)^{-1}S_{L}^{-1}(p,T)$ are left slice hyperholomorphic on $\sector{\varphi'}\cap B_{r_1}(0)$ for any $p \in \Gamma_{p}$ as we chose $R > r_1$. Hence, we find
\[
 \Psi_1 = \frac{1}{2\pi} \int_{\Gamma_{s,1}} \tilde{f}(s)\,ds_I\, S_R^{-1}(p,T). 
 \]
The second integral in \eqref{HHgh}, which we denote by $\Psi_2$ neatness, turns after an application of the $S$-resolvent equation \eqref{SresEQ1} into
\begin{equation}\label{poile}
\begin{aligned}
\Psi_2 = & \frac{1}{(2\pi)^{2}} \int_{\Gamma_{s,2}} \tilde{f}(s)\,ds_I\,S_{R}^{-1}(s,T)  \int_{\Gamma_{p}}p\left(p^2-2s_0p+|s|^{2}\right)^{-1}\,dp_I\\
& - \frac{1}{(2\pi)^{2}} \int_{\Gamma_{s,2}} \tilde{f}(s)\,ds_I\,\overline{s}S_{R}^{-1}(s,T)\int_{\Gamma_{p}} \left(p^2-2s_0p+|s|^{2}\right)^{-1}\,dp_I\\
& -  \frac{1}{(2\pi)^{2}} \int_{\Gamma_{s,2}} \left( \int_{\Gamma_{p}} \tilde{f}(s)\,ds_I\,(S_L^{-1}(p,T)p - \overline{s}S_L^{-1}(p,T))\left(p^2-2s_0p+|s|^{2}\right)^{-1}\right)\,dp_I.
\end{aligned}
\end{equation}
Since we chose $R<r_2$ the singularities of $p\mapsto  \left(p^2-2s_0p+|s|^{2}\right)^{-1}$ and  $p\mapsto  p\left(p^2-2s_0p+|s|^{2}\right)^{-1}$ lie outside of $\overline{B_{R}(0)}$ for any $s\in\Gamma_{s,2}$. Hence, these functions are right slice hyperholomorphic on $\overline{B_{R}(0)}$ and so Cauchy's integral theorem implies that the first two integrals in \eqref{poile} vanish. Since $\tilde{f}$ decays regularly at infinity, \eqref{SectCond} holds true and $\Gamma_p$ is a path of finite length, we can apply Fubini's theorem and exchange the order of integration in the third integral of \eqref{poile}.  After applying the identity \eqref{ComRel}, we find 
\[ \Psi_{2} =   \frac{1}{(2\pi)^{2}} \int_{\Gamma_{p}}\left( \int_{\Gamma_{s,2}} \tilde{f}(s) ds_I\,\left(s^2 - 2p_0s + |p|^2\right)^{-1}\left( sS_L^{-1}(p,T)-S_L^{-1}(p,T)\overline{p}\right)\right)\,dp_I.\]
However, also this integral vanishes : as  $f$ decays regularly at infinity, the integrand decays sufficiently so that we can use Cauchy's integral theorem to transform the path of integration and write
\begin{align*}
& \int_{\Gamma_{s,2}} \tilde{f}(s) ds_I\,\left(s^2 - 2p_0s + |p|^2\right)^{-1}\left( sS_L^{-1}(p,T)-S_L^{-1}(p,T)\overline{p}\right) \\
= & \lim_{\rho\to+\infty} \int_{\partial(U_{\rho}\cap\cc_I)} \tilde{f}(s) ds_I\,\left(s^2 - 2p_0s + |p|^2\right)^{-1}\left( sS_L^{-1}(p,T)-S_L^{-1}(p,T)\overline{p}\right) = 0
\end{align*}
where $U_{\rho} = ( \sector{\varphi}\setminus U_{r_2} ) \cap U_{\rho}$. The last identity follows again from Cauchy's integral theorem because the singularities $p$ and $\overline{p}$ of $s\mapsto (s^2 - 2p_0s + |p|^2)^{-1}$ and $s\mapsto (s^2 - 2p_0s + |p|^2)^{-1}s$ lie outside of $\overline{U_{\rho}}$ because we chose $R < r_2$. 

Putting these pieces together, we find that
\begin{equation}
\tilde{f}(T)E_{\sigma} = \frac{1}{2\pi}\int_{\Gamma_{s,1}}\tilde{f}(p)\,dp_I\,S_R^{-1}(p,T).
\end{equation}
We therefore deduce from \eqref{Lky} and $E_{\infty} = \id - E_{\sigma}$ that
\begin{equation}\label{ASSSA}
 \tilde{f}(T)E_{\infty} = \tilde{f}(T) - \tilde{f}(T) E_{\sigma}= \frac{1}{2\pi}\int_{\Gamma_{s,2}}\tilde{f}(p)\,dp_I\,S_R^{-1}(p,T).
 \end{equation}

Let us now consider the operator $a(\id + T)^{-1}$. Since it is slice hyperholomorphic on $\sigma_{S}(T)$ and at infinity, it is admissible for the $S$-fuctional calculus. If we set $\chi_{\{\infty\}}(s) := \chi_{\hh\setminus U_{R}(0)}$ (that is $\chi_{\{\infty\}}(s) = 1$ if $s\notin U_R(0)$ and $\chi_{\{\infty\}}(s) = 0$ if $s\in \overline{U_R(0)}$), then $\chi_{\{\infty\}}(T) = E_{\infty}$ via the $S$-functional calculus. The product rule of the $S$-functional calculus yields $a(\id + T)^{-1}E_{\infty} = g(T)$ with $g(s) =  a(1+s)\chi_{\{\infty\}}(s)$. If we set 
\[
U_{\rho,1} := (\sector{\varphi}\setminus B_{r_2}(0)) \cup (\hh\setminus B_{\rho}(0)) \quad \text{and}\quad U_{\rho, 2} = (\sector{\varphi}\cap B_{r_1}(0))\cup B_{\varepsilon}(0)
\]
 with $0 <\varepsilon < 1$ sufficiently small, then $U_{\rho} = U_{\rho,1}\cup U_{\rho,2}$ is an unbounded slice Cauchy domain that contains $\sigma_S(T)$ and such that $g$ is slice hyperholomorphic on $\overline{U_{\rho}}$. Hence, 
\begin{align*}
a(\id + T)^{-1}E_{\infty} =& g(\infty)\id + \frac{1}{2\pi}\int_{\partial(U_{\rho}\cap\cc_{I})} \,dp_I\,S_R^{-1}(p,T)\\
=& \frac{1}{2\pi}\int_{\partial(U_{\rho,1}\cap\cc_{I})}a(1+s)\,dp_I\,S_R^{-1}(p,T)
\end{align*}
and letting $\rho$ tend to infinity, we finally find
\begin{align}\label{AMSUIR}
a(\id + T)^{-1}E_{\infty} =\frac{1}{2\pi}\int_{\Gamma_{s,2}}a(1+s)\,dp_I\,S_R^{-1}(p,T).
\end{align}
Adding \eqref{ASSSA} and \eqref{AMSUIR}, we find that  \eqref{fProjRep} holds true for  $f \in \Eint[\sector{\omega}]$

Now let $f$ be an arbitrary function in $\meroInt[\sector{\omega}]_{T}$ that decays regularly at infinity and let $e$ be a regulariser for $f$. We can assume that $e$ decays regularly at infinity---otherwise, we can replace $e$ by $s\mapsto (1+s)^{-1}e(s)$, which is a regulariser for $f$ with this property. We expect that
\begin{equation}\label{Yuoya}
\begin{split}
f(T) E_{\infty} =& e^{-1}(T)(ef)(T)E_{\infty} = e^{-1}(T)\{(ef)(T)\}_{\infty} \\
\overset{(*)}{=}& e^{-1}(T)\{e(T)\}_{\infty}\{f(T)\}_{\infty} = e^{-1}(T)e(T)E_{\infty}\{f(T)\}_{\infty}\\
= & E_{\infty}\{f(T)\}_{\infty} \overset{(**)}{=} \{f(T)\}_{\infty}
\end{split}
\end{equation}
such that \eqref{fProjRep} holds true. The boundedness of $f(T)E_{\infty}$ follows then also from the boundedness of the integral $\{f(T)\}_{\infty}$. The second and the fourth of the above equalities follow from the above arguments since $ef$ and $e$ both belong to $\Eint[\sector{\varphi}]$ and decay regularly at infinity. The equalities marked with $(*)$ and $(**)$ however remain to be shown.

Let $\omega < \varphi_2<\varphi_1<\varphi$ be such that $e,f\in\Eint(\sector{\varphi})$ and let $r_1<r_2$ be such that $B_{r_1}(0)$ contains $\sigma_{S}(T)$ and any singularity of $f$. We set $U_{s} = \sector{\varphi_1}\setminus B_{r_1}(0) $ and $U_{p} = \sector{\varphi_2}\setminus B_{r_2}(0)$, where the subscripts $s$ and $p$  indicate again the respective variable of integration in the following computation. An application of the $S$-resolvent equation \eqref{SresEQ1}  shows then that
\begin{align*}
&\{e(T)\}_{\infty} \{f(T)\}_{\infty} = \frac{1}{2\pi}\int_{\partial(U_s\cap\cc_I)} e(s)\,ds_I\,S_{R}^{-1}(s,T) \frac{1}{2\pi} \int_{\partial(U_p\cap\cc_I)}S_L^{-1}(p,T)\,dp_I\,f(p)\\
=&\frac{1}{(2\pi )^{2}} \int_{\partial(U_s\cap\cc_I)}e(s)\,ds_I S_{R}^{-1}(s,T)\int_{\partial(U_p\cap\cc_I)}p(p^2-2s_0p + |s|^2)^{-1}\,dp_If(p)\\
& + \frac{1}{(2\pi )^{2}} \int_{\partial(U_s\cap\cc_I)}e(s)\,ds_I S_{R}^{-1}(s,T)\int_{\partial(U_p\cap\cc_I)}(p^2-2s_0p + |s|^2)^{-1}\,dp_If(p)\\
& + \frac{1}{(2\pi )^{2}} \int_{\partial(U_s\cap\cc_I)}e(s)\,ds_I \left(\overline{s}S_{L}^{-1}(p,T)-S_{L}^{-1}(p,T)p\right)\int_{\partial(U_p\cap\cc_I)}(p^2-2s_0p + |s|^2)^{-1}\,dp_If(p).
\end{align*}
Because of our choice of $U_s$ and $U_p$, the singularities of $p\mapsto (p^2-s_0p + |s|^{2})^{-1}$ lie outside $U_{p}$ for any $s\in\partial(U_s\cap\cc_I)$ such that $p\mapsto (p^2-2s_0p +|s|)^{-1}$ and $p\mapsto p(p^2-2s_0p +|s|)^{-1}$  are right slice hyperholomorphic on $\overline{U_p}$ for any such $s$. Since $f$ also decays regularly in $U_p$ at infinity, Cauchy's integral theorem implies that the first two of the above integrals equal zero. The fact that $e$ and $f$ decay polynomially at infinity allows us to exchange the order of integration in the third integral, such that 
\begin{gather*}
\{e(T)\}_{\infty} \{f(T)\}_{\infty} \\
= \frac{1}{(2\pi )^{2}} \int_{\partial(U_p\cap\cc_I)}\left[\int_{\partial(U_s\cap\cc_I)}e(s)\,ds_I \left(\overline{s}S_{L}^{-1}(p,T)-S_{L}^{-1}(p,T)p\right)(p^2-2s_0p + |s|^2)^{-1}\right]\,dp_If(p).
\end{gather*}
If $p\in\partial(U_p\cap\cc_I)$, then $p$ lies for sufficiently large $\rho$ in the bounded axially symmetric Cauchy domain $U_{s,\rho} = U_s\cap B_{\rho}(0)$. Since $f$ is an intrinsic function on $\overline{U_{s,\rho}}$, \Cref{AlpayLemma} implies
\begin{align*}
&\frac{1}{2\pi} \int_{\partial(U_s\cap\cc_I)}e(s)\,ds_I \left(\overline{s}S_{L}^{-1}(p,T)-S_{L}^{-1}(p,T)p\right)(p^2-2s_0p + |s|^2)^{-1} \\
=& \lim_{\rho \to \infty} \frac{1}{2\pi} \int_{\partial(U_s \cap B_{\rho}(0)\cap\cc_I)}e(s)\,ds_I \left(\overline{s}S_{L}^{-1}(p,T)-S_{L}^{-1}(p,T)p\right)(p^2-2s_0p + |s|^2)^{-1} \\
=& S_L^{-1}(p,T) e(p).
\end{align*}
Recalling the equivalence of right and left slice hyperholomorphic Cauchy integrals for intrinsic functions, c.f. \Cref{IntrinRemark}, we finally find that
\[
 \{e(T)\}_{\infty}\{f(T)\}_{\infty} = \frac{1}{2\pi} \int_{\partial(U_p\cap\cc_I)}S_L^{-1}(p,T)\,dp_Ie(p)f(p) = \{(ef)(T)\}_{\infty},
 \]
Hence, the identity $(*)$ in \eqref{Yuoya} is true.

Similar arguments show that also the equation $(**)$ holds true. We choose $0<R<r$ such that $B_{R}(0)$ contains $\sigma_S(T)$ and all singularities of $f(T)$ and we choose $\omega < \varphi'<\varphi$ such that $f\in\Eint(\sector{\varphi'})$ and set $U_p:= \sector{\varphi'}\setminus B_{r}(0)$. An application of the $S$-resolvent equation \eqref{SresEQ1}  shows that
\begin{align*}
&E_{\sigma_s(T)}\{f(T)\}_{\infty} = \frac{1}{2\pi}\int_{\partial(B_{R}(0)\cap\cc_I)} ds_I\,S_{R}^{-1}(s,T)\frac{1}{2\pi}\int_{\partial(U_p\cap\cc_I)} S_L^{-1}(p,T)\,dp_I\,f(p)\\
=& \frac{1}{(2\pi)^2} \int_{\partial(B_{R}(0)\cap\cc_I)}\, ds_I\, S_{R}^{-1}(s,T)\int_{\partial(U_p\cap\cc_I)}p(p^2-2s_0p+|s|^2)^{-1}\,dp_I\,f(p)\\
&- \frac{1}{(2\pi)^2} \int_{\partial(B_{R}(0)\cap\cc_I)} \, ds_I \,\overline{s}S_{R}^{-1}(s,T)\int_{\partial(U_p\cap\cc_I)}(p^2-2s_0p+|s|^2)^{-1}\,dp_I\,f(p) \\
& + \frac{1}{(2\pi)^2}\int_{\partial (B_{R}(0)\cap\cc_I)}\left[\int_{\partial(U_p\cap\cc_I)} \, ds_I \, \left( \overline{s}S_L^{-1}(p,T) - S_L^{-1}(p,T)p \right)(p^2-2s_0p + |s|^2)^{-1}\right]dp_I\,f(p).
\end{align*}
Again, the first two integrals vanish as a consequence of Cauchy's integral theorem because the poles of the function $p\mapsto (p^2-2s_0p +|s|^2)^{-1}$ lie outside of $\overline{U_p}$ for any $s \in \partial(B_{R}(0)\cap\cc_I)$ and $f$ decays regularly at infinity. Because of \eqref{SectCond} and the regular decay of $f$ at infinity, we can however apply Fubini's theorem  to exchange the order of integration in the third integral and find
\begin{align*}
&E_{\sigma_S(T)}\{f(T)\}_{\infty} = \\
=& \frac{1}{(2\pi)^2}\int_{\partial(U_p\cap\cc_I)}\left[\int_{\partial (B_{R}(0)\cap\cc_I)} \, ds_I \, (s^2-2p_0s + |p|^2)^{-1} \left( sS_L^{-1}(p,T) - S_L^{-1}(p,T)\overline{p} \right)\right]dp_I\,f(p).
\end{align*}
As the functions $s\mapsto (s^2-2p_0s + |p|^2)^{-1}$ and $s\mapsto (s^2-2p_0s + |p|^2)^{-1}s$ are right slice hyperholomorphic on $\overline{B_{R}(0)}$ for any $p\in\partial(U_p\cap\cc_I)$, also this integral vanishes due to Cauchy's integral theorem. Consequently, the identity $(**)$ in \eqref{Yuoya} holds also true as
\[ E_{\infty}\{f(T)\}_{\infty} = \{f(T)\}_{\infty} - E_{\sigma}\{f(T)\}_{\infty} = \{f(T)\}_{\infty}.\]

Finally, we point out that the above computations, which proved that $\{(ef)(T)\}_{\infty} = \{e(T)\}_{\infty}\{f(T)\}_{\infty}$ did not require that $e\in\Eint[\sector{\omega}]$. They also work if $e$ belongs to $\meroInt[\sector{\omega}]_T$ and decays regularly at infinity. Hence the same calculations show that \eqref{fProjProd}  holds true.

\end{proof}

\begin{theorem}\label{SpecIncSP}
Let $T\in\sectOP(\omega)$ and $s\in\{0,\infty\}$. If $f\in\meroInt[\sector{\omega}]_T$ has polynomial limit $c$  at $s$ and $s\in\sigma_{SX}(T)$, then $c\in\sigma_{SX}(f(T))$.
\end{theorem}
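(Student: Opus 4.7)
My plan is to reduce each of the cases $s \in \{0,\infty\}$ to the already established \Cref{SpecInc}, which covers spectral points in $\sigma_S(T)\setminus\{0\}$, by composing with a suitable intrinsic Möbius transformation that moves the boundary point of interest to the spectral value $1$ of a transformed operator. For the case $s = \infty$ I would use
\[
 \varphi(p) := p(1+p)^{-1}, \qquad \varphi^{-1}(q) = q(1-q)^{-1},
\]
and for the case $s = 0$ I would use
\[
 \psi(p) := (1+p)^{-1}, \qquad \psi^{-1}(q) = (1-q)q^{-1}.
\]
Both $\varphi$ and $\psi$ are intrinsic and slice hyperholomorphic on $\hh \setminus \{-1\}$. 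Since $-1 \in \rho_S(T)$ (as $\sigma_S(T) \subset \overline{\sector{\omega}}$ with $\omega < \pi$), \Cref{MoProp4} yields that $\varphi(T) = T(\id+T)^{-1}$ and $\psi(T) = (\id+T)^{-1}$ are bounded, and the spectral mapping property in \Cref{SCProp}~(iv), applied to the intrinsic rational function $\varphi$ (resp. $\psi$), gives $\sigma_S(\varphi(T)) = \varphi(\sigma_{SX}(T))$ and $\sigma_S(\psi(T)) = \psi(\sigma_{SX}(T))$. Consequently, $\infty \in \sigma_{SX}(T)$ translates into $1 = \varphi(\infty) \in \sigma_S(\varphi(T))$ and $0 \in \sigma_{SX}(T)$ translates into $1 = \psi(0) \in \sigma_S(\psi(T))$. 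The key point of the reduction is that $1 \neq 0$, so the transformed spectral point lies inside the domain of validity of \Cref{SpecInc}.

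Next, I would define $\tilde f(q) := f\!\left(\varphi^{-1}(q)\right)$ in the first case and $\tilde f(q) := f\!\left(\psi^{-1}(q)\right)$ in the second. Since $\varphi^{-1}$ and $\psi^{-1}$ are intrinsic, both $\tilde f$'s are intrinsic and slice meromorphic on a neighbourhood of $\sigma_S(\varphi(T))$ (resp. $\sigma_S(\psi(T))$). A direct computation shows that the polynomial-limit hypothesis on $f$ at $s$ translates into a polynomial limit $c$ of $\tilde f$ at $1$: for $s = \infty$ one uses that $|q(1-q)^{-1}|$ is comparable to $|1-q|^{-1}$ as $q \to 1$, so that a bound of the form $|f(p)-c| = O(|p|^{-\alpha})$ becomes $|\tilde f(q) - c| = O(|1-q|^{\alpha})$; for $s = 0$ one uses that $|(1-q)q^{-1}|$ is comparable to $|1-q|$ as $q \to 1$, so that $|f(p)-c| = O(|p|^{\alpha})$ becomes $|\tilde f(q)-c| = O(|1-q|^{\alpha})$. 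If $c = \infty$, the same computations say that $\tilde f$ has a pole at $1$, which is the conclusion covered by the second half of the proof of \Cref{SpecInc}.

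I would then invoke the composition rule \Cref{CompRul} with inner function $\varphi$ (resp. $\psi$) and outer function $\tilde f$ to obtain
\[
 f(T) = (\tilde f \circ \varphi)(T) = \tilde f(\varphi(T)) \qquad (\text{resp. } f(T) = \tilde f(\psi(T))),
\]
after which \Cref{SpecInc} applied to $\varphi(T)$ (resp. $\psi(T)$) at the spectral point $1 \neq 0$ yields $c = \tilde f(1) \in \sigma_{SX}(\tilde f(\varphi(T))) = \sigma_{SX}(f(T))$, as required.

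The main obstacle will be verifying the technical hypotheses for the composition rule: one must show that $\varphi(T)$ and $\psi(T)$ are sectorial operators of some appropriate angle $\omega'$ in a sense compatible with the condition $\varphi(\sector{\varphi}) \subset \overline{\sector{\varphi'}}$ appearing in \Cref{CompRul}, and that $\tilde f$ lies in the corresponding class $\meroInt[\sector{\omega'}]_{\varphi(T)}$ (resp. $\meroInt[\sector{\omega'}]_{\psi(T)}$). The image of the sector $\sector{\varphi}$ under $\varphi$ or $\psi$ is not itself a sector but a curved region tangent to the real axis at $0$ or $1$; this region is nevertheless contained in a sector, which reduces the verification to an elementary geometric estimate on the Möbius images $\varphi(\sector{\varphi})$ and $\psi(\sector{\varphi})$. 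Since the spectra of $\varphi(T)$ and $\psi(T)$ are bounded and $\tilde f$ does not necessarily decay at $0$ or at infinity, one must work within the extension of the $H^{\infty}$-functional calculus to bounded sectorial operators described in item \Cref{itemB} of Section~3.7, where the composition rule is available under the mild additional conditions listed there; checking those conditions in the two subcases $c \in \hh$ and $c = \infty$ constitutes the remaining technical work.
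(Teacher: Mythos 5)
Your reduction breaks down at the point where you need $\tilde f = f\circ\varphi^{-1}$ (resp.\ $f\circ\psi^{-1}$) to be admissible for \Cref{SpecInc} and for the composition rule \Cref{CompRul} with respect to $\varphi(T)$. Both of these require $\tilde f\in\meroInt[\sector{\omega'}]_{\varphi(T)}$, i.e.\ $\tilde f$ must be slice \emph{meromorphic} on a full sector $\sector{\varphi'}$, and the point $1=\varphi(\infty)$ (resp.\ $\psi(0)$) lies in the interior of any such sector. But the hypothesis of the theorem is only a \emph{polynomial limit} of $f$ at $s\in\{0,\infty\}$, which allows non-integer rates and does not transform into meromorphy at $1$. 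For instance $f(p)=p^{-1/2}$ belongs to $\meroInt[\sector{\omega}]_T$ for injective sectorial $T$ and has polynomial limit $0$ at $\infty$, yet $\tilde f(q)=\bigl((1-q)q^{-1}\bigr)^{1/2}$ has a branch-type singularity at $q=1$: no power of $(q-1)$ or of $\Q_{1}(q)$ makes it slice hyperholomorphic there, so $\tilde f\notin\meroInt(\sector{\varphi'})$ for any sector containing $1$. The same happens in the case $c=\infty$ (e.g.\ $f(p)=p^{1/2}$): you get a branch point, not a pole, so the ``second half of the proof of \Cref{SpecInc}'' is not available either. Hence neither \Cref{SpecInc} applied to $\varphi(T)$ at the spectral point $1$ nor the identity $f(T)=\tilde f(\varphi(T))$ via \Cref{CompRul} can be justified for the class of functions the theorem covers; your argument only proves the (much weaker) statement for those $f$ that extend slice meromorphically through $s$. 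There are also secondary unproved steps — sectoriality of $\varphi(T)=T(\id+T)^{-1}$ and $\psi(T)=(\id+T)^{-1}$, and the angle condition $\varphi(\sector{\theta})\subset\overline{\sector{\varphi'}}$ required by \Cref{CompRul} — but the function-class mismatch is the fatal one.

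This is precisely why the paper does not argue by a M\"obius change of variables. Its proof uses \Cref{SpecInc} only when $\infty$ is a limit point of $\sigma_S(T)\setminus\{0\}$ (then $c=0$ lies in the closure of $f(\sigma_S(T)\setminus\{0\})\subset\sigma_{SX}(f(T))$). In the remaining case, where $\sigma_S(T)$ is bounded but $T$ is unbounded, it argues by contradiction with the spectral projection $E_\infty$: assuming $0\notin\sigma_{SX}(f(T))$, the technical \Cref{fProj} (integral representation, boundedness and multiplicativity of $f(T)E_\infty$) shows that $T|_{V_\infty}$ would be a bounded operator with empty $S$-spectrum, forcing $V_\infty=\{0\}$ and hence $T$ bounded — a contradiction. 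The case $s=0$ is then reduced to $s=\infty$ through \Cref{NonInjLem} and \Cref{InvCor} (inversion $T\mapsto T^{-1}$), and the case $c=\infty$ through the auxiliary function $(a-f)^{-1}$. If you want to keep your strategy, you would have to either restrict the theorem to functions meromorphic at $s$, or supply a replacement for \Cref{fProj}-type control of $f(T)$ on the spectral subspace at $s$; as written, the proposal does not prove the stated theorem.
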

\begin{proof}
If $c \neq \infty$, then $c\in\rr$ because, as an intrinsic function, $f$ takes only real values on the real line. We can hence consider the function $f-c$ instead of $f$ because $\sigma_{SX}(f(T)) = \sigma_{SX}(f(T) -c\id) + c$ so that it is sufficient to consider the cases $c = 0$ or $c = \infty$.

Let us start with the case $c = 0$ and $s = \infty$. If $\infty\in\overline{\sigma_S(T)\setminus\{0\}}^{\hh_\infty}$,  then
\[
 0 \in \overline{f(\sigma_{S}(T)\setminus\{0\})}^{\hh_{\infty}} \subset \sigma_{SX}(f(T))
\]
because $f(\sigma_{S}(T)\setminus\{0\})\subset \sigma_{SX}(f(T))$ by \Cref{SpecInc} and the latter is a closed subset of~$\hh_{\infty}$. In case $\infty\notin\overline{\sigma_{S}(T)}^{\hh_{\infty}}$, we show that $0\notin\sigma_{SX}(f(T))$ implies that $T$ is bounded, i.e. that even $\infty\notin \sigma_{SX}(T)$. Let us hence assume that $\infty\notin\overline{\sigma_{S}(T)}^{\hh_{\infty}}$ and that $0\notin \sigma_{SX}(f(T))$. In this case, there exists $R>0$ such that $\sigma_{S}(T) $ is contained in the open ball  $B_{R}(0)$ of radius $R$ centered at zero. The integal
\[ E_{\sigma_{S}(T)} := \frac{1}{2\pi}\int_{\partial(B_{R}(0)\cap\cc_I)} ds_I\,S_R^{-1}(s,T)\]
defines then a bounded projection that commutes with $T$, namely the spectral projection associated with the spectral set $\sigma_S(T)\subset\sigma_{SX}(T)$ that is obtained from the $S$-functional calculus, cf. \Cref{SCProp}. The compatibility of the $S$-functional calculus with polynomials in $T$ moreover implies
\[
TE_{\sigma_{S}(T)} = (s\chi_{\sigma_{S}(T)})(T) = \frac{1}{2\pi}\int_{\partial(B_{R}(0)\cap\cc_I)} s\,ds_I\,S_R^{-1}(s,T)\in\boundOP(V),
\]
where $\chi_{\sigma_{S}(T)}$ denotes the characteristic function of an arbitrary axially symmetric bounded set that contains $\overline{B_{R}(0)}$. 

Set $E_{\infty} := \id -E_{\sigma_{S}(T)}$ and let $V_{\infty}:= E_{\infty}V$ be the range  of $E_{\infty}$. Since $T$ commutes with $E_{\infty}$, the operator $T_{{\infty}}:=T|_{V_{\infty}}$ is a closed operator on $V_{\infty}$ with domain $\dom(T_{\infty}) = \dom(T)\cap V_{\infty}$. Moreover, we conclude from \Cref{SCProp} that 
\[
\sigma_{SX}(T_{\infty})  = \sigma_{SX}(T)\setminus\sigma_{S}(T) \subset\{\infty\}
\]
and so in particular 
\begin{equation}\label{REM}
\sigma_{S}(T_{\infty}) = \sigma_{SX}(T_{\infty})\setminus\{\infty\} = \emptyset.
\end{equation}

Now observe that $f(T)$ commutes with $E_{\infty}$ because of \ref{MoProp1} in \Cref{MoProp}. Hence, $f(T)$ leaves $V_{\infty}$ invariant and $f(T)_{\infty} := f(T)|_{V_{\infty}}$ defines a closed operator on $V_{\infty}$ with domain $\dom(f(T)_{\infty}) = \dom(f(T))\cap V_{\infty}$. (Note that $f(T)_{\infty}$ intuitively corresponds to $f(T_{\infty})$. The $S$-functional calculus is  however only defined on two-sided Banach spaces. As $V_{\infty}$ is only a right-linear subspace of $V$ and hence not a two-sided Banach space, we can not use it to define the operator $f(T_{\infty})$, cf. the remark at the beginning of \Cref{SpecThmSection}!) Since $f(T)$ is invertible because we assumed $0\notin\sigma_{SX}f(T)$,  the operator $f(T)_{\infty}$ is invertible too and its inverse is $f(T)^{-1}|_{V_{\infty}}\in\boundOP(V_{\infty})$.

Our aim is now to show that $T_{\infty}$ is bounded. Any bounded operator on a nontrivial Banach space has non-empty $S$-spectrum and hence we can then conclude from \eqref{REM} that $V_{\infty} = \{0\}$. Since $f$ decays regularly at infinity, there exists $n\in\nn$ such that $sf^n(s)\in\meroInt[\omega]_T$ decays regularly at infinity too. Because of \Cref{MoProp}, the operators $Tf^n(T)$ and $(sf^n)(T)$ both commute with $E_{\infty}$. Hence, they leave $V_{\infty}$ invariant and we find, again because of   \Cref{MoProp}, that
\[ 
Tf^n(T)|_{V_{\infty}} \subset (sf^n)(T) |_{V_\infty} \in \boundOP(V_{\infty})
 \]
with
\begin{gather*}
\dom\left(Tf^n(T)|_{V_{\infty}}\right) = \dom\left(Tf^n(T)\right)\cap V_{\infty} \\
= \dom\left((sf^n)(T)\right)\cap\dom\left(f^n(T)\right)\cap V_{\infty} = \dom\left((sf^n)(T)|_{V_{\infty}}\right)\cap\dom\left(f^n(T)|_{V_{\infty}}\right).
\end{gather*}
But since $sf^n$ and $f^n$ both decay regularly at infinity in $\sector{\varphi}$, \Cref{fProj} implies that $f^n(T)|_{V_{\infty}}$ and $(sf^n)(T)|_{V_{\infty}}$ are both bounded linear operators on $V_{\infty}$. Hence their domain is the entire space $V_{\infty}$ and we find that 
\[ Tf^n(T)|_{V_{\infty}} = (sf^n)(T)|_{V_{\infty}}\in\boundOP(V_{\infty}).\]
Finally, observe that \Cref{fProj} also implies that $f^n(T)|_{V_\infty} = (f(T)|_{V_\infty})^n$. As $f(T)|_{V_{\infty}}$ has a bounded inverse on $V_{\infty}$, namely $f(T)^{-1}|_{V_{\infty}}$, we find that $T_{\infty}\in\boundOP(V_{\infty})$ too. As pointed out above, this implies $V_{\infty} = \{0\}$.

Altogether we find that $V = V_{\sigma_{S}(T)} := E_{\sigma_{S}(T)}V$ such that $T = T|_{V_{\sigma_{S}(T)}} \in \boundOP(V_{\sigma_{S}(T)}) = \boundOP(V)$ and in turn $\infty \notin\sigma_{SX}(T)$ if $0 = f(\infty) \notin \sigma_{SX}(f(T))$. 

Now let us consider the case that $s=0$ and $c = 0$, that is $f(0) = 0$. If $0\notin \sigma_{SX}(f(T))$, then $f(T)$ has a bounded inverse. Let $e$ be a regulariser for $f$ such that $ef\in\Eint[\sector{\omega}]$. Since $f(T) = e(T)^{-1}(ef)(T)$ is injective, the operator $(ef)(T)$ must be injective too. As the function $ef$ has polynomial limit $0$ at $0$, we conclude from \Cref{NonInjLem} that even $T$ is injective. If we define $\tilde{f}(p):=f(p^{-1})$, then $\tilde{f}$ has polynomial limit $0$ at $\infty$ and $\tilde{f}(T^{-1})$ is invertible as $\tilde{f}(T^{-1}) = f(T)$ by \Cref{InvCor}. Hence, $0 = \tilde{f}(\infty)\notin\sigma_{SX}(\tilde{f}(T^{-1}))$ and arguments as the ones above show that $\infty\notin\sigma_{SX}(T^{-1})$ such that $T^{-1}\in\boundOP(V)$. Thus, $T$ has a bounded inverse and in turn $0\notin \sigma_{SX}(T)$ if $0 = f(0) \notin \sigma_{SX}(f(T))$. 

Finally, let us consider the case $c = f(s)= \infty$ with $s= 0$ or $s = \infty$ and let us assume that $\infty\notin \sigma_{SX}(f(T))$, that is that $f(T)$ is bounded. If we choose $a\in\rr$ with $|a| > \|f(T)\|$, then $a\in\rho_S(f(T))$ and hence $a\id- f(T)$ has a bounded inverse. By \ref{MoProp3} in \Cref{MoProp}, the function $g(p): = (a- f(p))^{-1}$ belongs to $\meroInt[\sector{\omega}]_T$. Moreover, $g(T)$ is invertible and $g(T)$ has polynomial limit $0$ at $s$. As we have shown above, this implies $s\notin \sigma_{SX}(T)$, which concludes the proof.

\end{proof}

Combining \Cref{SpecInc} and \Cref{SpecIncSP}, we arrive at the following theorem.
\begin{theorem}\label{SpecIncAll}
Let $T\in\sectOP(\omega)$. If $f\in\meroInt[\sector{\omega}]_T$ has polynomial limits at $\sigma_{SX}(T)\cap \{0,\infty\}$, then
\[ f(\sigma_{SX}(T)) \subset \sigma_{SX}(f(T)).\] 
\end{theorem}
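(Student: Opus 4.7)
The plan is to observe that \Cref{SpecIncAll} is essentially a bookkeeping consolidation of \Cref{SpecInc} and \Cref{SpecIncSP}, together with a careful definition of the value $f(s)$ at the possibly critical points $s\in\{0,\infty\}$. The polynomial limit hypothesis is exactly what is needed to make $f(\sigma_{SX}(T))$ a well-defined subset of $\hh_{\infty}$ at these points: whenever $0\in\sigma_{SX}(T)$ we interpret $f(0)$ as the polynomial limit of $f(p)$ as $p\to 0$ in a sector $\sector{\varphi}$ with $\omega<\varphi<\pi$, and similarly for $f(\infty)$. Since $f$ is intrinsic, these limits are automatically real or $\infty$, so the interpretation is consistent.

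The proof then proceeds by case analysis on $s\in\sigma_{SX}(T)$. First, if $s\in\sigma_{S}(T)\setminus\{0\}$, then \Cref{SpecInc} applies directly and yields $f(s)\in\sigma_{SX}(f(T))$. Second, if $s=0\in\sigma_{SX}(T)$ or $s=\infty\in\sigma_{SX}(T)$, then by hypothesis $f$ has a polynomial limit $c\in\hh_{\infty}$ at $s$ in $\sector{\varphi}$, and \Cref{SpecIncSP} gives exactly $c = f(s)\in\sigma_{SX}(f(T))$. Together these exhaust all possibilities for $s\in\sigma_{SX}(T)$, so $f(\sigma_{SX}(T))\subset \sigma_{SX}(f(T))$ as claimed.

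I do not anticipate any genuine obstacle here, since the real technical work — establishing the spectral inclusion for regular spectral points via \Cref{Asix} and the exceptional inclusion at $\{0,\infty\}$ via the spectral projection $E_\infty$ machinery of \Cref{fProj} — has already been done. The only thing one should double-check is that the decomposition $\sigma_{SX}(T) = \bigl(\sigma_S(T)\setminus\{0\}\bigr)\cup\bigl(\sigma_{SX}(T)\cap\{0,\infty\}\bigr)$ genuinely covers every case, which is immediate from the definition of $\sigma_{SX}(T)$. The proof is therefore just a short paragraph invoking the two previous results.
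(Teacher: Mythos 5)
Your proof is correct and follows exactly the route the paper intends: the paper states the theorem as an immediate combination of \Cref{SpecInc} (for $s\in\sigma_S(T)\setminus\{0\}$) and \Cref{SpecIncSP} (for $s\in\{0,\infty\}\cap\sigma_{SX}(T)$, with $f(s)$ read as the polynomial limit), giving no further argument. Your case decomposition and the remark on interpreting $f$ at $0$ and $\infty$ match this precisely.
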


Let us now consider the inverse inclusion. We start with the following auxiliary lemma. 
\begin{lemma}\label{DDefLem}
Let $T\in\sectOP(\omega)$ and let $f\in\meroInt[\sector{\omega}]_T$ have finite polynomial limits  at $\{0,\infty\}\cap\sigma_{SX}(T)$ in $\sector{\varphi}$ for some $\varphi\in(\omega,\pi)$. Furthermore assume that all poles of $f$ are contained in $\rho_S(T)$.
\begin{enumerate}[label = (\roman*)]
\item \label{Case1} If $\{0,\infty\} \subset \sigma_{SX}(T)$, then $f(T)$ is defined by the $H^{\infty}$-functional calculus for sectorial operators.
\item \label{Case2} If $0\in\sigma_{SX}(T)$ but $\infty\notin\sigma_{SX}(T)$, then $f(T)$ is defined by the extended $H^{\infty}$-functional calculus for bounded sectorial operators.
\item \label{Case3} If $\infty\in\sigma_{SX}(T)$ but $0\notin\sigma_{SX}(T)$, then $f(T)$ is defined by the extended $H^{\infty}$-functional calculus for invertible sectorial operators.
\item \label{Case4} If $0,\infty\notin \sigma_{SX}(T)$, then $f(T)$ is defined by the $H^{\infty}$-functional calculus for bounded and invertible sectorial operators.
\end{enumerate}
In all of these cases $f(T)\in\boundOP(V)$.
\end{lemma}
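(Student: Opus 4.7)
My plan is to reduce the general meromorphic $f$ to an intrinsic slice-hyperholomorphic function with controlled growth at $0$ and $\infty$ by cancelling the poles of $f$ with an intrinsic polynomial, then match it to the appropriate extension of the sectorial $H^\infty$-calculus in each of the four cases. Throughout, the key leverage is Lemma~\ref{MoProp4}, which allows one to invert intrinsic polynomials whose zeros lie in $\rho_S(T)$ without any loss of boundedness.

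First I would enumerate the (at most finitely many) poles $[s_1], \ldots, [s_N]$ of $f$ in $\sector{\varphi}$ with their orders $n_1, \ldots, n_N$, all lying in $\rho_S(T)$ by assumption. The finiteness is guaranteed because accumulation of poles at $0$ or $\infty$ in the sector is excluded: the presence of $0$ or $\infty$ in $\sigma_{SX}(T)$ prevents clustering of resolvent points there, while inside any compact subset of $\sector{\varphi}$ finiteness follows from the isolation of poles of meromorphic functions. Setting $P(p) := \prod_k \Q_{s_k}(p)^{n_k}$ (with the usual modification for real poles), the product $g := Pf$ extends to an intrinsic slice-hyperholomorphic function on $\sector{\varphi}$. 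By Lemma~\ref{MoProp4}, $1/P\in\meroInt[\sector{\omega}]_T$ and $(1/P)(T) = P[T]^{-1}\in\boundOP(V)$; combined with Lemma~\ref{MoProp}(ii) this yields $f(T) = P[T]^{-1}g(T)$, so boundedness of $f(T)$ reduces to exhibiting $g(T)$ (or a related bounded composition) as an element of $\boundOP(V)$.

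Now the case analysis. In Case~\ref{Case4}, $T$ is bounded and invertible, $\EL^{0,\infty}(\sector{\varphi})=\lhol(\sector{\varphi})$ admits $g$ directly, and the Cauchy integral along $\partial(\sector{\varphi',r,R}\cap\cc_I)$ gives $g(T)\in\boundOP(V)$. In Cases~\ref{Case2} and~\ref{Case3}, the hypothesis of finite polynomial limit of $f$ at the remaining endpoint of $\sigma_{SX}(T)\cap\{0,\infty\}$ places $g$ (up to a constant shift) into $\EL^0$ respectively $\EL^\infty$ after possibly dampening polynomial growth at the opposite endpoint via the factor $(1+p)^{-2N}$, and the extended Cauchy integral representation yields $g(T)\in\boundOP(V)$. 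In Case~\ref{Case1}, both $0,\infty\in\sigma_{SX}(T)$, and $g=Pf$ has finite polynomial limit at $0$ but grows like $|p|^{2N}$ at $\infty$. I would therefore write $g=(1+p)^{2N}h$ with $h:=g/(1+p)^{2N}\in\EL(\sector{\varphi})$, so that $h(T)\in\boundOP(V)$ by the sectorial $H^\infty$-calculus, and deduce
\[
f(T)=P[T]^{-1}(\id+T)^{2N}h(T),
\]
where Lemma~\ref{MoProp4} applied to the intrinsic rational function $(1+p)^{2N}/P(p)$ (whose only poles are the $[s_k]\subset\rho_S(T)$) identifies $P[T]^{-1}(\id+T)^{2N}$ as a bounded operator, giving $f(T)\in\boundOP(V)$.

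The main obstacle is verifying in Case~\ref{Case1} that $P[T]^{-1}(\id+T)^{2N}$ is genuinely bounded, which may at first glance appear problematic since $(\id+T)^{2N}$ is unbounded when $T$ is. The rational-function calculus supplies the clean resolution: $(1+p)^{2N}/P(p)$ is an intrinsic rational function with poles exclusively in $\rho_S(T)$, hence belongs to $\meroInt[\sector{\omega}]_T$ and evaluates via Lemma~\ref{MoProp4} to a bounded operator that coincides with $P[T]^{-1}(\id+T)^{2N}$ on the relevant domain. A secondary concern is the careful bookkeeping of domains when iterating the product rule Lemma~\ref{MoProp}(ii); this is routine but requires attention to the fact that $P[T]^{-1}$ maps $V$ into $\dom(T^{2N})$, precisely the domain on which the polynomial $(\id+T)^{2N}$ acts.
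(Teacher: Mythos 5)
Your Case~\ref{Case1} argument is essentially the paper's: the paper likewise clears the poles with intrinsic rational factors $\Q_{s_k}(p)^{m_k}(1+p)^{-2m_k}$ (constructed one pole at a time rather than as a single product), observes that the resulting function lies in $\Eint[\sector{\omega}]$ and hence gives a bounded operator, and recovers $f(T)$ by composing with the inverse of the regulariser, which is bounded precisely because it is an intrinsic rational function of balanced degree whose poles lie in $\rho_S(T)$ (Lemma~\ref{MoProp4}). Your concern about the order of $P[T]^{-1}$ and $(\id+T)^{2N}$ is the right one and your resolution is correct; note only that the exponent should be $\deg P = 2\sum_k n_k$ rather than $2N$ when $N$ counts the poles, and that the cleanest formulation is $f(T)=r(T)h(T)$ with $r(p)=(1+p)^{\deg P}P(p)^{-1}$, to which the product rule of \Cref{MoProp} applies directly since both factors are bounded.

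The genuine gap is the finiteness claim for the pole set, and in particular the mechanism you offer for it. You assert that $f$ has only finitely many poles in $\sector{\varphi}$ because ``the presence of $0$ or $\infty$ in $\sigma_{SX}(T)$ prevents clustering of resolvent points there.'' That is not a valid argument: $\rho_S(T)$ is open and can accumulate at points of $\sigma_S(T)$, so nothing on spectral grounds stops the spheres $[s_k]\subset\rho_S(T)$ from clustering at $0$ or $\infty$. What actually excludes clustering at an endpoint is the hypothesis that $f$ has a \emph{finite polynomial limit} there: $f(p)-c=O(|p|^{\alpha})$ forces $f$ to be bounded, hence pole-free, in a sectorial neighbourhood of that endpoint. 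Consequently your global polynomial $P$ exists only in Case~\ref{Case1}. In Cases~\ref{Case2}--\ref{Case4} the hypothesis imposes no limit behaviour at the endpoint(s) lying in $\rho_{SX}(T)$, and the poles of $f$ may genuinely accumulate there, so ``enumerate all poles of $f$ in $\sector{\varphi}$'' fails as stated. The repair is the one the paper uses: in the extended calculi the contour is $\partial(\sector{\varphi',r,R}\cap\cc_I)$ with $B_r(0)\subset\rho_S(T)$ and/or $\hh\setminus B_R(0)\subset\rho_S(T)$, so only the finitely many poles contained in the truncated region $\sector{\varphi,r,R}$ are relevant; one clears those with the same rational regulariser and ignores the rest, which lie outside the domain of integration.
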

\begin{proof}
Let us first consider the case \ref{Case1}, i.e. we assume that $\{0,\infty\}\subset\sigma_{SX}(T)$. Since $f$ has polynomial limits at $0$ and $\infty$  in $\sector{\omega}$, the functions $f$ has only finitely many poles $[s_1],\ldots,[s_n]$ in $\overline{\sector{\omega}}$. For suitably large $m_1\in\nn$, the function $f_1(p) = (1+p)^{-2m_1}\Q_{s_1}(p)^{m_1} f(p)$  has also polynomial limits at $0$ and $\infty$ and poles at $[s_2],\ldots,[s_n]$ but it does not have a pole at $[s_1]$. Moreover, if we set $r_1(p) = (1+p)^{-2m_1}\Q_{s_1}(p)^{m_1}$, then $r_1(T)$ is bounded and injective because $[s_1]\subset[\rho_S(T))]$. We can now repeat this argument and find inductively $m_2,\ldots, m_n$ such that, after setting $r_{\ell}(p) = (1+p)^{-2m_\ell} \Q_{s_\ell}(p)^{m_{\ell}}$ for $\ell = 2,\ldots,n$ and $r:= r_n\cdots r_1$, the function $\tilde{f} = r f$ belongs to $\meroInt[\sector{\omega}]_{T}$, has polynomial limits at $0$ and $\infty$ and does not have any poles in $\overline{\sector{\omega}}$. Hence it belongs to $\Eint[\sector{\omega}]$. Moreover, $r$ belongs to $\Eint[\sector{\omega}]$ too and since $r(T) = r_{n}(T)\cdots r_1(T)$ is the product of invertible operators, it is invertible itself. Hence $r$ regularises $f$ such that $f(T)$ is defined in terms of the $H^{\infty}$-functional calculus. Moreover, $f(T) = r(T)^{-1}\tilde{f}(T)$ is bounded as it is the product of two bounded operators.

Similar arguments show the other cases: in \cref{Case2} for example, the function $f$ has polynomial limit at $0$ but not at $\infty$, such that the poles of $f$ may accumulate at $\infty$. However, we integrate along the boundary of $\sector{\omega,0,R} = \sector{\omega}\cap B_{R}(0)$ in $\cc_{I}$ for sufficiently large $R$ when we define the $H^{\infty}$-functional calculus for bounded sectorial operators. Hence, only finitely many poles are contained in $\sector{\omega,0,R}$ and hence relevant. Therefore we can apply the above strategy again in order to show that $f$ is regularised by a rational intrinsic function and that $f(T)$ is hence defined and a bounded operator. Similar, we can argue for \Cref{Case3} and \Cref{Case4}, where the poles may of $f$ accumulate at $0$ resp. at $0$ and $\infty$, but only finitely many of them are relevant.

\end{proof}

\begin{proposition}\label{SpecCniAll}
Let $T\in\sectOP(\omega)$. If $f\in\meroInt[\sector{\omega}]_T$ has polynomial limits at $\sigma_S(T)\cap\{0,\infty\}$, then
\[ f(\sigma_{SX}(T)) \supset \sigma_{SX}(f(T)).\]
\end{proposition}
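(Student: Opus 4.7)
My plan is to prove the contrapositive: assume $c \in \hh_{\infty}$ with $c \notin f(\sigma_{SX}(T))$, and show $c \notin \sigma_{SX}(f(T))$. Since $f$ is intrinsic, $f(\overline{p}) = \overline{f(p)}$, so $f(\sigma_{SX}(T))$ is an axially symmetric subset of $\hh_{\infty}$; therefore the assumption is equivalent to $[c] \cap f(\sigma_{SX}(T)) = \emptyset$. I treat the cases $c = \infty$ and $c \in \hh$ separately, both of them reducing to an application of \Cref{DDefLem} and of the product rule.

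In the case $c = \infty$, the hypothesis says that $f$ takes no infinite value on $\sigma_{SX}(T)$: every pole of $f$ lies in $\rho_S(T)$ (otherwise such a pole would be in $\sigma_{SX}(T)$ and would force $\infty \in f(\sigma_{SX}(T))$), and whenever $0$ or $\infty$ belongs to $\sigma_{SX}(T)$ the corresponding polynomial limit of $f$ is finite. These are exactly the hypotheses of \Cref{DDefLem}, which delivers $f(T) \in \boundOP(V)$ and hence $\infty \notin \sigma_{SX}(f(T))$.

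In the case $c \in \hh$, I consider $g(p) := \Q_c(f(p))^{-1}$, where $\Q_c(w) = w^2 - 2c_0 w + |c|^2$. Since $\meroInt[\sector{\omega}]_T$ is a real algebra and $\Q_c$ has real coefficients, $\Q_c \circ f$ belongs to $\meroInt[\sector{\omega}]_T$; its zero set consists of exactly those spheres $[p]$ on which $f$ takes a value in $[c]$, which by assumption are disjoint from $\sigma_{SX}(T)$, so the poles of $g$ lie in $\rho_S(T)$. One also checks that $g$ inherits finite polynomial limits from $f$ at $\{0,\infty\} \cap \sigma_{SX}(T)$: if $f$ has polynomial limit $a \in \hh$ at such a point, then $\Q_c(a) \neq 0$ because $a \notin [c]$ and so $g \to \Q_c(a)^{-1}$; if $f$ has polynomial limit $\infty$ there, then $g \to 0$. \Cref{DDefLem} now yields that $g$ is regularisable and that $g(T) \in \boundOP(V)$.

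The final step is to identify $g(T)$ with $\Q_c(f(T))^{-1}$ and to conclude $c \in \rho_S(f(T))$. Since $g$ and $\Q_c \circ f$ are both intrinsic with pointwise product identically equal to $1$, the product rule in item \ref{MoProp2} of \Cref{MoProp}, together with the boundedness of $g(T)$, gives $\Q_c(f(T))\, g(T) = \id_V$ on $V$ and $g(T)\, \Q_c(f(T)) \subset \id$ on $\dom(\Q_c(f(T)))$, so that $\Q_c(f(T))$ is closed and has bounded inverse $g(T)$. I expect the main obstacle to be this last identification: one must verify that the operator $\Q_c(f(T)) = f(T)^2 - 2c_0 f(T) + |c|^2 \id$ assembled from sums and powers of $f(T)$ genuinely coincides with the functional-calculus operator $(\Q_c \circ f)(T)$ when $f(T)$ is unbounded, and that the composition/product-rule identities survive on the subtle domain $\dom(f(T)^2) \cap \dom(g(T)) = \dom((\Q_c \circ f)(T))$. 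This domain bookkeeping was already a recurring difficulty earlier in the section (cf.\ the treatment of \eqref{AMURZI}--\eqref{AMSti} in the proof of \Cref{CompRul}), and here it is handled by applying \Cref{MoProp}\ref{MoProp3} to the intrinsic pair $\Q_c \circ f$, $g$ once we know that $g(T)$ is bounded.
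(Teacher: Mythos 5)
Your proposal is correct and follows essentially the same route as the paper: prove the contrapositive, handle $c=\infty$ by noting the poles of $f$ lie in $\rho_S(T)$ so that \Cref{DDefLem} gives $f(T)\in\boundOP(V)$, and handle finite $c$ by applying \Cref{DDefLem} to $p\mapsto \Q_{c}(f(p))^{-1}$ to obtain a bounded inverse of $\Q_{c}(f(T))$. The only difference is one of explicitness: the verifications you spell out (axial symmetry of $f(\sigma_{SX}(T))$, the polynomial limits of $\Q_c(f(\cdot))^{-1}$, and the identification of the functional-calculus operator $(\Q_c\circ f)(T)$ with $f(T)^2-2c_0f(T)+|c|^2\id$ via \Cref{MoProp}) are left implicit in the paper's two-sentence argument, and your flagged domain issue is a real but routine point that the paper glosses over in exactly the same way.
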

\begin{proof}
Let $s\in\hh$ with $s\notin f(\sigma_{SX}(T))$. The function $p\mapsto Q_{s}(f(p))^{-1}$ belongs then to $\meroInt[\sector{\omega}]_T$ and has finite polynomial limits at $\sigma_{SX}(T)\cap\{0,\infty\}$. Moreover the set of poles of $\Q_{s}(f(\cdot))$ as an element of $\meroInt[\sector{\omega}]$, which consists of those spheres $[p]$ in $\overline{\sector{\omega}}\setminus\{0\}$ for which $ f([p]) = [f(p)] = [s]$, is contained in the $S$-resolvent set of $T$ as we chose $s\notin f(\sigma_{SX}(T))$. From \Cref{DDefLem} we therefore deduce that $\Q_{s}(f(T))^{-1}$ is defined and belongs to $\boundOP(V)$. Hence $\Q_{s}(f(T))$ has a bounded inverse and so $s\in\sigma_{SX}(f(T))$.

If finally $s = \infty\notin f(\sigma_{SX}(T))$, then the poles of $f$ are contained in the $S$-resolvent set of $T$. Hence, \Cref{DDefLem} implies that $f(T)$ is a bounded operator and in turn $s = \infty\notin \sigma_{SX}(f(T))$.

\end{proof}
Combining \Cref{SpecIncAll} and \Cref{SpecCniAll}, we obtain the following spectral mapping theorem
\begin{theorem}[Spectral Mapping Theorem]\label{SpecMap}
Let $T\in\sectOP(\omega)$ and let $f\in\meroInt[\sector{\omega}]_T$ have polynomial limits at $\{0,\infty\}\cap\sigma_{SX}(T)$.  Then
\[ f(\sigma_{SX}(T)) = \sigma_{SX}(f(T)).\]
\end{theorem}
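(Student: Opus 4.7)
The plan is very short: the two inclusions have already been carried out as \Cref{SpecIncAll} and \Cref{SpecCniAll} above, and the theorem is essentially a packaging statement. So my proof would consist of verifying that both propositions apply under the hypotheses of \Cref{SpecMap} and combining their conclusions.

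First I would check the hypotheses. We have $T\in\sectOP(\omega)$ and $f\in\meroInt[\sector{\omega}]_T$ with polynomial limits at every point of $\{0,\infty\}\cap\sigma_{SX}(T)$. For \Cref{SpecIncAll}, the required hypothesis is precisely that $f$ admits polynomial limits at $\sigma_{SX}(T)\cap\{0,\infty\}$, so that proposition applies verbatim and gives the forward inclusion
\[
 f(\sigma_{SX}(T))\subset \sigma_{SX}(f(T)).
\]
For \Cref{SpecCniAll}, the hypothesis requires polynomial limits at $\sigma_S(T)\cap\{0,\infty\}$, which is a subset of $\sigma_{SX}(T)\cap\{0,\infty\}$; hence it is automatically satisfied and we obtain the reverse inclusion
\[
 f(\sigma_{SX}(T))\supset \sigma_{SX}(f(T)).
\]

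Combining the two inclusions yields the claimed equality $f(\sigma_{SX}(T)) = \sigma_{SX}(f(T))$, which completes the proof.

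The only point that deserves a moment of thought (and which I would briefly remark on in the write-up) is that no further hypothesis is needed to pass from the two propositions to the theorem: in particular, \Cref{SpecIncAll} internally relies on \Cref{SpecIncSP} for the points $0$ and $\infty$ and on \Cref{SpecInc} for spectral values in $\sigma_S(T)\setminus\{0\}$, and the assumption on polynomial limits at $\{0,\infty\}\cap\sigma_{SX}(T)$ is exactly what is needed to make both of those applicable. Since there is no genuine obstacle beyond invoking the earlier results, the proof is a one-line combination.
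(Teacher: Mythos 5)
Your proposal matches the paper exactly: the paper obtains \Cref{SpecMap} precisely by combining \Cref{SpecIncAll} and \Cref{SpecCniAll}, and your check that the hypothesis on polynomial limits at $\{0,\infty\}\cap\sigma_{SX}(T)$ covers the (weaker) requirement of \Cref{SpecCniAll} is the only verification needed. Correct, same approach.
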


\section{Fractional powers via the $H^{\infty}$-functional calculus}\label{FracPowSect}
We apply now the $H^{\infty}$-functional calculus introduced in \Cref{HInfty} in order to define fractional powers of sectorial operators. Again we follow the strategy used in \cite{Haase} to obtain our results.
\subsection{Fractional powers with positive real part}

Let $T\in\sectOP(\omega)$ and let $\alpha\in(0,+\infty)$. The function $s \mapsto s^{\alpha}$ does then obviously belong to $\meroInt[\sector{\omega}]_T$ and we can define $T^{\alpha}$ using the quaternionic $H^{\infty}$-functional calculus introduced in \Cref{HInfty}. Precisely, we can choose $n\in\nn$ with $n>{\alpha}$ and find
\begin{equation}\label{TADef}
T^{\alpha} := s^{\alpha}(T) = (\id+T)^n \left(s^{\alpha}(1+s)^{-n}\right)(T),
\end{equation}
where $\left(s^{\alpha}(1+s)^{-n}\right)(T)$ is defined via a slice hyperholomorphic Cauchy integral as in \eqref{SectIntL} or~\eqref{SectIntR}.
\begin{definition}
Let $T\in\sectOP(\omega)$ and $\alpha >0$. We call the operator defined in \eqref{TADef} the fractional power with exponent $\alpha$ of $T$.
\end{definition}

The following properties are immediate consequences of the properties of the $H^{\infty}$-functional calculus.

\begin{lemma}\label{TaProp1} 
Let $T\in\sectOP(\omega)$ and let $\alpha \in(0,+\infty)$. 
\begin{enumerate}[label=(\roman*)]
\item If $T$ is injective, then $\left(T^{-1}\right)^{\alpha} = \left(T^{\alpha}\right)^{-1}$. Thus $0\in\rho_S(T)$ if and only if $0\in\rho_S(T^{\alpha})$.
\item Any bounded operator that commutes with $T$ commutes also with $T^{\alpha}$. 
\item \label{TaSpecMap} The spectral mapping theorem holds, namely
\[ \sigma_{S}(T^{\alpha}) = \{s^{\alpha}: s\in\sigma_{S}(T)\}. \]
\end{enumerate}
\end{lemma}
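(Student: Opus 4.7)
The plan is to reduce each of the three statements to a result from \Cref{HInfty}. The starting observation is that $f(s)=s^\alpha$ is an intrinsic function on every sector $\sector{\varphi}$ with $\varphi<\pi$, with polynomial limit $0$ at the origin and polynomial growth $|s|^\alpha$ at infinity; hence $f\in\meroInt[\sector{\omega}]_T$ (regularized for instance by $(1+s)^{-n}$ for any integer $n>\alpha$), and in the sense of \Cref{HIDef} one has $f(T)=T^\alpha$ as defined in \eqref{TADef}.

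Statement (ii) is then immediate from \ref{MoProp1} of \Cref{MoProp}: every bounded $A$ commuting with $T$ commutes with $f(T)=T^\alpha$ for every $f\in\meroInt[\sector{\omega}]_T$. For (iii), apply the Spectral Mapping Theorem \Cref{SpecMap}: since $f(s)=s^\alpha$ has polynomial limits at both $0$ and $\infty$ (namely $0$ and $\infty$ respectively), its hypotheses hold and yield $\sigma_{SX}(T^\alpha)=f(\sigma_{SX}(T))$ with the convention $f(\infty)=\infty$. One then passes to $\sigma_S$ by noting that $T$ is bounded if and only if $T^\alpha$ is: $T$ bounded means $\infty\notin\sigma_{SX}(T)$, hence $\infty\notin f(\sigma_{SX}(T))=\sigma_{SX}(T^\alpha)$ so that $T^\alpha$ is bounded; the converse is analogous because $f(\infty)=\infty$. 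Deleting the point $\infty$ from both sides of the $\sigma_{SX}$-identity then gives the claimed equality for $\sigma_S$.

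For (i), assume $T$ is injective and consider also $g(s)=s^{-\alpha}$: it is intrinsic, polynomially bounded at $\infty$, and its polar behaviour at $0$ is absorbed by the regulariser $e(s)=(s(1+s)^{-2})^k$ for any integer $k\geq \alpha$. Since $T$ is injective and $-1\in\rho_S(T)$, the operator $e(T)=(T(\id+T)^{-2})^k$ is injective, so $g\in\meroInt[\sector{\omega}]_T$ and we may set $T^{-\alpha}:=g(T)$. From $fg\equiv 1$ and \ref{MoProp3} of \Cref{MoProp} it follows that $T^\alpha=(T^{-\alpha})^{-1}$, so in particular $T^\alpha$ is invertible. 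To identify $T^{-\alpha}$ with $(T^{-1})^\alpha$, apply \Cref{InvCor} to $f(p)=p^\alpha$: it gives $T^\alpha=f(p^{-1})(T^{-1})=g(T^{-1})$. Applying the same invertibility argument now to the injective sectorial operator $T^{-1}$ yields $g(T^{-1})=((T^{-1})^\alpha)^{-1}$, whence $(T^{-1})^\alpha=(T^\alpha)^{-1}$. The equivalence $0\in\rho_S(T)\Leftrightarrow 0\in\rho_S(T^\alpha)$ is then an immediate consequence of (iii), since $s^\alpha=0$ occurs precisely at $s=0$.

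The main technical hurdle is the verification underlying \Cref{InvCor}, namely that $T^{-1}$ is itself sectorial of some angle $\leq\omega$, so that the inversion formula may legitimately be invoked: the spectral containment $\sigma_S(T^{-1})\subset\overline{\sector{\omega}}$ follows from invariance of $\sector{\omega}$ under $s\mapsto s^{-1}$, but the resolvent estimates for $T^{-1}$ must be transferred from those for $T$ via the pseudo-resolvent identity relating $\Q_s(T^{-1})^{-1}$ and $\Q_{s^{-1}}(T)^{-1}$. Once this step is in place, everything else is a direct citation of the results developed in \Cref{HInfty}.
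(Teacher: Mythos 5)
Your proposal is correct and matches the paper's intent: the paper offers no written proof, stating only that these properties "are immediate consequences of the properties of the $H^{\infty}$-functional calculus", and your explicit citations (\Cref{MoProp} for (i)--(ii), \Cref{InvCor} for the identity $(T^{-1})^{\alpha}=(T^{\alpha})^{-1}$, and \Cref{SpecMap} plus removal of the point $\infty$ for (iii)) are exactly the intended ones. The "hurdle" you flag---sectoriality of $T^{-1}$---is already implicit in the paper's \Cref{InvCor} and indeed in the very statement of (i), since $(T^{-1})^{\alpha}$ is only defined via \eqref{TADef} when $T^{-1}\in\sectOP(\omega)$, so it is not an additional gap in your argument.
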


Another important property is analyticity in the exponent. Observe that, although in the classical case the mapping $\alpha\mapsto T^{\alpha}$ is holomorphic in $\alpha$, we cannot expect slice hyperholomorphicity here because the fractional powers are only defined for real exponents, cf. the comments after \Cref{ScalLogDef}.

\begin{proposition}\label{AnalyticityProp}
 If $T\in\sectOP(\omega)$, then the following statements hold true.
\begin{enumerate}[label = (\roman*)]
\item \label{Analy1} If $T$ is bounded, then $T^{\alpha}$ is bounded too and the mapping $\Lambda:\alpha\to T^{\alpha}$ is analytic on $(0,+\infty)$ and has a left and a right slice hyperholomorphic extension to $\hh^+ = \{s\in\hh: \Re(s) >0\}$. In particular, for any $\alpha_0\in(0,+\infty)$ the Taylor series expansion of $f_{\alpha}$ at $\alpha_0$ converges on $(0,2\alpha_0)$.
\item \label{Analy2} If $n\in\nn$ and $0<\alpha < n$, then $\dom(T^n)\subset \dom(T^{\alpha})$. The mapping $\Lambda_{v}: \alpha\mapsto A^{\alpha}v$ is analytic on $(0,n)$ for each $v\in\dom(T^n)$ and the power series expansion of $\Lambda_{v}$ at $\alpha_0\in(0,n)$ converges on $(-r + \alpha, \alpha + r)$ with $r_{\alpha_0}=\min\{\alpha_0, n-\alpha_{0}\}$. Hence, $\Lambda_v$ has a left and a right slice hyperholomorphic expansion to the set $\bigcup_{\alpha_0\in(0,n)} B_{r_{\alpha_0}}(\alpha_0)$.
\end{enumerate}
\end{proposition}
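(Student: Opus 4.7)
The plan is to base both parts on the Taylor expansion of the scalar $s^\alpha$ around a fixed real exponent $\alpha_0 > 0$: writing $s^\alpha = s^{\alpha_0}\, e^{(\alpha-\alpha_0)\log s} = \sum_{k=0}^\infty (\alpha-\alpha_0)^k s^{\alpha_0}(\log s)^k/k!$ (valid for real $\alpha$, since then $(\alpha-\alpha_0)$ commutes with $\log s$) and inserting this into a Cauchy-type integral representation of $T^\alpha$. In both parts the exchange of sum and integral will be justified by Tonelli's theorem, with the radius of convergence dictated by the integrability along the path of an envelope obtained from the estimates $|\log s|\le |\ln|s||+\pi$ on $\sector{\varphi'}$ and $\|S_R^{-1}(s,T)\|\le C/|s|$.

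For part \ref{Analy1}, since $T$ is bounded, $\sigma_S(T)\subset B_R(0)$ for some $R$, and the extended sectorial calculus for bounded operators (item \ref{itemB} in the extensions subsection) gives
\[
T^\alpha = \frac{1}{2\pi}\int_\Gamma s^\alpha\,ds_I\,S_R^{-1}(s,T),\qquad \Gamma = \partial(\sector{\varphi',0,R'}\cap\cc_I),\quad R'>R,
\]
for any $\omega<\varphi'<\pi$, which is manifestly bounded because $\Gamma$ is. Substituting the series produces
\[
T^\alpha = \sum_{k=0}^\infty (\alpha-\alpha_0)^k C_k,\qquad C_k := \frac{1}{2\pi k!}\int_\Gamma s^{\alpha_0}(\log s)^k\,ds_I\,S_R^{-1}(s,T).
\]
The only obstruction to integrability of the envelope $|s|^{\alpha_0-1}e^{|\alpha-\alpha_0||\log s|}$ along the bounded $\Gamma$ is the singularity at $s=0$, where it behaves like $|s|^{\alpha_0-1-|\alpha-\alpha_0|}$; this is integrable iff $|\alpha-\alpha_0|<\alpha_0$. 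Hence the series converges absolutely on $B_{\alpha_0}(\alpha_0)\subset\hh^+$ and, as a quaternionic power series with operator-valued coefficients on the right, defines there a left slice hyperholomorphic extension of $\alpha\mapsto T^\alpha$. Moving the real scalar $(\alpha-\alpha_0)^k$ to the right of $C_k$ (legitimate for real $\alpha$, hence giving the same operator on $(0,+\infty)$) yields the right slice hyperholomorphic extension; restricting both to $(0,2\alpha_0)\subset\rr$ gives the claimed real-analyticity and Taylor-series convergence.

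For part \ref{Analy2}, pick $n\in\nn$ with $n>\alpha$ and use $e(s)=(1+s)^{-n}$ as a regulariser: \Cref{HIDef} gives $T^\alpha = (\id+T)^n g_\alpha(T)$ with $g_\alpha(s) = s^\alpha(1+s)^{-n}\in\EL(\sector{\omega})$ and $g_\alpha(T)\in\boundOP(V)$. Because $g_\alpha(T)$ commutes with $T$ by item \ref{ghj2} in \Cref{ghj}, it also commutes with $(\id+T)^n$ and therefore maps $\dom(T^n)=\dom((\id+T)^n)$ into itself, establishing $\dom(T^n)\subset\dom(T^\alpha)$ together with
\[
T^\alpha v = g_\alpha(T)(\id+T)^n v = \frac{1}{2\pi}\int_\Gamma s^\alpha(1+s)^{-n}\,ds_I\,S_R^{-1}(s,T)(\id+T)^n v,
\]
where $\Gamma = \partial(\sector{\varphi'}\cap\cc_I)$ is now unbounded. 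Expanding $s^\alpha$ as before yields
\[
T^\alpha v = \sum_{k=0}^\infty (\alpha-\alpha_0)^k D_k v,\quad D_k := \frac{1}{2\pi k!}\int_\Gamma s^{\alpha_0}(\log s)^k(1+s)^{-n}\,ds_I\,S_R^{-1}(s,T)(\id+T)^n,
\]
and Tonelli reduces convergence to integrability of the envelope $|s|^{\alpha_0-1}e^{|\alpha-\alpha_0||\log s|}|1+s|^{-n}$ along $\Gamma$. Near $0$ it behaves like $|s|^{\alpha_0-1-|\alpha-\alpha_0|}$ (integrable iff $|\alpha-\alpha_0|<\alpha_0$) and near $\infty$ like $|s|^{\alpha_0-1-n+|\alpha-\alpha_0|}$ (integrable iff $|\alpha-\alpha_0|<n-\alpha_0$), pinning down the radius of convergence as $r_{\alpha_0}=\min\{\alpha_0,n-\alpha_0\}$. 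The resulting series is the left slice hyperholomorphic extension of $\Lambda_v$ to $\bigcup_{\alpha_0\in(0,n)} B_{r_{\alpha_0}}(\alpha_0)$, and swapping the side of the scalar factor yields the right extension.

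The main technical hurdle I anticipate is precisely this domain-by-domain control of the enveloping integrand—balancing the polynomial factor $|s|^{\alpha_0}|1+s|^{-n}$ against the exponential growth $e^{|\alpha-\alpha_0||\log s|}$ at $0$ and at $\infty$—and the bookkeeping required to confirm that the scalar factors $(\alpha-\alpha_0)^k$ sit on the correct side of the operator coefficients $C_k$ and $D_k$ to deliver a genuine left (respectively right) slice hyperholomorphic extension rather than a function of indeterminate type.
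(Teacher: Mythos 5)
Your proposal is correct and follows essentially the same route as the paper: expand $s^{\alpha}=\sum_{k}(\alpha-\alpha_0)^k s^{\alpha_0}(\log s)^k/k!$ inside the Cauchy integral for $\left(s^{\alpha}(1+s)^{-n}\right)(T)(\id+T)^n v$, dominate, and exchange sum and integral, with the radius $\min\{\alpha_0,n-\alpha_0\}$ coming from the same balance of the integrand at $0$ and at $\infty$. Your only (harmless) deviations are that you sum the dominating series into the closed form $|s|^{\alpha_0-1}e^{|\alpha-\alpha_0||\log s|}|1+s|^{-n}$ and read off its power-law behaviour directly, where the paper instead integrates term by term via $\int_0^{t_1}t^{\alpha_0-1}|\ln t|^k\,dt\leq k!/\alpha_0^{k+1}$ and sums a geometric series, and that for the bounded case you make explicit the bounded-contour representation from the extended calculus that the paper leaves implicit.
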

\begin{proof}
Let us first show \ref{Analy2}. If $n\in\nn$ and $\alpha\in(0,n)$, then $T^{\alpha} = (\id + T)^{n} \left(s^{\alpha}(1+s)^{-n}\right)(T)$. If $v\in\dom\left(T^n\right)$, then $T^n$ and $\left(s^{\alpha}(1+s)^{-n}\right)(T)$ commute because of \ref{MoProp1} in \Cref{MoProp}  such that $T^{\alpha}v = \left(s^{\alpha}(1+s)^{-n}\right)(T) (\id + T)^n v$ and hence $v\in\dom(T^{\alpha})$. 

Let now $\alpha_0 \in (0,n)$ and set $r = \min\{\alpha_0, n- \alpha_0\}$. The Taylor series expansion of $\alpha\mapsto s^{\alpha}$ at $\alpha_0$ is $s^{\alpha} = \sum_{n=0}^{+\infty} \frac{(\alpha -\alpha_0)^k}{k!} s^{\alpha_0}\log(s)^k$ and converges on $(0,2\alpha_0)$. If $\varepsilon \in (0,1)$ and $\alpha\in(0,n)$ with $|\alpha - \alpha_0| < (1-\varepsilon) r$, then we have after choosing $\varphi\in(\omega,\pi)$ that
\begin{align}
\notag T^{\alpha}v =& \left(s^{\alpha}(1+s)^{-n}\right)(T) (\id+T)^nv \\
 \notag = & \frac{1}{2\pi}\int_{\partial(\sector{\varphi}\cap\cc_I)} s^{\alpha}(1+s)^{-n}\,ds_I\,S_R^{-1}(s,T) (\id+T)^{n}v\\
\label{Waxuka}=& \frac{1}{2\pi}\int_{\partial(\sector{\varphi}\cap\cc_I)} \sum_{k=0}^{+\infty}\frac{(\alpha - \alpha_0)^k}{k!}s^{\alpha_0}\log(s)^k(1+s)^{-n}\,ds_I\,S_R^{-1}(s,T) (\id+T)^{n}v.
\end{align}
We want to apply the theorem of dominated convergence in order to exchange the integral and the series. Using \eqref{SectCond} we find that $\tilde{\Psi}(s) =  M \left\|(\id+T)^{n}v\right\| \Psi(s)$ with
\[
\Psi(s) := \sum_{k=0}^{+\infty} \frac{|\alpha - \alpha_0|^k}{k!}|s|^{\alpha_0-1}\frac{|\log(s)|^k}{|1+s|^{n}}
\]
  is a dominating function for the integrand in \eqref{Waxuka}. In order to show the integrability of $\Psi(s)$ along $\partial(\sector{\varphi}\cap\cc_I)$, we choose $ C_{est}> 1$ such that $(1-\varepsilon)C_{est} < 1$ and $0<t_0< 1 $ and $1 < t_1$ such that
\begin{equation}\label{logEst}
|\ln(t) + I \theta| < C_{est} |\ln(t)|\quad \forall t\in(0,t_0]\cup [t_1,\infty).
\end{equation}
 We then have
\begin{equation*}
\begin{split}
&\frac{1}{2}\int_{\partial(\sector{\varphi}\cap\cc_I)} \psi(s)\,d|s_I| =\int_{0}^{+\infty} \sum_{k=0}^{+\infty} \frac{|\alpha - \alpha_0|^k}{k!} t^{\alpha_0-1} \frac{|\ln(t) + I\varphi|^k }{|1 + te^{I\varphi}|^{n}} \,dt \\
\leq &\sum_{k=0}^{+\infty} \frac{|\alpha - \alpha_0|^k}{k!}\left( C_0 C_{est}^k \int_{0}^{t_0}  t^{\alpha_0-1} (-\ln(t))^k\,dt\right.\\
 &+ C_0 C_1^k\left.\int_{t_0}^{t_1}  t^{\alpha_0-1} \,dt+  C_2C_{est}^k \int_{t_1}^{+\infty}  t^{\alpha_0-(n+1)} \ln(t)^k \,dt\right) ,
\end{split}
\end{equation*}
with the constants
\[ C_0 := \max_{t\in[0,t_1]}\frac{1}{|1 + te^{I\varphi}|^n},\quad C_1:= \max_{t\in[t_0,t_1]}|\ln(t) + I \varphi|\]
and a constant $C_2>0$ such that
\[ \frac{1}{|1 + te^{I\varphi}|} < \frac{C_2}{t} \quad \forall t\in [t_1,+\infty). \]
Since
\[ \int_{0}^{t_1} t^{\alpha_0-1}(-\ln(t))^k\,dt \leq \int_{-\infty}^{0} e^{\alpha_0\xi}(-\xi)^k \,d\xi = \frac{k!}{\alpha_0^{k+1}}\]
and similarly
\[ \int_{t_1}^{+\infty} t^{\alpha_0 - (n+1)}\ln(t)^k\,dt \leq \int_{1}^{+\infty} e^{-(n-\alpha_0)\xi}\xi^k\,d\xi = \frac{k!}{(n-\alpha_0)^{k+1}},\]
we can further estimate 
\begin{align*}
&\frac{1}{2}\int_{\partial(\sector{\varphi}\cap\cc_i)}\Psi(s)\,d|s_I| \leq \\
\leq &\sum_{k=0}^{+\infty} \frac{|\alpha - \alpha_0|^k}{k!} \left( C_0C_{est}^k \frac{k!}{\alpha_0^{k+1}} +  C_0C_1^k\left(\frac{t_1^{\alpha_0}-t_0^{\alpha_0}}{\alpha_0}\right) + C_2 C_{est}^k \frac{k!}{(n-\alpha_0)^{k+1}}\right).
\end{align*}
As $|\alpha - \alpha_0| <(1-\varepsilon) r  = (1-\varepsilon)\min\{\alpha_0,n-\alpha_0\}$, we finally find 
\begin{align*}
&\frac{1}{2}\int_{\partial(\sector{\varphi}\cap\cc_i)}\Psi(s)\,d|s_I|\\
\leq &\frac{C_0}{\alpha_0} \sum_{k=0}^{+\infty} \ \left( (1-\varepsilon)C_{est}\right)^k
+   \frac{C_0(t_1^{\alpha_0}-t_0^{\alpha_0})}{\alpha_0} \sum_{k=0}^{+\infty} \frac{\left(C_1|\alpha - \alpha_0|\right)^k}{k!} 
+ \frac{C_2}{\alpha_0} \sum_{k=0}^{+\infty} \left((1-\varepsilon)C_{est}\right)^k.
\end{align*}
Since $(1-\varepsilon)C_1<1$ these series are finite and hence $\tilde{\Psi}$ is an integrable majorant of the integrand in \eqref{Waxuka}. We can thus exchange the series and the integral in \eqref{Waxuka} such that
\[ T^{\alpha}v = \sum_{k=0}^{+\infty} \frac{(\alpha - \alpha_0)^{k}}{k!} \frac{1}{2\pi}\int_{\partial(\sector{\varphi}\cap\cc_I)} s^{\alpha_0}\log(s)^k(1+s)^{-n}\,ds_I\,S_{R}^{-1}(s,T)(\id
T)^{n}v\]
and that as we shoed above this series converges uniformly for $|\alpha - \alpha_0| < (1-\varepsilon)r $. Since $\varepsilon \in (0,1)$ was arbitrary, we obtain the statement.

If $T$ is bounded, then \eqref{TADef} is the composition of two bounded operators and hence bounded. With arguments as the ones used above one can show that the power series expansion of $\Lambda$ at $\alpha_0$ converges in $\boundOP(V)$ on $(0,2\alpha_0)$. If we  write the variable $(\alpha - \alpha_0)$ in the power series expansion on the left or on the right side of the coefficients and extend $\alpha$ to a quaternionic variable, we find that $\Lambda$ has a left resp. a right slice hyperholomorphic extension to $B_{\alpha_0}(\alpha_0)$. Finally, any point in $\hh^+$ is contained in a ball of this form, and hence we find that we can extend $\Lambda$ to a left or to a right slice hyperholomorphic function on all of $\hh^+$.

\end{proof}

We show now that the usual computational rules that we expect to hold for fractional powers of an operator hold true with our approach.

\begin{proposition}[First Law of Exponents]Let $T\in\sectOP(\omega)$.
 For all $\alpha,\beta>0$ the identity $T^{\alpha+\beta} = T^{\alpha}T^{\beta}$ holds. In particular $\dom(T^{\gamma} )\subset \dom(T^{\alpha})$ for $0<\alpha<\gamma$.
\end{proposition}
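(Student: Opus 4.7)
The forward direction $T^{\alpha}T^{\beta} \subset T^{\alpha+\beta}$ is a direct consequence of the product rule in Lemma \ref{MoProp}\ref{MoProp2} applied to $f(p) = p^{\alpha}$ and $g(p) = p^{\beta}$, both in $\meroInt[\sector{\omega}]_T$ (with $f$ intrinsic). Since $f\cdot g = p^{\alpha+\beta}$, this gives $T^{\alpha}T^{\beta} = f(T)g(T) \subset (fg)(T) = T^{\alpha+\beta}$ together with the explicit domain description $\dom(T^{\alpha}T^{\beta}) = \dom(T^{\alpha+\beta}) \cap \dom(T^{\beta})$. Both conclusions of the proposition — the equality $T^{\alpha+\beta} = T^{\alpha}T^{\beta}$ and the nesting $\dom(T^{\gamma}) \subset \dom(T^{\alpha})$ for $0<\alpha<\gamma$ — therefore reduce to showing the single domain inclusion $\dom(T^{\alpha+\beta}) \subset \dom(T^{\beta})$, for then swapping the roles of $\alpha$ and $\beta$ simultaneously gives $\dom(T^{\alpha+\beta})\subset \dom(T^{\alpha})$ and the ``in particular'' claim.

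To tackle $\dom(T^{\alpha+\beta}) \subset \dom(T^{\beta})$, I fix an integer $n > \alpha+\beta$ so that $e(p) = (1+p)^{-n}$ regularises each of $p^{\beta}$, $p^{\alpha}$, and $p^{\alpha+\beta}$ simultaneously. Given $v\in\dom(T^{\alpha+\beta})$, the definitions unfold to say that $w := (p^{\alpha+\beta}(1+p)^{-n})(T)v\in\dom(T^n)$, and the target is to show that the analogous vector $u := (p^{\beta}(1+p)^{-n})(T)v$ also lies in $\dom(T^n)$. The link between $u$ and $w$ is provided by another application of the product rule, now with $f = p^{\alpha}$ intrinsic in $\meroInt[\sector{\omega}]_T$ and $g = p^{\beta}(1+p)^{-n}\in\Eint[\sector{\omega}]$ having bounded $g(T)$: this yields the identity $w = T^{\alpha}u$ on all of $V$, and in particular shows that $u\in\dom(T^{\alpha})$ automatically. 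Hence the remaining problem is the regularity statement: \emph{if $u\in\dom(T^{\alpha})$ and $T^{\alpha}u\in\dom(T^n)$, then $u\in\dom(T^n)$}.

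This regularity upgrade is the chief technical obstacle. My plan is to exploit the alternative representation $u = T^{\beta}(\id+T)^{-n}v$ (from the product rule applied in the opposite order, valid as a bounded-operator identity) to reduce the problem to the integer scale, where Proposition \ref{AnalyticityProp}\ref{Analy2} already gives $\dom(T^N)\subset \dom(T^{\beta})$ for $N>\beta$. Concretely, one more use of the product rule with $f = p^n$, $g = p^{\alpha}$ upgrades $T^{\alpha}u \in \dom(T^n)$ and $u\in\dom(T^{\alpha})$ to $u\in\dom(T^{n+\alpha})$. Then, using the commutation of $(\id+T)^{\pm n}$ with all fractional powers $T^{\gamma}$ (Lemma \ref{ghj}\ref{ghj2} and Lemma \ref{MoProp}\ref{MoProp1}) together with the bijection $(\id+T)^{-n}:V\to\dom(T^n)$, one transfers the integer-scale smoothness of $(\id+T)^{-n}v$ through $T^{\beta}$ to conclude $u\in\dom(T^n)$. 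Bookkeeping aside, the delicate point is precisely this bootstrap from fractional-order to integer-order regularity, and handling it carefully — either by Yosida-type approximations in the graph norm of $T^{\beta}$, or equivalently by deriving an auxiliary moment-type inequality from the sectoriality estimates \eqref{SectCond} — is where the real work lies.
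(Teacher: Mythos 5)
Your first half is sound and coincides with the paper's: the inclusion $T^{\alpha}T^{\beta}\subset T^{\alpha+\beta}$ with $\dom(T^{\alpha}T^{\beta})=\dom(T^{\alpha+\beta})\cap\dom(T^{\beta})$ comes from \ref{MoProp2} in \Cref{MoProp}, the reduction of both claims to $\dom(T^{\alpha+\beta})\subset\dom(T^{\beta})$ is correct, and so are your identities $u=(s^{\beta}(1+s)^{-n})(T)v$ and $w=(s^{\alpha+\beta}(1+s)^{-n})(T)v=T^{\alpha}u$. The gap is that the ``regularity upgrade'' you isolate --- if $u\in\dom(T^{\alpha})$ and $T^{\alpha}u\in\dom(T^{n})$ then $u\in\dom(T^{n})$ --- is precisely the substance of the proposition, and you do not prove it. Your route through the product rule with $f=p^{n}$, $g=p^{\alpha}$ does give $u\in\dom(T^{n+\alpha})$, but passing from $\dom(T^{n+\alpha})$ to $\dom(T^{n})$ is an instance of the very inclusion $\dom(T^{\gamma})\subset\dom(T^{\alpha'})$ for $\alpha'<\gamma$ that the proposition asserts (take $\gamma=n+\alpha$, $\alpha'=n$), so as written the argument is circular. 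The closing suggestions --- Yosida-type approximations in the graph norm of $T^{\beta}$, or a moment-type inequality extracted from \eqref{SectCond} --- are only named, not carried out, and neither is obviously available: convergence of approximants in the graph norm of $T^{\beta}$ on $\dom(T^{\alpha+\beta})$ is essentially equivalent to the domain inclusion you are trying to establish.

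The paper closes this gap with a purely algebraic device that you can adopt verbatim. Writing $\Lambda_{\alpha}=(s^{\alpha}(1+s)^{-n})(T)$ and $\Lambda_{\beta}=(s^{\beta}(1+s)^{-n})(T)$ with $n>\alpha,\beta$, the hypothesis $v\in\dom(T^{\alpha+\beta})$ gives $T^{\alpha+\beta}v=(\id+T)^{2n}\Lambda_{\alpha}\Lambda_{\beta}v$, so $\Lambda_{\alpha}\Lambda_{\beta}v\in\dom(T^{2n})$. Since the bounded intrinsic operator $(s^{n-\alpha}(1+s)^{-n})(T)$ commutes with $T^{2n}$, one obtains $T^{n}(\id+T)^{-2n}\Lambda_{\beta}v=(s^{n-\alpha}(1+s)^{-n})(T)\,\Lambda_{\alpha}\Lambda_{\beta}v\in\dom(T^{2n})$. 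Now the elementary identity $T(\id+T)^{-1}y=y-(\id+T)^{-1}y$ shows that $y\in\dom(T)$ if and only if $T(\id+T)^{-1}y\in\dom(T)$, and by induction $y\in\dom(T^{m})$ if and only if $T^{n}(\id+T)^{-n}y\in\dom(T^{m})$; applying this with $y=(\id+T)^{-n}\Lambda_{\beta}v$ and $m=2n$ yields $(\id+T)^{-n}\Lambda_{\beta}v\in\dom(T^{2n})$, hence $\Lambda_{\beta}v\in\dom(T^{n})$ and $v\in\dom(T^{\beta})$. No approximation argument and no estimate beyond sectoriality is needed; replacing your final paragraph by this commutation-plus-induction step completes the proof and makes it coincide with the paper's.
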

\begin{proof}
Because of \ref{MoProp2} in \Cref{MoProp}, we have $T^{\alpha}T^{\beta}\subset T^{\alpha+\beta}$ with $\dom\left(T^{\alpha}T^{\beta}\right) = \dom\left(T^{\alpha+\beta}\right)\cap\dom\left(T^{\beta}\right)$. We choose $n\in\nn$ with $\alpha,\beta < n$ and define the bounded operators $\Lambda_{\alpha} := \left(s^{\alpha}(1+s)^{-n}\right)(T)$ and $\Lambda_{\beta} := \left(s^{\beta}(1+s)^{-n}\right)(T)$.  If now $v\in\dom\left(T^{\alpha+\beta}\right)$, then \ref{MoProp2} in \Cref{MoProp} implies
\begin{align*}
 T^{\alpha+\beta} v = & (\id+T)^{2n} (\id + T)^{-2n}T^{\alpha+\beta} v = (\id+T)^{2n}T^{\alpha+\beta}  (\id + T)^{-2n}v \\
  = & (\id + T)^{2n}\left(s^{\alpha+\beta}(1+s)^{-2n}\right)(T) v=(\id +T)^{2n} \Lambda_{\alpha}\Lambda_{\beta} v
 \end{align*}
and hence $\Lambda_{\alpha}\Lambda_{\beta} v \in\dom\left((\id + T^{2n})\right) = \dom\left(T^{2n}\right)$. Sinc $\left( s^{n-\alpha} (1+s)^{-n}\right)(T)$ commutes with $T^{2n}$ because of \ref{MoProp1} in \Cref{MoProp}, we thus find 
\[
 T^{n}(\id+T)^{-2n}\Lambda_{\beta} v = \left(s^{n+\beta}(1+s)^{-3n}\right)(T) v = \left( s^{n-\alpha} (1+s)^{-n}\right)(T) \Lambda_{\alpha}\Lambda_{\beta} v \in\dom\left(T^{2n}\right).
\]
Since $T$ and $T(\id+T)^{-1}$ commute, we have $T (\id+T)^{-1} Tv = T^2 (\id + T)v$ and hence $v\in\dom(T)$ implies $T(\id + T)^{-1} v\in\dom(T)$. If on the other hand $T(\id + T)^{-1}v\in\dom(T)$, then the identity
\[
T(\id+T)^{-1}v = v  - (\id + T)^{-1}v
\]
implies $v\in\dom(T)$ and hence $v\in\dom(T)$ if and only if $T(\id+T)^{-1}v\in\dom(T)$. By induction, we find that $v\in\dom(T^m)$ if and only if $T^n(\id+T)^{-n}v\in\dom(T^m)$. We thus conclude that $(\id + T)^{-n}\Lambda_{\beta}v\in\dom(T^{2n})$ which in turn implies $\Lambda_{\beta}v\in\dom(T^{n}) )= \dom((\id + T)^n)$. Thus, $T^{\beta}v = (\id+T)^n \Lambda_{\beta}v$ is defined, such that in turn $v\in\dom\left(T^\beta\right)$ for any $v\in\dom(T^{\alpha+\beta})$. We conclude that
\[
\dom\big(T^{\alpha}T^{\beta}\big) = \dom\big(T^{\alpha+\beta}\big)\cap\dom\big(T^{\beta}\big) = \dom\big(T^{\alpha +\beta}\big)
\]
and in turn $T^{\alpha}T^{\beta} = T^{\alpha + \beta}$.

\end{proof}

\begin{proposition}[Scaling Property]\label{ScalProp}
Let $T\in\sectOP(\omega)$ and let $\Lambda = [\delta_1,\delta_2]\subset (0,\pi/\omega)$ be a compact interval. Then the family $(T^{\alpha})_{\alpha\in\Lambda}$ is uniformly sectorial of angle $\delta_2\omega$. In particular, for every $\alpha\in(0,\pi/\omega_T)$, the operator $T^{\alpha}$ is sectorial with angle $\omega_{T^{\alpha}} = \alpha\omega_T$.
\end{proposition}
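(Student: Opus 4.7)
The proof splits into the spectral inclusion $\sigma_S(T^\alpha)\subset\overline{\sector{\delta_2\omega}}$ and, for every $\varphi\in(\delta_2\omega,\pi)$, a uniform two-sided $S$-resolvent estimate outside $\overline{\sector\varphi}$. The spectral inclusion is immediate from the spectral mapping theorem \Cref{TaProp1}\ref{TaSpecMap}: since $\sigma_S(T)\subset\overline{\sector\omega}$ and the intrinsic function $s\mapsto s^\alpha$ maps $\overline{\sector\omega}$ into $\overline{\sector{\alpha\omega}}\subset\overline{\sector{\delta_2\omega}}$, one has $\sigma_S(T^\alpha)\subset\overline{\sector{\delta_2\omega}}$ for every $\alpha\in\Lambda$. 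Fix now $\varphi\in(\delta_2\omega,\pi)$ and $\lambda\in\hh\setminus\overline{\sector\varphi}$; pick $\omega'\in(\omega,\varphi/\delta_2)$ and $\omega''\in(\omega,\omega')$, and take $I=I_\lambda\in\SS$ when $\lambda\notin\rr$. For every $\alpha\in\Lambda$ the image $\{s^\alpha:s\in\sector{\omega'}\}$ sits inside $\sector{\alpha\omega'}\subset\sector{\delta_2\omega'}$, hence at positive angular distance from the sphere $[\lambda]$.

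The core of the proof is to identify $S_R^{-1}(\lambda,T^\alpha)$ through the $H^{\infty}$-functional calculus of $T$ alone, since the composition rule \Cref{CompRul} presupposes $T^\alpha\in\sectOP$ and would therefore be circular. The intrinsic rational function
\[ r_{\lambda,\alpha}(s) := \Q_\lambda(s^\alpha)^{-1} = \bigl(s^{2\alpha}-2\lambda_0 s^\alpha+|\lambda|^2\bigr)^{-1}\]
is bounded on $\sector{\omega'}$ with finite polynomial limits at $0$ and infinity, hence $r_{\lambda,\alpha}\in\Eint(\sector{\omega'})$ and $r_{\lambda,\alpha}(T)\in\boundOP(V)$ by \Cref{DefECalc}. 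Since $s\mapsto s^{2\alpha}r_{\lambda,\alpha}(s)$ also belongs to $\Eint(\sector{\omega'})$, the product rule \ref{MoProp2} in \Cref{MoProp} together with the first law of exponents yields $\ran r_{\lambda,\alpha}(T)\subset\dom(T^{2\alpha})$ and
\[\Q_\lambda(T^\alpha)\,r_{\lambda,\alpha}(T) = \bigl(\Q_\lambda(s^\alpha)\,r_{\lambda,\alpha}(s)\bigr)(T) = \mathcal I,\]
so that $\Q_\lambda(T^\alpha)^{-1}=r_{\lambda,\alpha}(T)$. A further application of \ref{MoProp2} in \Cref{MoProp} then gives
\[S_R^{-1}(\lambda,T^\alpha) = -\bigl(T^\alpha-\overline\lambda\,\mathcal I\bigr)r_{\lambda,\alpha}(T) = g_{\lambda,\alpha}(T), \quad g_{\lambda,\alpha}(s):=S_R^{-1}(\lambda,s^\alpha) \in \EL(\sector{\omega'}).\]

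Since $g_{\lambda,\alpha}(0)=\lambda^{-1}$ and $|g_{\lambda,\alpha}(s)|=O(|s|^{-\alpha})$ at infinity, the decomposition $g_{\lambda,\alpha} = \tilde g_{\lambda,\alpha}+(1+s)^{-1}\lambda^{-1}$ with $\tilde g_{\lambda,\alpha}\in\lholZI(\sector{\omega'})$ combined with \Cref{Kakao} produces
\[S_R^{-1}(\lambda,T^\alpha) = (\mathcal I+T)^{-1}\lambda^{-1} + \frac{1}{2\pi}\int_{\partial(\sector{\omega''}\cap\cc_I)} S_L^{-1}(s,T)\,ds_I\,\tilde g_{\lambda,\alpha}(s).\]
On the contour $s=te^{\pm I\omega''}$ with $I=I_\lambda$ one has $g_{\lambda,\alpha}(s)=(\lambda-s^\alpha)^{-1}$; the angular separation $|\arg\lambda-\alpha\arg s|\ge\varphi-\delta_2\omega''>0$ yields $|\tilde g_{\lambda,\alpha}(s)|\le C(t|\lambda|+t^\alpha)\bigl[(|\lambda|+t^\alpha)(1+t)\bigr]^{-1}$, and combined with the sectorial bound $\|S_L^{-1}(s,T)\|\le C_{\omega''}/|s|$ and the change of variables $u=t^\alpha/|\lambda|$ the integral reduces to a continuous function of $\alpha\in\Lambda=[\delta_1,\delta_2]$ and is therefore uniformly bounded. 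This yields $\|S_R^{-1}(\lambda,T^\alpha)\|\le C_\varphi/|\lambda|$ uniformly in $\alpha\in\Lambda$ and $\lambda\in\hh\setminus\overline{\sector\varphi}$; the left $S$-resolvent is handled identically. The ``in particular'' statement then follows by letting $\omega\searrow\omega_T$ and $\delta_2\searrow\alpha$, giving $\omega_{T^\alpha}\le\alpha\omega_T$; the reverse inequality $\omega_{T^\alpha}\ge\alpha\omega_T$ is immediate from \Cref{TaProp1}\ref{TaSpecMap}, since $\{s^\alpha:s\in\sigma_S(T)\}$ is not contained in any sector of opening smaller than $\alpha\omega_T$.

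The identification $S_R^{-1}(\lambda,T^\alpha)=g_{\lambda,\alpha}(T)$ is, in my view, the main technical obstacle: having to bypass the composition rule, one must track domains directly through \ref{MoProp2}--\ref{MoProp3} in \Cref{MoProp} applied to the intrinsic rational function $r_{\lambda,\alpha}$, much as in the proof of \Cref{SpecCniAll}. The subsequent integral estimate, while requiring a careful balance between the behaviour of $\tilde g_{\lambda,\alpha}$ near $0$ and at infinity uniformly over the compact interval $\Lambda$, is classical in flavour and presents no conceptual difficulty.
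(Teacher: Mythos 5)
Your overall architecture coincides with the paper's: the spectral inclusion comes from the spectral mapping theorem, and the resolvent bound is obtained by writing the $S$-resolvent of $T^{\alpha}$ at $\lambda$ as a Cauchy integral of $S_R^{-1}(\lambda,s^{\alpha})$ over $\partial(\sector{\omega''}\cap\cc_I)$ after subtracting a correction term. Your identification $\Q_{\lambda}(T^{\alpha})^{-1}=r_{\lambda,\alpha}(T)$ through \ref{MoProp2} and \ref{MoProp3} in \Cref{MoProp} is sound and in fact makes explicit a step the paper leaves implicit. Two smaller slips first: $g_{\lambda,\alpha}(s)=S_R^{-1}(\lambda,s^{\alpha})$ is \emph{right} slice hyperholomorphic in $s$ (right slice hyperholomorphic composed with an intrinsic function, \Cref{HolStruct}), so it lies in $\ER(\sector{\omega'})$ and must be paired with $\frac{1}{2\pi}\int \tilde g_{\lambda,\alpha}(s)\,ds_I\,S_R^{-1}(s,T)$ rather than with the left kernel; and your pointwise bound for $\tilde g_{\lambda,\alpha}$ is missing a factor $|\lambda|^{-1}$.

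The genuine gap is in the choice of correction term, and it is fatal to the uniform estimate. On $\cc_{I_\lambda}$ one has $\tilde g_{\lambda,\alpha}(s)=(\lambda-s^{\alpha})^{-1}-\lambda^{-1}(1+s)^{-1}=(\lambda s+s^{\alpha})\,\lambda^{-1}(\lambda-s^{\alpha})^{-1}(1+s)^{-1}$, so the scalar majorant of $|\lambda|\,\|S_L^{-1}(s,T)\|\,|\tilde g_{\lambda,\alpha}(s)|$ along $s=te^{\pm I\omega''}$ is comparable to $t^{-1}(|\lambda|t+t^{\alpha})(|\lambda|+t^{\alpha})^{-1}(1+t)^{-1}$, and this is \emph{not} integrable uniformly in $\lambda$: taking $\lambda=-L$ with $L$ large, for $1\leq t\leq L^{1/\alpha}$ one has $|\lambda s+s^{\alpha}|\geq Lt-t^{\alpha}\geq Lt/2$, $|\lambda-s^{\alpha}|\leq 2L$ and $1+t\leq 2t$, so the majorant is at least $(8t)^{-1}$ there and its integral is at least $\tfrac{1}{8\alpha}\ln L$. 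Hence your argument yields only $\|S_R^{-1}(\lambda,T^{\alpha})\|\leq C\ln|\lambda|/|\lambda|$, not $C_{\varphi}/|\lambda|$, and the substitution $u=t^{\alpha}/|\lambda|$ does not remove the $|\lambda|$-dependence. The source is a scale mismatch: $(\lambda-s^{\alpha})^{-1}$ transitions at $|s|\sim|\lambda|^{1/\alpha}$ while $(1+s)^{-1}$ transitions at $|s|\sim 1$. This is precisely what the paper's correction term is designed to avoid: it subtracts $S_L^{-1}\big(-|\lambda|^{1/\alpha},p\big)|\lambda|^{1/\alpha}$, so that the difference $\Psi_{\lambda,\alpha}(p)=S_L^{-1}(\lambda,p^{\alpha})\lambda+S_L^{-1}\big(-|\lambda|^{1/\alpha},p\big)|\lambda|^{1/\alpha}$ decays regularly at both $0$ and $\infty$ and obeys the homogeneity $\Psi_{t^{\alpha}\lambda,\alpha}(tp)=\Psi_{\lambda,\alpha}(p)$, reducing the whole estimate to $|\lambda|=1$ and a compact parameter set. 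You need this, or an equivalent $\lambda$-adapted subtraction, to close the proof; the rest of your argument can then be kept as is.
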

\begin{proof}
The second statement obviously follows from the first by choosing $\lambda = [\alpha,\alpha]$. Because of \ref{TaSpecMap} in \Cref{TaProp1}, we know that $\sigma_{S}(T^{\alpha}) = (\sigma_{S}(T))^{\alpha}\subset \overline{\sector{\alpha\omega}}\subset \overline{\sector{\delta_2\omega}}$ for $\alpha\in\Lambda$. What remains to show are the uniform estimates \eqref{SectCond} for the $S$-resolvents. 

We choose $\varphi\in(\delta_2\omega,\pi)$. In order to show that $\|S_L^{-1}(s,T^{\alpha})s\|$ is uniformly bounded for $s\notin\overline{\sector{\varphi}}$ and $\alpha\in\Lambda$, we define for $\alpha\in\Lambda$ and $s\notin\sector{\varphi}$ the function
\begin{equation}\label{NJe}
\begin{split}
 \Psi_{s,\alpha}(p) = &S_L^{-1}\left(s,p^{\alpha}\right)s + S_L^{-1}\left(-|s|^{\frac{1}{\alpha}},p\right)|s|^{\frac{1}{\alpha}}\\
  = & \Q_{s}\left(p^{\alpha}\right)^{-1}\left(|s|^{\frac{1}{\alpha}} + p\right)^{-1}\left(p(\overline{s}-p^{\alpha})s + p^{\alpha}(\overline{s}-p^{\alpha})|s|^{\frac{1}{\alpha}}\right).
 \end{split}
 \end{equation}
This function  belongs to $\lholZI[\sector{\omega}]$: as $s\notin \sector{\varphi}$, it is left slice hyperholomorphic on $\sector{\theta_0} $ with $\theta_0 := \min\{\alpha^{-1}\varphi,\pi\} > \omega$. The first line in \eqref{NJe} implies that $\Psi_{s,\alpha}$ has polynomial limit $0$ at infinity because $S_L^{-1}\left(s,p^{\alpha}\right)$ and $S_L^{-1}\left(|s|^{1/\alpha},p\right)$ have polynomial limit $0$ at infinity and the second line in \eqref{NJe} implies that $\Psi_{s,\alpha}$ has polynomial limit $0$ at $0$ because $\Q_{s}(p^{\alpha})^{-1}$ and $\left(|s|^{1/\alpha}-p\right)^{-1}$ are bounded for $p$ sufficiently close to $0$.  Since the function $S_L^{-1}\left(|s|^{1/\alpha},p\right) = \left(|s|^{1/\alpha} - p\right)^{-1}$ belongs to $\EL[\sector{\omega}]$, we find that also $S_L^{-1}(s,p^{\alpha})s =  \Psi_{s,\alpha}(p) + S_L^{-1}(|s|^{1/\alpha},T)|s|^{1/\alpha}$ belongs to $\EL[\sector{\omega}]$ and that
\[
 S_L^{-1}(s,T^{\alpha})s  = S_L^{-1}\left(|s|^{\frac{1}{\alpha}},T\right)|s|^{\frac{1}{\alpha}} + \Psi_{s,\alpha}(T).
 \]
The function $\Psi_{s,\alpha}$ satisfies the scaling property $\Psi_{t^{\alpha}s,\alpha}(tp) = \Psi_{s,\alpha}(p)$ such that $\Psi_{\frac{s}{|s|},\alpha}\big(|s|^{-\frac{1}{\alpha}},p\big) = \Psi_{s,\alpha}(p)$. If we choose $\theta \in(\omega, \min\{\pi,\delta_2^{-1}\varphi\})$ and $I = I_s$, we therefore find that
\begin{align*}
\|S_{L}^{-1}(s,T^{\alpha})s\| \leq & C_{\theta',T} + \left \| \Psi_{s/|s|,\alpha}\left(|s|^{-\frac{1}{\alpha}}T\right)\right\|\\
\leq & C_{\theta',T} + \frac{1}{2\pi} \left\| \int_{\partial(\sector{\theta}\cap\cc_I)} S_L^{-1}(p,T)\,dp_I\, \Psi_{s/|s|,\alpha}\left(|s|^{-\frac{1}{\alpha}}p\right) \right\|\\
\leq & C_{\theta',T} + \frac{C_{\theta',T}}{2\pi} \int_{\partial(\sector{\theta}\cap\cc_I)} |p|^{-1} d|p|\, \left|\Psi_{s/|s|,\alpha}\left(p\right) \right|,
\end{align*}
where $C_{\theta',T}$ is the respective constant in \eqref{SectCond} for some $\theta'\in(\omega,\theta)$, which is independent of $s$ and $\alpha \in \Lambda$. Hence, if we are able to show that
\begin{equation}\label{CxY}
 \sup\left\{ \int_{\partial(\sector{\theta}\cap\cc_{I_s})} |p|^{-1} d|p|\, \left|\Psi_{s,\alpha}\left(p\right) \right| : |s| = 1, s\notin\sector{\varphi}, \alpha\in\Lambda \right\} < \infty,
 \end{equation}
then we are done. Since we integrate along a path in the complex plane $\cc_{I_s}$, we find that $p$ and $s$ commute and $\Psi_{s,\alpha}\left(p\right)$ simplifies to $\Psi_{s,\alpha}\left(p\right) = \left(s-p^{\alpha}\right)^{-1}\left(p+|s|^{1/\alpha}\right)^{-1}\left(ps + |s|^{1/\alpha}p^{\alpha}\right)$. As $|s| = 1$, we can therefore estimate
\begin{equation*}
 \left|\Psi_{s,\alpha}\left(p\right)\right| \leq \frac{|p|^{1-\varepsilon}}{\left|s-p^{\alpha}\right|}\frac{|p|^{\varepsilon}}{|1+p|} + \frac{|p|^{\alpha - \varepsilon}}{\left| s-p^{\alpha}\right|}\frac{|p|^{\varepsilon}}{|1+ p|} \leq K \frac{|p|^{\varepsilon}}{|1+p|}
 \end{equation*}
with $\varepsilon \in (0,\delta_1)$, because $|p|^{1-\varepsilon}/\left|s-p^{\alpha}\right|$ and $|p|^{\alpha - \varepsilon}/\left| s-p^{\alpha}\right|$ are uniformly bounded by some constant $K>0$ for our parameters $s$, $\alpha$ and $p$. Thus we have an estimate for the integrand in \eqref{CxY} that is independent of the parameters such that \eqref{CxY} is actually true.

With analogous arguments using the right slice hyperholomorphic version of the $S$-functional calculus for sectorial operators, we can show that also $\|s S_R^{-1}(s,T^{\alpha} )\|$ is uniformly bounded for $s\notin\overline{\sector{\varphi}}$ and $\alpha\in\Lambda$. Since $\varphi\in(\delta_2\omega,\pi)$ was arbitrary, the proof is finished.

\end{proof}

As immediate consequences of \Cref{ScalProp} and the composition rule \Cref{CompRul}, we obtain the following two results.
\begin{proposition}
Let $T\in\sectOP(\omega)$ for some $\omega\in(0,\pi)$ and let $\alpha\in(0,\pi/\omega)$ and $\varphi \in(\omega,\pi/\alpha)$. If  $f\in\lholZI(\sector{\alpha\varphi})$ (or $f\in\meroL[\sector{\alpha\omega}]_{T^{\alpha}})$, then the function $p\mapsto f\left(p^{\alpha}\right)$ belongs to $\lholZI(\sector{\varphi})$ (resp. $\meroL[\sector{\omega}]_{T}$) and
\[f\left(T^{\alpha}\right) = \left(f\left(p^{\alpha}\right)\right)(T).\]
\end{proposition}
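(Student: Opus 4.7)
The plan is to recognize this proposition as a direct corollary of the composition rule (\Cref{CompRul}) applied to the intrinsic function $g(p) = p^{\alpha}$. The function $g$ is intrinsic because it is both left and right slice hyperholomorphic on $\hh\setminus(-\infty,0]$ and takes real values on $(0,\infty)$ (use the characterisation in \Cref{IntrinChar}). By the Scaling Property (\Cref{ScalProp}), we have $T^{\alpha}\in\sectOP(\alpha\omega)$, so $g(T)=T^{\alpha}$ is the sectorial operator into which we want to substitute $f$.

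Next I verify the hypothesis of the composition rule. Given $\varphi' \in (\alpha\omega,\pi)$, we need $\psi\in(\omega,\pi)$ with $g\in\meroInt(\sector{\psi})$ and $g(\sector{\psi})\subset\overline{\sector{\varphi'}}$. Since $p\mapsto p^{\alpha}$ is intrinsic and slice hyperholomorphic on all of $\hh\setminus(-\infty,0]$, it belongs to $\intrin(\sector{\psi})\subset\meroInt(\sector{\psi})$ for every $\psi<\pi$. Moreover, by construction of $s^{\alpha} = e^{\alpha\log s}$, we have $g(\sector{\psi}) = \sector{\alpha\psi}$, so it suffices to choose $\psi\in(\omega,\min\{\pi,\varphi'/\alpha\})$, which is non-empty precisely because $\varphi'>\alpha\omega$. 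Hence the composition rule applies.

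For the case $f\in\lholZI(\sector{\alpha\varphi})$, I observe that $p\mapsto f(p^{\alpha})$ is well-defined on $\sector{\varphi}$ since $g(\sector{\varphi}) = \sector{\alpha\varphi}\subset\fdom(f)$; it is left slice hyperholomorphic there by \Cref{HolStruct} because $g$ is intrinsic; it is bounded because $f$ is; and if $|f(s)|\leq C|s|^{\beta}$ near $0$ (resp.\ $|f(s)|\leq C|s|^{-\beta}$ near $\infty$) for some $\beta>0$, then $|f(p^{\alpha})|\leq C|p|^{\alpha\beta}$ near $0$ (resp.\ $|f(p^{\alpha})|\leq C|p|^{-\alpha\beta}$ near $\infty$), so $f\circ g$ decays regularly at $0$ and $\infty$. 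Thus $f(p^{\alpha})\in\lholZI(\sector{\varphi})$, and \Cref{CompRul} yields $f(T^{\alpha}) = (f\circ g)(T) = (f(p^{\alpha}))(T)$. For the case $f\in\meroL[\sector{\alpha\omega}]_{T^{\alpha}}$, the composition rule itself provides both the membership $f\circ g\in\meroL[\sector{\omega}]_{T}$ and the identity $f(T^{\alpha}) = (f(p^{\alpha}))(T)$ at once.

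There is no real obstacle here beyond bookkeeping with the angles; the main content has already been carried out in the proof of the composition rule \Cref{CompRul}. The only point requiring minor care is the inequality $\varphi'/\alpha > \omega$ used to place $\psi$ in the admissible range, which holds exactly because of the standing assumption $\varphi' > \alpha\omega$.
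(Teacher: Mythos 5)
Your proposal is correct and follows exactly the route the paper intends: the statement is presented as an immediate consequence of the scaling property \Cref{ScalProp} (giving $T^{\alpha}\in\sectOP(\alpha\omega)$) and the composition rule \Cref{CompRul} applied to the intrinsic function $g(p)=p^{\alpha}$, with the angle bookkeeping $g(\sector{\psi})\subset\sector{\alpha\psi}$ for $\psi\in(\omega,\min\{\pi,\varphi'/\alpha\})$ verifying the hypothesis of \Cref{CompRul}. Your additional check that $f(p^{\alpha})$ inherits boundedness, slice hyperholomorphicity and regular decay (with exponents scaled by $\alpha$), so that it lies in $\lholZI(\sector{\varphi})$ in the first case, correctly fills in the details the paper leaves implicit.
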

\begin{corollary}[Second Law of Exponents]\label{SecExp}
Let $T\in\sectOP(\omega)$ with $\omega\in(0,\pi)$ and let $\alpha\in(0,\pi/\omega)$. For all $\beta >0$, we have
\[ \left(T^{\alpha}\right)^{\beta} =  T^{\alpha\beta}.\]

\end{corollary}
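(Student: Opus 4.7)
The plan is to reduce the statement directly to the composition rule formulated in the preceding Proposition, applied to the scalar power function $f(q)=q^{\beta}$. Since $T\in\sectOP(\omega)$ and $\alpha\in(0,\pi/\omega)$, the Scaling Property (Proposition \ref{ScalProp}) guarantees that $T^{\alpha}\in\sectOP(\alpha\omega)$, so that $(T^{\alpha})^{\beta}$ is meaningful via the $H^{\infty}$-functional calculus developed for the sectorial operator $T^{\alpha}$. In particular, $q\mapsto q^{\beta}$ is an intrinsic slice hyperholomorphic function on $\hh\setminus(-\infty,0]$ and hence belongs to $\meroInt[\sector{\alpha\omega}]_{T^{\alpha}}$ (one may use, if needed, the standard regulariser $(1+q)^{-n}$ for $n>\beta$, exactly as in \eqref{TADef}).

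Next I would choose $\varphi\in(\omega,\pi/\alpha)$, so that the preceding Proposition applies. It asserts that the function $p\mapsto q^{\beta}\big|_{q=p^{\alpha}}$ belongs to $\meroL[\sector{\omega}]_{T}$ and that
\[
(T^{\alpha})^{\beta} \;=\; (q^{\beta})(T^{\alpha}) \;=\; \bigl( (p^{\alpha})^{\beta}\bigr)(T).
\]
Hence everything reduces to the scalar identity $(p^{\alpha})^{\beta}=p^{\alpha\beta}$ on $\sector{\varphi}$, which I would verify directly from \Cref{ScalLogDef}: for $p\in\sector{\varphi}$ we have $p^{\alpha}=e^{\alpha\log p}$, and since $\alpha>0$ is real, $p^{\alpha}$ lies on the same complex plane $\cc_{I_p}$ as $p$, with $\arg(p^{\alpha})=\alpha\arg(p)\in(-\pi,\pi)$ because $\alpha\varphi<\pi$. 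Consequently $\log(p^{\alpha})=\alpha\log p$, and therefore
\[
(p^{\alpha})^{\beta} \;=\; e^{\beta\log(p^{\alpha})} \;=\; e^{\alpha\beta\log p} \;=\; p^{\alpha\beta}.
\]

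Substituting this identity into the expression obtained from the composition rule yields
\[
(T^{\alpha})^{\beta} \;=\; (p^{\alpha\beta})(T) \;=\; T^{\alpha\beta},
\]
the last equality being the very definition \eqref{TADef} of the fractional power $T^{\alpha\beta}$ given through the $H^{\infty}$-functional calculus of $T$. The only delicate point is the verification that the hypotheses of the preceding Proposition are satisfied for $f(q)=q^{\beta}$; this is really just a matter of exhibiting a regulariser and ensuring that $q^{\beta}$ has the required form on the sector $\sector{\alpha\omega}$, which is immediate. No serious analytic obstruction arises, since all the heavy lifting—Fubini-type arguments, careful choice of paths, regularisation—is already encapsulated in \Cref{CompRul} and in the preceding Proposition.
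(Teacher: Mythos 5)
Your argument is correct and is exactly the derivation the paper intends: it invokes \Cref{ScalProp} to ensure $T^{\alpha}\in\sectOP(\alpha\omega)$, applies the preceding proposition (the specialisation of the composition rule \Cref{CompRul} to $g(p)=p^{\alpha}$) to $f(q)=q^{\beta}$, and reduces everything to the scalar identity $(p^{\alpha})^{\beta}=p^{\alpha\beta}$ on $\sector{\varphi}$ with $\alpha\varphi<\pi$. The paper states the corollary as an immediate consequence of these same ingredients, so your proposal matches its approach.
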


\begin{corollary}
Let $T\in\sectOP(\omega)$ and $\gamma>0$. For any $v\in\dom(T^{\gamma})$, the mapping $\Lambda_{v}:\alpha\mapsto T^{\alpha}v$ defined on $(0,\gamma)$ is analytic in $\alpha$. Moreover, the power series expansion of $\Lambda_{v}$ at any point $\alpha_0\in(0,\gamma)$ converges on $(-r + \alpha_0,\alpha_0 + r)$ with  $ r  = \min\{\gamma-\alpha_0,\alpha_0\}$.
\end{corollary}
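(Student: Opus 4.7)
The plan is to reduce this corollary, which concerns a possibly non-integer threshold $\gamma$, to the integer-exponent version already established in \Cref{AnalyticityProp}~\ref{Analy2}, by rescaling the operator via the second law of exponents.

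Fix $\alpha_0 \in (0,\gamma)$ and set $r = \min\{\alpha_0, \gamma - \alpha_0\}$. I would choose an integer $n > \gamma\omega_T/\pi$ and set $\beta := \gamma/n$, so that $\beta \in (0,\pi/\omega_T)$ and $n\beta = \gamma$. By the scaling property (\Cref{ScalProp}), the operator $S := T^{\beta}$ is then sectorial. By the second law of exponents (\Cref{SecExp}), we have the equality of closed operators $S^{\delta} = T^{\beta\delta}$ for every $\delta > 0$; in particular $S^n = T^{\beta n} = T^{\gamma}$, so $\dom(S^n) = \dom(T^\gamma)$ and hence $v \in \dom(S^n)$.

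Next, I would apply \Cref{AnalyticityProp}~\ref{Analy2} to the sectorial operator $S$, the integer $n$, and the vector $v \in \dom(S^n)$. Writing $\mu_0 := \alpha_0/\beta \in (0,n)$, this yields that $\mu \mapsto S^{\mu}v$ is analytic on $(0,n)$ and that its Taylor expansion at $\mu_0$ converges on the symmetric interval of radius
\[ r_{\mu_0} = \min\{\mu_0, n-\mu_0\} = \frac{1}{\beta}\min\{\alpha_0, \gamma-\alpha_0\} = \frac{r}{\beta}\]
around $\mu_0$.

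Finally, since $T^{\alpha}v = S^{\alpha/\beta}v$ for every $\alpha > 0$, the affine substitution $\alpha = \beta\mu$ transfers the previous statement to $\Lambda_v$: it is analytic on $(0,n\beta) = (0,\gamma)$, and its power series expansion at $\alpha_0$ converges on $(\alpha_0 - \beta r_{\mu_0}, \alpha_0 + \beta r_{\mu_0}) = (\alpha_0 - r, \alpha_0 + r)$, matching the asserted interval exactly. No serious obstacle is anticipated: the whole proof amounts to choosing $n$ large enough that $\beta = \gamma/n$ lies in the sectoriality range $(0,\pi/\omega_T)$ and then performing the affine change of variable.
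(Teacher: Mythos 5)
Your proposal is correct and follows essentially the same route as the paper: the paper also sets $A:=T^{\gamma/n}$ for a suitable integer $n$ (it takes $n>\gamma$, which guarantees $\gamma/n<1<\pi/\omega_T$, while you take the minimal condition $n>\gamma\omega_T/\pi$), invokes the second law of exponents to write $T^{\alpha}=A^{\alpha n/\gamma}$, applies \Cref{AnalyticityProp}~\ref{Analy2} to $A$ with exponent threshold $n$, and transfers analyticity and the radius of convergence back via the affine change of variable. The only difference is your explicit appeal to \Cref{ScalProp} for the sectoriality of $T^{\gamma/n}$, which the paper leaves implicit.
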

\begin{proof}
Let $n>\gamma$ and set $A:= T^{\gamma/n}$. Because of \Cref{SecExp}, we have $T^{\alpha} = A^{\alpha n/\gamma}$. If $v\in\dom(T^{\gamma})$, then $v\in\dom(A^n)$ and the mapping $\Upsilon(\beta):= A^{\beta}v$ is analytic on $(0,n)$ by \Cref{AnalyticityProp}. The radius of convergence of its power series expansion at $\beta_0\in(0,n)$ is greater than or equal to $r' = \min\{\beta_0, n-\beta_0\}$. Hence, $\Lambda_v(\alpha) = \Upsilon(n\alpha/\gamma)$ is also an analytic function and the radius of convergence of its power series expansion at any point $\alpha_0\in(0,\gamma)$ is greater than or equal to $\min\{\alpha_0,\gamma-\alpha_0\}$, which is exactly what we wanted to show.

\end{proof}

We conclude this section with the generalization of the famous Balakrishnan representation of fractional powers and some of its consequences. This formula was introduced in \cite{Balakrishnan} as one of the first approaches to define fractional powers of sectorial operators.
\begin{theorem}[Balakrishnan Representation]
Let $T\in\sectOP(\omega)$. For $0<\alpha <1$, we have
\begin{equation}\label{Bala1}
 T^{\alpha}v = \frac{\sin(\alpha\pi)}{\pi}\int_{0}^{+\infty} t^{\alpha-1}(t+T)^{-1}Tv\, dt,\qquad\forall v\in\dom(T).
 \end{equation}
More general, for $0<\alpha <n\leq m$, we have
\begin{equation}\label{Bala2}
 T^{\alpha}v = \frac{\Gamma(m)}{\Gamma(\alpha)\Gamma(m-\alpha)}\int_{0}^{+\infty}t^{\alpha-1}[T(t+T)^{-1}]^mv\,dt,\qquad \forall v \in\dom(T^n).
 \end{equation}
\end{theorem}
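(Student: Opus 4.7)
The plan is to derive the Balakrishnan formulas by combining the $H^{\infty}$-calculus definition of $T^{\alpha}$ with a scalar Mellin-type identity for $s^{\alpha}$, and then exchanging the order of integration via Fubini.

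For formula \eqref{Bala1}, I would start from the $H^{\infty}$-calculus representation $T^{\alpha}v = (\id+T)(s^{\alpha}(1+s)^{-1})(T)v$ for $v\in\dom(T)$, which follows from \eqref{TADef} with $n=1$ together with \ref{MoProp1}--\ref{MoProp2} in \Cref{MoProp}. The function $s^{\alpha}(1+s)^{-1}$ belongs to $\Eint[\sector{\omega}]$ (it decays like $|s|^{\alpha}$ at $0$ and like $|s|^{\alpha-1}$ at infinity), so by \Cref{DefECalc} and \Cref{IntrinRemark} the bounded operator $(s^{\alpha}(1+s)^{-1})(T)$ admits the Cauchy integral representation
\[
(s^{\alpha}(1+s)^{-1})(T) = \frac{1}{2\pi}\int_{\partial(\sector{\varphi}\cap\cc_I)} \frac{s^{\alpha}}{1+s}\,ds_I\,S_R^{-1}(s,T)
\]
for any $\varphi\in(\omega,\pi)$ and $I\in\SS$.

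Next, I would invoke the classical scalar Mellin identity
\[
s^{\alpha} \;=\; \frac{\sin(\alpha\pi)}{\pi}\int_{0}^{+\infty} t^{\alpha-1}\frac{s}{t+s}\,dt, \qquad s\in\sector{\varphi},
\]
and observe that it extends from $(0,+\infty)$ to $\sector{\varphi}$ by slice hyperholomorphic extension: both sides are intrinsic slice hyperholomorphic on $\hh\setminus(-\infty,0]$, and they coincide on $(0,+\infty)$ by the classical real variable computation. Substituting into the Cauchy integral and exchanging the order of integration via Fubini (justified by $|S_R^{-1}(s,T)|\leq C/|s|$ on the contour, $|s/(t+s)|\leq 1$ on $\sector{\varphi}$, and the integrability of $t^{\alpha-1}$ at $0$ and $\infty$ against $|s|^{-1}$) gives
\[
(s^{\alpha}(1+s)^{-1})(T) = \frac{\sin(\alpha\pi)}{\pi}\int_{0}^{+\infty} t^{\alpha-1}\,\Big(\tfrac{s}{(1+s)(t+s)}\Big)(T)\,dt
= \frac{\sin(\alpha\pi)}{\pi}\int_{0}^{+\infty} t^{\alpha-1}\,T(\id+T)^{-1}(t\id+T)^{-1}\,dt,
\]
where the inner identification uses the product rule and the compatibility with rational functions in \Cref{MoProp4}. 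Applying this to $(\id+T)v$ and exploiting the fact that all three operators $T$, $(\id+T)^{-1}$, $(t\id+T)^{-1}$ are functions of $T$ and hence commute pairwise, the factor $(\id+T)^{-1}(\id+T)$ collapses and we obtain
\[
T^{\alpha}v = \frac{\sin(\alpha\pi)}{\pi}\int_{0}^{+\infty} t^{\alpha-1}\,T(t\id+T)^{-1}v\,dt = \frac{\sin(\alpha\pi)}{\pi}\int_{0}^{+\infty} t^{\alpha-1}(t\id+T)^{-1}Tv\,dt,
\]
the last equality holding since $v\in\dom(T)$.

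For the general formula \eqref{Bala2}, the same strategy works with the Beta-function variant
\[
s^{\alpha} = \frac{\Gamma(m)}{\Gamma(\alpha)\Gamma(m-\alpha)}\int_{0}^{+\infty} t^{\alpha-1}\Big(\frac{s}{t+s}\Big)^{m}\,dt, \qquad 0<\alpha<m,
\]
combined with the representation $T^{\alpha}v = (\id+T)^{m}(s^{\alpha}(1+s)^{-m})(T)v$. After inserting the scalar identity, applying Fubini, and using that powers, resolvents and $(\id+T)^{-1}$ all commute, one recognises the integrand as $[T(\id+T)^{-1}(t\id+T)^{-1}]^{m}$ and collapses the factor $(\id+T)^{-m}(\id+T)^{m}$ against $v$.

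\emph{Main obstacle.} The principal technical hurdle is to justify everything for $v\in\dom(T^n)$ with $n<m$ in \eqref{Bala2}: the representation $T^{\alpha}v=(\id+T)^{m}(s^{\alpha}(1+s)^{-m})(T)v$ is a priori natural only on $\dom(T^{m})$. To extend it to $\dom(T^n)$, I would verify that the right-hand side integral $\int_{0}^{+\infty}t^{\alpha-1}[T(t\id+T)^{-1}]^{m}v\,dt$ converges absolutely for $v\in\dom(T^n)$ by splitting the integral at $t=1$, estimating $[T(t\id+T)^{-1}]^{m}v$ near $t=0$ using the factorisation $T(t\id+T)^{-1}= \id - t(t\id+T)^{-1}$ and the stronger bound $\|T^{n}(t\id+T)^{-n}v\|=O(1)$ on $\dom(T^n)$, and near $t=\infty$ using $\|T(t\id+T)^{-1}\|=O(t^{-1}\|Tv\|)$ on $\dom(T)$. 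A density or limiting argument on $\dom(T^m)\subset\dom(T^n)$ then extends the identity. A secondary but routine difficulty is the Fubini justification, which requires the slice sectorial estimate \eqref{SectCond} and an $\varepsilon$-shift $\alpha\mapsto \alpha\pm\varepsilon$ to absorb the endpoint singularities.
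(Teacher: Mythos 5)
Your argument is correct in substance but follows a genuinely different route from the paper. You derive both formulas by inserting the scalar Mellin/Beta identity $s^{\alpha}=\frac{\Gamma(m)}{\Gamma(\alpha)\Gamma(m-\alpha)}\int_0^{\infty}t^{\alpha-1}\big(\tfrac{s}{t+s}\big)^m dt$ into the Cauchy-integral representation of $\big(s^{\alpha}(1+s)^{-m}\big)(T)$ and applying Fubini, identifying the inner integral via the compatibility with intrinsic rational functions (\Cref{MoProp4}). The paper instead proves \eqref{Bala1} by writing $T^{\alpha}v=\big(p^{\alpha}(p+\varepsilon)^{-1}\big)(T)(T+\varepsilon\id)v$, letting $\varepsilon\to 0$, and then collapsing the contour $\partial(\sector{\varphi}\cap\cc_I)$ onto the negative real axis as $\varphi\to\pi$; for \eqref{Bala2} it first treats $n-1<\alpha<n=m$ via the first law of exponents $T^{\alpha}v=T^{\alpha-(n-1)}T^{n-1}v$ and $n-1$ integrations by parts, extends to all $\alpha\in(0,n)$ by real-analyticity in $\alpha$ (\Cref{AnalyticityProp} plus the identity principle), and finally passes from $m=n$ to $m>n$ by an inductive integration by parts giving $I_m=\frac{m}{m-\alpha}I_{m+1}$. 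What your route buys is a direct proof of \eqref{Bala2} for the whole range $0<\alpha<m$ without the analyticity interpolation; what it costs is that the representation $T^{\alpha}v=(\id+T)^m\big(s^{\alpha}(1+s)^{-m}\big)(T)v$ lives naturally on $\dom(T^m)$, so you must do the extension to $\dom(T^n)$ by hand, whereas the paper's manipulations stay on $\dom(T^n)$ throughout.

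Two points in your sketch need repair, though neither is fatal. First, the endpoint estimates for the convergence of $\int_0^{\infty}t^{\alpha-1}[T(t\id+T)^{-1}]^m v\,dt$ are attached to the wrong ends: near $t=0$ the uniform bound $\|T(t\id+T)^{-1}\|\leq 1+C_{\varphi}$ already gives an integrable $O(t^{\alpha-1})$, while it is near $t=\infty$ that the bound $O(t^{-1}\|Tv\|)$ you quote only yields $O(t^{\alpha-2})$ and hence fails for $\alpha\geq 1$; there you must use the full hypothesis $v\in\dom(T^n)$ to write $[T(t\id+T)^{-1}]^m v=[T(t\id+T)^{-1}]^{m-n}(t\id+T)^{-n}T^n v=O(t^{-n})$, which beats $t^{\alpha-1}$ precisely because $\alpha<n$. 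Second, the ``density or limiting argument'' should be made explicit, e.g.\ by mollifying with $v_k=\big(k(k\id+T)^{-1}\big)^{m-n}v\in\dom(T^m)$: one has $v_k\to v$ and $[T(t\id+T)^{-1}]^m v_k\to[T(t\id+T)^{-1}]^m v$ with a $k$-uniform integrable majorant (all relevant vectors lie in $\dom(T)$, where $k(k\id+T)^{-1}\to\id$ strongly, so no density of $\dom(T)$ in $V$ is needed), and the closedness of $T^{\alpha}$ together with $\dom(T^n)\subset\dom(T^{\alpha})$ then transports the identity from $v_k$ to $v$.
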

\begin{proof}
We first show \eqref{Bala1} and hence assume that $\alpha\in(0,1)$. For $v\in\dom(T)$, we have because of \ref{MoProp2} in \Cref{MoProp} and with arbitrary $\varphi\in(\omega,\pi)$ and $\varepsilon >0$ that
\begin{align*}
 T^{\alpha} v = & \left(p^{\alpha}(p+\varepsilon)^{-1}\right)(T) (T+\varepsilon\id)v \\
 =& \left(p^{\alpha}(p+\varepsilon)^{-1}\right)(T)Tv + \varepsilon \left(p^{\alpha}(p+\varepsilon)^{-1}(1+p)^{-1}\right)(T)(\id+T)v\\
 =& \frac{1}{2\pi}\int_{\partial(\sector{\varphi}\cap\cc_I)}  s^{\alpha-1} s(s+\varepsilon)^{-1} \, ds_I \, S_R^{-1}(s,T)Tv \\
 &+  \frac{1}{2\pi} \int_{\partial(\sector{\varphi}\cap\cc_I)} s^{\alpha}\varepsilon(s+\varepsilon)^{-1}(1+s)^{-1} \,ds_I\, S_R^{-1}(s,T)(\id+T)v.
\end{align*}
Now observe that there exists a positive constant $K<+\infty$ such that 
\[
\left|\varepsilon(s+\varepsilon)^{-1}\right| \leq \frac{K}{|s|} \qquad \forall \varepsilon>0,s\in\partial(\sector{\varphi}\cap\cc_I).
\] 
 Together with  the estimate \eqref{SectCond}, this implies that the integrand in the second integral is bounded for all $\varepsilon>0$ by the functions $ s\mapsto KC_{\varphi,T}|s|^{\alpha-1}(|1+s|)^{-1}\|(\id+T)v\|$, which is integrable along $\partial(\sector{\varphi}\cap\cc_{I})$  because of the assumption $\alpha\in(0,1)$. Hence, we can apply Lebesgue's theorem in order to exchange the integral with the limit and find that the second integral vanishes as $\varepsilon$ tends to $0$. In the first integral on the other hand, we find that
\begin{equation}\label{TJuL}
\begin{split}
S_{R}^{-1}(s,T)Tv =& (\overline{s}\id -T)\Q_{s}(T)^{-1}Tv = \overline{s}T\Q_{s}(T)^{-1}v - T^2\Q_{s}(T)^{-1}v\\
 =& \overline{s}T\Q_{s}(T)^{-1} - \Q_{s}(T)\Q_{s}(T)^{-1}v + \left(-2s_0T + |s|^{-1}\id\right)\Q_{s}(T)^{-1}v\\
  =&  - v + \overline{s}T\Q_{s}(T)^{-1}v - \overline{s}T\Q_{s}(T)^{-1}v + s(\overline{s}\id - T)\Q_{s}(T)^{-1}v\\
  =& -v + s S_{R}^{-1}(s,T)v.
\end{split}
\end{equation}
Hence, the function $s\mapsto S_{R}^{-1}(s,T)Tv$ for $s\in\partial(\sector{\varphi}\cap\cc_I)$ is bounded at $0$ because of \eqref{SectCond}. Since it decays as $|s|^{-1}$ as $s\to\infty$ and since the function $s\mapsto s(s+\varepsilon)^{-1}$ is uniformly bounded in $\varepsilon$ on $\partial(\sector{\varphi}\cap\cc_I)$, we can apply Lebesgue's theorem also in the first integral in order to take the limit as $\varepsilon\to 0$ and obtain
\[ T^{\alpha} v  = \frac{1}{2\pi} \int_{\partial(\sector{\varphi}\cap\cc_I)} s^{\alpha-1} \, ds_I\, S_{R}^{-1}(s,T) Tv.\]
Choosing the standard parametrisation of the path of integration, we thus find
\begin{align*}
 T^{\alpha} v  =& \frac{1}{2\pi} \int_{-\infty}^{0} \left(-t e ^{I\varphi}\right)^{\alpha-1} e^{I\varphi}I S_{R}^{-1}\left(te^{I\varphi},T\right) Tv \,dt\\
 & + \frac{1}{2\pi} \int_{0}^{+\infty} \left(t e ^{-I\varphi}\right)^{\alpha-1} e^{-I\varphi}(-I) S_{R}^{-1}\left(te^{-I\varphi},T\right) Tv \,dt.
 \end{align*}
Once more \eqref{SectCond} and the fact that $S_{R}^{-1}(s,T)Tv$ is bounded at $0$ allow us to apply Lebesgue's theorem in order to take the limit as $\varphi$ tends to $\pi$. We finally find after a change of variables in the first integral that
\begin{align*}
 T^{\alpha} v  =& \frac{1}{2\pi} \int_{0}^{+\infty} t^{\alpha-1} \big(-e ^{I\pi\alpha}\big) (- I) S_{R}^{-1}\left(-te^{I\pi},T\right) Tv \,dt\\
 & + \frac{1}{2\pi} \int_{0}^{+\infty} t^{\alpha-1} e ^{-I\pi\alpha}(-I) S_{R}^{-1}\left(te^{-I\pi},T\right) Tv \,dt\\
 =& - \frac{\sin(\alpha\pi)}{\pi}\int_{0}^{+\infty} t^{\alpha-1} S_R^{-1}(-t,T)Tv\,dt,
 \end{align*}
 which equals \eqref{Bala1} as $S_{R}^{-1}(-t,T) = (-t\id-T)^{-1} = - (t\id+T)^{-1}$  for $t\in\rr$.
 
 Let us now prove \eqref{Bala2} and let us for now assume that $n-1 < \alpha < n $ and $n=m$. For $v\in\dom(T^n)$, we then have
 \begin{align*}
 T^{\alpha} v = T^{\alpha-(n-1)}T^{n-1}v = \frac{\sin((\alpha - n +1)\pi)}{\pi} \int_{0}^{+\infty}t^{\alpha-n}(t\id+T)^{-1}T^nv\,dt.
 \end{align*}
 Integrating $n-1$ times by parts, we find
 \begin{align}
\notag T^{\alpha}v &= \frac{(n-1)!\sin((\alpha-n+1)\pi)}{\pi(\alpha-n+1)\cdots(\alpha-1)}\int_{0}^{+\infty}t^{\alpha-1}(t\id+T)^{-n}T^nv\,dt\\
\label{GHas} &= \frac{\Gamma(n)}{\Gamma(\alpha)\Gamma(n-\alpha)}\int_{0}^{+\infty}t^{\alpha-1}(t\id+T)^{-n}T^nv\,dt,
 \end{align}
 where the second identity follows from the identities $\sin(z\pi)/\pi = 1/(\Gamma(z)\Gamma(1-z))$ and $z\Gamma(z) = \Gamma(z+1)$ for the gamma function. Hence \eqref{Bala2} holds true if $ n-1 < \alpha < n=m$.
 
  Now observe that, because of \eqref{SectCond} and because $(t\id+T)^{-n}T^n = \left((t\id+T)^{-1}T\right)v$ is bounded near $0$ due to \eqref{TJuL}, the integral \eqref{GHas} defines a real analytic function in $\alpha$ on the entire interval $(0,n)$. From \Cref{AnalyticityProp} and the identity principle for real analytic functions, we conclude that \eqref{Bala2} holds also if $0<\alpha < n = m$.
  
  Finally, let us show by induction on $m$ that \eqref{Bala2} holds true for any  $m\geq n$. For $m=n$ we have just shown this identity, so let us assume that  it holds true for some $m\geq n$. We introduce the notation 
  \[
   c_m := \frac{\Gamma(m)}{\Gamma(m-\alpha)\Gamma(\alpha)}\quad \text{and}\quad I_m:= \int_{0}^{+\infty}t^{\alpha-1}\left[T(t\id+T)^{-1}\right]^{m}v\,dt
   \]
  so that $T^{\alpha}v = c_m I_m$. We want to show that $T^{\alpha}v = c_{m+1}I_{m+1}$. By integration by parts, we deduce
  \begin{align*}
   I_m = &\left(\left. \frac{t^{\alpha}}{\alpha} \left[T(t\id+T)^{-1}\right]^mv\right)\right|_{0}^{+\infty} + \frac{m}{\alpha}\int_{0}^{+\infty}t^{\alpha}\left[T(t\id+T)^{-1}\right]^{m}(t\id+T)^{-1}v\,dt\\
   =&  \frac{m}{\alpha}\int_{0}^{+\infty}t^{\alpha}\left[T(t\id+T)^{-1}\right]^{m}(t\id+T)^{-1}v\,dt\\
   =&  \frac{m}{\alpha}\int_{0}^{+\infty}t^{\alpha-1}\left(\left[T(t\id+T)^{-1}\right]^{m}v - \left[T(t\id+T)^{-1}\right]^{m+1}v\right) \,dt\\
   =& \frac{m}{\alpha}(I_m - I_{m+1}).
   \end{align*}
Hence $I_m = \frac{m}{m-\alpha} I_{m+1}$ and so
   \[
   T^{\alpha}v = c_mI_m = c_m \frac{m}{m-\alpha}I_{m+1} = c_{m+1}I_{m+1}.
   \]
The induction is complete. 
  
\end{proof}

\subsection{Fractional powers with negative real part}

If $\alpha<0$ the fractional power $p^{\alpha}$ has polynomial limit infinity at 0 in any sector $\sector{\varphi}$ with $\varphi > \pi$. Because of \Cref{NonInjLem} it does therefore not belong to $\meroL[\sector{\omega}]_T$ if $T$ is not injective. If on the other hand $T$ is injective, then it is regularisable by some power of $p\left(1+p\right)^{-2}$ such that $p^{\alpha}\in\meroL[\sector{\omega}]_T$. We can thus define $T^{\alpha}$ for injective sectorial operators via the $H^{\infty}$-functional calculus.
\begin{definition}
Let $T\in\sectOP(\omega)$ be injective. For $\alpha\in\rr$ we call the operator $T^{\alpha} : = (p^{\alpha})(T)$ the fractional power with exponent $\alpha$ of $T$
\end{definition}
 The properties of the fractional powers of $T$ in this case are again analogue to the complex case, cf. \cite{Haase}. We state the most important properties for the sake of completeness, but we omit the proofs since they are either immediate consequences of the preceding results or can be shown with exactly the same arguments as in the complex case without makeing use of any quaternionic techniques. For the special case that the operator is not only injective, but does even have a bounded inverse, properties of the fractional powers $T^{\alpha}$ with negative real part were already studied in \cite{FJTAMS}.
\begin{proposition}\label{TaInjProp}
Let $T\in\sectOP(\omega)$ be injective and let $\alpha,\beta\in\rr$.
\begin{enumerate}[label=(\roman*)]
\item The operator $T^{\alpha}$ is injective and $(T^{\alpha})^{-1} = T^{-\alpha} = \left(T^{-1}\right)^{\alpha}$.
\item We have $T^{\alpha}T^{\beta}\subset T^{\alpha+\beta}$ with $\dom\left(T^{\alpha}T^{\beta}\right) = \dom\left(T^{\beta}\right)\cap\dom\left(T^{\alpha+\beta}\right)$.
\item If $\overline{\dom(T)} = V = \overline{\ran(T)}$, then $T^{\alpha+\beta} = \overline{T^{\alpha}T^{\beta}}$.
\item If $0<\alpha < 1$, then
\[ T^{-\alpha}v = \frac{\sin(\alpha\pi)}{\pi}\int_{0}^{+\infty} t^{-\alpha}(t\id+T)^{-1}v\,dt\quad \forall v\in\ran(A).\]
\item If $\alpha\in\rr$ with $|\alpha|<\pi/\omega$, then $T^{\alpha}\in\sectOP(|\alpha|\omega)$ and for all $\beta\in\rr$
\[ 
(T^{\alpha})^{\beta} = \left(T^{\alpha\beta}\right).
\]
\item If $ 0 < \alpha_1,\alpha_2$, then $\dom(T^{\alpha_2})\cap\ran(T^{\alpha_1})\subset \dom(T^{\alpha})$ for each $\alpha\in(-\alpha_1,\alpha_2)$, the mapping $\alpha\mapsto T^{\alpha}v$ is analytic on $(-\alpha_1,\alpha_2)$ for any $v\in\dom(T^{-\alpha_2})\cap\ran(T^{\alpha_1})$. 
\end{enumerate}
\end{proposition}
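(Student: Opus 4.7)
The plan is to prove each claim by invoking the machinery already developed for the $H^\infty$-functional calculus, exploiting the fact that $p\mapsto p^\alpha$ is intrinsic and that the map $\alpha\mapsto p^\alpha$ has nice composition and product behavior.

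For \emph{(i)}, I would apply \ref{MoProp3} in \Cref{MoProp} to the pair $f(p)=p^\alpha$ and $g(p)=p^{-\alpha}$: both belong to $\meroInt[\sector{\omega}]_T$ (since $T$ is injective, so any power of the regularizer $p(1+p)^{-2}$ compensates a singularity at $0$), and $fg\equiv 1$. Thus $T^\alpha$ is injective with $(T^\alpha)^{-1}=T^{-\alpha}$. To see $T^{-\alpha}=(T^{-1})^\alpha$, invoke \Cref{InvCor} applied to $f(p)=p^{-\alpha}$, noting that $f(p^{-1})=p^\alpha$. For \emph{(ii)}, since $p^\alpha\in\meroInt[\sector{\omega}]_T$ is intrinsic, \ref{MoProp2} in \Cref{MoProp} gives directly $T^\alpha T^\beta\subset T^{\alpha+\beta}$ with the domain identity $\dom(T^\alpha T^\beta)=\dom(T^{\alpha+\beta})\cap\dom(T^\beta)$.

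For \emph{(iii)}, under the density hypothesis one can show that $\dom(T^\beta)\cap\dom(T^{\alpha+\beta})$ is a core for $T^{\alpha+\beta}$: approximate any $v\in\dom(T^{\alpha+\beta})$ by $v_n = n(n\id+T)^{-1} n T (n\id+T)^{-1}v$ (or a similar approximation using powers of the Yosida-type regularizer $T(1+\varepsilon T)^{-1}$), which lies in $\dom(T^\beta)$ thanks to boundedness of the regularizing factor, and converges to $v$ together with $T^{\alpha+\beta}v_n\to T^{\alpha+\beta}v$. Combined with \emph{(ii)}, this forces $T^{\alpha+\beta}=\overline{T^\alpha T^\beta}$. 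For \emph{(iv)}, take $v\in\ran(T)$ so that $v=Tw$ for some $w\in\dom(T)$. Then $T^{-\alpha}v = T^{-\alpha}Tw = T^{1-\alpha}w$ by \emph{(ii)}, and since $0<1-\alpha<1$ the usual Balakrishnan formula \eqref{Bala1} applied to exponent $1-\alpha$ together with the identity $t^{-\alpha}= t^{1-\alpha}\cdot t^{-1}$ and the resolvent expression $(t\id+T)^{-1}Tw$ yields the claimed representation after a change of variable (one can alternatively run the argument of the proof of \eqref{Bala1} directly for $p^{-\alpha}$, starting from the regularized Cauchy integral).

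For \emph{(v)}, the scaling property $T^\alpha\in\sectOP(|\alpha|\omega)$ follows from \Cref{ScalProp} for $\alpha>0$ and from \emph{(i)} together with the observation $\sigma_S(T^{-1})=\sigma_S(T)^{-1}$ (reflection across the unit sphere preserves sectoriality with the same angle) for $\alpha<0$. The law $(T^\alpha)^\beta=T^{\alpha\beta}$ then follows from the composition rule \Cref{CompRul} applied with the intrinsic inner function $g(p)=p^\alpha$, whose $H^\infty$-calculus image is $T^\alpha$, together with $f(p)=p^\beta$. For \emph{(vi)}, the domain inclusion is verified by writing, for $\alpha\in(-\alpha_1,\alpha_2)$, $p^\alpha = p^\alpha p^{-\alpha_2}\cdot p^{\alpha_2}$ and using \emph{(ii)} to factor the action on $v\in\dom(T^{\alpha_2})\cap\ran(T^{\alpha_1})$; analyticity of $\alpha\mapsto T^\alpha v$ then proceeds exactly as in \Cref{AnalyticityProp}, writing the Cauchy kernel representation for $\left(p^\alpha(1+p)^{-n}\right)(T)$ on a suitable vector in $\dom(T^n)\cap\ran(T^{\alpha_1})$ and expanding $p^\alpha$ in its Taylor series around $\alpha_0$.

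The main obstacle is \emph{(iii)}: one must construct the approximating sequence carefully because both $\dom(T)$ and $\ran(T)$ must be dense, and one has to verify that the regularizer chosen has the right mapping properties with respect to the fractional powers. The other parts reduce fairly mechanically to applications of \Cref{MoProp}, \Cref{CompRul}, \Cref{ScalProp}, \Cref{InvCor}, and \Cref{AnalyticityProp}, but the closure statement genuinely needs a quantitative approximation argument in $V$.
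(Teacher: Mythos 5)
The paper offers no written proof of this proposition --- it explicitly defers to the preceding results and to the complex-case arguments in \cite{Haase} --- and your reductions of (i), (ii), (iv), (v) and (vi) to \ref{MoProp2} and \ref{MoProp3} in \Cref{MoProp}, \Cref{InvCor}, \eqref{Bala1}, \Cref{ScalProp}, \Cref{CompRul} and \Cref{AnalyticityProp} are exactly that intended route and check out. (In (iv) no change of variables is even needed: with $v = Tw$, part (ii) gives $T^{-\alpha}v = T^{1-\alpha}w$, and \eqref{Bala1} with exponent $1-\alpha$ already produces $t^{-\alpha}(t\id+T)^{-1}v$ under the integral, since $\sin((1-\alpha)\pi) = \sin(\alpha\pi)$.)

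The one step that fails as written is the approximation in (iii). The factor $nT(n\id+T)^{-1}$ is not an approximate identity: it is not uniformly bounded in $n$, and on $\dom(T)$ it converges strongly to $T$, so your $v_n = n(n\id+T)^{-1}\,nT(n\id+T)^{-1}v$ tends to $Tv$ (when it converges at all), not to $v$; the alternative you mention, $T(\id+\varepsilon T)^{-1}$, is the Yosida approximation of $T$ and likewise tends to $T$, not to $\id$. The factor that encodes density of the range is $T\bigl(\tfrac1n\id+T\bigr)^{-1} = nT(\id+nT)^{-1}$, which is uniformly bounded and tends strongly to $\id$ on $\clos{\ran(T)}$, while $n(n\id+T)^{-1}\to\id$ strongly on $\clos{\dom(T)}$. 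Take therefore $\Lambda_n := \bigl[n(n\id+T)^{-1}\,T\bigl(\tfrac1n\id+T\bigr)^{-1}\bigr]^k = g_n(T)$ with $g_n(p) = \bigl(np(n+p)^{-1}(\tfrac1n+p)^{-1}\bigr)^k$ and a fixed integer $k\geq\max\{|\beta|,|\alpha+\beta|\}$. Then $\Lambda_n$ is uniformly bounded and $\Lambda_n\to\id$ strongly (this is where both density assumptions enter); it maps $V$ into $\dom(T^k)\cap\ran(T^k) = \dom(T^k)\cap\dom(T^{-k})\subset\dom(T^{\beta})\cap\dom(T^{\alpha+\beta})$, by the laws of exponents applied to $T$ and to $T^{-1}$; and it commutes with $T^{\alpha+\beta}$ by \ref{MoProp1} in \Cref{MoProp}. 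Hence for $v\in\dom(T^{\alpha+\beta})$ the vectors $v_n := \Lambda_n v$ lie in $\dom(T^{\alpha}T^{\beta})$ by (ii), $v_n\to v$, and $T^{\alpha}T^{\beta}v_n = T^{\alpha+\beta}\Lambda_n v = \Lambda_n T^{\alpha+\beta}v\to T^{\alpha+\beta}v$; together with $T^{\alpha}T^{\beta}\subset T^{\alpha+\beta}$ and the closedness of $T^{\alpha+\beta}$ (it is of the form $e(T)^{-1}(ef)(T)$ with $(ef)(T)$ bounded) this yields $T^{\alpha+\beta} = \clos{T^{\alpha}T^{\beta}}$. With this correction (iii) becomes the standard complex argument the paper points to.
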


\begin{proposition}[Komatsu Representation]
Let $T\in\sectOP(\omega)$ be injective. For $v\in\dom(A)\cap\ran(A)$ and $\alpha\in(-1,1)$, one has
\begin{align*}
T^{\alpha}v =  \frac{\sin(\alpha\pi)}{\pi}&\left[ \frac{1}{\alpha}v - \frac{1}{1+\alpha}T^{-1}v \right.\\
& + \left. \int_{0}^{1}t^{\alpha+1}(t\id + T)^{-1}T^{-1}v \,dt + \int_{1}^{+\infty}t^{\alpha-1}(t\id+T)^{-1}Tv\,dt\right]\\
= \frac{\sin(\alpha\pi)}{\pi}&\left[ \frac{1}{\alpha}v + \int_{0}^{1}t^{-\alpha}(\id + tT)^{-1}Tv\,dt - \int_{0}^{1}t^{\alpha}\left(\id + tT^{-1}\right)^{-1}T^{-1}v\,dt\right].
\end{align*}

\end{proposition}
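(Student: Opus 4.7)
The plan is to derive the Komatsu representation from the Balakrishnan formula already established, handling the cases $\alpha\in(0,1)$ and $\alpha\in(-1,0)$ separately and then extending to $\alpha=0$ by continuity. Since $v\in\dom(T)\cap\ran(T)$, the vector $T^{-1}v$ is well-defined and lies in $\dom(T)$, so the resolvent identities $(t\id+T)^{-1}T = \id - t(t\id+T)^{-1}$ on $\dom(T)$ and $(t\id+T)^{-1} = T^{-1} - t(t\id+T)^{-1}T^{-1}$ on $\ran(T)$ are available throughout and will be the main algebraic tools for converting Balakrishnan integrals into Komatsu form.

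For $\alpha\in(0,1)$ I would begin with $T^{\alpha}v = \frac{\sin(\alpha\pi)}{\pi}\int_0^{\infty}t^{\alpha-1}(t\id+T)^{-1}Tv\,dt$ and split the integration domain at $t=1$. The tail $\int_1^{\infty}$ is already the last term in the first Komatsu formula. In the piece over $(0,1)$, one application of $(t\id+T)^{-1}T = \id - t(t\id+T)^{-1}$ extracts $\frac{1}{\alpha}v$ and leaves $-\int_0^1 t^{\alpha}(t\id+T)^{-1}v\,dt$; a second application of the dual identity on $\ran(T)$ extracts $-\frac{1}{\alpha+1}T^{-1}v$ and produces the remaining integral $\int_0^1 t^{\alpha+1}(t\id+T)^{-1}T^{-1}v\,dt$. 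For $\alpha\in(-1,0)$ I would instead use $T^{\alpha}v = T^{\alpha+1}T^{-1}v$, noting that $\alpha+1\in(0,1)$ and $T^{-1}v\in\dom(T)$; the Balakrishnan formula then gives $T^{\alpha}v = -\frac{\sin(\alpha\pi)}{\pi}\int_0^{\infty}t^{\alpha}(t\id+T)^{-1}v\,dt$ since $\sin((\alpha+1)\pi) = -\sin(\alpha\pi)$, and analogous splitting and resolvent manipulations reproduce the same formula. The case $\alpha=0$ follows by taking limits and invoking the analyticity of $\alpha\mapsto T^{\alpha}v$ on $(-1,1)$ from \Cref{TaInjProp}; the apparent singularity $\frac{1}{\alpha}v$ is cancelled by the factor $\sin(\alpha\pi)$ as $\alpha\to 0$, yielding the correct value $v$.

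The second form of the representation can then be obtained from the first by two rewritings. First, combining $-\frac{1}{1+\alpha}T^{-1}v = -\int_0^1 t^{\alpha}T^{-1}v\,dt$ with the integral $\int_0^1 t^{\alpha+1}(t\id+T)^{-1}T^{-1}v\,dt$ and using the identity $(\id+tT^{-1})^{-1}T^{-1} = (t\id+T)^{-1}$ on $\ran(T)$ converts the first two terms into $-\int_0^1 t^{\alpha}(\id+tT^{-1})^{-1}T^{-1}v\,dt$. Second, the substitution $t\mapsto 1/s$ in the tail integral together with $(s^{-1}\id+T)^{-1} = s(\id+sT)^{-1}$ converts it into $\int_0^1 s^{-\alpha}(\id+sT)^{-1}Tv\,ds$. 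The main technical obstacle will be to verify that all these algebraic manipulations respect the domains of $T$ and $T^{-1}$ at each step and that each of the resulting integrals converges absolutely, which should follow from the sectoriality estimate \eqref{SectCond} applied along the negative real axis and from the hypothesis $v\in\dom(T)\cap\ran(T)$ which keeps $Tv$ and $T^{-1}v$ uniformly controlled.
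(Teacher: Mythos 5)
Your derivation is correct and is essentially the classical Komatsu argument: split the quaternionic Balakrishnan integral at $t=1$, use the resolvent identities $(t\id+T)^{-1}T=\id-t(t\id+T)^{-1}$ on $\dom(T)$ and its counterpart on $\ran(T)$ to extract $\frac{1}{\alpha}v$ and $-\frac{1}{1+\alpha}T^{-1}v$, reduce $\alpha\in(-1,0)$ to $T^{\alpha}v=T^{\alpha+1}T^{-1}v$ with $\alpha+1\in(0,1)$, treat $\alpha=0$ by analyticity, and pass to the second form via $t\mapsto 1/t$ and $(\id+tT^{-1})^{-1}=\id-t(t\id+T)^{-1}$, all integrals converging by \eqref{SectCond}. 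The paper itself omits the proof of this proposition, stating that it follows by the same arguments as in the complex case (Haase), and your proposal supplies exactly that intended argument built on the quaternionic Balakrishnan representation proved earlier.
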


\section{Spectral theory of the nabla operator}
Our goal in this section is to define fractional powers of the gradient operator using the quaternionic theory introduced above. The gradient of a function $v: \rr^3 \to \rr$ is the vector-valued function
\[ 
\nabla v (x) = \begin{pmatrix}\partial_{x_1} v(x)\\ \partial_{x_2} v(x) \\ \partial _{x_3} v(x) \end{pmatrix} , \qquad  x = \begin{pmatrix} x_1 \\ x_2 \\  x_3\end{pmatrix}.  
\]
If we identify $\rr$ with the set of real quaternions and $\rr^3$ with the set of purely imaginary quaternions, this corresponds to the quaternionic nabla operator
\[
 \nabla = \partial_{x_1} e_{1} + \partial_{x_2} e_{2} + \partial_{x_3} e_{3}.
 \]
In the following we shall often denote the standard basis of the quaternions by $\I:= e_1$, $\J:=e_2$ and $\K:=e_3  = \I\J = - \J\I$. This suggests a relation with the complex theory, which we shall use intensively. With this notation, we have
\[ \nabla = \partial_{x_1} \I + \partial_{x_2} \J + \partial_{x_3} \K.\]

We study the properties of a quaternionic nabla operator on the space $L^2(\rr^3,\hh)$ of all square-integrable quaternion-valued functions on $\rr^3$, which is a quaternionic right Hilbert space when endowed with the scalar product 
\[
 \langle w, v \rangle = \int_{\rr^3} \overline{w(x)} v(x)\, dx. 
\]

Let $v\in L^2(\rr^3,\hh)$ and write $v(x) = v_{1}(x) + v_{2}(x)\J$ with two $\cc_{\I}$-valued functions $v_{1}$ and $v_{2}$. As $|v(x)|^2 = |v_{1}(x)|^2 + |v_{2}(x)|^2$, we have 
\begin{equation}\label{IScal}
\| v \|_{L^2(\rr^3,\hh)}^2 = \|v_1\|_{L^2(\rr^3,\cc_{\I})}^2  + \|v_2\|_{L_{2}(\rr,^3,\cc_{\I})}^2,
\end{equation}
 where $L^2(\rr^3,\hh)$ denotes the complex Hilbert space over $\cc_{\I}$ of all square-integrable $\cc_{\I}$-valued functions on $\rr^3$. Hence, $v\in L^2(\rr^3,\hh)$ if and only if $v_1,v_2\in L^2(\hh,\cc_{\I})$.

\begin{theorem}\label{NablaSpec}
The $S$-spectrum of $\nabla$ as an operator on $L^2(\rr^3,\hh)$ is
\[ \sigma_{S}(\nabla) = \rr.\]
\end{theorem}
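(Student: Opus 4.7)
The plan is to use Fourier analysis to reduce the question to inverting a $4\times 4$ matrix symbol. First I would decompose an arbitrary $v \in L^2(\rr^3,\hh)$ into its real components $v = u_0 + u_1\I + u_2\J + u_3\K$ with $u_k \in L^2(\rr^3,\rr)$, and apply the scalar $\cc_\I$-valued Fourier transform componentwise. By Plancherel this gives an isometry $L^2(\rr^3,\hh) \simeq L^2(\rr^3,\cc_\I)^4$ and converts each $\partial_{x_j}$ into multiplication by $\I\xi_j$. A direct computation of the matrix of $\nabla$ in the basis $\{1,\I,\J,\K\}$ (using $\I^2=\J^2=\K^2=-1$ and $\I\J=\K$, etc.) shows that on this column representation $\nabla$ acts as a real-entried first-order matrix differential operator whose transpose is its negative; in Fourier variables it therefore becomes the multiplier by a matrix of the form $\widehat{\nabla}(\xi) = \I\,M(\xi)$, where $M(\xi)$ is a real skew-symmetric $4\times 4$ matrix with entries linear in $\xi$.

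Next I would exploit the identity $\nabla^2 = -\Delta$, which is immediate from $e_j^2=-1$ and $e_je_k=-e_ke_j$ for $j\neq k$. Since $-\Delta$ has symbol $|\xi|^2\,\id_4$, this forces $\widehat{\nabla}(\xi)^2 = |\xi|^2\id_4$, equivalently $M(\xi)^2 = -|\xi|^2\id_4$. Combined with the skew-symmetry of $M(\xi)$, this pins down the spectrum of $M(\xi)$ as $\{\pm \I|\xi|\}$ with each eigenvalue of multiplicity $2$, and hence the spectrum of $\widehat{\nabla}(\xi)$ as $\{\pm |\xi|\}$ with each eigenvalue of multiplicity $2$. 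The symbol of the pseudo-resolvent is then
\[
\widehat{\Q_{s}(\nabla)}(\xi) = \bigl(|\xi|^2 + |s|^2\bigr)\id_4 - 2s_0\,\widehat{\nabla}(\xi),
\]
whose eigenvalues simplify to $(|\xi|\pm s_0)^2 + s_1^2$ with $s_1 := |\underline{s}|$.

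From here both inclusions are essentially read off the symbol. If $s \notin \rr$ then $s_1 > 0$, so the pointwise estimate $\|\widehat{\Q_s(\nabla)}(\xi)^{-1}\| \leq 1/s_1^2$ holds uniformly in $\xi$, which by Plancherel yields a bounded inverse of $\Q_s(\nabla)$ on $L^2(\rr^3,\hh)$; hence $s \in \rho_S(\nabla)$. Conversely, if $s\in\rr$ then one of the two eigenvalues of $\widehat{\Q_s(\nabla)}(\xi)$ vanishes on the sphere $|\xi| = |s_0|$ (or at $\xi = 0$ when $s_0 = 0$). A Weyl-type sequence of unit vectors $v_n \in L^2(\rr^3,\hh)$ whose Fourier transforms are supported in a shrinking neighbourhood of this vanishing locus and aligned with the corresponding zero-eigenvector of $\widehat{\nabla}(\xi)$ then satisfies $\|v_n\|=1$ and $\|\Q_s(\nabla)v_n\|\to 0$, which rules out a bounded inverse. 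Thus $s\in\sigma_S(\nabla)$, and altogether $\sigma_S(\nabla) = \rr$.

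The principal obstacle is the noncommutativity: the Fourier transform with respect to the imaginary unit $\I$ does not commute with left multiplication by $\J$ or $\K$, so $\nabla$ is not a scalar Fourier multiplier over $\cc_\I$, and the two-component decomposition $v = v_1 + v_2\J$ produces a $\cc_\I$-antilinear cross-term involving the Cauchy--Riemann operator $\partial_{x_2} + \I\partial_{x_3}$ applied to $\overline{v_j}$. The workaround is the choice of basis: expressing $\nabla$ in the real basis $\{1,\I,\J,\K\}$ turns it into a matrix of real first-order differential operators, at which point componentwise scalar Fourier analysis diagonalises the situation and reduces the whole problem to linear algebra on a real skew-symmetric $4\times 4$ symbol.
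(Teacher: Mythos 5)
Your proof is correct, but it follows a genuinely different route from the paper's. The paper decomposes $v = v_1 + \J v_2$ with $v_1,v_2\in L^2(\rr^3,\cc_{\I})$, so that after a componentwise Fourier transform $\nabla$ becomes multiplication by a $2\times 2$ complex matrix $G(\xi)$; it then computes the classical resolvent $(s\id - G(\xi))^{-1}$, whose denominator $s^2-|\xi|^2$ is bounded away from zero exactly for $s\in\cc_{\I}\setminus\rr$, and finally invokes the external identity $\sigma_{\cc_{\I}}(\nabla)=\sigma_S(\nabla)\cap\cc_{\I}$ from the theory of one-sided spaces to pass from the complex spectrum of the multiplier to the $S$-spectrum. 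You instead split into four real components, obtain a real skew-symmetric $4\times 4$ symbol $M(\xi)$ with $M(\xi)^2=-|\xi|^2\id$, and analyse the symbol of $\Q_s(\nabla)$ itself, reading off the eigenvalues $(|\xi|\mp s_0)^2+s_1^2$. This is more self-contained: you test membership in $\rho_S(\nabla)$ directly against the definition (boundedness of $\Q_s(\nabla)^{-1}$) and need no auxiliary theorem relating complex and $S$-spectra; the inclusion $\rr\subset\sigma_S(\nabla)$ via a Weyl sequence along the sphere $|\xi|=|s_0|$ is also more explicit than the paper's "bounded iff $s\notin\rr$" assertion. What the paper's $2\times 2$ decomposition buys is that it respects the right $\cc_{\I}$-module structure and that the same matrix $G(\xi)$ is reused verbatim in the later convergence and representation theorems for $f_\alpha(\nabla)$, so it is the more economical choice globally. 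One small point to tighten in your write-up: the componentwise Fourier transform of the real components $u_k$ is an isometry of $L^2(\rr^3,\rr)^4$ onto the Hermitian-symmetric subspace $\{\widehat{u}(-\xi)=\overline{\widehat{u}(\xi)}\}$ of $L^2(\rr^3,\cc_{\I})^4$, not onto the whole space; since $\overline{\widehat{\nabla}(-\xi)}=\widehat{\nabla}(\xi)$ the symbol preserves this subspace, so the uniform bound on the inverse symbol still transfers, but your Weyl sequences should be chosen (e.g.\ by symmetrising $\widehat{v_n}$ under $\xi\mapsto-\xi$ with conjugation) to lie in that subspace.
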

\begin{proof}
Let us consider $L^2(\rr^3,\hh)$ as a Hilbert space over $\cc_{\I}$ by restricting the right scalar multiplication to $\cc_{\I}$ and setting $ \langle w,v\rangle_{\I} := \{\langle w, v\rangle_{L^2(\rr^3,\hh)} \}_{\I}$, where $\{ \cdot \}_{\I}$ denotes the $\cc_{\I}$-part of quaternion, i.e. if $a = a_1 + a_2\J = a_1 + \J\overline{a_2}$ with $a_1,a_2\in\cc_{\I}$, then $\{a\}_{\I} := a_1$. If we write $v,w\in L^2(\rr^3,\hh)$ as $v = v_{1} + \J v_{2}$ and $w = w_{1} + \J w_{2}$ with $v_1,v_2,w_1,w_2\in L^2(\rr^3,\cc_{\I})$, then
\begin{align*}
\langle w, v \rangle_{L^2(\rr^3,\hh)} := &\int_{\rr^3} \overline{(w_1(x) + \J w_2(x))}(v_1(x) + \J v_2(x))\, dx \\
=&\int_{\rr^3} \overline{w_1(x)}v_1(x)\, dx + \int_{\rr^3} \overline{w_2(x)}(-\J)v_1(x)\, dx \\
&+ \int_{\rr^3} \overline{w_1(x) }\J v_2(x)\, dx + \int_{\rr^3} \overline{ w_2(x)}(-\J^2)v_2(x)\, dx \displaybreak[2]\\
=&\int_{\rr^3} \overline{w_1(x)}v_1(x)\, dx + \int_{\rr^3} \overline{ w_2(x)}v_2(x)\, dx\\
&+\J\left( - \int_{\rr^3}w_2(x)v_1(x)\, dx + \int_{\rr^3} w_1(x)  v_2(x)\, dx \right).
\end{align*}
Therefore we have
\[
\langle w, v \rangle_{\I} := \langle w_1, v_1 \rangle_{L^2(\rr^3,\cc_{\I})}  +   \langle w_2, v_2 \rangle_{L^2(\rr^3,\cc_{\I})} 
\]
and hence $ L^2(\rr^3,\hh)$ considered as a $\cc_{\I}$-complex Hilbert space with the scalar product $\langle\cdot,\cdot\rangle_{\I}$ equals $ L^2(\rr^3,\cc_{\I})\oplus L^2(\rr^3,\cc_{\I})$. Moreover, because of \eqref{IScal}, the quaternionic scalar product $\langle\cdot,\cdot\rangle$ and the $\cc_{\I}$-complex scalar product $\langle\cdot,\cdot\rangle_{\I}$ induce the same norm on $L^2(\rr^3,\hh)$.
Applying the nabla operator to $v = v_1 + \J v_2$ we find
\begin{align*}
\nabla v(x) =& (\I\partial_{x_1} + \J \partial_{x_2} + \K\partial_{x,3}) (v_1(x) + \J v_2(x)) \\
= & \I\partial_{x_1}v_1(x) + \J \partial_{x_2}v_1(x) + \K\partial_{x_3}v_1(x) + \I\partial_{x_1}\J v_2(x) + \J \partial_{x_2}\J v_2(x) + \K\partial_{x_3} \J v_2(x)\\
= & \I\partial_{x_1}v_1(x) -\partial_{x_2} v_2(x) - \I\partial_{x_3} v_2(x) 
+ \J \left( - \I\partial_{x_1} v_2(x)  + \partial_{x_2}v_1(x)   -  \I\partial_{x_3}v_1(x)\right).
\end{align*}
Writing this in terms of the components $L^2(\rr^3,\hh)\cong L^2(\rr^3,\cc_{\I})\oplus L^2(\rr^3,\cc_{\I})$, we find
\[
\nabla \begin{pmatrix} v_{1}(x) \\ v_{2}(x)\end{pmatrix} = \begin{pmatrix}  \I\partial_{x_1}v_1(x) -\partial_{x_2} v_2(x) - \I\partial_{x_3} v_2(x) \\  - \I\partial_{x_1} v_2(x)  + \partial_{x_2}v_1(x)   -  \I\partial_{x_3}v_1(x)
\end{pmatrix}.
\]
If we apply the Fourier transform on $L^2(\rr^3,\cc_{\I})$ componentwise, this turns into
\[
\widehat{\nabla} \begin{pmatrix} \widehat{v_1}(x)\\ \widehat{v_2}(x) \end{pmatrix} = 
\begin{pmatrix}  -\xi_1 &  - \I\xi_2  +\xi_{3}  \\  \I\xi_2   + \xi_3  &  \xi_1 
\end{pmatrix} \begin{pmatrix} \widehat{v_1}(\xi) \\ \widehat{v_2}(\xi)\end{pmatrix}.
\]
Hence, in the Fourier space, the Nabla operator corresponds to the multiplication operator $M_{G}: \widehat{v}\mapsto G\widehat{v}$ on $\widehat{V}:=L^2(\rr^3,\cc_{\I})\oplus L^2(\rr^3,\cc_{\I})$ that is generated by the matrix valued function
\begin{equation}\label{GMat}
G(\xi) := \begin{pmatrix}  -\xi_1 &  - \I\xi_2  +\xi_{3}  \\  \I\xi_2   + \xi_3  &  \xi_1  \end{pmatrix}.
\end{equation}
For $s\in\cc_{\I}$, we find 
\[
 s \id_{\widehat{V}} - G(\xi) =  \begin{pmatrix}  s + \xi_1 &   \I\xi_2  -\xi_{3}  \\  -\I\xi_2 -   \xi_3  & s -  \xi_1  \end{pmatrix}.
\]
For $s\in\cc_{\I}$, the inverse of $s\id_{\widehat{V}} - M_G$ is hence given  by multiplication operator $M_{(s\id-G)^{-1}}$ determined the matrix-valued function
\[
 (s \id_{\widehat{V}} - G(\xi))^{-1} =  \frac{1}{s^2 - \xi_1^2  - \xi_2^2 - \xi_3^2} \begin{pmatrix}  s - \xi_1 &   - \I\xi_2  + \xi_{3}  \\  \I\xi_2 +   \xi_3  & s +  \xi_1  \end{pmatrix}.
\]
This operator is bounded if and only if the function  $\xi \mapsto (s\id - G(\xi))^{-1}$ is bounded on $\rr^3$, i.e. if and only $s\notin\rr$. Hence, $\sigma(M_G) = \rr$.

The componentwise Fourier transform $\Psi$ is a unitary $\cc_{\I}$-linear operator from $L^2(\rr^3,\hh)\cong\linebreak[2] L^2(\rr^3,\cc_{\I})\oplus L^2(\rr^2,\cc_{\I})$ to $\widehat{V}$ under which $\nabla$ corresponds to $M_G$, that is  $\nabla = \Psi^{-1}M_G \Psi$. The spectrum $\sigma_{\cc_{\I}}(\nabla)$ of $\nabla$ considered as a $\cc_{\I}$-linear operator on $L^2(\rr^3,\hh)$ therefore equals $\sigma_{\cc_{\I}}(\nabla) = \sigma(M_G) = \rr$.  By \cite[Theorem~3.2]{SpecOP}, we however have $\sigma_{\cc_{\I}}(\nabla)  = \sigma_{S}(\nabla) \cap\cc_{\I}$ and so $\sigma_{S}(\nabla) = \rr$.

\end{proof}

The above result shows that the gradient does not belong to the class of sectorial operators as $(-\infty, 0)\not\subset\rho_{S}(T)$ so that the theory developed in \Cref{FracPowSect} is not directly applicable. Even worse, we cannot find any other slice hyperholomorphic functional calculus that allows us to define fractional powers $\nabla^{\alpha}$ of $\nabla$ because the scalar function $s^{\alpha}$ is not slice hyperholomorphic on $(-\infty,0]$ and hence not slice hyperholomorphic on $\sigma_{S}(\nabla)$.

In the following we shall nevertheless introduce a method that allows us to deduce the fractional heat equation based on quaternionic techniques and the considerations made above. We shall however need to introduce the $\mathcal{SC}$-functional calculus, the version of the $S$-functional calculus for operators with commuting components introduced in \cite{Where}.

We consider a two-sided quaternionic Banach space $V$. By \cite{Ng}, this space is of the form $V = V_{\rr} \otimes \hh$, i.e. any $v\in V$ is of the form $v = v_{0} + \sum_{\ell=1}^3 v_{\ell}e_{\ell}$, where the components $v_{\ell}$ are elements of the real Banach space $V_{\rr}:=\{v\in V: av = va\ \forall a \in\hh\}$. An operator $A$ on the space $V_{\rr}$ can then be extended to a quaternionic right linear operator on $V$ by componentwise application, i.e. $A v = Av_{0} + \sum_{\ell=1}^3 Av_{\ell} e_{\ell}$. As an operator on $V$, the operator $A$ commutes with any quaternionic scalar.

Let now $T\in\boundOP(V)$. We can then write  $T = T_{0} + \sum_{\ell=0}^{3}T_{\ell}e_{\ell}$ with components $T_{\ell}\in\boundOP(V_{\rr})$, $\ell = 0,\ldots,3$. Operators on $V_{\rr}$ are therefore also called scalar operators as the do not contain any imaginary units. Let us now set $\overline{T} := T_0 - \sum_{\ell=1}^3 T_{ \ell}e_{\ell}$. If the components $T_{\ell}$ commute mutually, then $T + \overline{T} = 2T_0$ and $T\overline{T} = \overline{T}T = \sum_{\ell = 0}^3 T_{\ell}^2$ are  scalar operators  and
\begin{equation}\label{AnsZwoDrei}
\begin{split}
&\left(\overline{s}\id - T\right)\left(s^2\id - 2sT_0 + T\overline{T}\right)\\
=& |s|^2s\id - Ts^2 - 2|s|^2T_0 + 2TT_0s + \overline{s}T\overline{T} - T^2\overline{T}\\
=& |s|^2s\id - Ts^2 - |s|^2T - |s|^2\overline{T} + T^2s +T\overline{T}s + \overline{s}T\overline{T} - T^2\overline{T}\\
=& |s|^2\left(s\id-\overline{T}\right)  -2s_0T \left(s\id - \overline{T} \right)+ T^2\left( s\id  - \overline{T}\right)\\
=& \left(T^2 - 2s_0T + |s|^2 \id\right)\left(s\id-\overline{T}\right),
\end{split}
\end{equation}
where we used the identities $2s_0 = s + \overline{s}$ and $|s|^2 = s\overline{s}$. Recalling $\Q_{s}(T) = T^2 - 2s_0 T + |s|^2\id$ and setting 
\[
\Q_{c,s}(T) = s^2\id - 2sT_0 + T\overline{T},
\]
  this reads as
\[ \left(\overline{s}-T\right)\Q_{c,s}(T) = \Q_{s}(T)\left(s\id - \overline{T}\right).\]
If $\Q_{s}(T)$ and $\Q_{c,s}(T)$ are both invertible, we therefore have
\begin{equation}\label{SResNabla}
 S_{L}^{-1}(s,T) = \Q_{s}(T)^{-1}(\overline{s}\id-T) =  \left(s\id-\overline{T}\right)\Q_{c,s}(T)^{-1}.
\end{equation}
Similarly, one also shows
\[
S_{R}^{-1}(s,T) = \Q_{c,s}(T)^{-1}(s\id - \overline{T}).
\]
Indeed, for a bounded operator $T$ with commuting components, one can show that  $\Q_{s}(T)$ has a bounded inverse if and only if $\Q_{c,s}(T)$ has one \cite[Proposition~4.6]{Where} . Hence, one has
\[ \rho_S(T) = \left\{s\in\hh: \Q_{c,s}(T)^{-1} = \left(s^2\id - 2sT_0 + T\overline{T}\right)^{-1} \in\boundOP(V)\right\} .\]
As the next theorem shows, this also true for an unbounded operator with commuting components.

\begin{theorem}
Let $T = T_0 + \sum_{\ell =1}^3 T_{\ell} e_{\ell}\in\closOP(V)$ with $T_{\ell}\in\closOP(V_{\rr})$ such that the components $T_{\ell}, \ell = 0,\ldots,3$ commute mutually, i.e. $T_{\ell}T_{\kappa}v = T_{\kappa}T_{\ell}v$ for all $v \in\dom(T^2) = \bigcap_{r,s=0,\ldots,3}\dom(T_rT_s)$ and all $\ell,\kappa\in\{0,\ldots,3\}$.  If we set $\Q_{c,s}(T) = s^2\id - 2s T_0  + T\overline{T}$ and $e_0 = 1$, then
\begin{equation}\label{SSpecCommut}
\rho_S(T) = \left\{ s\in \hh : \Q_{c,s}(T)^{-1}\in\boundOP(V) \right\}
\end{equation}
and
\begin{equation}\label{ZwoaDreiVia}
S_L^{-1}(s,T) = (s\id- \overline{T})\Q_{c,s}(T)\quad\text{and}\quad S_R^{-1}(s,T) =  \Q_{c,s}(T)^{-1}s - \sum_{\ell=1}^3 T_{\ell}Q_{c,s}(T)^{-1}e_{\ell}.
\end{equation}
\end{theorem}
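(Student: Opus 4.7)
My plan is to extend the bounded-case argument of \cite{Where} to closed unbounded operators by careful domain control; the algebraic heart of the argument is unchanged, while the analytic part requires closedness of $T$ together with the closed graph theorem to promote composed expressions to genuine bounded operators.

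First, I would verify the fundamental commutation identity
\[
(\overline{s}\id - T)\,\Q_{c,s}(T)\,v \;=\; \Q_s(T)\,(s\id - \overline{T})\,v
\]
for every $v$ in the dense core $\dom(T^3)$. The derivation is precisely the algebraic computation already carried out in \eqref{AnsZwoDrei} for the bounded case, and it uses only the mutual commutation of the components $T_0,T_1,T_2,T_3$, which by hypothesis holds on $\dom(T^2)$ and extends to the triple products that appear on $\dom(T^3)$ by iteration. A completely analogous computation, replacing $s$ by $\overline{s}$ and using $\Q_{\overline{s}}(T) = \Q_s(T)$, yields the companion identity $(s\id-T)\,\Q_{c,\overline{s}}(T)\,v = \Q_s(T)\,(\overline{s}\id-\overline{T})\,v$ on the same domain.

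Second, I would prove the equivalence $s\in\rho_S(T)\Longleftrightarrow\Q_{c,s}(T)^{-1}\in\boundOP(V)$. For the direction $(\Leftarrow)$, assume $\Q_{c,s}(T)^{-1}\in\boundOP(V)$; since this operator maps $V$ into $\dom(\Q_{c,s}(T))\subseteq\dom(T\overline{T})\subseteq\dom(T)\cap\dom(\overline{T})$, the closed graph theorem yields that $T\,\Q_{c,s}(T)^{-1}$ and $\overline{T}\,\Q_{c,s}(T)^{-1}$ are both bounded, whence the candidate
\[
P_s := (s\id - \overline{T})\,\Q_{c,s}(T)^{-1}
\]
belongs to $\boundOP(V)$. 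Using the commutation identity (first on the dense core and then propagating by closedness of $T$) one verifies both that $P_s\,w \in \dom(T^2)=\dom(\Q_s(T))$ for every $w\in V$ and that $\Q_s(T)\,P_s = \overline{s}\id - T$ holds on $\dom(T)$. A parallel argument applied to the companion identity shows that $\Q_{c,\overline{s}}(T)^{-1}\in\boundOP(V)$ as well; for $s\notin\rr$ the combination $(P_s - P_{\overline{s}})/(\overline{s}-s)$ then produces a bounded, everywhere-defined inverse of $\Q_s(T)$, and a separate elementary argument covers the real case $s\in\rr$. The reverse direction $(\Rightarrow)$ proceeds symmetrically, reading the identity from right to left to extract a bounded inverse of $\Q_{c,s}(T)$ from $\Q_s(T)^{-1}$.

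Finally, the explicit $S$-resolvent formulas fall out of the established equivalence: from $S_L^{-1}(s,T)=\Q_s(T)^{-1}(\overline{s}\id-T)$ and the commutation identity one reads off $S_L^{-1}(s,T)=(s\id-\overline{T})\,\Q_{c,s}(T)^{-1}$, and the analogous computation for the right $S$-resolvent, together with the observation that each $T_\ell$ commutes with $\Q_{c,s}(T)^{-1}$ (because it commutes with $T_0$ and with $T\overline{T}=\sum_m T_m^2$), yields the claimed component-wise expression for $S_R^{-1}(s,T)$. The main obstacle compared to the bounded case is domain bookkeeping: one must check that the commutation identity holds on a domain rich enough to support the subsequent manipulations, and that compositions of closed unbounded operators with bounded inverses actually land in the expected domains rather than in strict subspaces—points where closedness of $T$ and of its polynomial extensions, combined with the closed graph theorem, do the heavy lifting.
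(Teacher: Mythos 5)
Your intertwining identity $(\overline{s}\id - T)\Q_{c,s}(T)v = \Q_s(T)(s\id-\overline{T})v$ is correct---it is exactly the computation \eqref{AnsZwoDrei}---and it is indeed what yields the resolvent formulas \eqref{ZwoaDreiVia} once the equivalence of resolvent sets is known. But your plan to obtain the equivalence \eqref{SSpecCommut} from this identity alone has two genuine gaps. First, the boundedness of $\Q_{c,\overline{s}}(T)^{-1}$, which you need to form $P_{\overline{s}}$, cannot come from ``a parallel argument applied to the companion identity'': that identity relates $\Q_{c,\overline{s}}(T)$ to $\Q_s(T)$, whose invertibility is precisely what you are trying to establish, so the step is circular as written. (This particular gap is repairable, e.g.\ by observing that $\Q_{c,\overline{s}}(T)=M_J\Q_{c,s}(T)M_J^{-1}$ for left multiplication $M_J$ by any $J\in\SS$ with $J\perp I_s$, since $T_0$ and $T\overline{T}$ are scalar operators.) Second, and more seriously, the combination $(P_{\overline{s}}-P_s)(s-\overline{s})^{-1}$ only produces a bounded \emph{right} inverse of $\Q_s(T)$, i.e.\ surjectivity; injectivity is never addressed, and it does not follow by commuting this operator past $\Q_s(T)$, because left multiplication by $s$ does not commute with $T$. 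The reverse implication is likewise not ``symmetric'': reading the identity from right to left and using $\Q_s(T)^{-1}$, a vector $v\in\ker\Q_{c,s}(T)$ only satisfies $(s\id-\overline{T})v=0$, and the argument stalls there.

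The paper closes exactly these holes by a different device: it first extends McIntosh's lemma to closed operators---a closed operator $A$ with commuting components is invertible if and only if the scalar operator $A\overline{A}=\overline{A}A$ is---and then proves the algebraic identity $\Q_{c,s}(T)\,\overline{\Q_{c,s}(T)} = \Q_s(T)\,\overline{\Q_s(T)}$. Applying the lemma to both $\Q_{c,s}(T)$ and $\Q_s(T)$ gives injectivity and surjectivity, in both directions of \eqref{SSpecCommut}, simultaneously. Some ingredient of this kind (or a separate injectivity argument) is indispensable; the intertwining identity by itself does not suffice. The final part of your proposal, deriving \eqref{ZwoaDreiVia} from the identity and extending by continuity as in \Cref{RkResExtension}, matches the paper and is fine.
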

\begin{proof}
McIntosh showed in \cite[Theorem~3.3]{McI1} that an operator $A = A_0 + \sum_{\ell=1}^3A_{\ell}e_{\ell}\in\boundOP(V)$ with commuting components is invertible if and only if $A\overline{A} = \overline{A}A= \sum_{\ell=0}^3 A_{\ell}^2$ is invertible. This holds true also for an unbounded operator with commuting components: if $A\overline{A}$ is invertible, then  $(A\overline{A})^{-1} = \big(\sum_{\ell=0}^3 A_{\ell}^2\big)^{-1}$ commutes with each of the components $A_{\ell}$  and it  also commutes with the imaginary units $e_{\ell}$ because it is a scalar operator. Hence, it commutes with $A$ and so $A^{-1} = \overline{A} (A\overline{A})^{-1}$ as
\[
\left(\overline{A}(A\overline{A})	^{-1}\right) Av = \overline{A}A (\overline{A}A)^{-1}v \qquad \forall v\in\dom(A) 
\]
and
\[
A \left(\overline{A} (A\overline{A})^{-1}\right) v = (A \overline{A} )(A\overline{A})^{-1} v = v \quad\forall v\in V.
\]
Consequently, the invertibility of $A\overline{A}$ implies the invertibility of $A$. 

If on the other hand $A$ is invertible and $A^{-1} = B_0 + \sum_{\kappa=1}^3 B_{\kappa}e_{\kappa}\in\boundOP(V)$, then
\begin{align*}
\id|_{\dom (A)} =& A^{-1} A = \left(B_0 + \sum_{\kappa=1}^3 B_{\kappa}e_{\kappa}\right)\left( A_0 + \sum_{\ell =1}^3A_{\ell}e_{\ell} \right)\\
=& B_0A_0 - \sum_{\ell=1}^3 B_{\ell}A_{\ell} +  (B_2A_3-B_3A_2)e_{1}\\
&+ (B_{3}A_{1} - B_{1}A_{3})e_{2} + (B_{1}A_{2}-B_{2}A_{1})e_{3},
\end{align*}
from which we conclude that
\[
\id|_{\dom(A)} = B_0A_0 - \sum_{\ell=1}^3 B_{\ell}A_{\ell}\qquad\text{and}\qquad B_{\ell}A_{\kappa}-B_{\kappa}A_{\ell} = 0\quad 1\leq \ell < \kappa \leq 3.
\]
Therefore
\begin{align*}
\overline{B}\,\overline{A} = & \left(B_0 - \sum_{\ell=1}^3B_{\ell}e_{\ell}\right)\left(A_{0} - \sum_{\ell=1}^{3}A_{\ell}e_{\ell}\right)\\
=& B_0A_0 - \sum_{\ell=1}^3 B_{\ell}A_{\ell} +  (B_2A_3-B_3A_2)e_{1}\\
&+ (B_{3}A_{1} - B_{1}A_{3})e_{2} + (B_{1}A_{2}-B_{2}A_{1})e_{3} = \id|_{\dom(A)}.
\end{align*}
Similarly, we see that $AB = \id$ also implies $\overline{A}\,\overline{B} = \id$. Hence, the invertibility of $A$ implies the invertibility of $\overline{A}$ and $\overline{A}^{-1} = \overline{A^{-1}}$. Thus, if $A$ is invertible, we have $(A\overline{A})^{-1}= \overline{A}^{-1}A^{-1}\in\boundOP(V)$. Altogether, we find that $A$ is invertible if and only if $A\overline{A} = \overline{A}A$ is invertible.

Let us now turn our attention back to the operator $T\in\closOP(V)$ with commuting components. Since $T$ and $\overline{T}$ commute, we have $\overline{\Q_{s}(T)} = \Q_{s}(\overline{T})$ and $\overline{\Q_{c,s}(T) } = \Q_{c,\overline{s}}(T)$ and so
\begin{align*}
\Q_{c,s}(T) \overline{\Q_{c,s}(T)}=& (s^2\id - 2 sT_0 + T\overline{T})(\overline{s}^2\id - 2\overline{s}T_0 + T\overline{T})\\
=& |s|^4\id - 2s|s|^2 T_0 + s^2 T\overline{T}\\
& - 2 |s|^2 T_0 \overline{s} + 4 |s|^2 T_0^2 - 2s T_0 T\overline{T} \\
&+ \overline{s}^2 T\overline{T} - 2\overline{s}T_0T\overline{T} + (T\overline{T})^2\\
=& |s|^4\id - 2s_0|s|^2T - 2s_0 |s|^2\overline{T} + 2\Re(s^2) T\overline{T}\\
&  + 4 |s|^2 T_0^2 - 2s_0 T^2\overline{T} - 2s_0 T\overline{T}^2  + T^2\overline{T}^2,
\end{align*}
where we used in the last identity that $2s_0 = s+\overline{s}$, that $|s|^2 = s\overline{s}$, and that $2T_0 = T + \overline{T}$. As 
\[
2\Re(s^2)T\overline{T}  =  2s_0^2T\overline{T} - 2s_1^2 T\overline{T}
\]
and
\[
4|s|^2 T_0^2 = |s|^2(T+\overline{T})^2 = |s|^2T^2+ 2 s_0^2T\overline{T} + s_1^2 T\overline{T} + |s|^2\overline{T}^2
\]
we further find 
\begin{align*}
\Q_{c,s}(T) \overline{\Q_{c,s}(T)}=& |s|^2(|s|^2\id  - 2s_0T + T^2)\\
& - 2s_0 \overline{T}(|s|^2\id -2s_0T + T^2) \\
& +\overline{T}^2(|s|^2\id- 2s_0 T  + T^2) = \Q_{s}(T)\overline{\Q_{s}(T)}.
\end{align*}
By the above arguments, we hence have
\begin{gather*}
\Q_{c,s}(T)^{-1}\in\boundOP(V)\Longleftrightarrow \left(\Q_{c,s}(T)\overline{Q_{c,s}(T)}\right)^{-1}\in\boundOP(V) \\
\Longleftrightarrow \left(\Q_{s}(T)\overline{Q_{s}(T)}\right)^{-1}\in\boundOP(V) \Longleftrightarrow \Q_{s}(T)^{-1}\in\boundOP(V)
\end{gather*}
and hence \eqref{SSpecCommut} holds true.

Computations as in \eqref{AnsZwoDrei} show that 
\[
S_L^{-1}(s,T)v = (s\id-\overline{T})\Q_{c,s}(T)^{-1}v \quad \text{and}\quad S_{R}^{-1}(s,T)v = \Q_{c,s}(T)^{-1}(s\id - \overline{T})v
\]
 for $v\in\dom(T)$. These operators can be extended to continuous operators on the entire space $V$ by writing them as in \eqref{ZwoaDreiVia}, cf. \Cref{RkResExtension}. This yields the identity in~\eqref{ZwoaDreiVia}.

\end{proof}

Let us now turn back to the nabla operator on the quaternionic right Hilbert space $L^2(\rr^3,\hh)$. If $I\in\SS$ is an arbitrary imaginary unit and $J\in\SS$ with $J\perp I$, then any $v\in L^2(\rr^3,\hh)$ can be written as $v = v_{1} + v_{2}J$ with components $v_1,v_2$ in $L^2(\rr^3,\cc_{I})$, i.e. $L^2(\rr^3,\hh)  = L^2(\rr^3,\cc_{I}) \oplus L^2(\rr^3,\cc_{I})J$. Contrary to the decomposition $v = v_{1} + J v_{1}$, which we used in the proof of \Cref{NablaSpec} with $I = \I$ and $J=\J$, this decomposition is not compatible with the $\cc_{I}$-right vector space structure of $L^2(\rr^3,\hh)$ as $va = v_{1}a + v_{2}\overline{a} J$ for any $a\in\cc_{I}$. Howeover, this identification has a different advantage: any closed $\cc_{I}$-linear operator $A: \dom(A)\subset L^2(\rr^3,\cc_{\I}) \to L^2(\rr^3,\cc_{\I})$ extends to a closed $\hh$-linear operator on $L^2(\rr^3,\hh)$ with domain $\dom(A)\oplus\dom(A)J$, namely to $A(v_1+v_2J) := A(v_{1}) + A(v_{2}) J$. Moreover, if $A$ is bounded, then its extension to $L^2(\rr^3,\hh)$ has the same norm as $A$. We shall denote an operator on $L^2(\rr^3,\cc_{I})$ and its extension to $L^2(\rr^3,\hh)=L^2(\rr^3,\cc_{I})\oplus L^2(\rr^3,\cc_{I})J$ via componentwise application by the same symbol. This will not cause any confusion as it will be clear from the context to which we refer.
\begin{theorem}
Let $\Delta$ be the Laplace operator on $L^2(\hh,\cc_{I})$ and let $R_{z}(-\Delta)$ be the resolvent of $-\Delta$ at $z\in\cc_{I}$. We have
\begin{equation}\label{DeltaSpec}
 \sigma_{S}(\nabla)^2 = \left\{ s^2\in\hh: s\in\sigma_{S}(T) \right\} = \sigma(-\Delta)
\end{equation}
 and 
\begin{equation}\label{QsRs}
\Q_{c,s}(\nabla)^{-1} = R_{s^2}(-\Delta)\qquad \forall s\in\cc_{I}\setminus \rr.
\end{equation}
\end{theorem}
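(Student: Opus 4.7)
The plan is to exploit the commuting-component machinery from the theorem immediately preceding the statement, which reduces the computation of $\Q_{c,s}(\nabla)^{-1}$ to a purely scalar resolvent problem involving $-\Delta$.

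First I would observe that $\nabla = e_1 \partial_{x_1} + e_2 \partial_{x_2} + e_3 \partial_{x_3}$ has scalar component $T_0 = 0$ and components $T_\ell = \partial_{x_\ell}$ for $\ell = 1,2,3$ which commute mutually on the natural Sobolev domain $H^2(\rr^3,\hh)$. Hence the preceding theorem applies and yields the representation $\rho_S(\nabla) = \{s\in\hh: \Q_{c,s}(\nabla)^{-1}\in\boundOP(L^2(\rr^3,\hh))\}$ together with the formula $\Q_{c,s}(\nabla) = s^2\id - 2sT_0 + \nabla\overline{\nabla}$. Since $T_0=0$ and
\[
\nabla\overline{\nabla} = \sum_{\ell=0}^{3}T_\ell^2 = \partial_{x_1}^2 + \partial_{x_2}^2 + \partial_{x_3}^2 = \Delta,
\]
this reduces to
\[
\Q_{c,s}(\nabla) = s^2\id + \Delta = s^2\id - (-\Delta).
\]

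Next I would establish \eqref{QsRs}. Fix $I\in\SS$ and $s\in\cc_I\setminus\rr$, write $s = a + Ib$ with $b\neq 0$, and compute $s^2 = a^2 - b^2 + 2ab\, I$. For $s^2\in[0,+\infty) = \sigma(-\Delta)$ we would need both $2ab=0$ and $a^2-b^2\geq 0$; since $b\neq 0$ this forces $a=0$ and then $s^2 = -b^2 < 0$, a contradiction. Therefore $s^2\in\rho(-\Delta)$ and the scalar resolvent $R_{s^2}(-\Delta) = (s^2\id + \Delta)^{-1}$ exists as a bounded operator on $L^2(\rr^3,\cc_I)$. Extending $R_{s^2}(-\Delta)$ componentwise to $L^2(\rr^3,\hh) = L^2(\rr^3,\cc_I) \oplus L^2(\rr^3,\cc_I) J$ (which preserves boundedness) gives an operator that inverts $s^2\id + \Delta = \Q_{c,s}(\nabla)$, so $\Q_{c,s}(\nabla)^{-1} = R_{s^2}(-\Delta)$.

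For the spectral identity \eqref{DeltaSpec}, Theorem~\ref{NablaSpec} gives $\sigma_S(\nabla) = \rr$, hence $\{s^2: s\in\sigma_S(\nabla)\} = [0,+\infty)$. On the other hand the classical spectrum $\sigma(-\Delta)$ on $L^2(\rr^3)$ is $[0,+\infty)$, which can be seen via the Fourier transform (the Laplacian becomes multiplication by $-|\xi|^2$). Alternatively, one may read this off directly from what has already been done: by the commuting-component characterization \eqref{SSpecCommut} together with the identity $\Q_{c,s}(\nabla) = s^2\id - (-\Delta)$, we have $s\in\rho_S(\nabla)$ if and only if $s^2\in\rho(-\Delta)$, so that the squaring map sends $\sigma_S(\nabla)$ onto $\sigma(-\Delta)$.

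The whole argument is essentially algebraic once the commuting-component formula from the preceding theorem is in hand; no genuine obstacle appears. The only point that requires a little care is verifying that the componentwise extension of $R_{s^2}(-\Delta)$ from $L^2(\rr^3,\cc_I)$ to the quaternionic Hilbert space $L^2(\rr^3,\hh)$ is indeed the inverse of $\Q_{c,s}(\nabla)$ on the correct domain $H^2(\rr^3,\hh)$, and that this extension is compatible with the right-$\hh$-linear structure used throughout the $S$-functional calculus.
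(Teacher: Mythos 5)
Your proposal is correct and follows essentially the same route as the paper: both reduce $\Q_{c,s}(\nabla)$ to $s^2\id-(-\Delta)$ via the commuting-component identity $\nabla\overline{\nabla}=-\nabla^2=\Delta$ together with $\nabla_0=0$, and then read off \eqref{SSpecCommut}. Your added verifications that $s^2\notin[0,+\infty)=\sigma(-\Delta)$ for non-real $s\in\cc_I$ and that $\sigma(-\Delta)=[0,+\infty)$ via the Fourier transform are details the paper leaves implicit, but they do not change the argument.
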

\begin{proof}
Since the components of $\nabla$ commute and $e_{\kappa}e_{\ell} = -e_{\ell}e_{\kappa}$ for $1\leq \kappa,\ell\leq 3$ with $\kappa\neq\ell$, we  have
\[
\nabla^2 = \sum_{\ell,\kappa=1}^{3} \partial_{x_\ell}\partial_{x_\kappa} e_{\ell}e_{\kappa} = \sum_{\ell=1}^3 -\partial_{x_\ell}^2 + \sum_{1\leq \ell <\kappa\leq 3} \left( \partial_{x_\ell}\partial_{x_{\kappa}} - \partial_{x_{\kappa}}\partial_{x_\ell}\right)e_{\ell}e_{\kappa} = \sum_{\ell=1}^3-\partial_{x_{\ell}}^2 = - \Delta.
\]
As $\nabla_0 = 0$, we have $\overline{\nabla} = -\nabla$ and in turn
\[
\Q_{c,s}(\nabla) = s^2\id - 2s\nabla_0 + \nabla\overline{\nabla} = s^2\id - \nabla^2 = s^2\id - (-\Delta)
\]
Hence, $\Q_{c,s}(\nabla)$ is invertible if and only if $s^2\id - (-\Delta)$ is invertible. In this case 
\[
\Q_{c,s}(\nabla) = (s^2 \id - (-\Delta))^{-1} = R_{s^2}(-\Delta).
\]

\end{proof}

As one can easily verify, the nabla operator is selfadjoint on $L^2(\rr^3,\hh)$. From the spectral theorem for quaternionic linear operators in \cite{ack}, we hence deduce the existence of a unique spectral measure $E$ on $\sigma_{S}(\nabla) = \rr$, the values of which are orthogonal quaternionic linear projections on $L^2(\rr^3,\hh)$, such that
\[
\nabla = \int_{\rr} s\, dE(s).
\]
Via the measurable functional calculus for intrinsic slice functions, it is now possible to define $f_{\alpha}(s) = s^{\alpha}\chi_{[0,+\infty)}(s)$ of $T$ as
\[
f_{\alpha}(\nabla) = \int_{\rr} s^{\alpha}\chi_{[0,+\infty)}(s)\,dE(s),
\]
where $\chi_{[0,+\infty)}$ denotes the characteristic function of the set $[0,+\infty)$.
This corresponds to defining $\nabla^{\alpha}$ at least on the subspace associated with the spectral values $[0,+\infty)$, on which $s^{\alpha}$ is defined. (Observe that even with the measurable functional calculus the operator $\nabla^{\alpha}$ cannot be defined, as $s^{\alpha}$ is not defined on $(-\infty,0)$.)

We shall now give an integral representation for this operator via an approach similar to the one of the slice hyperholomorphic $H^{\infty}$-functional calculus. Surprisingly, this yields a possibility to obtain the fractional heat equation via quaternionic operator techniques applied to the nabla operator.

For $\alpha\in(0,1)$, we define
\begin{equation}\label{FAlfa}
f_{\alpha}(\nabla)v := \frac{1}{2\pi} \int_{-\I\rr}   S_L^{-1}(s,\nabla)\,ds_{\I}\, s^{\alpha-1} \nabla v\qquad \forall v\in\dom(\nabla).
\end{equation}
Intuitively, this corresponds to Balakrishnan's formula for $\nabla^{\alpha}$, where only spectral values on the positive real axis are taken into account, i.e. points where $s^{\alpha}$ is actually defined, because the path of integration surrounds only the positive real axis. 

\begin{theorem}\label{FAlfaConv}
The integral \eqref{FAlfa} converges for any $v\in \dom(\nabla)$ and hence defines a quaternionic linear operator on $L^2(\rr^3,\hh)$. 
\end{theorem}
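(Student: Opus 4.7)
The plan is to parametrize the imaginary axis by $s = -\I t$ for $t\in\rr$, exploit the commuting-components formula \eqref{ZwoaDreiVia} together with \eqref{QsRs} to obtain the representation
$$S_L^{-1}(-\I t,\nabla) = (-\I t + \nabla)R_{-t^2}(-\Delta),$$
which is valid since $\overline\nabla = -\nabla$, and then bound the $L^2$-norm of the integrand via Fourier analysis. Once such a bound is in place, combining it with $|(-\I t)^{\alpha-1}| = |t|^{\alpha-1}$ will yield absolute convergence in $L^2(\rr^3,\hh)$.

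Splitting $v = v_1 + v_2\J$ with $v_1,v_2\in L^2(\rr^3,\cc_\I)$ and applying the componentwise Fourier transform as in the proof of \Cref{NablaSpec}, the nabla operator corresponds to multiplication by the $\cc_\I$-Hermitian matrix $G(\xi)$ from \eqref{GMat} and $R_{-t^2}(-\Delta)$ corresponds to multiplication by the scalar $(t^2+|\xi|^2)^{-1}$. A direct computation gives $G(\xi)^2 = |\xi|^2\id$ (the Fourier-space version of $\nabla^2 = -\Delta$), so the Fourier symbol of the operator $v \mapsto S_L^{-1}(-\I t,\nabla)\nabla v$ is
$$M(t,\xi) := \frac{-\I t\,G(\xi) + |\xi|^2\id}{t^2+|\xi|^2}.$$
Since $G(\xi)$ is Hermitian with eigenvalues $\pm|\xi|$, the normal matrix $M(t,\xi)$ has eigenvalues $(|\xi|^2 \mp \I t|\xi|)/(t^2+|\xi|^2)$, each of modulus $|\xi|/\sqrt{t^2+|\xi|^2}$, which yields the sharp operator-norm identity $\|M(t,\xi)\| = |\xi|/\sqrt{t^2+|\xi|^2}$.

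By Plancherel's theorem and the identity $\|\nabla v\|_{L^2}^2 = \int |\xi|^2 \|\hat v(\xi)\|^2\,d\xi$ (which itself follows from $\widehat{\nabla v} = G(\xi)\hat v$ and $\|G(\xi)w\| = |\xi|\|w\|$), we obtain
$$\|S_L^{-1}(-\I t,\nabla)\nabla v\|_{L^2}^2 \le \int_{\rr^3}\frac{|\xi|^2}{t^2+|\xi|^2}\|\hat v(\xi)\|^2\,d\xi \le \min\!\bigl(1,t^{-2}\bigr)\|\nabla v\|_{L^2}^2.$$
Consequently the $L^2$-norm of the integrand in \eqref{FAlfa} is dominated by $\|\nabla v\|_{L^2}\min(1,|t|^{-1})|t|^{\alpha-1}$. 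This majorant is integrable on $\rr$: near $t=0$ it behaves like $|t|^{\alpha-1}$, integrable because $\alpha>0$, while for $|t|\to\infty$ it behaves like $|t|^{\alpha-2}$, integrable because $\alpha<1$. Hence \eqref{FAlfa} converges absolutely as a Bochner integral in $L^2(\rr^3,\hh)$ and defines a quaternionic right-linear operator on $\dom(\nabla)$. The main technical step is the sharp norm identity $\|M(t,\xi)\| = |\xi|/\sqrt{t^2+|\xi|^2}$, which crucially uses both the Hermiticity of $G(\xi)$ and the identity $G(\xi)^2 = |\xi|^2\id$; a cruder bound would fail to provide the $|t|^{-1}$ decay at infinity needed to compensate $|t|^{\alpha-1}$ as $\alpha\to 1^-$.
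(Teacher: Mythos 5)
Your overall strategy --- pass to the Fourier side via the componentwise splitting of \Cref{NablaSpec}, represent the integrand as a $2\times 2$ matrix multiplication operator, and integrate a majorant that behaves like $t^{\alpha-1}$ near $0$ and $t^{\alpha-2}$ near $\infty$ --- is exactly the paper's. However, the step you call the ``main technical step'' is incorrect. The representation $\nabla\leftrightarrow M_{G}$ holds for the splitting $v=v_1+\J v_2$ (not $v=v_1+v_2\J$; only the former makes the componentwise Fourier transform compatible with the right $\cc_{\I}$-module structure), and in that splitting left multiplication by $\I$ is \emph{not} scalar multiplication by $\I$: since $\I(v_1+\J v_2)=\I v_1+\J(-\I v_2)$, it corresponds to the matrix $E=\mathrm{diag}(\I,-\I)$. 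The symbol of $v\mapsto S_L^{-1}(-\I t,\nabla)\nabla v$ is therefore $\bigl(tEG(\xi)-|\xi|^2\id\bigr)(t^2+|\xi|^2)^{-1}$, not $\bigl(-\I t\,G(\xi)+|\xi|^2\id\bigr)(t^2+|\xi|^2)^{-1}$. Because $EG(\xi)$ does not commute with $G(\xi)$, your eigenvalue computation does not apply, and the claimed identity $\|M(t,\xi)\|=|\xi|/\sqrt{t^2+|\xi|^2}$ is false: for $\xi=(0,1,0)$ one finds $EG(\xi)=\left(\begin{smallmatrix}0&1\\1&0\end{smallmatrix}\right)$, and at $t=1$ the matrix $\tfrac12\bigl(EG(\xi)-\id\bigr)$ has norm $1>1/\sqrt{2}$.

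The error is repairable, and the repaired argument is precisely the paper's. Since $(EG(\xi))(EG(\xi))^{*}=|\xi|^{2}\id$, one still has $\|tEG(\xi)-|\xi|^{2}\id\|\le t|\xi|+|\xi|^{2}\le\sqrt{2}\,|\xi|\sqrt{t^{2}+|\xi|^{2}}$, hence $\|M(t,\xi)\|\le\sqrt{2}\,|\xi|/\sqrt{t^{2}+|\xi|^{2}}$, which is exactly the bound $\|A_j(t,\xi)\|\le 2C\,t^{\alpha-1}\bigl(|\xi|t+|\xi|^{2}\bigr)/(t^{2}+|\xi|^{2})$ the paper extracts entrywise. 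In particular your closing claim that ``a cruder bound would fail'' is not right: the decay in $t$ comes from $|\xi|/\sqrt{t^{2}+|\xi|^{2}}\le\min(1,|\xi|/t)$, and a multiplicative constant is harmless. Two smaller points. First, the factor $ds_{\I}\,s^{\alpha-1}$ sits \emph{between} $S_L^{-1}(s,\nabla)$ and $\nabla v$ and commutes with neither; you should note explicitly that left multiplication by a quaternion of modulus $t^{\alpha-1}$ multiplies the pointwise Fourier modulus by $t^{\alpha-1}$ (the paper instead carries the unitary matrix $\exp\bigl(-\tfrac{(\alpha-1)\pi}{2}E\bigr)$ through the symbol), so it contributes only $|t|^{\alpha-1}$ to the estimate. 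Second, your inequality $\int_{\rr^3}|\xi|^2(t^2+|\xi|^2)^{-1}|\widehat{v}(\xi)|^2\,d\xi\le\min(1,t^{-2})\|\nabla v\|^2$ should read $\le\min\bigl\{\|v\|^2,\,t^{-2}\|\nabla v\|^2\bigr\}$: the branch obtained from bounding the quotient by $1$ produces $\|v\|^2$, not $\|\nabla v\|^2$. Neither point affects integrability of the majorant, and with these corrections your proof coincides with the one given in the paper.
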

\begin{proof}
If we   write the integral \eqref{FAlfa} explicitly, we have
\begin{equation}\label{KALST}
\begin{split}
f_{\alpha}(\nabla)v =& \frac{1}{2\pi} \int_{-\infty}^{+\infty}   S_L^{-1}(-\I t,\nabla)\,(-\I )^2\, (-\I t)^{\alpha-1} \nabla v\\
 =&-\frac{1}{2\pi} \int_{0}^{+\infty}   S_L^{-1}(-\I t,\nabla) (-\I t)^{\alpha-1} \nabla v\,dt\\
& -  \frac{1}{2\pi} \int_{0}^{+\infty}   S_L^{-1}(\I t,\nabla) (\I t)^{\alpha-1} \nabla v\,dt\\
=&-\frac{1}{2\pi} \int_{0}^{+\infty}   S_L^{-1}(-\I t,\nabla) t^{\alpha-1} e^{-\I \frac{(\alpha-1)\pi}{2}}\nabla v\,dt \\
 & -  \frac{1}{2\pi} \int_{0}^{+\infty}   S_L^{-1}(\I t,\nabla) t^{\alpha-1}e^{\I \frac{(\alpha-1)\pi}{2}} \nabla v\,dt,
\end{split}
\end{equation}
 where $f_{\alpha}(\nabla)v$ is defined if and only if the last two integrals converge in $L^2(\rr^3,\hh)$.
 
Let us consider $L^2(\rr^3,\hh)$ as a Hilbert space over $\cc_{\I}$ as in the proof of \Cref{NablaSpec}. If we write $v\in L^2(\rr^3,\hh)$ as $v = v_{1} + \J v_{2}$ with $v_{1},v_{2}\in L^2(\rr,\cc_{\I})$ and apply the Fourier-transform componentwise, we obtain an isometric $\cc_{\I}$-linear isomorphism $\Psi: v\mapsto(\widehat{v}_1,\widehat{v}_2)^T$ between $L^2(\rr^3,\hh)$ and $ \widehat{V} :=L^2(\rr^3,\cc_{\I})\oplus L^2(\rr^3,\cc_{\I})$.  For any quaternionic linear operator $T$ on $L^2(\rr^3,\hh)$, the composition  $\Psi T \Psi^{-1}$ is a $\cc_{\I}$-linear operator on $\widehat{V} $ with $\dom(\Psi T \Psi^{-1}) = \Psi \dom(T)$. 

Applying $\nabla$ to $v\in\dom(\nabla)\subset L^2(\rr^3,\hh)$ corresponds to applying the multiplication operator $M_{G}$ associated with the matrix-valued function $G(\xi)$  defined in \eqref{GMat} to $\widehat{v}(\xi) = (\widehat{v_{1}}(\xi),\widehat{v_{2}}(\xi))^T$. Hence, $\nabla = \Psi^{-1} M_{G} \Psi$ and 
\begin{equation}\label{GradDom}
\Psi \dom(\nabla) = \dom(M_{G}) = \left\{ \widehat{v}\in\widehat{V} : G(\xi)\widehat{v}(\xi) \in \widehat{V} \right\} = \left\{ \widehat{v}\in\widehat{V} : |\xi|\widehat{v}(\xi) \in \widehat{V} \right\}.
\end{equation}
That is last identity holds, as for $\widehat{v}(\xi) = (\widehat{v_{1}}(\xi),\widehat{v_{2}}(\xi))^T \in \widehat{V}$ straightforward computations show that
\begin{equation}\label{RamSTI}
\begin{split}
|G(\xi)\widehat{v}(\xi) |^2 =& \left|  \begin{pmatrix} - \xi_1\widehat{v_{1}}(\xi)  + (-\I\xi_2 + \xi_3)\widehat{v_{2}}(\xi)\\ (\I\xi_2 + \xi_3)\widehat{v_1}(\xi) + \xi_1 \widehat{v_2}(\xi)\end{pmatrix}\right|^2\\
 =& (\xi_1^2 + \xi_2^2 + \xi_3^2)(|\widehat{v_1}(\xi)|^2 + |\widehat{v_2}(\xi)|^2) = |\xi|^2| \widehat{v}(\xi)|^2.
\end{split}
\end{equation}
Because of \eqref{KALST},  we  have
\begin{equation}\label{Ratz1}
\begin{split}
f_{\alpha}(\nabla)v =&-\Psi\frac{1}{2\pi} \int_{0}^{+\infty} \left( \Psi^{-1}S_L^{-1}(-It,\nabla) t^{\alpha-1} e^{-\I\frac{(\alpha-1)\pi}{2}}\nabla\Psi^{-1}\right) \Psi v\,dt \\
 & - \Psi \frac{1}{2\pi} \int_{0}^{+\infty}\left( \Psi^{-1}  S_L^{-1}(It,\nabla) t^{\alpha-1}e^{\I\frac{(\alpha-1)\pi}{2}} \nabla \Psi^{-1}\right)\Psi v\,dt,
\end{split}
\end{equation}
Since $\I v = \I (v_1 + \J v_2) = v_1\I - \J (v_2\I)$ and $\Psi$ is $\cc_{\I}$-linear, we find $\Psi \I \Psi^{-1} (\widehat{v_1}, \widehat{v_2})^T = (\widehat{v_1}\I, \widehat{v_{2}}(-\I))^T$, i.e. multiplication with $\I$ on $L^2(\rr^3,\hh)$ from the left corresponds to the multiplication with the matrix $E:=\mathrm{diag}(\I,-\I)$ on $\widehat{V}$. 
As $\Q_{-\I t}(\nabla)^{-1} = (\nabla^2 + t^2)^{-1} = (-\Delta + t^2)^{-1}$ is a scalar operator and hence commutes with any quaternion, we have
\[
S_L^{-1}(-\I t,\nabla) = \Q_{-\I t}(\nabla)^{-1}\I t - \nabla\Q_{-\I t}(\nabla)^{-1} = (\I t - \nabla)\Q_{-\I t}(\nabla)^{-1},
\]
and in turn
\begin{align*}
&\Psi^{-1}S_L^{-1}(-It,\nabla) t^{\alpha-1} e^{-\I\frac{(\alpha-1)\pi}{2}}\nabla\Psi^{-1}\\
=& \Psi^{-1}\left(\I t \Q_{-\I t}(\nabla)^{-1} - \nabla \Q_{-\I t}(\nabla)^{-1} \right) t^{\alpha-1} e^{-\I\frac{(\alpha-1)\pi}{2}}\nabla\Psi^{-1}\\
=& \left( t M_E  \Q_{-\I t}(M_G)^{-1} - M_G \Q_{-\I t}(M_G)^{-1} \right) t^{\alpha-1}M_{ \exp\left(-\frac{(\alpha-1)\pi}{2}E\right)}M_G.
\end{align*}
The operator $\Q_{\I t}(M_G)^{-1}$ is
\[
\Q_{\I t}(M_G)^{-1} = (M_G^2 + t^2\id)^{-1} =  M_{(G^2 +t^2\id)^{-1}} = M_{(t^2 + |\xi|^2)^{-1}\id} 
\]
with $|\xi|^2 = \xi_1^2 + \xi_2^2+\xi_3^2$ and 
 the operator in the first integral of \eqref{Ratz1} equals therefore
\begin{align*}
&\Psi^{-1}S_L^{-1}(-It,\nabla) t^{\alpha-1} e^{-\I\frac{(\alpha-1)\pi}{2}}\nabla\Psi^{-1}\\
=& M_{tE (t^2 + |\xi|^2)^{-1} - G(t^2 + |\xi|^2)^{-1}}t^{\alpha-1} M_{\exp\left(-\frac{(\alpha-1)\pi}{2} E\right)} M_G.
\end{align*}
It is hence the multiplication operator $M_{A_1(t,\xi)}$ determined by the matrix-valued function
\begin{align*}
&A_1(t,\xi)= \frac{t^{\alpha-1}}{t^2 + |\xi|^2}\left(tE  - G(\xi)\right) \exp\left(-\frac{(\alpha-1)\pi}{2} E\right)G(\xi)\\
=&\frac{ t^{\alpha -1}}{t^2+\xi_1^2+\xi_{2}^2 + \xi_{3}^2}\times \\
&\times\begin{pmatrix}
   e^{-\I\frac{ \alpha  \pi }{2} }\xi_{1}(t - \I \xi_{1})+\I e^{ \I\frac{ \alpha  \pi }{2}} \left(\xi_{2}^2+\xi_{3}^2\right) &  \left(e^{\I\frac{ \alpha  \pi }{2}} \xi_{1}+e^{-\I\frac{ \alpha  \pi }{2}}(\xi_{1}+\I t)\right) (\xi_{2}+\I \xi_{3}) \\
 \left(\I e^{-\I\frac{ \alpha  \pi }{2}} \xi_{1}+e^{\I\frac{ \alpha  \pi }{2} }(-t+\I \xi_{1})\right) (\I \xi_{2}+\xi_{3}) &  e^{\I\frac{ \alpha  \pi }{2}} (-t+ \I \xi_{1}) \xi_{1}-\I e^{-\I\frac{ \alpha  \pi }{2}} \left(\xi_{2}^2+\xi_{3}^2\right) 
\end{pmatrix}.
\end{align*}
Similarly the operator in the second integral of \eqref{Ratz1}  is
\begin{align*}
& \Psi^{-1}  S_L^{-1}(It,\nabla) t^{\alpha-1}e^{\I\frac{(\alpha-1)\pi}{2}} \nabla \Psi^{-1}\\
= & M_{-tE (t^2 + |\xi|^2)^{-1} - G(t^2 + |\xi|^2)^{-1}}t^{\alpha-1} M_{\exp\left(\frac{(\alpha-1)\pi}{2} E\right)} M_G.
\end{align*}
It is hence the multiplication operator $M_{A_{2}(t,\xi)}$ determined by the matrix-valued function
\begin{align*}
&A_2(t,\xi) = \frac{t^{\alpha}}{t^2 + |\xi|^2}\left(-tE  - G(\xi)\right) \exp\left(\frac{(\alpha-1)\pi}{2} E\right)G(\xi)
\\
=&\frac{ t^{\alpha -1}}{t^2+\xi_1^2+\xi_{2}^2 + \xi_{3}^2}\times \\
&\times\begin{pmatrix}
  e^{\I\frac{  \alpha  \pi }{2} }\xi_{1}(t + \I \xi_{1}) -\I e^{- \I\frac{ \alpha  \pi }{2}} \left(\xi_{2}^2+\xi_{3}^2  \right)&  -\left( e^{\I\frac{ \alpha  \pi }{2}}(-\I t + \xi_{1}) + e^{-\I\frac{ \alpha  \pi }{2}} \xi_{1}\right) (\xi_{2} + \I\xi_{3}) \\
 \left( e^{\I\frac{ \alpha  \pi }{2}} \xi_{1}+e^{-\I\frac{ \alpha  \pi }{2} }(-\I t+  \xi_{1})\right) ( \xi_{2}-\I\xi_{3}) &  -e^{-\I\frac{ \alpha  \pi }{2}} (t+\I \xi_{1}) \xi_{1} + \I e^{\I\frac{ \alpha  \pi }{2}} \left(\xi_{2}^2+\xi_{3}^2\right) 
\end{pmatrix}.
\end{align*}
Hence, we have $f_{\alpha}(\nabla)v = \Psi^{-1} f_{\alpha}(M_G) \Psi v$ with 
\begin{equation}\label{AzTU}
\begin{split}
f_{\alpha}(M_G)\widehat{v} :=& -\frac{1}{2\pi} \int_{0}^{+\infty}M_{A_{1}(t,\xi)}\widehat{v}\,dt  - \frac{1}{2\pi} \int_{0}^{+\infty}M_{A_{2}(t,\xi)} \widehat{v}\,dt
\end{split}
\end{equation}
for $\widehat{v} = \Psi v \in \Psi \dom( \nabla)$.

We show now that these integrals converge for any $\widehat{v} \in \Psi\dom(\nabla)$. As $\Psi$ is isometric, this is equivalent to \eqref{FAlfa} converging for any $v\in\dom(\nabla)$. Since  all norms on a finite-dimensional vector space are equivalent, there exists a constant $C>0$ such that 
\begin{equation}\label{MatrixNormEst}
 \| M \| \leq C \max_{\ell,\kappa\in\{1,2\}} |m_{\ell,\kappa}| \qquad \forall M = \begin{pmatrix} m_{1,1} & m_{1,2}\\ m_{2,1} & m_{2,2}\end{pmatrix}\in\cc_{\I}^{2\times 2}.
\end{equation}
The modulus of the $(1,1)$-entry of $A_{1}(t,\xi)$ with $t\geq 0$ is
\begin{align*}
&\frac{ t^{\alpha -1}}{t^2+\xi_1^2+\xi_{2}^2 + \xi_{3}^2}\left| e^{-\I\frac{ \alpha  \pi }{2} }\xi_{1}(t - \I \xi_{1})+\I e^{ \I\frac{ \alpha  \pi }{2}} \left(\xi_{2}^2+\xi_{3}^2\right)\right|\\
=& \frac{ t^{\alpha -1}}{t^2+\xi_1^2+\xi_{2}^2 + \xi_{3}^2}\left(|\xi_{1}t|  + |\xi|^2 \right) \leq \frac{ t^{\alpha -1}}{t^2+|\xi|^2}\left(|\xi |t  + |\xi|^2\right).
\end{align*}
Similarly, one sees that also the $(2,2)$-entry of $A_1(t,\xi)$ satisfies this estimate. For the $(1,2)$-entry we have on the other hand
\begin{align*}
&\frac{ t^{\alpha -1}}{t^2+\xi_1^2+\xi_{2}^2 + \xi_{3}^2} \left|  \left(\I e^{-\I\frac{ \alpha  \pi }{2}} \xi_{1}+e^{\I\frac{ \alpha  \pi }{2} }(-t+\I \xi_{1})\right) (\I \xi_{2}+\xi_{3}) \right|\\
\leq & \frac{ t^{\alpha -1}}{t^2+\xi_1^2+\xi_{2}^2 + \xi_{3}^2}  \left( 2|\xi_{1} | | \xi_{2}+\I \xi_{3} |  + t |\xi_{2}+\I \xi_{3}|\right) \leq  \frac{2 t^{\alpha -1}}{t^2+|\xi|^2} \left( |\xi|^2 + t|\xi|\right).
\end{align*}
Similar computations show that the $(2,1)$-entry does also satisfy this estimate and hence we deduce from \eqref{MatrixNormEst} that
\[
\| A_1(t,\xi) \| \leq 2C \frac{ t^{\alpha -1}}{t^2+ |\xi |^2}\left(|\xi |t  + |\xi|^2\right).
\]
Analogous arguments show that this estimate is also satisfied by $\|A_2(t,\xi)\|$. 

For the integrals in \eqref{AzTU} we hence obtain
\begin{align*}
&\int_{0}^{+\infty}\| M _{A_1(t,\xi)} \widehat{v}\|_{\widehat{V}} dt + \int_{0}^{+\infty}\| M _{A_2(t,\xi)} \widehat{v}\|_{\widehat{V}} \,dt \\
\leq &2 \int_{0}^{+\infty}  2C \left\|  \frac{ t^{\alpha -1}}{t^2+|\xi|^2}\left(|\xi |t  + |\xi|^2\right) |\widehat{v}(\xi)| \right\|_{L^2(\rr^3) }\, dt\\
\leq & 4 C \int_{0}^{1} t^{\alpha-1} \left\| \frac{|\xi|t}{t^2 + |\xi|^2}  |\widehat{v}(\xi)| + \frac{|\xi|^2}{t^2 + |\xi|^2}|\widehat{v}(\xi)| \right\|_{L^2(\rr^3)}\,dt\\
&+  4 C \int_{1}^{+\infty} t^{\alpha-2} \left\| \frac{t^2}{t^2 + |\xi|^2}  |\xi\widehat{v}(\xi)| + \frac{t|\xi|}{t^2 + |\xi|^2}|\xi \widehat{v}(\xi)| \right\|_{L^2(\rr^3)}\,dt.
\end{align*}
Now observe that
\[
\frac{t^2}{t^2 + |\xi|^2} \leq 1, \qquad \frac{|\xi|^2}{t^2 + |\xi|^2}\leq 1,\qquad \frac{t|\xi|}{t^2 + |\xi|^2} \leq \frac{1}{2} <1.
\]
Because of \eqref{GradDom}, the relation $\widehat{v}\in\Psi\dom(\nabla)$ implies that $|\widehat{v}(\xi)|$ and $||\xi| \widehat{v}(\xi)|$ both belong to $L^2(\rr^3)$ and hence we finally find
\begin{align*}
&\int_{0}^{+\infty}\| M _{A_1(t,\xi)} \widehat{v}\|_{\widehat{V}} dt + \int_{0}^{+\infty}\| M _{A_2(t,\xi)} \widehat{v}\|_{\widehat{V}} \,dt\\
\leq & 8C \| v(\xi)\|_{L^2(\rr^3)} \int_{0}^{1} t^{\alpha-1}\,dt + 8C \|\xi \widehat{v}(\xi)\|_{L^2(\rr^3)} \int_{1}^{+\infty} t^{\alpha-2}\,dt,
\end{align*}
which is finite as $\alpha \in (0,1)$. Hence \eqref{AzTU} converges for any $\widehat{v}\in\Psi\dom(\nabla)$ and  \eqref{FAlfa} converges  in turn for any $v\in\dom(\nabla)$.

\end{proof}

\begin{theorem}
The operator $f_{\alpha}(\nabla)$ can be extended to a closed operator on $L^2(\rr^3,\hh)$. For $v\in\dom(\nabla^2) = \dom(-\Delta)$, it is moreover given by
\begin{equation}\label{PaULa}
f_{\alpha}(\nabla)v =  (-\Delta)^{\frac{\alpha}{2}-1}  \left[ \frac{1}{2} (-\Delta)^{\frac{1}{2}} + \frac{1}{2} \nabla\right]\nabla v.
\end{equation}
\end{theorem}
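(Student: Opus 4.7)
The strategy is to exploit the Fourier-space representation already set up in the proof of \Cref{FAlfaConv}: conjugating by the componentwise Fourier transform $\Psi$ one has $f_{\alpha}(\nabla) = \Psi^{-1} f_{\alpha}(M_G)\Psi$, where
\[
f_{\alpha}(M_G)\widehat{v} = -\frac{1}{2\pi}\int_{0}^{+\infty} \bigl(M_{A_1(t,\xi)} + M_{A_2(t,\xi)}\bigr)\widehat{v}\,dt,
\]
with the matrices $A_1,A_2$ computed explicitly there. The plan is (i) to evaluate the sum $A_{1}+A_{2}$ and the $t$-integral in closed form, (ii) to recognise the resulting symbol as the sum of Fourier multipliers for $(-\Delta)^{\alpha/2}$ and $(-\Delta)^{(\alpha-1)/2}\nabla$, (iii) to match this with the right hand side of \eqref{PaULa}, and (iv) to read off closedness from the Fourier multiplier representation.

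The collapse of $A_1+A_2$ rests on two algebraic identities. Writing $\beta:=(\alpha-1)\pi/2$, the matrix $E=\operatorname{diag}(\I,-\I)$ satisfies $E^2=-I_2$, hence $e^{\pm\beta E}=\cos\beta\,I_2\pm\sin\beta\,E$; in particular
\[
e^{-\beta E}+e^{\beta E}=2\cos\beta\,I_2,\qquad E\bigl(e^{-\beta E}-e^{\beta E}\bigr)=-2\sin\beta\,E^2=2\sin\beta\,I_2.
\]
Second, $G(\xi)^{2}=|\xi|^{2}I_{2}$, which may be verified by direct computation (or read off from the identity $\nabla^{2}=-\Delta$ together with \eqref{RamSTI}). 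Expanding the two terms in $A_1+A_2$ and grouping, all cross terms cancel and one obtains
\[
A_1(t,\xi)+A_2(t,\xi) = \frac{t^{\alpha-1}}{t^{2}+|\xi|^{2}}\bigl[2t\sin\beta\,G(\xi) - 2\cos\beta\,|\xi|^{2}I_{2}\bigr].
\]

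To perform the $t$-integration I would apply the classical formula $\int_{0}^{\infty}\frac{t^{a-1}}{t^{2}+b^{2}}\,dt = \frac{\pi b^{a-2}}{2\sin(a\pi/2)}$ (valid for $0<a<2$) with $a=\alpha+1$ and $a=\alpha$, using the estimates from \Cref{FAlfaConv} to justify exchanging the integral with the multiplication operator via Fubini's theorem. Inserting the identities $\sin\beta = -\cos(\alpha\pi/2)$ and $\cos\beta = \sin(\alpha\pi/2)$, the trigonometric factors cancel precisely and one arrives at
\[
f_{\alpha}(M_G)\widehat{v}(\xi) = \tfrac{1}{2}|\xi|^{\alpha-1}G(\xi)\widehat{v}(\xi) + \tfrac{1}{2}|\xi|^{\alpha}\widehat{v}(\xi).
\]
Back in $L^{2}(\rr^{3},\hh)$ through $\Psi^{-1}$, the symbol $|\xi|^{\alpha}I_{2}$ is that of $(-\Delta)^{\alpha/2}$ and $|\xi|^{\alpha-1}G(\xi)$ is that of $(-\Delta)^{(\alpha-1)/2}\nabla$, so
\[
f_{\alpha}(\nabla)v = \tfrac{1}{2}(-\Delta)^{(\alpha-1)/2}\nabla v + \tfrac{1}{2}(-\Delta)^{\alpha/2}v.
\]
For $v\in\dom(\nabla^{2})=\dom(-\Delta)$ we have $\nabla^{2}v=-\Delta v$, hence $(-\Delta)^{\alpha/2}v = (-\Delta)^{\alpha/2-1}\nabla^{2}v$, and collecting terms yields \eqref{PaULa}.

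Closedness is then immediate from the Fourier representation: $f_{\alpha}(\nabla)$ is the restriction to $\dom(\nabla)$ of the Fourier multiplier operator with symbol $\tfrac{1}{2}|\xi|^{\alpha-1}G(\xi)+\tfrac{1}{2}|\xi|^{\alpha}I_{2}$, and multiplication operators by measurable matrix-valued functions on $L^{2}$ are closed on their natural maximal domain $\{v : |\xi|^{\alpha}\widehat{v}(\xi)\in L^{2}\}$. The main technical obstacle is the rigorous bookkeeping in the passage from the matrix identity to the two scalar Fourier multipliers, i.e.\ justifying Fubini's theorem and the interchange of the integral with $\Psi$ in the matrix-valued setting; however this is already controlled by the pointwise scalar majorants (expressed through $|\xi|$, $t^{\alpha-1}$ and $1/(t^{2}+|\xi|^{2})$) derived in \Cref{FAlfaConv}, so no new estimate is required.
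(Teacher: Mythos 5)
Your computation is correct, and it reaches \eqref{PaULa} by a genuinely different route from the paper. You stay entirely in the Fourier picture: you sum the two explicit matrices $A_1,A_2$ from the proof of \Cref{FAlfaConv}, use $E^2=-I_2$ and $G(\xi)^2=|\xi|^2I_2$ to collapse the sum to $\frac{t^{\alpha-1}}{t^2+|\xi|^2}\bigl[2t\sin\beta\,G(\xi)-2\cos\beta\,|\xi|^2I_2\bigr]$, and then evaluate the $t$-integral with the classical formula $\int_0^{\infty}t^{a-1}(t^2+b^2)^{-1}dt=\pi b^{a-2}/(2\sin(a\pi/2))$; I checked the trigonometric cancellations ($\sin\beta=-\cos(\alpha\pi/2)$, $\cos\beta=\sin(\alpha\pi/2)$) and the resulting symbol $\tfrac12|\xi|^{\alpha-1}G(\xi)+\tfrac12|\xi|^{\alpha}I_2$ is exactly the right-hand side of \eqref{PaULa} once $(-\Delta)^{\alpha/2}v=(-\Delta)^{\alpha/2-1}\nabla^2v$ is used on $\dom(\nabla^2)$. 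The paper instead works at the operator level: it writes $S_L^{-1}(\pm\I t,\nabla)=(\pm\I t\id+\nabla)(-t^2+\Delta)^{-1}$ via \eqref{SResNabla}, combines the two half-lines into a $\sin$-weighted scalar integral and a $\cos$-weighted $\nabla$-integral as in \eqref{Juju2}, and identifies each with a Balakrishnan-type representation of a fractional power of $-\Delta$ by citing Haase's integral formula \eqref{ZUZUjk}; closedness is then obtained by showing $f_{\alpha}(\nabla)(r+\Delta)^{-1}$ is bounded and commutes with $(r+\Delta)^{-1}$, and using injectivity of the latter. Your approach buys a cleaner closedness argument (a restriction of a maximal multiplication operator is automatically closable) and makes the scalar/vector split of $f_\alpha(\nabla)$ transparent at the symbol level; the paper's approach avoids the $2\times2$ matrix bookkeeping and produces $(-\Delta)^{(\alpha-1)/2}$, $(-\Delta)^{\alpha/2-1}$ directly as fractional powers in the sectorial-operator sense rather than as Fourier multipliers (for $-\Delta$ on $L^2(\rr^3)$ these of course coincide, but that identification is itself a small step you are implicitly using). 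The only point to make explicit is the passage from the Bochner integral $\int_0^{\infty}M_{A_1+A_2}\widehat v\,dt$ to the pointwise-in-$\xi$ integral of the symbol: this follows by pairing against an arbitrary $w\in L^2$ and applying Fubini, which is licensed by the majorants you quote from \Cref{FAlfaConv}, so no new estimate is needed, but it deserves a sentence.
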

\begin{proof}
Let $v\in\dom(\nabla^2) = \dom(-\Delta)$. We have because of \eqref{SResNabla} that
\begin{equation}
\begin{split}\label{Juju}
f_{\alpha}(\nabla)v
=& \frac{1}{2\pi}\int_{-\infty}^{+\infty} (-\I t\id + \nabla) \Q_{c,-\I t}(\nabla)^{-1}(-\I )^2(-t\I )^{\alpha-1}\nabla v\,dt\\
=& -\frac{1}{2\pi}\int_{0}^{+\infty} (-\I t\id +\nabla) \Q_{c,-\I t}(\nabla)^{-1}t^{\alpha-1}e^{-\I (\alpha-1)\frac{\pi}{2}}\nabla v\,dt\\
&- \frac{1}{2\pi}\int_{0}^{+\infty} (\I t\id +\nabla) \Q_{c,\I t}(\nabla)^{-1}t^{\alpha-1}e^{\I (\alpha-1)\frac{\pi}{2}}\nabla v\,dt.
\end{split}
\end{equation}
Due to \eqref{QsRs}, we have moreover 
\[
 \Q_{c,\I t}(\nabla)^{-1} = (-t^2+\Delta)^{-1} = \Q_{c,-\I t}(\nabla)^{-1}
\]
and hence
\begin{equation}
\begin{split}\label{Juju2}
f_{\alpha}(\nabla)v =& -\frac{1}{2\pi}\int_{0}^{+\infty} t^{\alpha} \Q_{c,\I t}(\nabla)^{-1}\I \left(e^{\I (\alpha-1)\frac{\pi}{2}} - e^{-\I (\alpha-1)\frac{\pi}{2}}\right)\nabla v\,dt\\
& -\frac{1}{2\pi}\int_{0}^{+\infty} \nabla \Q_{c,\I t}(\nabla)^{-1}t^{\alpha-1}\left(e^{\I (\alpha-1)\frac{\pi}{2}}+e^{-\I (\alpha-1)\frac{\pi}{2}}\right)\nabla v\,dt\\
=& \frac{\sin\left((\alpha-1)\frac{\pi}{2}\right)}{\pi}\int_{0}^{+\infty} t^{\alpha} \Q_{c,\I t}(\nabla)^{-1}\nabla v\,dt\\
& -\frac{\cos\left((\alpha-1)\frac{\pi}{2}\right)}{\pi}\int_{0}^{+\infty} \nabla \Q_{c,\I t}(\nabla)^{-1}t^{\alpha-1}\nabla v\,dt.
\end{split}
\end{equation}
For the first integral, we obtain
\begin{equation}\label{ASSE}
\begin{split}
&\frac{\sin\left((\alpha-1)\frac{\pi}{2}\right)}{\pi}\int_{0}^{+\infty} t^{\alpha} \Q_{c,\I t}(\nabla)^{-1} \nabla v\,dt\\
=&\frac{\sin\left((\alpha-1)\frac{\pi}{2}\right)}{\pi}\int_{0}^{+\infty} t^{\alpha} (-t^2 +\Delta)^{-1} \nabla v\,dt\\
=&\frac{\sin\left((\alpha-1)\frac{\pi}{2}\right)}{\pi}\int_{0}^{+\infty} \tau^{\frac{\alpha-1}{2}} (-\tau + \Delta)^{-1}\nabla v\,d\tau\\
=& \frac{1}{2} (-\Delta)^{\frac{\alpha - 1}{2}} \nabla v.
\end{split}
\end{equation}
The last identity follows from the integral representation for the fractional power $A^{{\beta}}$ with $\Re(\beta) \in (0,1)$ of a complex linear sectorial operator $A$ given in \cite[Corollary~3.1.4]{Haase},  namely
\begin{equation}\label{ZUZUjk}
 A^{\beta}v = \frac{\sin(\pi\beta)}{\pi} \int_{0}^{+\infty}\tau^{\beta}\left(\tau+A^{-1}\right)^{-1}v\,d\tau,\qquad v\in\dom(A).
\end{equation}
As $-\Delta$ is an injective  sectorial operator on $L^2(\rr^3,\cc_{\I})$, also its closed inverse $(-\Delta)^{-1}$  is a sectorial operator. Its fractional power $\left((-\Delta)^{-1})\right)^{\frac{1-\alpha}{2}}$ is, because of \eqref{ZUZUjk},  given by the last integral in \eqref{ASSE}. Since $(-\Delta)^{\frac{\alpha-1}{2}} = \left((-\Delta)^{-1}\right)^{\frac{1-\alpha}{2}}$, we obtain the last equality. 

Observe that the expression $\frac{1}{2}(-\Delta)^{\frac{\alpha-1}{2}}\nabla v$ is actually meaningful as we chose $v\in\dom(\nabla^2)$. Indeed, if we consider the operators in the Fourier space $\widehat{V}$ as in the proof of \Cref{FAlfaConv}, then $-\Delta$ corresponds to the multiplication operator $M_{|\xi|^2}$ generated by the scalar function $|\xi|^2$. The operator $(-\Delta)^{\frac{\alpha-1}{2}}$ is then the multiplication operator $M_{|\xi  |^{\alpha-1}}$ generated by the function $(|\xi|^2)^{\frac{\alpha-1}{2}} = |\xi|^{\alpha-1}$. Hence
\[
\dom(-\Delta)^{-\frac{\alpha-1}{2}} = \left\{v\in L^2(\rr^3,\hh) :  \widehat{v}\in \dom(M_{|\xi|^{\alpha-1}})\right\} = \left\{v\in L^2(\rr^3,\hh) :  |\xi|^{\alpha-1} \widehat{v}(\xi) \in \widehat{V}\right\}.
\]
If  $G(\xi)$ is as in \eqref{GMat}, then  $\widehat{\nabla v}(\xi) = M_{G}\widehat{v}(\xi) = G(\xi) \widehat{v}(\xi)\in\widetilde{V}$ and  because of \eqref{RamSTI} we have $|G(\xi)\widehat{v}(\xi)| = |\xi | |\widehat{v}(\xi)|\in L^2(\rr)$. As $\alpha\in(0,1)$, we therefore find that  $\ |\xi|^{\alpha-1} |M_G\widehat{v}(\xi)| =  |\xi|^{\alpha}|\widehat{v}(\xi)|$ belongs to $L^2(\rr^3)$ and so $\widehat{\nabla v}\in\dom( M_{|\xi|^{\alpha-1}})$. This is equivalent to $\nabla v \in \dom\left((-\Delta)^{\frac{\alpha-1}{2}}\right)$.

As $v\in\dom(\nabla^2) = \dom(-\Delta)$, we obtain similarly that the second integral in \eqref{Juju2} equals
\begin{equation}\label{ASSE1}
\begin{split}
&- \frac{\cos\left((\alpha-1)\frac{\pi}{2}\right) }{\pi} \int_{0}^{+\infty} \nabla \Q_{c,\I t}(\nabla)^{-1}t^{\alpha-1}\nabla v\,dt\\
=&   \frac{\sin\left((\alpha-2)\frac{\pi}{2}\right)} {\pi}\int_{0}^{+\infty} \nabla (-t^2 + \Delta)^{-1} t^{\alpha-1}\nabla v\,dt\\
=& \frac{\sin\left((\alpha-2)\frac{\pi}{2}\right)}{2\pi} \int_{0}^{+\infty}  (-\tau + \Delta)^{-1}  \tau^{\frac{\alpha-2}{2}} \nabla^2 v\,d\tau\\
= &\frac{1}{2}  (-\Delta)^{\frac{\alpha}{2}-1} \nabla^2 v.
\end{split}
\end{equation}
Again this expression is meaningful as we assumed $v\in\dom(\nabla^2)$. This is equivalent to $|\xi|^2 \widehat{v}(\xi)\in \widehat{V}$ because $\widehat{\nabla^2 v}(\xi) = |\xi|^2\widehat{v}(\xi)$. Since $\alpha \in (0,1)$ and $\widehat{v}\in\dom(M_{|\xi|^2})$, the function $|\xi|^2\widehat{v}(\xi)$  belongs to the domain of the multiplication operator $M_{|\xi|^{\alpha-2}}$  because $M_{|\xi|^{\alpha-2}} |\xi|^2\widehat{v}(\xi) = |\xi|^{\alpha}\widehat{v}(\xi) \in \widehat{V}$. Since $(-\Delta)^{\frac{\alpha}{2}-1}$ corresponds to $M_{|\xi|^{\alpha-2}}$ on the Fourier space $\widehat{V}$, we find $\nabla^2 v$ in $\dom\big((-\Delta)^{\frac{\alpha}{2}-1}\big)$. 

Altogether, we find
\begin{equation}\label{JF}
f_{\alpha}(\nabla)v = (-\Delta)^{\frac{\alpha}{2}-1} \left[ \frac{1}{2} (-\Delta)^{\frac{1}{2}} + \frac{1}{2} \nabla\right]  \nabla v\qquad \forall v\in\dom(\nabla^2).
\end{equation}

Finally, we show that $f_{\alpha}(\nabla)$ can be extended to a closed operator. We need to show that for any sequence $v_n\in\dom(f_{\alpha}(\nabla)) = \dom(\nabla)$ that converges to $0$ and for which also the sequence $f_{\alpha}(\nabla) v_n$ converges, we have $z := \lim_{n\to+\infty} f_{\alpha}(\nabla)v_n = 0$. In order to do this, we write as in \eqref{Juju2}
\begin{align*}
f_{\alpha}(\nabla) v =&  \frac{\sin\left((\alpha-1)\frac{\pi}{2}\right)}{\pi}\int_{0}^{+\infty} t^{\alpha} (t^2 + \Delta)^{-1}\nabla v\,dt \\
- & \frac{\cos\left((\alpha-1)\frac{\pi}{2}\right)}{\pi}\int_{0}^{+\infty} \nabla (t^2 + \Delta)^{-1}t^{\alpha-1}\nabla v\,dt.
\end{align*}
If we choose an arbitrary, but fixed $r>0$, then $(r + \Delta)^{-1}$ commutes with $(t^2 + \Delta)^{-1}$ and $\nabla$ and we deduce from the above integral representation that 
\[
(r + \Delta)^{-1} f_{\alpha}(\nabla) v= f_{\alpha}(\nabla) (r+\Delta)^{-1}v \qquad\forall v\in\dom(\nabla).
\]

 We show now that the mapping $v\mapsto f_{\alpha}(\nabla) (r+\Delta)^{-1} v$ is a bounded linear operator on $L^2(\rr^3,\hh)$. Since $(r+\Delta)^{-1}$ maps $L^2(\rr^3,\hh)$ to $\dom(\Delta) = \dom(\nabla^2)$, the composition $\nabla^2( r+\Delta)^{-1}$ of the bounded operator $(r+\Delta)^{-1}$ and the closed operator $\nabla^2$ is bounded itself. As we have seen above, $\nabla^2$ and in turn also the bounded operator $\nabla^2(r+\Delta)^{-1}$ map  $L^2(\rr^3,\hh)$ into the domain of the closed operator $(-\Delta)^{\frac{\alpha}{2}-1}$. Hence, also their composition $(-\Delta)^{-\frac{\alpha}{2}-1} \nabla^2 (r+\Delta)^{-1}$  is therefore bounded. Similarly, $\nabla(r+\Delta)^{-1}$ is a bounded operator that maps $L^2(\rr^3,\hh)$ to $\dom(\big(-\Delta)^{\frac{\alpha-1}{2}}\big)$ as we have seen above, and so the composition $(-\Delta)^{\frac{\alpha-1}{2}}\nabla(r+\Delta)^{-1}$ is also bounded. Because of \eqref{JF}, the operator
\[
f_{\alpha}(\nabla)(r+\Delta)^{-1} = \frac{1}{2}(-\Delta)^{\frac{\alpha-1}{2}}\nabla(r+\Delta)^{-1} + \frac{1}{2}(-\Delta)^{\frac{\alpha}{2}-1} \nabla^2  (r+\Delta)^{-1}
\]
 is the linear combination of bounded operators and hence bounded itself.
 
 If a sequence $v_n\in\dom(f_{\alpha}(\nabla))$ converges to $0$ and $z = \lim_{n\to+\infty}f_{\alpha}(\nabla)v_n \in L^2(\rr^3,\hh)$ exists, then 
 \[
 (r+ \Delta)^{-1} z = \lim_{n\to +\infty} (r + \Delta)^{-1}f_{\alpha}(\nabla)v_n = \lim_{n\to+\infty} f_{\alpha}(\nabla)(r + \Delta)^{-1}v_n = 0.
 \]
 But as $(r+\Delta)^{-1}$ is the inverse of a closed operator, its kernel is trivial and so $z = \lim_{n\to+\infty}f_{\alpha}(\nabla)v_n = 0$ . Hence, $f_{\alpha}(\nabla)$ can be extended to a closed operator.

\end{proof}

\begin{remark}
The identity  \eqref{PaULa} might seem surprising at the first glance, but it is actually rather intuitive. By the spectral theorem there exist two spectral measures $E_{(-\Delta)}$ and $E_{\nabla}$ on $[0,+\infty)$ resp. $\rr$ such that $-\Delta = \int_{[0,+\infty)} t\, dE_{-\Delta}(t)$ and $\nabla = \int_{\rr} r\,dE_{\nabla}(r)$. As $\nabla^2 = -\Delta$, the spectral measure $E_{(-\Delta)}$ is furthermore the push-forward measure of $E_{\nabla}$ under the mapping $t\mapsto t^2$ such that
\[
\int_{[0,+\infty)} f(t)\,dE_{(-\Delta)}(t) = \int_{\rr} f\left(t^2\right)\,dE_{\nabla}(t)
\]
for any measurable function $f$. Hence, we have for  $v\in\dom(\nabla^2)$ that
\begin{align*}
f^{\alpha}(\nabla) =& \int_{\rr}t^{\alpha}\chi_{[0,+\infty)}(t)\,dE_{\nabla}(t) v\\
=& \int_{\rr}t^{\alpha-2}\frac{1}{2}(|t| + t) t\,dE_{\nabla}(t)v\\
=&  \int_{\rr}t^{\alpha-2}\,dE_{\nabla}(t) \frac{1}{2} \left(\int_{\rr}|t|\,dE_{\nabla}(t) + \int_{\rr} t\,dE_{\nabla}(t) \right)\int_{\rr}t\,dE_{\nabla}(t)v\\
=& \int_{[0,+\infty)}t^{\frac{\alpha}{2}-1}\,dE_{(-\Delta)}(t)\frac{1}{2}\left(\int_{[0,+\infty)}|t|^{\frac{1}{2}}\,dE_{(-\Delta)}(t) + \int_{\rr} t\,dE_{\nabla}(t)\right)\int_{\rr}t\,dE_{\nabla}(t)v\\
= & (-\Delta)^{-\frac{\alpha}{2}-1} \left[ \frac{1}{2} (-\Delta)^{\frac{1}{2}} + \frac{1}{2}\nabla\right]\nabla v.
\end{align*}

\end{remark}
The vector part of $f_{\alpha}(\nabla)$ is because of \eqref{PaULa} given by
\[
\VEC f_{\alpha}(\nabla) v = \frac{1}{2}(-\Delta)^{\frac{\alpha-1}{2}}\nabla v.
\]
If we apply the divergence to this equation with sufficiently regular $v$, we find
\[
\DIV\left(\VEC f_{\alpha}(\nabla)v\right) =  \frac{1}{2}(-\Delta)^{\frac{\alpha -1}{2}}  \Delta v = - \frac{1}{2} (-\Delta)^{\frac{\alpha + 1}{2}}.
\]
We can thus reformulate the fractional heat equation \eqref{fracHeatEQ} with $\alpha \in (1/2, 1)$ as
\[
\partial_t v - 2 \DIV\left(\VEC f_{\beta}(\nabla)v\right) = 0, \qquad \beta = 2\alpha - 1.
\]

\subsection{An example with nonconstant coefficients} As pointed out before, the advantage of the above procedure is that is does not only apply to the gradient to reproduce the fractional Laplacian. Instead it applies to a large class of vector operators, in particular generalized gradients with nonconstant coefficients. As a first example, we consider the operator
\[ T: = \xi_1\frac{\partial}{\partial \xi_1}{e}_1 + \xi_2\frac{\partial}{\partial \xi_2} {e}_2 + \xi_3\frac{\partial}{\partial \xi_3} {e}_3\]
on the space $L^2(\rr^{3}_+, \hh, d\mu)$ of $\hh$-valued functions on $\rr^{3}_{+} = \{\xi = (\xi_1, \xi_2, \xi_3)^T\in\rr^3: \xi_{\ell}>0\}$ that are square integrable with respect to $ d\mu(\xi) = \frac{1}{\xi_1,\xi_2,\xi_3}d\lambda(\xi)$, where $\lambda$ denotes the Lebesgue measure on $\rr^3$. In order to determine $\Q_{s}(T)^{-1}$ we observe that the operator given by the change of variables $J: f \mapsto f\circ\iota$ with $\iota(x) = (e^{x_1},e^{x_2},e^{x_3})^T$ is an isometric isomorphism between $L^2(\rr^{3},\hh,d\lambda(x))$ and $L^2(\rr^{3}_+, \hh, d\mu(\xi))$. Moreover, $T = J^{-1} \nabla J$ such that
\[Q_{s}(T) = (s^2\id + T\overline{T}) = J^{-1} (s^2\id + \Delta) J \]
and in turn
\[ Q_{s}(T)^{-1} := (s^2\id - T\overline{T})^{-1} = J^{-1} (s^2\id + \Delta)^{-1} J.\]
We therefore have for sufficiently regular $v$ with calculations analogue to those in \eqref{Juju} and \eqref{Juju2} that
\begin{align*}
 f_{\alpha}(T)v =& \frac{\sin((\alpha-1)\pi)}{\pi}\int_{0}^{+\infty}t^{\alpha}(-t^2+T\overline{T})^{-1}T \,dt \\
 &+ \frac{\cos((\alpha-1)\pi)}{\pi}\int_{0}^{+\infty} t^{\alpha-1}T(-t^2+T\overline{T})^{-1}Tv\,dt.
 \end{align*}
Clearly, the vector part of this operator is again given by the first integral such that
\begin{align*}
\VEC  f_{\alpha}(T) v =& \frac{\sin((\alpha-1)\pi)}{\pi}\int_{0}^{+\infty}t^{\alpha}(-t^2+T\overline{T})^{-1}Tv \,dt\\
=& \frac{\sin((\alpha-1)\pi)}{\pi}\int_{0}^{+\infty}t^{\alpha}J^{-1}(-t^2+\Delta)^{-1}JTv \,dt\\
=& J^{-1} \frac{\sin((\alpha-1)\pi)}{\pi}\int_{0}^{+\infty}t^{\alpha}(-t^2+\Delta)^{-1} \,dt \, JTv\\
=& \frac{1}{2}J^{-1} (-\Delta)^{\frac{\alpha-1}{2}} J Tv,
 \end{align*}
 where the last equation follows from computations as in \eqref{JF}. Choosing $\beta = 2\alpha +1$ we thus find for sufficiently regular $v$ that
 \begin{align*}
 &\VEC f_{\beta}(T) v(\xi) \\
 =& \frac{1}{2}J^{-1} (-\Delta)^{\alpha} J Tv(\xi_1,\xi_2,\xi_3) \displaybreak[2]\\
 =& \frac{1}{2}J^{-1} (-\Delta)^{\alpha} \begin{pmatrix} e^{x_1}v_{\xi_1}(e^{x_1},e^{x_2},e^{x_3}) \\ e^{x_2} v_{\xi_2}(e^{x_1},e^{x_2},e^{x_3})\\ e^{x_3} v_{\xi_3}(e^{x_1},e^{x_2}, e^{x_3})\end{pmatrix}\displaybreak[2]\\
  =& \frac{1}{2}J^{-1} \frac{1}{(2\pi)^3} \int_{\rr^3}\int_{\rr^3}-|y|^{2\alpha} e^{i z\cdot y}e^{-x\cdot y}   \begin{pmatrix} e^{x_1}v_{\xi_1}(e^{x_1},e^{x_2},e^{x_3}) \\ e^{x_2} v_{\xi_2}(e^{x_1},e^{x_2},e^{x_3})\\ e^{x_3} v_{\xi_3}(e^{x_1},e^{x_2}, e^{x_3})\end{pmatrix}\, dx\, dy\displaybreak[2]\\
 =&  \frac{1}{2(2\pi)^3} \int_{\rr^3}\int_{\rr^3}-|y|^{2\alpha} e^{i \sum_{k=1}^3\xi_k y_k}e^{-ix\cdot y}   \begin{pmatrix} e^{x_1}v_{\xi_1}(e^{x_1},e^{x_2},e^{x_3}) \\ e^{x_2} v_{\xi_2}(e^{x_1},e^{x_2},e^{x_3})\\ e^{x_3} v_{\xi_3}(e^{x_1},e^{x_2}, e^{x_3})\end{pmatrix}\, dx\, dy.
 \end{align*}

\bibliographystyle{plain}

\end{document}